\newtheorem{theorem}{Theorem}[section]
\newtheorem{cor}[theorem]{Corollary}
\newtheorem{lem}[theorem]{Lemma}
\newtheorem{prop}[theorem]{Proposition}
\newtheorem{thm}[theorem]{Theorem}
\theoremstyle{definition}
\newtheorem{defn}{Definition}[section]
\theoremstyle{remark}
\newtheorem{rem}{Remark}[section]
\newtheorem{ex}{Example}[section]
\newcommand{\nc}{\newcommand}
\nc\ol{\overline} \nc\ul{\underline} \nc\wt{\widetilde}
\nc{\z}{\zeta}
\nc{\ZZ}{{\mathbb Z}} \nc{\NN}{{\mathbb N}} \nc{\QQ}{{\mathbb Q}}
\nc{\CC}{{\mathbb C}} \nc{\TT}{{\mathbb T}} \nc{\CP}{{\mathbb {CP}}}
\nc{\A}{{\mathcal A}} \nc{\bA}{{\mathbb A}} \nc{\bP}{{\mathbb P}}
\nc\U{{\mathfrak U}}  \nc\FF{{\mathfrak F}} \nc\D{{\mathfrak D}}
\nc\dd{{\mathfrak d}} \nc{\F}{{\mathcal F}} \nc{\N}{{\mathcal N}}
\nc{\Aa}{{\mathcal A}} \nc{\E}{{\mathcal E}} \nc{\sS}{{\mathbb S}}
\nc{\K}{{\mathcal K}} \nc{\Oo}{{\mathcal O}} \nc{\M}{{\mathcal M}}
\nc{\PP}{{\mathcal P}}
\newcommand{\T}{\mathfrak{t}}
\newcommand{\gl}{\mathfrak{gl}}
\newcommand{\ssl}{\mathfrak{sl}}
\newcommand{\g}{\mathfrak{g}}
\newcommand{\Gg}{\mathrm{g}}
\newcommand{\Qq}{\mathrm{q}}
\newcommand{\Sym}{\mathrm{Sym}}
\newcommand{\ch}{\mathrm{ch}}
\newcommand{\Ker}{\mathrm{Ker}}
\newcommand{\spa}{\mathrm{span}}
\newcommand{\Frac}{\mathrm{Frac}}
\newcommand{\End}{\mathrm{End}}
\newcommand{\Hom}{\mathrm{Hom}}
\newcommand{\HH}{\mathrm{H}}
\newcommand\uu{\mathbf u}
\newcommand\vv{\mathbf v}
\nc{\bp}{{\mathbf{p}}} \nc{\bq}{{\mathbf{q}}} \nc{\ba}{{\mathbf{a}}}
\nc{\bb}{{\mathbf{b}}} \nc{\bc}{{\mathbf{c}}} \nc{\bC}{{\mathbf{C}}}
\nc{\hh}{{\mathbf{h}}} \nc{\xx}{{\mathbf{x}}} \nc{\yy}{{\mathbf{y}}}
\nc{\bj}{{\mathbf{j}}} \nc{\bU}{{\mathbf{U}}} \nc{\bd}{{\mathbf{d}}}
\newcommand{\ad}{\mathop{\rm ad}\nolimits}
\nc{\iso}{{\stackrel{\sim}{\longrightarrow}}}
\begin{document}

\title[The affine Yangian of $\gl_1$ revisited]
 {The affine Yangian of $\gl_1$ revisited}

\author[Alexander Tsymbaliuk]{Alexander Tsymbaliuk}
 \address{Department of Mathematics, MIT, 77 Massachusetts Ave., Cambridge, MA  02139, USA}
 \curraddr{Simons Center for Geometry and Physics, Stony Brook, NY 11794, USA}
 \email{otsymbaliuk@scgp.stonybrook.edu}

\begin{abstract}
  The affine Yangian of $\gl_1$ has recently appeared simultaneously in the work
 of Maulik-Okounkov~\cite{MO} and Schiffmann-Vasserot~\cite{SV3} in connection
 with the Alday-Gaiotto-Tachikawa conjecture.
 While the presentation from~\cite{MO} is purely geometric,
 the algebraic presentation in~\cite{SV3} is quite involved.
 In this article, we provide a simple loop realization of this algebra which can be viewed
 as an ``additivization'' of the quantum toroidal algebra of $\gl_1$ in the same way
 as the Yangian $Y_h(\g)$ is an ``additivization'' of the quantum loop algebra
 $U_q(L\g)$ for a simple Lie algebra $\g$.
 We also explain the similarity between the representation theories of the affine Yangian and
 the quantum toroidal algebras of $\gl_1$ by generalizing the main result
 of~\cite{GTL} to the current settings.
\end{abstract}

\maketitle

%%%%%%%%%%%%%%%%%%%%%%%%%%%%%%%%%%%%%%%%%%%%%%%%%%%%%%%%%%%%%%%%%%%%%%%%%%%%%%%%%%%%%%%%%%%%%%%%%%%%%%%%%%%
%%%%%%%%%%%%%%%%%%%%%%%%%%%%%%%%%%%%%%%%%% Introduction %%%%%%%%%%%%%%%%%%%%%%%%%%%%%%%%%%%%%%%%%%%%%%%%%%%
%%%%%%%%%%%%%%%%%%%%%%%%%%%%%%%%%%%%%%%%%%%%%%%%%%%%%%%%%%%%%%%%%%%%%%%%%%%%%%%%%%%%%%%%%%%%%%%%%%%%%%%%%%%

\section*{Introduction}

 The goal of this note is twofold.
 First, we provide an explicit loop type presentation of the affine Yangian of $\gl_1$,
 which appeared simultaneously in~\cite{MO,SV3}, and discuss its representation theory.
 We also explain its relation to the quantum toroidal algebra of $\gl_1$.
 The first half of the paper has an expository role.
 However, the author feels that it is worth recalling the geometric
 and representation theoretic aspects of the quantum toroidal algebra of $\gl_1$,
 since in many cases they admit parallel counterparts for the case of the affine Yangian of $\gl_1$.

 The other source of motivation comes from the similarity between
 these theories for the $\gl_1$ and $\ssl_n$ cases.
 In particular, most of the results from this paper admit the corresponding analogues
 for the quantum toroidal and affine Yangian algebras of $\ssl_n$.
 As such analogues are generally much more technical,
 they will be addressed separately in the forthcoming publications.
 Let us mention some new results of the current paper which admit $\ssl_n$-generalizations:

\noindent
 - We treat the $\ssl_n$-generalizations of Theorems~\ref{limit_1},~\ref{limit_1.1},~\ref{limit_2},~\ref{limit_2.1} in~\cite{T2}.

\noindent
 - We treat the $\ssl_n$-generalization of Proposition~\ref{Fock_horizontal} in~\cite{T1}.

\noindent
 - We treat the $\ssl_n$-generalization of Proposition~\ref{matrix_realization} in~\cite{FT2}.

\noindent
 - We treat the $\ssl_n$-generalizations of Proposition~\ref{Fock_a}, Theorem~\ref{flatness}, and Corollary~\ref{faithfulness} in~\cite{TB}.

\noindent
 - We construct homomorphisms
  $\ddot{U}'_{q_1,q_2,q_3}(\gl_1)\to \widehat{\ddot{Y}}'_{h_1,h_2,h_3}(\ssl_n)$
 generalizing $\Upsilon$ in~\cite{TB}.

 This paper is organized as follows:

$\bullet$
 In Section 1, we recall the definition and discuss some basic properties
 of the quantum toroidal algebra $\ddot{U}_{q_1,q_2,q_3}(\gl_1)$.
 A similar class of algebras was first considered in~\cite{DI}.
 This algebra was rediscovered later by different groups of people
 in~\cite{M},~\cite{BS},~\cite{FT} (see Remark~\ref{historical1}).

 We also introduce the key object of this paper, the affine Yangian $\ddot{Y}_{h_1,h_2,h_3}(\gl_1)$.
 This algebra was considered by the author and B.~Feigin in an unpublished work,
 where it was viewed as a natural ``additivization'' of $\ddot{U}_{q_1,q_2,q_3}(\gl_1)$
 in the same way as $Y_h(\g)$ is an ``additivization'' of $U_q(L\g)$.
 The only non-trivial relations (Y4$'$) and (Y5$'$) were determined
 from the requirement that this algebra should naturally act on the sum
 of equivariant cohomology groups of the Hilbert schemes of points on $\mathbb{A}^2$.
 As pointed out by the referee, this algebra also appeared in~\cite{AS}, where the authors
 proved that it is isomorphic to the algebra $\textbf{SH}^{\textbf{c}}$ from~\cite{SV3}.

$\bullet$
 In Section 2, we recall the $\ddot{U}_{q_1,q_2,q_3}(\gl_1)$-action on
 the direct sum of equivariant K-groups of the Hilbert scheme of points
 on $\bA^2$, discovered simultaneously in~\cite{FT,SV}.
 We also formulate a similar result about the $\ddot{Y}_{h_1,h_2,h_3}(\gl_1)$-action
 on an analogous sum of equivariant cohomologies, discovered by
 Schiffmann-Vasserot in~\cite{SV3} and Feigin-Tsymbaliuk (unpublished).
 We conclude that section by sketching our proof of this result,
 which is completely parallel to~\cite{FT} with only one
 modification (Lemma~\ref{psi_0,1,2}) required.

$\bullet$
 In Section 3, we present the generalizations of the results from Section 2
 to the Gieseker moduli spaces $M(r,n)$ (also known as the instanton moduli spaces).
 In the $r=1$ case, we have $M(1,n)\simeq (\bA^2)^{[n]}$ and we recover the actions from Section 2.

$\bullet$
 In Section 4, we recall some series of $\ddot{U}_{q_1,q_2,q_3}(\gl_1)$-representations
 discovered in~\cite{FFJMM,FFJMM2}. Those are constructed from the simplest family of
 \emph{vector representations} $V(u)$ by using the formal coproduct on $\ddot{U}_{q_1,q_2,q_3}(\gl_1)$.
 The simplest example of representations from the category $\Oo$ are the
 \emph{Fock representations} $F(u)$, whose basis is labeled by all Young diagrams.

 We introduce the analogous \emph{vector representations} $^aV(u)$ of $\ddot{Y}_{h_1,h_2,h_3}(\gl_1)$,
 as well as the Fock representations $^aF(u)$, which are of particular interest for us.
 We also prove that the representations of geometric origin from Section 3
 are the tensor products of the aforementioned Fock representations.
 We conclude that section by introducing appropriate categories $\Oo$ and
 generalizing the standard result of~\cite{CP} to the current settings.

$\bullet$
 In Section 5, we describe the limits of the appropriately renormalized algebras
 $\ddot{U}'_{q_1,q_2,q_3}(\gl_1)$ and $\ddot{Y}'_{h_1,h_2,h_3}(\gl_1)$
 as $q_3\to 1$ and $h_3\to 0$, respectively. The resulting limit algebras
 are closely related to central extensions of the algebras of
 \emph{difference operators} on $\CC^*$ and $\CC$.

$\bullet$
 In Section 6, we construct a homomorphism
   $\Upsilon:\ddot{U}'_{q_1,q_2,q_3}(\gl_1)\to \widehat{\ddot{Y}}'_{h_1,h_2,h_3}(\gl_1)$.
 In the limit $h_3\to 0$, this homomorphism is induced by a natural
 isomorphism between the completions of the aforementioned algebras of difference operators.
 This construction is motivated by the corresponding homomorphism
 $U_q(L\g)\to \widehat{Y_h(\g)}$ from~\cite{GTL}.

 We also prove that the formal algebras
 $\ddot{U}'_{q_1,q_2,q_3}(\gl_1)$ and $\ddot{Y}'_{h_1,h_2,h_3}(\gl_1)$
 are flat deformations of their limits
 $\ddot{U}'_{q_1,q_2,q_3}(\gl_1)/(q_3-1)$ and $\ddot{Y}'_{h_1,h_2,h_3}(\gl_1)/(h_3)$.
 In particular, this implies that $\Upsilon$ is injective.
 We also establish the faithfulness of the action of the two
 algebras in interest on the sum of the representations from Section 3.

$\bullet$
 In Section 7, we recall the definition of the small shuffle algebra $S^m$ and
 its commutative subalgebra $\A^m$, which played a crucial role in~\cite{FT}.
 We introduce its additive analogue $S^a$ and the corresponding commutative subalgebra $\A^a$.

$\bullet$
 In Section 8, we discuss a \emph{horizontal} realization of $\ddot{U}_{q_1,q_2,q_3}(\gl_1)$,
 under which the Fock representations $F(u)$ correspond to the vertex type
 representations $\rho_c$ from~\cite{FHHSY}.
 The representations $\rho_c$ provide a new viewpoint towards
 the commutative algebra $\A^m$ (see~\cite{FT2} for more details).
 We conclude that section by introducing and discussing properties of
 the \emph{Whittaker} vectors (see also~\cite{SV3} for the cohomological case).

$\bullet$
 In Appendix, we present main computations.

\subsection*{Acknowledgments:}
 I want to thank Pavel Etingof and Boris Feigin for numerous stimulating discussions.
 I am also grateful to Michael Finkelberg, Sachin Gautam, Valerio Toledano Laredo,
 Michael McBreen, and Andrei Negut for their comments.

 The author is indebted to the anonymous referee for useful comments
 on the first version of the paper, which led to a better exposition of the material.
 The author gratefully acknowledges support from the Simons Center for Geometry and Physics,
 Stony Brook University, at which the final version of this paper was completed.
 The final revision of this paper was carried out
 while the author was partially supported by the NSF grant DMS--1502497.

%%%%%%%%%%%%%%%%%%%%%%%%%%%%%%%%%%%%%%%%%%%%%%%%%%%%%%%%%%%%%%%%%%%%%%%%%%%%%%%%%%%%%%%%%%%%%%%%%%%%%%%%
%%%%%%%%%%%%%%%%%%%%%%%%%%%%%%%%%%%%%%%%%% SECTION 1 %%%%%%%%%%%%%%%%%%%%%%%%%%%%%%%%%%%%%%%%%%%%%%%%%%%
%%%%%%%%%%%%%%%%%%%%%%%%%%%%%%%%%%%%%%%%%%%%%%%%%%%%%%%%%%%%%%%%%%%%%%%%%%%%%%%%%%%%%%%%%%%%%%%%%%%%%%%%

\section{Basic definitions}

 In this section, we introduce two associative algebras
  $\ddot{U}_{q_1,q_2,q_3}(\gl_1)$ and $\ddot{Y}_{h_1,h_2,h_3}(\gl_1)$,
 which are the key objects studied in this paper.

\subsection{Quantum toroidal algebra of $\gl_1$}
$\ $

 Let $q_1,q_2,q_3=q^2\in \CC^*$ satisfy $q_1q_2q_3=1$,
 while $q_i^m\ne 1$ for any $m\in \NN,i\in \{1,2,3\}$.
 The quantum toroidal algebra of $\gl_1$, denoted by $\ddot{U}_{q_1,q_2,q_3}(\gl_1)$,
 is the unital associative $\CC$-algebra generated by $\{e_i,f_i,\psi_i, \psi_0^{-1}|i\in \ZZ\}$
 with the following defining relations:
\begin{equation}\tag{T0} \label{T0}
  \psi_0\cdot \psi_0^{-1}=\psi_0^{-1}\cdot \psi_0=1,\ \
  [\psi^\pm(z),\psi^\pm(w)]=0,\ \  [\psi^+(z),\psi^-(w)]=0,
\end{equation}
\begin{equation}\tag{T1} \label{T1}
  e(z)e(w)(z-q_1w)(z-q_2w)(z-q_3w)=-e(w)e(z)(w-q_1z)(w-q_2z)(w-q_3z),
\end{equation}
\begin{equation}\tag{T2}\label{T2}
  f(z)f(w)(w-q_1z)(w-q_2z)(w-q_3z)=-f(w)f(z)(z-q_1w)(z-q_2w)(z-q_3w),
\end{equation}
\begin{equation}\tag{T3}\label{T3}
  [e(z),f(w)]=\frac{\delta(z/w)}{(1-q_1)(1-q_2)(1-q_3)}(\psi^{+}(w)-\psi^{-}(z)),
\end{equation}
\begin{equation}\tag{T4}\label{T4}
  \psi^{\pm}(z)e(w)(z-q_1w)(z-q_2w)(z-q_3w)=-e(w)\psi^{\pm}(z)(w-q_1z)(w-q_2z)(w-q_3z),
\end{equation}
\begin{equation}\tag{T5}\label{T5}
  \psi^{\pm}(z)f(w)(w-q_1z)(w-q_2z)(w-q_3z)=-f(w)\psi^{\pm}(z)(z-q_1w)(z-q_2w)(z-q_3w),
\end{equation}
\begin{equation}\tag{T6}\label{T6}
  \mathrm{Sym}_{\mathfrak{S}_3} [e_{i_1},[e_{i_2+1},e_{i_3-1}]]=0,\ \
  \mathrm{Sym}_{\mathfrak{S}_3} [f_{i_1},[f_{i_2+1},f_{i_3-1}]]=0,
\end{equation}
 where these generating series are defined as follows:
  $$e(z):=\sum_{i=-\infty}^{\infty}{e_iz^{-i}},\
    f(z):=\sum_{i=-\infty}^{\infty}{f_iz^{-i}},\
    \psi^{\pm}(z):=\psi_0^{\pm 1}+\sum_{j>0}{\psi_{\pm j}z^{\mp j}},\
    \delta(z):=\sum_{i=-\infty}^{\infty}{z^i}.$$
 The relations (T0--T5) should be viewed as collections of termwise relations which can be
 recovered by evaluating the coefficients of $z^kw^l\ (k,l\in \ZZ)$ on both sides of the equalities.

\begin{rem}\label{historical1}
 The first reference of such algebras (but without the \emph{Serre} relation (T6)) goes back to~\cite{DI}.
 To distinguish, we refer to the algebra with the same collection of generators
 and the defining relations (T0--T5) as the \emph{Ding-Iohara} algebra, see~\cite{FT}.
 These algebras also appeared independently in the work of Burban and Schiffmann (see~\cite[Section 6]{BS}) as
 a direct specialization of Kapranov's theorem to the case of
 elliptic curves, and later with the Serre relation in the work of
 Schiffmann (see~\cite{S}) on the Drinfeld realization of elliptic Hall algebras.
 The algebra $\ddot{U}_{q_1,q_2,q_3}(\gl_1)$ was studied in~\cite{FFJMM,FFJMM2}
 under the name \emph{quantum continuous $\gl_\infty$}.
 We would like to thank the referee for pointing out that the
 algebra $\ddot{U}_{q_1,q_2,q_3}(\gl_1)$ also appeared independently
 in the work of Miki on quantum deformations of $W_{1+\infty}$, see~\cite{M}.
\end{rem}

\subsection{Some properties of $\ddot{U}_{q_1,q_2,q_3}(\gl_1)$}
$\ $

 Let $\ddot{U}^0$ be the subalgebra of $\ddot{U}_{q_1,q_2,q_3}(\gl_1)$
 generated by $\{\psi_i, \psi_0^{-1}\}_{i\in \ZZ}$.
 It is often more convenient to use the generators $\{\psi_0^{\pm 1}, t_i\}_{i\in \ZZ^*}$
 of $\ddot{U}^0$ (here $\ZZ^*:=\ZZ\backslash\{0\}$), defined via
  $$\psi^\pm(z)=\psi^{\pm 1}_0\cdot \exp\left(\mp\sum_{m>0}\frac{\beta_m}{m}t_{\pm m}z^{\mp m}\right),\
    \mathrm{where}\ \beta_m:=(1-q_1^m)(1-q_2^m)(1-q_3^m).$$
 This choice of $t_i$ is motivated by the following two basic results.

\begin{prop}\label{triang}
 The relations \emph{(T4,T5)} are equivalent to

\medskip
\noindent
 \emph{(T4t)} $[\psi_0, e_j]=0,\ [t_i, e_j]=e_{i+j}\ \mathrm{for}\ i\in \ZZ^*, j\in \ZZ$.

\medskip
\noindent
 \emph{(T5t)} $[\psi_0, f_j]=0,\ [t_i, f_j]=-f_{i+j}\ \mathrm{for}\ i\in \ZZ^*, j\in \ZZ$.
\end{prop}

\begin{proof}
$\ $

 The proof of this proposition follows formally from the identity
   $$\log\left(\frac{(z-q_1^{-1}w)(z-q_2^{-1}w)(z-q_3^{-1}w)}{(z-q_1w)(z-q_2w)(z-q_3w)}\right)=
     \sum_{m>0}{-\frac{\beta_m}{m}\cdot \frac{w^m}{z^m}},$$
 where the left-hand side is expressed as a Taylor series in $w/z$.
\end{proof}

\begin{prop}\label{Serre_0}
 If the relations \emph{(T4t,T5t)} hold, then \emph{(T6)} is equivalent to
\begin{equation}\tag{T6t}\label{T6t}
   [e_0,[e_1,e_{-1}]]=0,\ \ [f_0,[f_1,f_{-1}]]=0.
\end{equation}
\end{prop}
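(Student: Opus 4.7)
The easy direction $\eqref{T6} \Rightarrow \eqref{T6t}$ is trivial, as $\eqref{T6t}$ is the case $(i_1,i_2,i_3)=(0,0,0)$ of $\eqref{T6}$. For the converse, the plan is to derive the general $\eqref{T6}$ from $\eqref{T6t}$ by iterated application of $\ad(t_k)$, using (T4t) to shift indices. Write $R(a,b,c):=[e_a,[e_{b+1},e_{c-1}]]$ and $S(i_1,i_2,i_3):=\Sym_{\mathfrak{S}_3} R(i_1,i_2,i_3)$, so that the first half of $\eqref{T6}$ becomes the assertion $S(i_1,i_2,i_3)=0$ for every integer triple; note that $S$ is manifestly symmetric in its three arguments.

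The core technical identity, to be established first, is
$$[t_k,\,S(i_1,i_2,i_3)] \;=\; S(i_1+k,i_2,i_3) + S(i_1,i_2+k,i_3) + S(i_1,i_2,i_3+k), \qquad k\neq 0. \qquad (\star)$$
This follows from (T4t), which gives $\ad(t_k)(e_j)=e_{j+k}$, together with the derivation property of $\ad(t_k)$: one first obtains
$$[t_k,R(a,b,c)] = R(a+k,b,c) + R(a,b+k,c) + R(a,b,c+k),$$
and then summing over the six $\sigma\in\mathfrak{S}_3$ produces eighteen terms which regroup, according to which of the three original elements $i_j$ has been incremented by $k$, into the three symmetrized expressions on the right-hand side of $(\star)$.

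With $(\star)$ in hand, a three-step induction finishes the argument. The base case $S(0,0,0)=6[e_0,[e_1,e_{-1}]]=0$ is $\eqref{T6t}$. Applying $(\star)$ at $(0,0,0)$ with $k\neq 0$ gives $0=3\,S(k,0,0)$ by symmetry of $S$, hence $S(k,0,0)=0$ for every $k$. Applying $(\star)$ at $(k,0,0)$ with shift $l\neq 0$ gives $0=S(k+l,0,0)+2\,S(k,l,0)$, whence $S(k,l,0)=0$ for all $k,l$. Finally, applying $(\star)$ at $(k,l,0)$ with shift $m\neq 0$ yields $0=S(k+m,l,0)+S(k,l+m,0)+S(k,l,m)$, so $S(k,l,m)=0$ for every triple. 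The identical reasoning with $f_i$ and (T5t) handles the second Serre relation.

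The main obstacle is the combinatorial verification of $(\star)$: one must carefully track how the $\mathfrak{S}_3$-symmetrization interacts with the three derivation shifts on the arguments of $R$, confirming that the eighteen terms produced on the left reassemble into exactly three copies of $S$, one per shifted argument, on the right. Once $(\star)$ is in place, the three-tier induction on the number of nonzero entries is essentially automatic.
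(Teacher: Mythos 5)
Your proof is correct, and since the paper states Proposition~\ref{Serre_0} without any explicit proof (it is left implicit as a standard reduction), there is no paper argument to compare against; your argument is exactly the one that is being taken for granted. The computation of $[t_k,R(a,b,c)]$ via the derivation property of $\ad(t_k)$ and (T4t) is right, the regrouping of the eighteen terms of $[t_k,S(i_1,i_2,i_3)]$ according to which original index is incremented is right, and the three-tier induction correctly uses the $\mathfrak{S}_3$-symmetry of $S$ (so that at $(0,0,0)$ the three shifted terms coincide, at $(k,0,0)$ two of them coincide, etc.), with the $k=0$, $l=0$, $m=0$ boundary cases all covered by the previous tier. The same argument with (T5t) gives the $f$-relation.
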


\begin{proof}
$\ $

 Setting $i_1=i_2=i_3=0$ in (T6), we get (T6t).
 Let us now deduce (T4t),(T5t),(T6t)$\Rightarrow$(T6).
 We consider only the case with $\{e_i\}$ (the case with $\{f_i\}$ is completely analogous).
 For any $i\in \ZZ^*$, define the operator
  $T_i:\ddot{U}_{q_1,q_2,q_3}(\gl_1)\to \ddot{U}_{q_1,q_2,q_3}(\gl_1)$
 via $T_i:X\mapsto [t_i,X]=t_iX-Xt_i$.
 Combining the relation (T4t) with the algebraic equality
   $$[t,[a,[b,c]]]=[[t,a],[b,c]]+[a,[[t,b],c]]+[a,[b,[t,c]]],$$
 we get
   $$T_{i_1}T_{i_2}T_{i_3}-T_{i_1+i_2}T_{i_3}-T_{i_1+i_3}T_{i_2}-T_{i_2+i_3}T_{i_1}+2T_{i_1+i_2+i_3}:
     [e_0,[e_1,e_{-1}]]\mapsto \mathrm{Sym}_{\mathfrak{S}_3} [e_{i_1},[e_{i_2+1},e_{i_3-1}]]$$
 where we set $T_0:=3\mathrm{Id}$. This completes our proof.
\end{proof}

 Let $\ddot{U}^-$ and $\ddot{U}^+$ be the subalgebras of $\ddot{U}_{q_1,q_2,q_3}(\gl_1)$
 generated by $\{f_i\}$ and $\{e_i\}$, respectively.
 We conclude this section by the standard result (see Appendix A for a proof):

\begin{prop}\label{triangular_m}
 (a) (Triangular decomposition for $\ddot{U}_{q_1,q_2,q_3}(\gl_1)$)
 The multiplication map
  $m:\ddot{U}^-\otimes \ddot{U}^0\otimes \ddot{U}^+\to \ddot{U}_{q_1,q_2,q_3}(\gl_1)$
 is an isomorphism of vector spaces.

\noindent
 (b) The subalgebras $\ddot{U}^-, \ddot{U}^+, \ddot{U}^0$ are generated by
    $\{f_i\},\ \{e_i\},\ \{\psi_i, \psi_0^{-1}\}$
  with the defining relations (T2,T6), (T1,T6), and (T0), respectively.
\end{prop}

\subsection{Affine Yangian of $\gl_1$}
$\ $

 Let $h_1, h_2, h_3\in \CC$ satisfy $h_1+h_2+h_3=0$.
 The affine Yangian of $\gl_1$, denoted by $\ddot{Y}_{h_1,h_2,h_3}(\gl_1)$,
 is the unital associative $\CC$-algebra generated by $\{e_j,f_j,\psi_j|j\in \ZZ_+\}$
 (here $\ZZ_+:=\NN\cup\{0\}$) with the following defining relations:

\medskip
\noindent
 (Y0) $[\psi_i,\psi_j]=0,$

\medskip
\noindent
 (Y1) $[e_{i+3},e_j]-3[e_{i+2},e_{j+1}]+3[e_{i+1},e_{j+2}]-[e_i,e_{j+3}]+\sigma_2([e_{i+1},e_j]-[e_i,e_{j+1}])=\sigma_3\{e_i,e_j\},$

\medskip
\noindent
 (Y2) $[f_{i+3},f_j]-3[f_{i+2},f_{j+1}]+3[f_{i+1},f_{j+2}]-[f_i,f_{j+3}]+\sigma_2([f_{i+1},f_j]-[f_i,f_{j+1}])=-\sigma_3\{f_i,f_j\},$

\medskip
\noindent
 (Y3) $[e_i,f_j]=\psi_{i+j},$

\medskip
\noindent
 (Y4) $[\psi_{i+3},e_j]-3[\psi_{i+2},e_{j+1}]+3[\psi_{i+1},e_{j+2}]-[\psi_i,e_{j+3}]+\sigma_2([\psi_{i+1},e_j]-[\psi_i,e_{j+1}])=\sigma_3\{\psi_i,e_j\},$

\medskip
\noindent
 (Y4$'$) $[\psi_0,e_j]=0,\ \ [\psi_1,e_j]=0,\ \ [\psi_2,e_j]=2e_j,$

\medskip
\noindent
 (Y5) $[\psi_{i+3},f_j]-3[\psi_{i+2},f_{j+1}]+3[\psi_{i+1},f_{j+2}]-[\psi_i,f_{j+3}]+\sigma_2([\psi_{i+1},f_j]-[\psi_i,f_{j+1}])=-\sigma_3\{\psi_i,f_j\},$

\medskip
\noindent
 (Y5$'$) $[\psi_0,f_j]=0,\ \ [\psi_1,f_j]=0,\ \ [\psi_2,f_j]=-2f_j,$

\medskip
\noindent
 (Y6) $\mathrm{Sym}_{\mathfrak{S}_3} [e_{i_1},[e_{i_2},e_{i_3+1}]]=0,\ \ \mathrm{Sym}_{\mathfrak{S}_3} [f_{i_1},[f_{i_2},f_{i_3+1}]]=0$,

\medskip
\noindent
 where $i,j\in \ZZ_+$ and we set $\{a,b\}:=ab+ba,\ \sigma_2:=h_1h_2+h_1h_3+h_2h_3,\ \sigma_3:=h_1h_2h_3$.

\begin{rem}
 This algebra can be considered as a natural ``additivization'' of
 $\ddot{U}_{q_1,q_2,q_3}(\gl_1)$ in the same way as the classical Yangian
 $Y_h(\g)$ is an ``additivization'' of the quantum loop algebra $U_q(L\g)$.
 The relations (Y0--Y6) were obtained jointly with B.~Feigin in an unpublished work.
 To the best of our knowledge, the first written reference goes back to~\cite[Section 3.2]{AS}
 (with the particular choice $\{h_1,h_2,h_3\}=\{1,-\kappa,\kappa-1\}$),
 where it was shown to be isomorphic to the algebra ${\bf{SH}^{\bf{c}}}$ from~\cite{SV3}.
 The same algebra was also implicitly considered in~\cite{MO}.
\end{rem}

\subsection{Some properties of $\ddot{Y}_{h_1,h_2,h_3}(\gl_1)$}
$\ $

 Define the generating series
   $$e(z):=\sum_{j\geq 0}{e_jz^{-j-1}},\ \
     f(z):=\sum_{j\geq 0}{f_jz^{-j-1}},\ \
     \psi(z):=1+\sigma_3\sum_{j\geq 0}{\psi_jz^{-j-1}}.$$

 Let $\ddot{Y}^-,\ddot{Y}^0,\ddot{Y}^+$ be the subalgebras of $\ddot{Y}_{h_1,h_2,h_3}(\gl_1)$
 generated by $\{f_j\},\ \{\psi_j\}$, and $\{e_j\}$, respectively.
 Let $\ddot{Y}^{\geq}$ and $\ddot{Y}^{\leq}$ be the subalgebras of $\ddot{Y}_{h_1,h_2,h_3}(\gl_1)$
 generated by $\ddot{Y}^0,\ddot{Y}^+$ and $\ddot{Y}^0,\ddot{Y}^-$, respectively.
 The following is analogous to Proposition~\ref{triangular_m}:

\begin{prop}\label{triangular_a}
 (a) $\ddot{Y}^0$ is a polynomial algebra in the generators $\{\psi_j\}$.

\noindent
 (b) $\ddot{Y}^-$ and $\ddot{Y}^+$ are the algebras generated by
 $\{f_j\}$ and $\{e_j\}$ with the defining relations $(Y2,Y6)$ and $(Y1,Y6)$, respectively.

\noindent
 (c) $\ddot{Y}^{\leq}$ and $\ddot{Y}^{\geq}$ are the algebras generated by
 $\{\psi_j,f_j\}$ and $\{\psi_j, e_j\}$ with the defining relations
 $(Y0,Y2,Y5,Y5',Y6)$ and $(Y0,Y1,Y4,Y4',Y6)$, respectively.

\noindent
 (d) Multiplication induces an isomorphism of vector spaces
    $$m:\ddot{Y}^-\otimes \ddot{Y}^0\otimes \ddot{Y}^+\iso \ddot{Y}_{h_1,h_2,h_3}(\gl_1).$$
\end{prop}

 Let us consider the algebra homomorphisms
   $$\sigma^+:\ddot{Y}^{\geq}\to \ddot{Y}^{\geq}\
     \mathrm{determined\ by}\ \psi_j\mapsto \psi_j,\ e_j\mapsto e_{j+1}$$
 and
   $$\sigma^-:\ddot{Y}^{\leq}\to \ddot{Y}^{\leq}\
     \mathrm{determined\ by}\ \psi_j\mapsto \psi_j,\ f_j\mapsto f_{j+1}.$$
 These are well defined due to Proposition~\ref{triangular_a}. Let
   $$\mu: \ddot{Y}_{h_1,h_2,h_3}(\gl_1)^{\otimes 2}\to \ddot{Y}_{h_1,h_2,h_3}(\gl_1)$$
 be the multiplication. The following result is straightforward:

\begin{prop}\label{yangian_generating}
 Define
  $\sigma^\pm_{(1)}(a\otimes b):=\sigma^\pm(a)\otimes b$
 and
  $\sigma^\pm_{(2)}(a\otimes b):=a\otimes \sigma^\pm(b)$.
 We also set $P(z,w):=(z-w-h_1)(z-w-h_2)(z-w-h_3)$. Then:

(a) The relation (Y0) is equivalent to $[\psi(z),\psi(w)]=0.$

\medskip
(b) The relation (Y1) is equivalent to
     $$\partial_z^3 \mu(P(z,\sigma_{(2)}^+)e(z)\otimes e_j + P(\sigma_{(1)}^+,z)e_j\otimes e(z))=0\ \ \mathrm{for}\ j\in \ZZ_+.$$

(c) The relation (Y2) is equivalent to
     $$\partial_z^3 \mu(P(\sigma_{(2)}^-,z)f(z)\otimes f_j+P(z,\sigma_{(1)}^-)f_j\otimes f(z))=0\ \ \mathrm{for}\ j\in \ZZ_+.$$

(d) The relation (Y3) is equivalent to
     $$\sigma_3\cdot(w-z)[e(z),f(w)]=\psi(z)-\psi(w).$$

(e) The relations (Y4,Y4$'$) are equivalent to
     $$P(z,\sigma^+)\psi(z)e_j+P(\sigma^+,z)e_j\psi(z)=0\ \ \mathrm{for}\ j\in \ZZ_+.$$

(f) The relations (Y5,Y5$'$) are equivalent to
     $$P(\sigma^-,z)\psi(z)f_j+P(z,\sigma^-)f_j\psi(z)=0\ \ \mathrm{for}\ j\in \ZZ_+.$$
\end{prop}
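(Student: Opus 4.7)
The plan is to verify each part by direct coefficient extraction. Throughout I would use the expansion $P^a(z,w)=(z-w)^3+\sigma_2(z-w)-\sigma_3$ (which exploits $\sigma_1=0$) and treat $\sigma^+$ as the degree-raising operator $e_j\mapsto e_{j+1}$, $f_j\mapsto f_{j+1}$ on the appropriate subalgebras, which is well-defined by the preceding discussion. Substituting the series expansions of $e(z),f(z),\psi(z)$ into each proposed identity and reading off coefficients of monomials in $z$ (and $w$) converts it into a family of relations in the generators, which I then match against the indexed relations (Y0)--(Y5$'$).

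Parts (a) and (d) are essentially immediate. For (a), the coefficient of $z^{-i-1}w^{-j-1}$ in $[\psi(z),\psi(w)]$ is $\sigma_3^2[\psi_i,\psi_j]$, so the identity is equivalent to (Y0). For (d), one expands $\sigma_3(w-z)[e(z),f(w)]=\sigma_3\sum_{i,j\geq 0}[e_i,f_j](z^{-i-1}w^{-j}-z^{-i}w^{-j-1})$ and $\psi(z)-\psi(w)=\sigma_3\sum_{k\geq 0}\psi_k(z^{-k-1}-w^{-k-1})$; matching coefficients of the mixed monomials $z^{-a-1}w^{-b-1}$ yields $[e_a,f_{b+1}]=[e_{a+1},f_b]$, so $[e_i,f_j]$ depends only on $i+j$, and matching the pure-$z^{-i-1}$ and pure-$w^{-j-1}$ coefficients identifies this common value with $\psi_{i+j}$.

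For (e) and (f) I would expand $P^a(z,\sigma^+)$ as a cubic polynomial in $z$ and $\sigma^+$, apply it to $\psi(z)e_j$ and $e_j\psi(z)$, and add. The coefficient of $z^{-i-1}$ for $i\geq 0$ then reproduces (Y4) (respectively (Y5)) after dividing by $\sigma_3$; the coefficients of $z^2,z^1,z^0$ survive precisely because $\psi(z)$ has constant term $1$, and as $j$ varies they give a system equivalent to (Y4$'$) (respectively (Y5$'$)): the $z^2$ equation forces $[\psi_0,e_j]=0$, which reduces the $z^1$ equation to $[\psi_1,e_j]=0$, which in turn reduces the $z^0$ equation to $[\psi_2,e_j]=2e_j$.

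The main technical point is in parts (b) and (c). The expression $\mu(P^a(z,\sigma_{(2)}^+)e(z)\otimes e_j+P^a(\sigma_{(1)}^+,z)e_j\otimes e(z))$ simplifies to
\[
  z^3[e(z),e_j]-3z^2[e(z),e_{j+1}]+3z[e(z),e_{j+2}]-[e(z),e_{j+3}]+\sigma_2 z[e(z),e_j]-\sigma_2[e(z),e_{j+1}]-\sigma_3\{e(z),e_j\},
\]
whose coefficient of $z^{-i-1}$ for $i\geq 0$ is exactly the $(i,j)$-instance of (Y1). Its low-degree part in $z^2,z^1,z^0$, however, consists of commutators like $[e_0,e_j]$, $[e_1,e_j]-3[e_0,e_{j+1}]$, and $[e_2,e_j]-3[e_1,e_{j+1}]+3[e_0,e_{j+2}]+\sigma_2[e_0,e_j]$ that are not forced to vanish by (Y1). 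Applying $\partial_z^3$ kills this degree-$\leq 2$ tail while multiplying each $z^{-i-1}$ by the nonzero scalar $(-i-1)(-i-2)(-i-3)$, yielding the stated equivalence; (c) follows by the symmetric calculation with $f$. The only subtle point is the bookkeeping of this low-degree tail, needed to confirm that $\partial_z^3$ discards precisely the extraneous terms while preserving the (Y1) content.
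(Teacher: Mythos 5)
Your proposal is correct and matches the standard computation the paper has in mind (the paper itself gives no proof, calling the result ``straightforward''). The key observations are all in place: the expansion $P^a(z,w)=(z-w)^3+\sigma_2(z-w)-\sigma_3$, the role of $\sigma^\pm$ as index-shift on $e_j,f_j$ only, the triangular elimination in (e)/(f) that recovers (Y4$'$)/(Y5$'$) from the $z^2,z^1,z^0$ coefficients, and — crucially — that $\partial_z^3$ is exactly what discards the extraneous $z^{\geq 0}$ tail in (b)/(c) while scaling the $z^{-i-1}$ coefficients by nonzero constants, so that the remaining content is precisely the (Y1)/(Y2) family.
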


%%%%%%%%%%%%%%%%%%%%%%%%%%%%%%%%%%%%%%%%%%%%%%%%%%%%%%%%%%%%%%%%%%%%%%%%%%%%%%%%%%%%%%%%%%%%%%%%%%%%%%%%
%%%%%%%%%%%%%%%%%%%%%%%%%%%%%%%%%%%%%%%%%% SECTION 2 %%%%%%%%%%%%%%%%%%%%%%%%%%%%%%%%%%%%%%%%%%%%%%%%%%%
%%%%%%%%%%%%%%%%%%%%%%%%%%%%%%%%%%%%%%%%%%%%%%%%%%%%%%%%%%%%%%%%%%%%%%%%%%%%%%%%%%%%%%%%%%%%%%%%%%%%%%%%

\section{Representation theory via the Hilbert scheme}

\subsection{Correspondences and fixed points for $(\bA^2)^{[n]}$} \label{Hilb1}
$\ $

 Throughout this section $X=\bA^2$.
 Let $X^{[n]}$ be the Hilbert scheme of $n$ points on $X$.
 Its $\CC$-points are the codimension $n$ ideals $J\subset\CC[x,y]$.
 Let $P[i]\subset \coprod_{n}{X^{[n]}\times X^{[n+i]}}$ be the Nakajima-Grojnowski correspondence.
 For $i>0$, the correspondence $P[i]\subset \coprod_{n}{X^{[n]}\times X^{[n+i]}}$ consists
 of all pairs of ideals $(J_1, J_2)$ of $\CC[x,y]$ of codimension $n,\ n+i$ respectively,
 such that $J_2\subset J_1$ and the factor $J_1/J_2$ is supported at a single point.
 It is known that $P[1]$ is a smooth variety.
 Let $L$ be the \emph{tautological} line bundle on $P[1]$ whose fiber
 at a point $(J_1, J_2) \in P[1]$ equals $J_1/J_2$.
 There are natural projections $\bp, \bq$ from $P[1]$ to
 $X^{[n]}$ and $X^{[n+1]}$, correspondingly.

 Consider a natural action of $\TT=\CC^{*}\times \CC^{*}$ on each $X^{[n]}$,
 induced from the one on $X$ given by the formula $(t_1,t_2)\cdot (x,y)=(t_1 x, t_2 y)$.
 The set $(X^{[n]})^{\TT}$ of $\TT$-fixed points in $X^{[n]}$ is
 finite and is in bijection with size $n$ Young diagrams.
 For such a Young diagram $\lambda=(\lambda_1,\ldots, \lambda_k)$,
 the corresponding ideal $J_{\lambda}\in (X^{[n]})^{\TT}$ is given by
   $J_\lambda=\CC[x,y]\cdot (\CC x^{\lambda_1}y^0\oplus\cdots\oplus\CC x^{\lambda_k}y^{k-1}\oplus\CC y^k)$.

 \textbf{Notation:} For a Young diagram $\lambda$, let $\lambda^*$
 be the conjugate diagram and define $|\lambda|:=\sum \lambda_i$.
 For a box $\square$ with the coordinates $(i,j)$, we define
   $a_\lambda(\square):=\lambda_j-i,\ l_\lambda(\square):=\lambda^*_i-j.$
 The diagram obtained from $\lambda$ by adding a box to its $j$th row
 is denoted by $\lambda+\square_j$, or simply by $\lambda+j$.

\subsection{Geometric $\ddot{U}_{q_1,q_2,q_3}(\gl_1)$-action I}\label{rep_M}
$\ $

 Let ${}'M$ be the direct sum of equivariant (complexified) $K$-groups:
   ${}'M=\bigoplus_{n}K^{\TT}(X^{[n]})$.
 It is a module over $K^{\TT}(\textrm{pt})=\CC[t_1^{\pm 1},t_2^{\pm 1}]$.
 Set $t_3:=t_1^{-1}t_2^{-1}\in \CC(t_1,t_2)=\Frac(K^{\TT}(\textrm{pt}))$ and
 consider a quadratic extension $\mathbb{F}=\CC(t_1,t_2)[t]/(t^2-t_3)$.
 We define
   $$M:=\ {}'M\otimes_{K^{\TT}(\textrm{pt})} \mathbb{F}.$$

 It has a natural grading:
  $M=\bigoplus_{n}M_{n},\ M_{n}=K^{\TT}(X^{[n]})\otimes_{K^{\TT}(\textrm{pt})} \mathbb{F}.$
 According to the localization theorem,
 restriction to the $\TT$-fixed point set induces an isomorphism
   $K^{\TT}(X^{[n]})\otimes_{K^{\TT}(\textrm{pt})}\mathbb{F}\overset{\sim}\longrightarrow
    K^{\TT}((X^{[n]})^{\TT})\otimes_{K^{\TT}(\textrm{pt})}\mathbb{F}.$
 The structure sheaves $\{\lambda\}$ of the $\TT$-fixed points $J_\lambda$
 (defined in Section~\ref{Hilb1}) form a basis in
   $\bigoplus_{n}K^{\TT} ((X^{[n]})^{\TT})\otimes_{K^{\TT}(\textrm{pt})}\mathbb{F}$.
 Since embedding of a point $J_\lambda$ into $X^{[|\lambda|]}$ is a proper morphism,
 the direct image in the equivariant $K$-theory is well defined, and we denote by
 $[\lambda]\in M_{|\lambda|}$ the direct image of the structure sheaf $\{\lambda\}$.
 The set $\{[\lambda]\}$ forms a basis of $M$.

 Let $\mathfrak{F}$ be the \textit{tautological} vector bundle on $X^{[n]}$
 whose fiber at a point $J\in X^{[n]}$ equals the quotient $\CC[x,y]/J$.
 Consider the following generating series $\ba(z), \bc(z)\in M(z)$:
  $$\ba(z):=\Lambda^{\bullet}_{-1/z}(\mathfrak{F})=\sum_{i\geq 0}{[\Lambda^i(\mathfrak{F})](-1/z)^i},$$
  $$\bc(z):=\ba(zt_1)\ba(zt_2)\ba(zt_3)\ba(zt_1^{-1})^{-1}\ba(zt_2^{-1})^{-1}\ba(zt_3^{-1})^{-1}.$$

 Finally, we define the linear operators $e_i, f_i, \psi_i, \psi_0^{-1}\ (i\in \ZZ)$ acting on $M$:
\begin{equation}\label{raz}
  e_i=\bq_*(L^{\otimes i}\otimes \bp^*):\ M_{n}\to M_{n+1},
\end{equation}
\begin{equation}\label{dva}
  f_i=-t^{-1}\cdot \bp_*(L^{\otimes (i-1)}\otimes \bq^*):\ M_{n}\to M_{n-1},
\end{equation}
\begin{equation}\label{tree}
  \psi^{\pm}(z)=\psi_0^{\pm 1}+\sum_{j=1}^{\infty} \psi_{\pm j}z^{\mp j}:=
  \left(\frac{t^{-1}-tz^{-1}}{1-z^{-1}}\bc(z)\right)^{\pm} \in \prod_n \End(M_{n})[[z^{\mp 1}]],
\end{equation}
 where $\gamma(z)^\pm$ denotes the expansion of a rational function $\gamma(z)$ in $z^{\mp 1}$, respectively.

\begin{theorem}\label{DIm}
 The operators $e_i, f_i, \psi_i, \psi_0^{-1}$, defined in (\ref{raz}--\ref{tree}),
 satisfy the relations~(T0--T6) with the parameters $q_i=t_i$.
 This endows $M$ with the structure of $\ddot{U}_{q_1,q_2,q_3}(\gl_1)$-representation.
\end{theorem}

\begin{rem}
 This theorem was proved simultaneously and independently in~\cite{FT} and~\cite{SV}.
\end{rem}

\subsection{Geometric $\ddot{Y}_{h_1,h_2,h_3}(\gl_1)$-action I} \label{rep_V}
$\ $

 Let ${}'V$ be the direct sum of equivariant (complexified) cohomology:
   ${}'V=\bigoplus_{n}\HH^\bullet_{\TT}(X^{[n]}).$
 It is a module over $\HH^\bullet_{\TT}(\textrm{pt})=\CC[\T]= \CC[s_1,s_2]$,
 where $\T$ is the Lie algebra of $\TT$.
 We define
   $$V:=\ {}'V\otimes_{\HH^\bullet_{\TT}(\textrm{pt})} \Frac(\HH^\bullet_{\TT}(\textrm{pt}))=
     {}'V\otimes_{\CC [s_1,s_2]} \CC (s_1,s_2).$$

 It has a natural grading:
   $V=\bigoplus_{n}V_n,\ V_n=\HH^\bullet_{\TT}(X^{[n]})\otimes_{\HH^\bullet_{\TT}(\textrm{pt})} \Frac(\HH^\bullet_{\TT}(\textrm{pt})).$
 According to the localization theorem,
 restriction to the $\TT$-fixed point set induces an isomorphism
  $$\HH^\bullet_{\TT}(X^{[n]})\otimes_{\HH^\bullet_{\TT}(\textrm{pt})} \Frac(\HH^\bullet_{\TT}(\textrm{pt}))\overset{\sim}\longrightarrow
    \HH^\bullet_{\TT}((X^{[n]})^{\TT})\otimes_{\HH^\bullet_{\TT}(\textrm{pt})}\Frac(\HH^\bullet_{\TT}(\textrm{pt})).$$

 The fundamental cycles of the $\TT$-fixed points $J_\lambda$ form a basis in
   $\bigoplus_{n}\HH^\bullet_{\TT} ((X^{[n]})^{\TT})\otimes_{\HH^\bullet_{\TT}(\textrm{pt})}\Frac (\HH^\bullet_{\TT}(\textrm{pt}))$.
 Since embedding of a point $J_\lambda$ into $X^{[|\lambda|]}$ is a proper morphism,
 the direct image in the equivariant cohomology is well defined, and we denote
 by $[\lambda]\in V_{|\lambda|}$ the direct image of the fundamental cycle of
 the point $J_\lambda$. The set $\{[\lambda]\}$ forms a basis of $V$.

 Consider the following generating series $\bC(z)\in V[[z^{-1}]]$:
  $$\bC(z):=\left(\frac{\ch(\FF t_1^{-1},-z^{-1})\ch(\FF t_2^{-1},-z^{-1})\ch(\FF t_3^{-1},-z^{-1})}
    {\ch(\FF t_1,-z^{-1})\ch(\FF t_2,-z^{-1})\ch(\FF t_3,-z^{-1})}\right)^+,$$
 where $\ch(F,\bullet)$ denotes the Chern polynomial of $F$. We also set $s_3:=-s_1-s_2$.

 Finally, we define the linear operators $e_j, f_j, \psi_j\ (j\in \ZZ_+)$ acting on $V$:
\begin{equation}\tag{1$'$}\label{raz'}
  e_j=\bq_*(c_1(L)^j\cdot \bp^*):\ V_n\to V_{n+1},
\end{equation}
\begin{equation}\tag{2$'$}\label{dva'}
  f_j=\bp_*(c_1(L)^j\cdot \bq^*):\ V_n\to V_{n-1},
\end{equation}
\begin{equation}\tag{3$'$}\label{tree'}
  \psi(z)=1+s_1s_2s_3\sum_{j=0}^{\infty} \psi_jz^{-j-1}:=(1-s_3/z)\bC(z) \in \prod_n \End(V_n)[[z^{-1}]].
\end{equation}

\begin{theorem}\label{DIa}
 The operators $e_j, f_j, \psi_j$, defined in (\ref{raz'}--\ref{tree'}), satisfy
 the relations~(Y0--Y6) with the parameters $h_i=s_i$.
 This endows $V$ with the structure of $\ddot{Y}_{h_1,h_2,h_3}(\gl_1)$-representation.
\end{theorem}

\begin{rem}
 This result is a natural ``cohomological'' analogue of Theorem~\ref{DIm}.
 It was obtained jointly with B.~Feigin in an unpublished work.
 The first written reference goes back to~\cite{SV3} (to be precise,
 in~\cite{SV3} an action of an algebra ${\bf{SH}^{\bf{c}}}$ on the space $V$ was constructed,
 and in~\cite{AS} it was shown that ${\bf{SH}^{\bf{c}}}$ is isomorphic to the affine Yangian of $\gl_1$).
\end{rem}

 In the remaining part of this section, we explain how the proof of
 Theorem~\ref{DIm} from~\cite{FT} can be adapted almost automatically
 to the cohomological case of Theorem~\ref{DIa}.
 We start with an explicit computation of the matrix coefficients of
 $e_p,f_p,\psi(z)$ in the fixed point basis $\{[\lambda]\}$ of $V$.
 For a linear operator $A\in \End(V)$, we use
 $A_{\mid{[\lambda,\mu]}}$ to denote the coefficient of $[\mu]$ in $A([\lambda])$.
 We also set $\chi(\square_{i,j}):=(i-1)s_1+(j-1)s_2$
 for a box $\square_{i,j}$ located in the $j$th row and $i$th column.

\begin{lem}\label{matrix_elements_a}
\noindent
 (a) The only nonzero matrix coefficients of the operators $e_p, f_p$ are as follows:
 $$e_{p\mid{[\lambda-i,\lambda]}}=\frac{((\lambda_i-1)s_1+(i-1)s_2)^p}{(s_1+s_2)((\lambda_1-\lambda_i+1)s_1+(1-i)s_2)}
    \cdot \prod_{j\geq 1}\frac{(\lambda_j-\lambda_i+1)s_1+(j-i+1)s_2}{(\lambda_{j+1}-\lambda_i+1)s_1+(j-i+1)s_2},$$
 $$f_{p\mid{[\lambda+i,\lambda]}}=\frac{(\lambda_is_1+(i-1)s_2)^p((\lambda_i-\lambda_1+1)s_1+is_2)}{s_1+s_2}
    \cdot \prod_{j\geq 1}\frac{(\lambda_i-\lambda_{j+1}+1)s_1+(i-j)s_2}{(\lambda_i-\lambda_j+1)s_1+(i-j)s_2}.$$
\noindent
 (b) $\psi(z)$ is diagonal in the fixed point basis and the eigenvalue of $\psi(z)$ on $[\lambda]$ equals
  $${\psi(z)}_{\mid_\lambda}=
    \left(\left(1-\frac{s_3}{z}\right)
    \prod_{\square \in\lambda}\frac{(z-\chi(\square)+s_1)(z-\chi(\square)+s_2)(z-\chi(\square)+s_3)}
    {(z-\chi(\square)-s_1)(z-\chi(\square)-s_2)(z-\chi(\square)-s_3)}\right)^+.$$
\end{lem}

 This is a ``cohomological'' analogue of~\cite[Lemma 3.1, Proposition 3.1]{FT}.
 Using this result, proof of Theorem~\ref{DIa} is reduced to
 a routine verification of the relations (Y0--Y6) in the fixed point basis.
 The only non-trivial relation is (Y3).
 A similar issue in the case of $K$-theory was resolved by~\cite[Lemma 4.1]{FT}.
 We conclude this section by proving an analogous result:

\begin{lem}\label{psi_0,1,2}
 Consider the linear operator $\phi_{i,j}:=[e_i,f_j]$ acting on $V$.

\noindent
 (a) The operator $\phi_{i,j}$ is diagonal in the fixed point basis $\{[\lambda]\}$ of $V$.

\noindent
 (b) For any Young diagram $\lambda$, we have
 $\phi_{i,j}([\lambda])={\gamma_{i+j}}_{\mid_\lambda}\cdot [\lambda]$, where
\begin{multline}\tag{$\sharp$}\label{gamma}
  {\gamma_m}_{\mid_\lambda}=
    s_1^{-2}\sum_{i=1}^k{y_i^m\prod_{1\leq j\leq k}^{j\ne i}
    {\frac{(y_i-y_j+s_2)(y_j-y_i+s_1+s_2)}{(y_i-y_j)(y_j-y_i+s_1)}}
     \cdot \frac{y_i+s_1+(1-k)s_2}{-y_i+ks_2}}-\\
   s_1^{-2}\sum_{i=1}^k{(y_i+s_1)^m\prod_{1\leq j\leq k}^{j\ne i}
   {\frac{(y_j-y_i+s_2)(y_i-y_j+s_1+s_2)}{(y_j-y_i)(y_i-y_j+s_1)}}
    \cdot \frac{y_i+2s_1+(1-k)s_2}{-y_i-s_1+ks_2}}.
\end{multline}

 Here $y_i:=(\lambda_i-1)s_1+(i-1)s_2$ and $k$ is a positive integer such that $\lambda_k=0$.

\noindent
 (c) For any Young diagram $\lambda$, we have
      $${\gamma_0}_{\mid_\lambda}=-1/s_1s_2,\ \
        {\gamma_1}_{\mid_\lambda}=0,\ \
        {\gamma_2}_{\mid_\lambda}=2|\lambda|.$$
\end{lem}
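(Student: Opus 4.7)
The plan is to compute $\phi_{i,j}[\lambda]$ directly from the matrix coefficient formulas of Lemma~\ref{matrix_elements_a}(a), and then identify the resulting generating series for $\{\gamma_m|_\lambda\}$ with a Laurent expansion of the eigenvalue of $\psi(z)$ from Lemma~\ref{matrix_elements_a}(b) in order to extract~(c).

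For parts~(a) and~(b), I would expand $[e_i,f_j][\lambda] = e_if_j[\lambda] - f_je_i[\lambda]$ using Lemma~\ref{matrix_elements_a}(a). Both compositions produce sums over intermediate diagrams differing from $\lambda$ by a single box, so the results are linear combinations of $[\mu]$ with $\mu$ obtained by removing a box from some row $k$ and adding one to some row $l$ (or vice versa). For off-diagonal $\mu\ne\lambda$ (so $k\ne l$), exactly two paths contribute, one via $\lambda-k$ and one via $\lambda+l$. A direct comparison of the matrix elements, entirely parallel to the Nakajima-operator calculation in~\cite[Lemma~4.1]{FT}, shows that these two contributions cancel in the commutator; this gives~(a). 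The remaining diagonal contributions pair up as ``remove and re-add the same box'' (from $e_if_j$) and ``add and re-remove the same box'' (from $f_je_i$); in each case, the combined $L$-weight factors as $\chi(\square)^{i+j}$ times a rational function independent of $i,j$. Hence $\phi_{i,j}|_\lambda$ depends only on $m:=i+j$, and substituting the variables $y_i=(\lambda_i-1)s_1+(i-1)s_2$ and formally extending the summation to $i=k$ (permissible because the extra terms vanish thanks to $\lambda_{k-1}=0$) produces the closed-form ($\sharp$) of part~(b).

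For part~(c), rather than attacking ($\sharp$) term by term, I would assemble the generating function $\Gamma_\lambda(z) := 1+\sigma_3\sum_{m\geq 0}\gamma_m|_\lambda\,z^{-m-1}$. Using $\sum_{m\geq 0}y^mz^{-m-1}=(z-y)^{-1}$, formula~($\sharp$) rewrites $\Gamma_\lambda(z)$ as an explicit rational function in $z$ with simple poles at $y_i$ and $y_i+s_1$. I would then show that this rational function agrees with the closed-form eigenvalue $\psi_\lambda(z)$ of Lemma~\ref{matrix_elements_a}(b): both are rational in $z$ with leading term $1$ at $z=\infty$, and a residue (or telescoping) manipulation matches their zeros and poles (the row-wise factors in the product over boxes telescope into the two sums in~($\sharp$), with the boundary correction $\frac{y_i+s_1+(2-k)s_2}{-y_i+(k-1)s_2}$ accounting for the transition between consecutive rows). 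Once this identification is in place, the three values in~(c) follow by Laurent-expanding $\psi_\lambda(z)$ at $z=\infty$: the $z^{-1}$ coefficient comes solely from the prefactor $(1-s_3/z)$ and yields $\gamma_0=-1/(s_1s_2)$ after dividing by $\sigma_3=s_1s_2s_3$; the $z^{-2}$ contributions from numerator and denominator cancel inside every box factor, giving $\gamma_1=0$; and the $z^{-3}$ coefficient contributes $s_1s_2s_3\cdot 2$ per box, giving $\gamma_2=2|\lambda|$.

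The main obstacle I anticipate is the identification $\Gamma_\lambda(z)=\psi_\lambda(z)$: the residue form coming from~($\sharp$) is indexed by rows of $\lambda$ whereas $\psi_\lambda(z)$ is a product over all boxes, and reconciling the two requires a careful telescoping sensitive to the boundary terms involving $(k-1)s_2$. Once past this step the computations yielding~(c) are short expansions at infinity, and the same identification additionally prepares the verification of relation~(Y3) in the fixed-point basis needed for Theorem~\ref{DIa}.
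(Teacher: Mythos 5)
Your treatment of parts (a) and (b) matches the paper's, which likewise cites Lemma~\ref{matrix_elements_a}(a) and a ``straightforward calculation.'' Your route to part (c), however, is genuinely different from the paper's. The paper proves (c) \emph{without} invoking the eigenvalue of $\psi(z)$ at all: it observes that, for $m\geq 0$, the right-hand side of ($\sharp$) is a priori a rational function of the $y_i$ with apparent poles at $y_i=y_j$, $y_i=y_j+s_1$, $y_i=(k-1)s_2$, $y_i=-s_1+(k-1)s_2$; a residue count shows these poles are spurious, so $\gamma_m|_\lambda\in\CC(s_1,s_2)[y_1,y_2,\ldots]$. For $m=0$ the degree is $\leq 0$, hence constant in $\lambda$, and evaluating at $\lambda=\emptyset$ gives $-1/s_1s_2$. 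For $m=1$ the degree is $\leq 1$, but finiteness of the limit $y_{i_0}\to\infty$ (other $y_j$ fixed) forces degree $0$, and evaluating at $\emptyset$ gives $0$. For $m=2$ the same argument reduces the degree to $\leq 1$; the coefficient of $y_{i_0}$ is computed via the limit $\lim_{\xi\to\infty}\xi^{-1}\gamma_2|_\lambda$ and equals $2/s_1$, whence $\gamma_2|_\lambda = \frac{2}{s_1}\sum\widetilde{y}_i + F(s_1,s_2) = 2|\lambda| + F$, with $F=0$ from $\lambda=\emptyset$. That is the whole proof.

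You instead propose to establish the full identity $\Gamma_\lambda(z)=\psi_\lambda(z)$ first and read off the three values from the Laurent expansion at $z=\infty$. Your expansion is computed correctly: with $\sigma_1=0$, each box factor $F_\square(z)$ has vanishing $z^{-1}$ and $z^{-2}$ coefficients and leading contribution $2\sigma_3 z^{-3}$, so $\psi_\lambda(z)=1-s_3/z + 2\sigma_3|\lambda|z^{-3}+O(z^{-4})$, giving $(\psi_0,\psi_1,\psi_2)=(-1/s_1s_2,\,0,\,2|\lambda|)$. The telescoping you anticipate is indeed the content of the row-wise formula in Proposition~\ref{Fock_a} (with $u=0$), so the identity you need is true and provable as you outline. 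The tradeoff is that the paper keeps Lemma~\ref{psi_0,1,2}(c) elementary and self-contained (degree bounds plus one evaluation at $\emptyset$) and only afterward establishes $\gamma_m|_\lambda=\psi_m|_\lambda$ for all $m$, using (c) as an input to that argument (via (Y4,~Y4$'$), as in Appendix~B). Your route front-loads the harder generating-function identity, but in exchange obtains (c) and the (Y3) verification simultaneously. Both are valid; just be aware that the paper's architecture relies on (c) being available \emph{before} the $\gamma_m=\psi_m$ identification, so if you adopt your ordering you must make sure the residue-matching proof of $\Gamma_\lambda(z)=\psi_\lambda(z)$ does not silently appeal to (c) or to the (Y4)/(Y4$'$) relations, which is what the paper uses to propagate from $\emptyset$ to general~$\lambda$.
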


\begin{proof}$\ $

   Parts (a) and (b) follow from Lemma~\ref{matrix_elements_a}(a) by straightforward calculations.

 Let us now prove part (c).
 For $m\geq 0$, the expression in the right-hand side of ($\sharp$) is a rational function in $y_i$
 with the only possible (simple) poles at $y_i=y_j,\ y_i=y_j+s_1,\ y_i=ks_2,\ y_i=-s_1+ks_2$.
 A straightforward computation shows that the residues at these points are in fact zero.
 Therefore, ${\gamma_m}_{\mid_\lambda}\in \CC(s_1,s_2)[y_1,y_2,\ldots]$ for $m\geq 0$.

\noindent
 $\circ$ \textit{Case 1: $m=0$}.

 Since ${\gamma_0}_{\mid_\lambda}$ is a polynomial in $y_i$ of degree $\leq 0$,
 it must be an element of $\CC(s_1,s_2)$ independent of $\lambda$.
 Evaluating at the empty diagram, we find
  ${\gamma_0}_{\mid_\lambda}={\gamma_0}_{\mid_\emptyset}=-1/s_1s_2$.

\noindent
 $\circ$ \textit{Case 2: $m=1$}.

 The eigenvalue ${\gamma_1}_{\mid_\lambda}$ is a polynomial in $y_i$ of degree $\leq 1$.
 However, the limit of the right-hand side of ($\sharp$) with $m=1$ as $y_{i_0}\to \infty$
 while $y_j$ are fixed for all $j\ne i_0$ is finite for any index $i_0$.
 Therefore, ${\gamma_1}_{\mid_\lambda}$ must be a degree $0$ polynomial.
 Hence, ${\gamma_1}_{\mid_\lambda}={\gamma_1}_{\mid_\emptyset}=0$.

\noindent
 $\circ$ \textit{Case 3: $m=2$}.

 Recall that ${\gamma_2}_{\mid_\lambda}$ is a polynomial in $y_i$ of degree $\leq 2$.
 Arguments similar to the above show that ${\gamma_2}_{\mid_\lambda}$ is actually
 a degree $\leq 1$ polynomial in $y_i$. Let us now compute its principal linear part.
 The coefficient of $y_{i_0}$ equals the limit
  $\underset{\xi\to \infty}\lim \frac{1}{\xi}{\gamma_2}_{\mid_\lambda}$
 as $y_j$ is fixed for $j\ne i_0$ and $y_{i_0}=\xi\to \infty$.
 By $(\sharp)$ this is just $\frac{2}{s_1}$.
 Therefore, there exists $F(s_1,s_2)\in \CC(s_1,s_2)$ such that
  ${\gamma_2}_{\mid_\lambda}=\frac{2}{s_1}(\wt{y}_1+\wt{y}_2+\ldots)+F(s_1,s_2)=2|\lambda|+F(s_1,s_2)$,
 where $\wt{y}_i:=y_i-((i-1)s_2-s_1)$
 (the sequence $\{\wt{y}_i\}$ stabilizes to 0 as $i\to \infty$, unlike $\{y_i\}$).
 Evaluating at the empty Young diagram, we find $F(s_1,s_2)={\gamma_2}_{\mid_\emptyset}=0$.
 The equality ${\gamma_2}_{\mid_\lambda}=2|\lambda|$ follows.
\end{proof}

  Arguments similar to those from~\cite{FT} imply ${\gamma_m}_{\mid_\lambda}={\psi_m}_{\mid_\lambda}$ (see also Appendix B).

\begin{rem}
 Due to Lemma 2.3(b), the next $\psi$-coefficient acts in the following way:
   $${\psi_3}_{\mid_\lambda}=6\sum_{\square\in \lambda}\chi(\square)+2(s_1+s_2)|\lambda|.$$
 In particular, $\frac{1}{6}(\psi_3+s_3\psi_2)$ corresponds to the cup product with $c_1(\FF)$.
 This operator was first studied by M.~Lehn.
 It is also related to the Laplace-Beltrami operator (see~\cite[Section 4]{Na2}).
\end{rem}

%%%%%%%%%%%%%%%%%%%%%%%%%%%%%%%%%%%%%%%%%%%%%%%%%%%%%%%%%%%%%%%%%%%%%%%%%%%%%%%%%%%%%%%%%%%%%%%%%%%%%%%%
%%%%%%%%%%%%%%%%%%%%%%%%%%%%%%%%%%%%%%%%%% SECTION 3 %%%%%%%%%%%%%%%%%%%%%%%%%%%%%%%%%%%%%%%%%%%%%%%%%%%
%%%%%%%%%%%%%%%%%%%%%%%%%%%%%%%%%%%%%%%%%%%%%%%%%%%%%%%%%%%%%%%%%%%%%%%%%%%%%%%%%%%%%%%%%%%%%%%%%%%%%%%%

\section{Representation theory via the Gieseker space}

 The purpose of this section is to provide generalizations
 of the results from Section 2 to the higher rank $r$ cases, that is,
 replacing $(\bA^2)^{[n]}$ by the Gieseker moduli spaces $M(r,n)$.

% The Hilbert scheme $(\bA^2)^{[n]}$ can be viewed as the first member
% of the family of the Gieseker moduli spaces $M(r,n)$, corresponding to $r=1$.
% The purpose of this section is to provide generalizations
% of the results from Section 2 to the higher rank $r$ cases.

\subsection{Correspondences and fixed points for $M(r,n)$}\label{Gieseker1}
$\ $

 We recall some basics on the Gieseker framed moduli spaces  $M(r,n)$
 of torsion free sheaves on $\bP^2$ of rank $r$ with $c_2=n$.
 Its $\CC$-points are the isomorphism classes of pairs $\{(E,\Phi)\}$,
 where $E$ is a torsion free sheaf on $\bP^2$ of rank $r$ with $c_2(E)=n$, and which is locally free
 in a neighborhood of the line $l_\infty=\{(0:z_1:z_2)\}\subset \bP^2$, while
 $\Phi:E_{\mid_{l_\infty}}\iso \Oo_{l_\infty}^{\oplus r}$ (called a \emph{framing at infinity}).

 This space has an alternative quiver description (see~\cite[Ch. 2]{Na1} for details):
   $$M(r,n)=\M(r,n)/GL_n(\CC),\ \M(r,n)=\{(B_1,B_2,i,j)|[B_1,B_2]+ij=0\}^s,$$
 where $B_1,B_2\in \End(\CC^n), i\in \Hom(\CC^r,\CC^n), j\in \Hom(\CC^n,\CC^r)$,
 the $GL_n(\CC)$-action is given by
   $g\cdot(B_1,B_2,i,j)=(gB_1g^{-1},gB_2g^{-1}, gi, jg^{-1}),$
 while the superscript $s$ symbolizes the stability condition
   $\mathrm{``there\ exists\ no\ subspace}\ S\subsetneq \CC^n\
    \mathrm{such\ that}\ B_\alpha(S)\subset S\ (\alpha=1,2)\
    \mathrm{and}\ \mathrm{Im}(i)\subset S \mathrm{"}.$
 Let $\FF_r$ be the \emph{tautological} rank $n$ vector bundle on $M(r,n)$.

 Consider a natural action of $\TT_r=(\CC^{*})^2\times (\CC^{*})^r$ on $M(r,n)$, where
 $(\CC^*)^2$ acts on $\bP^2$ via $(t_1,t_2)\cdot([z_0:z_1:z_2])=[z_0:t_1z_1:t_2z_2]$, while
 $(\CC^*)^r$ acts by rescaling the framing isomorphism.
 The set $M(r,n)^{\TT_r}$ of $\TT_r$-fixed points in $M(r,n)$ is
 finite and is in bijection with $r$-tuples of Young diagrams $\bar{\lambda}=(\lambda^1,\ldots,\lambda^r)$
 satisfying $|\bar{\lambda}|:=|\lambda^1|+\ldots+|\lambda^r|=n$,
 denoted by $\bar{\lambda}\vdash n$ (see~\cite[Proposition 2.9]{NY}).
 For such $\bar{\lambda}$, the corresponding point $\xi_{\bar{\lambda}}\in M(r,n)^{\TT_r}$
 is represented by $(E_{\bar{\lambda}},\Phi_{\bar{\lambda}})$, where
 $E_{\bar{\lambda}}=J_{\lambda^1}\oplus\cdots\oplus J_{\lambda^r}$
 and $\Phi_{\bar{\lambda}}$ is a sum of natural inclusions
 $J_{\lambda^j \mid_{l_\infty}}\hookrightarrow \Oo_{l_\infty}$.

 Following~\cite[Section 5]{Na3}, we recall the \emph{Hecke correspondences},
 which generalize $P[1]$ from Section 2.1 to higher ranks.
 Consider $\M(r;n,n+1)\subset\M(r,n)\times \M(r,n+1)$ consisting of pairs of tuples
 $\{(B^{(k)}_1,B^{(k)}_2,i^{(k)},j^{(k)})\}$ for $k=n,n+1$, such that there exists
 $\xi:\CC^{n+1}\to \CC^{n}$ satisfying
   $$\xi B^{(n+1)}_1=B^{(n)}_1\xi,\ \xi B^{(n+1)}_2=B^{(n)}_2\xi,\ \xi i^{(n+1)}=i^{(n)},\ j^{(n+1)}=j^{(n)}\xi.$$
 The stability condition implies $\xi$ is surjective.
 Therefore $S:=\Ker\ \xi\subset\CC^{n+1}$ is a 1-dimensional subspace of $\Ker\ j^{(n+1)}$
 invariant with respect to $B^{(n+1)}_1, B^{(n+1)}_2$.
 This provides an identification of $\M(r;n,n+1)$
 with pairs of $(B^{(n+1)}_1,B^{(n+1)}_2,i^{(n+1)},j^{(n+1)})\in \M(r,n+1)$
 and a 1-dimensional subspace $S\subset \CC^{n+1}$ satisfying the above conditions.
 Taking the latter viewpoint, we define the Hecke correspondence $M(r;n,n+1)$
 as the quotient $M(r;n,n+1)=\M(r;n,n+1)/GL_{n+1}(\CC)$.
 Let $L_r$ be the \emph{tautological} line bundle on $M(r;n,n+1)$,
 while $\bp_r, \bq_r$ be the natural projections from $M(r;n,n+1)$ to $M(r,n)$ and $M(r,n+1)$, correspondingly.
 The set $M(r;n,n+1)^{\TT_r}$ of $\TT_r$-fixed points in $M(r;n,n+1)$ is in
 bijection with pairs of $r$-tuples of diagrams $\bar{\lambda}\vdash n, \bar{\mu}\vdash n+1$
 such that $\lambda^j\subseteq\mu^j$ for $1\leq j\leq r$;
 the corresponding fixed point will be denoted by $\xi_{\bar{\lambda},\bar{\mu}}$.

 Our computations are based on the following well-known result (see~\cite{Na3,NY}):

\begin{prop}
 (a) The variety $M(r;n,n+1)$ is smooth of complex dimension $2rn+r+1$.

\noindent
 (b) The $\TT_r$-character of the tangent space to $M(r,n)$ at the $\TT_r$-fixed point $\xi_{\bar{\lambda}}$ equals
     $$T_{\bar{\lambda}}=\sum_{a,b=1}^r \frac{\chi_b}{\chi_a}
       \left(\sum_{\square\in \lambda^a}t_1^{-a_{\lambda^b}(\square)}t_2^{l_{\lambda^a}(\square)+1}+
             \sum_{\square\in \lambda^b}t_1^{a_{\lambda^a}(\square)+1}t_2^{-l_{\lambda^b}(\square)}
       \right).$$
\noindent
 (c) The map $(\bp_r,\bq_r):M(r;n,n+1)\to M(r,n)\times M(r,n+1)$ is a closed immersion and
 the $\TT_r$-character of the fiber of the normal bundle of
 $M(r;n,n+1)$ at $\xi_{\bar{\lambda},\bar{\mu}}$ equals
     $$N_{\bar{\lambda},\bar{\mu}}=-t_1t_2+\sum_{a,b=1}^r \frac{\chi_b}{\chi_a}
       \left(\sum_{\square\in \lambda^a}t_1^{-a_{\lambda^b}(\square)}t_2^{l_{\mu^a}(\square)+1}+
             \sum_{\square\in \mu^b}t_1^{a_{\mu^a}(\square)+1}t_2^{-l_{\lambda^b}(\square)}
       \right).$$
\end{prop}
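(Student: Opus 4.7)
The strategy is to exploit the ADHM quiver description $M(r,n)=\M(r,n)/GL_n(\CC)$ from Section 3.1 and its natural extension to an incidence variety $\widetilde{\M}(r;n,n+1)$ parametrizing pairs of ADHM data linked by a surjection $\xi\colon \CC^{n+1}\twoheadrightarrow \CC^n$. Both smoothness (part (a)) and the equivariant tangent/normal characters (parts (b) and (c)) then fall under the standard Nakajima--Yoshioka framework for framed moduli spaces on $\bP^2$: smoothness via submersivity of the combined moment map, and characters via equivariant localization of the tangent complex at $\TT_r$-fixed points.

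For (a), I would verify smoothness at a stable point of $\widetilde{\M}(r;n,n+1)$ by a direct diagram chase showing that the differential of the combined relations $[B_1^{(k)},B_2^{(k)}]+i^{(k)}j^{(k)}=0$ ($k=n,n+1$) together with the intertwining constraints $\xi B_s^{(n+1)}=B_s^{(n)}\xi$, $\xi i^{(n+1)}=i^{(n)}$, $j^{(n+1)}=j^{(n)}\xi$ is surjective modulo the gauge action of $GL_n(\CC)\times GL_{n+1}(\CC)$. Smoothness of the GIT quotient and the dimension $2rn+r+1$ then both follow from the standard count of parameters minus relations minus gauge dimensions; alternatively, one may project via $\bq_r$ to $M(r,n+1)$ and analyze the fiber of admissible lines $S\subset \CC^{n+1}$.

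For (b), at a fixed point $\xi_{\bar\lambda}$ the tangent space is the middle cohomology of the standard deformation complex
$$\End(V)\xrightarrow{d_\sigma} \End(V)\otimes(t_1+t_2)\,\oplus\,\Hom(W,V)\,\oplus\,\Hom(V,W)\otimes t_1t_2\xrightarrow{d_\mu}\End(V)\otimes t_1t_2,$$
with $V=\CC^n$, $W=\CC^r$ carrying equivariant characters
$$\ch V_{\bar\lambda}=\sum_{a=1}^r \chi_a^{-1}\!\!\sum_{(i,j)\in\lambda^a}\!\! t_1^{1-i}t_2^{1-j},\qquad \ch W=\sum_{a=1}^r\chi_a.$$
Since $d_\sigma$ is injective and $d_\mu$ surjective at a stable point, the tangent character is the alternating sum $V^*W+VW^*t_1t_2+V^*V(t_1+t_2)-V^*V(1+t_1t_2)$, which I would reorganize box-by-box within each pair $(\lambda^a,\lambda^b)$ and collapse by telescoping to the stated arm--leg form. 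Part (c) is parallel, using the incidence description: the additional datum $S\subset V^{(n+1)}$ contributes the correction $-t_1t_2$ (from the determinantal twist of the tautological line $S$) and replaces the relevant $\lambda^\bullet$'s by $\mu^\bullet$'s in the raw character sum.

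The main obstacle is the purely combinatorial simplification of the raw equivariant character sums into the clean arm--leg expressions appearing in (b) and (c). The crucial trick is to split each inner sum by row (resp.\ column) within each partition and observe massive cancellation, leaving only boundary terms that encode $a_\lambda(\square)$ and $l_\lambda(\square)$. This identity is elementary but error-prone; it is well-documented in Nakajima's original tangent-character computation for the Hilbert scheme and its extension by Nakajima--Yoshioka to higher-rank Gieseker spaces, so in practice one would cite those references for the final step.
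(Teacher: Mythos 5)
The paper does not actually prove this Proposition: it presents it as ``the following well-known result'' immediately after recalling the ADHM quiver description and the Hecke correspondence, and implicitly relies on the standard references (Nakajima's book, Nakajima--Yoshioka). Your sketch --- smoothness via surjectivity of the combined moment-map differential on the incidence variety, and tangent/normal characters via the ADHM deformation complex and equivariant localization, with the final arm--leg reorganization deferred to those same references --- is exactly the standard argument the paper is citing, so there is nothing to compare beyond noting that your route and the paper's implicit one coincide.
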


\subsection{Geometric $\ddot{U}_{q_1,q_2,q_3}(\gl_1)$-action II}
$\ $

 Let ${}'M^r$ be the direct sum of equivariant (complexified) $K$-groups:
  ${}'M^r=\bigoplus_{n}K^{\TT_r}(M(r,n))$.
 It is a module over
  $K^{\TT_r}(\textrm{pt})=\CC[\TT_r]= \CC[t_1^{\pm 1},t_2^{\pm 1},\chi_1^{\pm 1},\ldots,\chi_r^{\pm 1}]$.
 Consider a quadratic extension $\mathbb{F}_r=\Frac(K^{\TT_r}(\textrm{pt}))[t]/(t^2-t_3)$, where
  $t_3:=t_1^{-1}t_2^{-1}\in \Frac(K^{\TT_r}(\textrm{pt}))$.
 We define
  $$M^r:=\ {}'M^r\otimes_{K^{\TT_r}(\textrm{pt})} \mathbb{F}_r.$$

 It has a natural grading:
  $M^r=\bigoplus_{n}M^r_{n},\ M^r_{n}=K^{\TT_r}(M(r,n))\otimes_{K^{\TT_r}(\textrm{pt})} \mathbb{F}_r.$
 By the localization theorem, restriction to the $\TT_r$-fixed point set induces an isomorphism
  $K^{\TT_r}(M(r,n))\otimes_{K^{\TT_r}(\textrm{pt})} \mathbb{F}_r\overset{\sim}\longrightarrow
   K^{\TT_r}(M(r,n)^{\TT_r})\otimes_{K^{\TT_r}(\textrm{pt})}\mathbb{F}_r.$
 The structure sheaves $\{\bar{\lambda}\}$ of the $\TT_r$-fixed points $\xi_{\bar{\lambda}}$
 form a basis in $\bigoplus_{n}K^{\TT_r} (M(r,n)^{\TT_r})\otimes_{K^{\TT_r}(\textrm{pt})}\mathbb{F}_r$.
 Since embedding of a point $\xi_{\bar{\lambda}}$ into $M(r,|\bar{\lambda}|)$ is a proper morphism,
 the direct image in the equivariant $K$-theory is well defined,
 and we denote by $[\bar{\lambda}]\in M^r_{|\bar{\lambda}|}$
 the direct image of the structure sheaf $\{\bar{\lambda}\}$.
 The set $\{[\bar{\lambda}]\}$ forms a basis of $M^r$.

 Consider the following generating series $\ba_r(z), \bc_r(z)\in M^r(z)$:
   $$\ba_r(z):=\Lambda^{\bullet}_{-1/z}(\FF_r)=\sum_{i\geq 0}{[\Lambda^i(\FF_r)](-1/z)^i},$$
   $$\bc_r(z):=\ba_r(zt_1)\ba_r(zt_2)\ba_r(zt_3)\ba_r(zt_1^{-1})^{-1}\ba_r(zt_2^{-1})^{-1}\ba_r(zt_3^{-1})^{-1}.$$

 Finally, we define the linear operators $e_i, f_i, \psi_i, \psi_0^{-1}\ (i\in \ZZ)$ acting on $M^r$:
\begin{equation}\label{raz_r}
  e_i=\bq_{r*}(L_r^{\otimes i}\otimes \bp_r^*):\ M^r_{n}\to M^r_{n+1},
\end{equation}
\begin{equation}\label{dva_r}
  f_i=(-t)^{r-2}\chi_1^{-1}\cdots \chi_r^{-1}\cdot \bp_{r*}(L_r^{\otimes (i-r)}\otimes \bq_r^*):\ M^r_{n}\to M^r_{n-1},
\end{equation}
\begin{equation}\label{tree_r}
  \psi^{\pm}(z)=\psi_0^{\pm 1}+\sum_{j=1}^{\infty} \psi_{\pm j}z^{\mp j}:=
  \left(\prod_{a=1}^r\frac{t^{-1}-t\chi_a^{-1}/z}{1-\chi_a^{-1}/z}\bc_r(z)\right)^{\pm} \in \prod_n \End(M^r_{n})[[z^{\mp 1}]].
\end{equation}

\begin{theorem}\label{Gieseker_m}
 The operators $e_i, f_i, \psi_i, \psi_0^{-1}$, defined in (\ref{raz_r}--\ref{tree_r}),
 satisfy the relations~(T0--T6) with the parameters $q_i=t_i$.
 This endows $M^r$ with the structure of $\ddot{U}_{q_1,q_2,q_3}(\gl_1)$-representation.
\end{theorem}

\begin{rem}
 This higher rank generalization of Theorem~\ref{DIm} first appeared in~\cite{SV}.
\end{rem}

 We refer the interested reader to Appendix B, where it is explained how
 the proof of Theorem~\ref{DIm} from~\cite{FT} can be easily adapted to the general rank $r$ case.
 We conclude this section by computing explicitly the matrix coefficients of
 $e_p,f_p,\psi^\pm (z)$ in the fixed point basis of $M^r$.

\begin{lem}\label{matrix_coeff_Gis_m}
  Consider the fixed point basis $\{[\bar{\lambda}]\}$ of $M^r$.
  Define $\chi^{(a)}_k:=t_1^{\lambda^a_k-1}t_2^{k-1}\chi_a^{-1}$.

\noindent
 (a) The only nonzero matrix coefficients of the operators $e_p, f_p$ are as follows:
   $$e_{p\mid{[\bar{\lambda}-\square^{l}_j,\bar{\lambda}]}}=\frac{(\chi^{(l)}_j)^p}{1-t_1t_2}\cdot
     \prod_{a=1}^r\prod_{k=1}^\infty \frac{1-t_1t_2\chi^{(a)}_k/\chi^{(l)}_j}{1-t_1\chi^{(a)}_k/\chi^{(l)}_j},$$
   $$f_{p\mid{[\bar{\lambda}+\square^{l}_j,\bar{\lambda}]}}=(-t)^{r-2}\chi_1^{-1}\cdots \chi_r^{-1}\cdot \frac{(t_1\chi^{(l)}_j)^{p-r}}{1-t_1t_2}\cdot
     \prod_{a=1}^r\prod_{k=1}^\infty \frac{1-t_1t_2\chi^{(l)}_j/\chi^{(a)}_k}{1-t_1\chi^{(l)}_j/\chi^{(a)}_k},$$
  where $\bar{\lambda}\pm \square^{l}_j$ denotes the $r$-tuple of diagrams
   $(\lambda^1,\ldots,\lambda^{l-1}, \lambda^l\pm j, \lambda^{l+1},\ldots,\lambda^r)$.

\noindent
 (b) $\psi^\pm(z)$ is diagonal in the fixed point basis and the eigenvalue of $\psi^\pm(z)$ on $[\bar{\lambda}]$ equals
  $${\psi^\pm(z)}_{\mid_{\bar{\lambda}}}=
    \left(\prod_{a=1}^r\frac{t^{-1}-t\chi_a^{-1}/z}{1-\chi_a^{-1}/z}
           \prod_{a=1}^r\prod_{\square \in\lambda^a}\frac{(1-t_1^{-1}\chi(\square)/z)(1-t_2^{-1}\chi(\square)/z)(1-t_3^{-1}\chi(\square)/z)}
           {(1-t_1\chi(\square)/z)(1-t_2\chi(\square)/z)(1-t_3\chi(\square)/z)}\right)^\pm,$$
 where $\chi(\square^a_{i,j}):=t_1^{i-1}t_2^{j-1}\chi_a^{-1}$ for a box $\square^a_{i,j}$
 located in the $j$th row and $i$th column of $\lambda^a$.
\end{lem}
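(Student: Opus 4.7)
Both parts follow from equivariant localization at the $\TT_r$-fixed points, combined with the character data in Proposition~3.1.

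\emph{Part~(b).} The fiber of $\FF_r$ at $\xi_{\bar\lambda}$ is canonically $\bigoplus_{a=1}^{r}\CC[x,y]/J_{\lambda^a}$, with the $a$-th framing torus $\chi_a^{-1}$ acting on the $a$-th summand; hence its $\TT_r$-character is $\sum_{a}\sum_{\square\in\lambda^a}\chi(\square)$ with $\chi(\square^a_{i,j})=t_1^{i-1}t_2^{j-1}\chi_a^{-1}$. Consequently
$$\ba_r(z)_{|[\bar\lambda]}=\prod_{a=1}^r\prod_{\square\in\lambda^a}(1-\chi(\square)/z),$$
and plugging into the definition of $\bc_r(z)$ yields the box-product inside the parentheses of~(b). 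The advertised prefactor is then extracted from the one multiplying $\bc_r(z)$ in the definition of $\psi^\pm(z)$ by means of the identity $1-t_1t_2\chi_a z = -t_1t_2\chi_a z\cdot(1-t_3\chi_a^{-1}/z)$ (which uses $t_1t_2t_3=1$) together with its counterpart for $1-\chi_a z$.

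\emph{Part~(a).} The unique $\TT_r$-fixed point of $M(r;n,n+1)$ lying over $\xi_{\bar\lambda-\square^l_j}$ under $\bp_r$ and over $\xi_{\bar\lambda}$ under $\bq_r$ is $\xi_{\bar\lambda-\square^l_j,\bar\lambda}$; the fiber of $L_r$ there equals the character $\chi^{(l)}_j$ of the added box, since it corresponds to the kernel line of $\CC^{|\bar\lambda|}\twoheadrightarrow\CC^{|\bar\lambda|-1}$. Atiyah-Bott localization applied to the proper pushforward $\bq_{r*}$ then expresses $e_{p[\bar\lambda-\square^l_j,\bar\lambda]}$ as $(\chi^{(l)}_j)^p$ times a ratio of K-theoretic Euler classes of the tangent and normal bundles at the fixed point, whose characters come from Proposition~3.1(b,c). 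The isolated $-t_1t_2$ summand of $N_{\bar\lambda,\bar\mu}$ is responsible for the factor $1/(1-t_1^{-1}t_2^{-1})$, while the remaining arm/leg contributions telescope into the advertised double product over $(a,k)$. The computation for $f_p$ is parallel: one applies $\bp_{r*}$ together with the shift $L_r^{p-r}$, and the fiber of $L_r$ at the relevant fixed point $\xi_{\bar\lambda,\bar\lambda+\square^l_j}$ is now $t_1\chi^{(l)}_j$, namely the character of the newly-added box at position $(\lambda^l_j+1,j)$ in the larger partition, which accounts for the prefactor $(t_1\chi^{(l)}_j)^{p-r}$.

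\emph{Expected obstacle.} The delicate step is the telescoping of the K-theoretic Euler class of the normal bundle into the clean product $\prod_{a}\prod_{k}\frac{1-t_1t_2\chi^{(a)}_k/\chi^{(l)}_j}{1-t_1\chi^{(a)}_k/\chi^{(l)}_j}$ in~(a). This is the rank-$r$ analogue of the cancellation carried out in~\cite{FT} for the Hilbert scheme of $\bA^2$: the arm-leg bookkeeping extends unchanged, the only genuinely new ingredient being the double sum over framing indices $(a,b)\in\{1,\ldots,r\}^2$ in the character of the normal bundle, which enters multiplicatively and is handled the same way for each index.
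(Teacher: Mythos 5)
Your proposal is correct and reflects exactly the route the paper has in mind (equivariant localization at $\TT_r$-fixed points, plugging in the character data from Proposition~3.1, and reading off the fiber of $L_r$ as the character of the added box). The paper never actually writes out a proof of this lemma — it states the formulas and uses them in Appendix~B to verify the relations — so the real check is whether your computation is internally consistent, and it is: your identity $1-t_1t_2\chi_a z=-t_1t_2\chi_a z\,(1-t_3\chi_a^{-1}/z)$ correctly converts the prefactor in~$(6)$ into the $(-1)^r t_1^{r+1}t_2^{r+1}\chi_1\cdots\chi_r\prod_a\frac{1-t_3\chi_a^{-1}/z}{1-\chi_a^{-1}/z}$ form; the $-t_1t_2$ virtual summand of $N_{\bar\lambda,\bar\mu}$ indeed yields $1/(1-t_1^{-1}t_2^{-1})$ in the K-theoretic Euler class; and the character of the box added at $(\lambda^l_j+1,j)$ is $t_1\chi^{(l)}_j$ as you say, explaining the $(t_1\chi^{(l)}_j)^{p-r}$ prefactor in the $f_p$ formula. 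The telescoping of the remaining arm/leg data into the doubly-indexed product is, as you acknowledge, the same bookkeeping as in the $r=1$ case of \cite{FT}, carried factor by factor over the framing indices; calling this a ``delicate step'' is fair, but it is not a gap since you correctly identify the mechanism (the ratio $\chi^{(a)}_{k+1}/\chi^{(a)}_k=t_2$ for large $k$ makes the infinite product telescope to a finite one). In short: same approach, correctly executed.
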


\subsection{Geometric $\ddot{Y}_{h_1,h_2,h_3}(\gl_1)$-action II}
$\ $

 Let ${}'V^r$ be the direct sum of equivariant (complexified) cohomology:
  ${}'V^r=\bigoplus_{n}\HH^\bullet_{\TT_r}(M(r,n)).$
 It is a module over
  $\HH^\bullet_{\TT_r}(\textrm{pt})=\CC[\T_r]= \CC[s_1,s_2,x_1,\ldots,x_r]$,
 where $\T_r=\mathrm{Lie}(\TT_r)$. We define
  $$V^r:=\ {}'V^r\otimes_{\HH^\bullet_{\TT_r}(\textrm{pt})} \Frac(\HH^\bullet_{\TT_r}(\textrm{pt}))=
    {}'V^r\otimes_{\CC [s_1,s_2,x_1,\ldots,x_r]} \CC (s_1,s_2,x_1,\ldots,x_r).$$

 It has a natural grading:
   $V^r=\bigoplus_{n}V^r_n,\
    V^r_n=\HH^\bullet_{\TT_r}(M(r,n))\otimes_{\HH^\bullet_{\TT_r}(\textrm{pt})} \Frac(\HH^\bullet_{\TT_r}(\textrm{pt})).$
 According to the localization theorem, restriction to
 the $\TT_r$-fixed point set induces an isomorphism
  $$\HH^\bullet_{\TT_r}(M(r,n))\otimes_{\HH^\bullet_{\TT_r}(\textrm{pt})} \Frac(\HH^\bullet_{\TT_r}(\textrm{pt}))\overset{\sim}\longrightarrow
    \HH^\bullet_{\TT_r}(M(r,n)^{\TT_r})\otimes_{\HH^\bullet_{\TT_r}(\textrm{pt})}\Frac(\HH^\bullet_{\TT_r}(\textrm{pt})).$$

 The fundamental cycles of the $\TT_r$-fixed points $\xi_{\bar{\lambda}}$ form a basis in
   $\bigoplus_{n}\HH^\bullet_{\TT_r} (M(r,n)^{\TT_r})\otimes_{\HH^\bullet_{\TT_r}(\textrm{pt})}\Frac (\HH^\bullet_{\TT_r}(\textrm{pt}))$.
 Since embedding of a point $\xi_{\bar{\lambda}}$ into $M(r,|\bar{\lambda}|)$ is a proper morphism,
 the direct image in the equivariant cohomology is well defined, and we denote by
 $[\bar{\lambda}]\in V^r_{|\bar{\lambda}|}$ the direct image of the fundamental cycle
 of the point $\xi_{\bar{\lambda}}$. The set $\{[\bar{\lambda}]\}$ forms a basis of $V^r$.

 Set $s_3:=-s_1-s_2$. Consider the following generating series $\bC_r(z)\in V^r[[z^{-1}]]$:
  $$\bC_r(z):=\left(\frac{\ch(\FF_r t_1^{-1},-z^{-1})\ch(\FF_r t_2^{-1},-z^{-1})\ch(\FF_r t_3^{-1},-z^{-1})}
    {\ch(\FF_r t_1,-z^{-1})\ch(\FF_r t_2,-z^{-1})\ch(\FF_r t_3,-z^{-1})}\right)^+.$$

 Finally, we define the linear operators $e_j, f_j, \psi_j\ (j\in \ZZ_+)$ acting on $V^r$:
\begin{equation}\tag{4$'$}\label{raz_r'}
  e_j=\bq_{r*}(c_1(L_r)^j\cdot \bp_r^*):\ V^r_n\to V^r_{n+1},
\end{equation}
\begin{equation}\tag{5$'$}\label{dva_r'}
  f_j=(-1)^{r-1}\bp_{r*}(c_1(L_r)^j\cdot \bq_r^*):\ V^r_n\to V^r_{n-1},
\end{equation}
\begin{equation}\tag{6$'$}\label{tree_r'}
  \psi(z)=1+s_1s_2s_3\sum_{j=0}^{\infty} \psi_jz^{-j-1}:=
  \left(\prod_{a=1}^r \frac{z+x_a-s_3}{z+x_a}\right)^+\cdot \bC_r(z) \in \prod_n \End(V^r_n)[[z^{-1}]].
\end{equation}

\begin{theorem}\label{Gieseker_a}
 The operators $e_j, f_j, \psi_j$, defined in (\ref{raz_r'}--\ref{tree_r'}), satisfy
 the relations~(Y0--Y6) with the parameters $h_i=s_i$.
 This endows $V^r$ with the structure of $\ddot{Y}_{h_1,h_2,h_3}(\gl_1)$-representation.
\end{theorem}

\begin{rem}
 This result is a natural ``cohomological'' analogue of Theorem~\ref{Gieseker_m}.
 It was obtained jointly with B.~Feigin in an unpublished work
 (motivated by its K-theoretical version from~\cite{SV}): we sketch our proof in Appendix B.
 The first written reference goes back to~\cite{SV3}.
\end{rem}

 We conclude by computing the action of $e_p,f_p, \psi(z)$ in the fixed point basis $\{[\bar{\lambda}]\}$ of $V^r$.

\begin{lem}\label{matrix_coeff_Gis_a}
 Define $x^{(a)}_k:=(\lambda^a_k-1)s_1+(k-1)s_2-x_a,\ \chi(\square^a_{i,j}):=(i-1)s_1+(j-1)s_2-x_a$.

\noindent
 (a) The only nonzero matrix coefficients of the operators $e_p, f_p$ are as follows:
  $$e_{p\mid{[\bar{\lambda}-\square^{l}_j,\bar{\lambda}]}}=\frac{(x^{(l)}_j)^p}{s_1+s_2}\cdot
     \prod_{a=1}^r\prod_{k=1}^\infty \frac{s_1+s_2+x^{(a)}_k-x^{(l)}_j}{s_1+x^{(a)}_k-x^{(l)}_j},$$
  $$f_{p\mid{[\bar{\lambda}+\square^{l}_j,\bar{\lambda}]}}=(-1)^{r-1}\frac{(s_1+x^{(l)}_j)^p}{s_1+s_2}\cdot
     \prod_{a=1}^r\prod_{k=1}^\infty \frac{s_1+s_2+x^{(l)}_j-x^{(a)}_k}{s_1+x^{(l)}_j-x^{(a)}_k}.$$

\noindent
 (b) $\psi(z)$ is diagonal in the fixed point basis and the eigenvalue of $\psi(z)$ on $[\bar{\lambda}]$ equals
  $${\psi(z)}_{\mid_{\bar{\lambda}}}=
    \left(\prod_{a=1}^r \frac{z+x_a-s_3}{z+x_a}\cdot
    \prod_{a=1}^r\prod_{\square\in \lambda^a}\frac{(z-\chi(\square)+s_1)(z-\chi(\square)+s_2)(z-\chi(\square)+s_3)}
    {(z-\chi(\square)-s_1)(z-\chi(\square)-s_2)(z-\chi(\square)-s_3)}\right)^+.$$
\end{lem}

\begin{cor}
 We have
  $$\psi(z)_{\mid_{\bar{\lambda}}}=
   1-\frac{rs_3}{z}+\frac{s_3\sum x_a+\binom{r}{2}s_3^2}{z^2}+
   \frac{2\sigma_3|\bar{\lambda}|-s_3\sum x_a^2-(r-1)s_3^2\sum x_a-\binom{r}{3}s_3^3}{z^3}+o(z^{-3}).$$
\end{cor}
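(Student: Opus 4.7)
The plan is to expand the explicit formula from Lemma~\ref{matrix_coeff_Gis_a}(b) in powers of $z^{-1}$ up through $z^{-3}$. Writing $u=1/z$, I split
\[
\psi(z)_{\mid\bar\lambda}=\Phi_1(u)\cdot \Phi_2(u),\qquad
\Phi_1(u)=\prod_{a=1}^r\frac{1+(x_a-s_3)u}{1+x_au},\qquad
\Phi_2(u)=\prod_{a=1}^r\prod_{\square\in\lambda^a}\frac{N_\square(u)}{D_\square(u)},
\]
where $N_\square(u)=\prod_{i=1}^3\bigl(1-(\chi(\square)-s_i)u\bigr)$ and $D_\square(u)=\prod_{i=1}^3\bigl(1-(\chi(\square)+s_i)u\bigr)$, then multiply the two expansions.

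For $\Phi_1$, I would use $\frac{1+(x_a-s_3)u}{1+x_au}=1-\frac{s_3u}{1+x_au}$ to get, for each $a$, the expansion $1-s_3u+x_as_3u^2-x_a^2s_3u^3+O(u^4)$. Taking the product over $a=1,\dots,r$ and collecting terms via the elementary symmetric identities
\[
\sum_{a<b}1=\tbinom{r}{2},\qquad \sum_{a<b}(x_a+x_b)=(r-1)\sum_a x_a,\qquad \sum_{a<b<c}1=\tbinom{r}{3},
\]
yields
\[
\Phi_1(u)=1-rs_3u+\bigl(s_3\textstyle\sum x_a+\binom{r}{2}s_3^2\bigr)u^2-\bigl(s_3\sum x_a^2+(r-1)s_3^2\sum x_a+\binom{r}{3}s_3^3\bigr)u^3+O(u^4).
\]

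The key simplification for $\Phi_2$ is that, since $s_1+s_2+s_3=0$, a direct comparison of coefficients shows $N_\square(u)$ and $D_\square(u)$ agree in the coefficients of $u^0,u^1,u^2$ and differ only at order $u^3$: $N_\square(u)-D_\square(u)=2\sigma_3u^3$. Consequently
\[
\frac{N_\square(u)}{D_\square(u)}=1+\frac{2\sigma_3u^3}{D_\square(u)}=1+2\sigma_3u^3+O(u^4),
\]
and taking the product over all boxes gives $\Phi_2(u)=1+2\sigma_3|\bar\lambda|u^3+O(u^4)$, where $|\bar\lambda|=\sum_a|\lambda^a|$. Multiplying this by the expansion of $\Phi_1$ up to $u^3$ absorbs the $\sigma_3$-contribution into the $u^3$ coefficient and reproduces precisely the formula in the corollary.

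The only place where care is needed is the computation $N_\square-D_\square=2\sigma_3u^3$; every other step is a routine Taylor expansion. Everything else follows by straightforward term-by-term multiplication up to order $z^{-3}$.
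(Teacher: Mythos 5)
Your proposal is correct: it is the straightforward expansion of the explicit eigenvalue formula from Lemma~\ref{matrix_coeff_Gis_a}(b) to order $z^{-3}$, which is exactly how the paper arrives at the corollary (stated without a displayed proof). The key algebraic observation that $N_\square(u)-D_\square(u)=2\sigma_3 u^3$ (a direct consequence of $\sigma_1=s_1+s_2+s_3=0$) and the elementary-symmetric bookkeeping for $\Phi_1$ are both right, and their product reproduces the stated coefficients.
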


%%%%%%%%%%%%%%%%%%%%%%%%%%%%%%%%%%%%%%%%%%%%%%%%%%%%%%%%%%%%%%%%%%%%%%%%%%%%%%%%%%%%%%%%%%%%%%%%%%%%%%%%
%%%%%%%%%%%%%%%%%%%%%%%%%%%%%%%%%%%%%%%%%% SECTION 4 %%%%%%%%%%%%%%%%%%%%%%%%%%%%%%%%%%%%%%%%%%%%%%%%%%%
%%%%%%%%%%%%%%%%%%%%%%%%%%%%%%%%%%%%%%%%%%%%%%%%%%%%%%%%%%%%%%%%%%%%%%%%%%%%%%%%%%%%%%%%%%%%%%%%%%%%%%%%

\section{Some representations of $\ddot{U}_{q_1,q_2,q_3}(\gl_1)$ and $\ddot{Y}_{h_1,h_2,h_3}(\gl_1)$}

 In this section, we recall several families of
 $\ddot{U}_{q_1,q_2,q_3}(\gl_1)$-representations from~\cite{FFJMM, FFJMM2}
 and introduce their analogues for the case of $\ddot{Y}_{h_1,h_2,h_3}(\gl_1)$.
 We also establish the relation to the representations from Sections 2--3
 and introduce the appropriate categories $\mathcal{O}$.

\subsection{Vector representations $V(u)$ and $^aV(u)$}
$\ $

 The main `building block' of all known $\ddot{U}_{q_1,q_2,q_3}(\gl_1)$-representations
 is the family of \emph{vector representations} $\{V(u)\}_{u\in \CC^*}$,
 whose basis is parametrized by $\ZZ$ (see~\cite[Proposition 3.1]{FFJMM}).

\begin{prop}[Vector representation of $\ddot{U}_{q_1,q_2,q_3}(\gl_1)$]\label{Vector_m}
 For $u\in \CC^*$, let $V(u)$ be a $\CC$-vector space with the basis $\{[u]_j\}_{j\in \ZZ}$.
 The following formulas define a $\ddot{U}_{q_1,q_2,q_3}(\gl_1)$-action on $V(u)$:
  $$e(z)[u]_i=(1-q_1)^{-1}\delta(q_1^iu/z)\cdot[u]_{i+1},$$
  $$f(z)[u]_i=(q_1^{-1}-1)^{-1}\delta(q_1^{i-1}u/z)\cdot[u]_{i-1},$$
  $$\psi^\pm(z)[u]_i=\left(\frac{(z-q_1^iq_2u)(z-q_1^iq_3u)}{(z-q_1^iu)(z-q_1^{i-1}u)}\right)^\pm\cdot[u]_i.$$
\end{prop}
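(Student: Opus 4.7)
The plan is to verify each of the defining relations (T0)--(T6) directly on the basis $\{[u]_i\}_{i\in \ZZ}$ using the explicit formulas, exploiting the reductions from Section 1 to reach the shortest calculation. Extracting coefficients yields
$$e_k [u]_i = (1-q_1)^{-1}(q_1^i u)^k [u]_{i+1},\quad f_k [u]_i = (q_1^{-1}-1)^{-1}(q_1^{i-1}u)^k [u]_{i-1},$$
and $\psi^\pm(z)[u]_i = R_i(z)^\pm [u]_i$ with $R_i(z):=\frac{(z-q_1^iq_2u)(z-q_1^iq_3u)}{(z-q_1^iu)(z-q_1^{i-1}u)}$. The constraint $q_1q_2q_3=1$ forces $R_i(z)\to 1$ both as $z\to\infty$ and as $z\to 0$, so $\psi^\pm_0=1$; combined with the diagonal action of $\psi^\pm(z)$, this gives (T0) at once.

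For (T1) and (T2), I rely on the support of products of delta functions. The product $e(z)e(w)[u]_i$ is supported at $z=q_1^{i+1}u,\ w=q_1^iu$, where the polynomial factor $(z-q_1w)$ on the LHS of (T1) vanishes, while $e(w)e(z)[u]_i$ is supported at $z=q_1^iu,\ w=q_1^{i+1}u$, where $(w-q_1z)$ on the RHS vanishes; both sides are therefore zero, and (T2) follows identically. For (T4) and (T5), I pass to the generators $t_m$: since $\psi^\pm_0=1$, the proposition preceding Proposition~\ref{Serre_0} reduces them to $[t_m,e_j]=e_{m+j}$ and $[t_m,f_j]=-f_{m+j}$ for $m\ne 0$. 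Because $t_m$ acts as a scalar $t_m(i)$ on $[u]_i$, this collapses to the identity $t_m(i+1)-t_m(i)=(q_1^iu)^m$, which I extract by expanding $\ln R_i(z)$ in $z^{-1}$ and comparing with the logarithmic identity displayed just after that proposition; the calculation ultimately reduces to the algebraic identity $(q_1^m-1)(q_2^m+q_3^m-1-q_1^{-m})=\beta_m$, which follows from $q_1q_2q_3=1$. Relation (T6) reduces, by Proposition~\ref{Serre_0}, to the single instance (T6t); since $[e_k,e_l]$ acts on $[u]_i$ as the scalar $(1-q_1)^{-2}(q_1^iu)^{k+l}(q_1^k-q_1^l)$ times $[u]_{i+2}$, a short manipulation shows $[e_0,[e_1,e_{-1}]][u]_i=0$, and the $f$-analogue is identical.

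The main obstacle is relation (T3). A direct calculation of the commutator gives
$$[e(z),f(w)][u]_i = \frac{q_1}{(1-q_1)^2}\bigl(\delta(q_1^{i-1}u/z)\delta(q_1^{i-1}u/w)-\delta(q_1^iu/z)\delta(q_1^iu/w)\bigr)[u]_i,$$
and the basic identity $\delta(a/z)\delta(a/w)=\delta(a/w)\delta(z/w)$ recasts this as $\delta(z/w)$ times a scalar (one contribution for each of the two points $a\in\{q_1^iu,\,q_1^{i-1}u\}$). To match the right-hand side, I perform the partial fraction decomposition of $R_i(z)$ at its simple poles $z=q_1^iu$ and $z=q_1^{i-1}u$, simplifying the two residues with $q_1q_2q_3=1$; applying the standard rule $R(z)^+-R(z)^-=\sum_{a}\mathrm{Res}_{z=a}R\cdot z^{-1}\delta(a/z)$ converts $\psi^+(w)-\psi^-(z)$, paired with $\delta(z/w)$, into exactly the same combination of delta functions that appeared above. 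The delicate final step is that the prefactor $q_1/(1-q_1)^2$ from the commutator must coincide with each residue of $R_i$ divided by $(1-q_1)(1-q_2)(1-q_3)$; checking this balance, in which the chosen normalizations of $e,f,\psi^\pm$ conspire with $q_1q_2q_3=1$, is the heart of the verification.
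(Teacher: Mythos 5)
The paper does not actually prove this proposition; it cites \cite[Proposition 3.1]{FFJMM} and moves on. Your blind verification therefore supplies a self-contained argument where the paper offers only a pointer, and the argument is sound. The coefficient extraction $e_k[u]_i=(1-q_1)^{-1}(q_1^iu)^k[u]_{i+1}$, $f_k[u]_i=(q_1^{-1}-1)^{-1}(q_1^{i-1}u)^k[u]_{i-1}$ is correct for the conventions $e(z)=\sum e_iz^{-i}$, $\delta(a/z)=\sum a^nz^{-n}$. Your use of $q_1q_2q_3=1$ to force $R_i(0)=R_i(\infty)=1$, hence $\psi^\pm_0=1$, is the right way to get (T0) and to validate passing to the $t_m$-presentation for (T4)--(T5); the reduction there does come down to the scalar identity $(q_1^m-1)(q_2^m+q_3^m-1-q_1^{-m})=\beta_m$, which indeed collapses under $q_1q_2q_3=1$. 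For (T1)/(T2), the ``support'' heuristic is made rigorous by $z\,\delta(a/z)=a\,\delta(a/z)$: after multiplying by the polynomial factors, both sides of (T1) vanish identically on every $[u]_i$, and likewise for (T2). Your (T6t) computation hinges on the observation that the scalar $[e_1,e_{-1}]$ picks up on $[u]_i$ is \emph{independent of $i$}; that is exactly right, and the same holds for $f$'s. For (T3), the formula $\delta(a/z)\delta(a/w)=\delta(a/w)\delta(z/w)$ is a genuine formal-series identity (a two-line index substitution), and the residue balance you flag as ``the heart of the verification'' does close: both residues of $R_i$, after dividing by $\beta_1=(1-q_1)(1-q_2)(1-q_3)$ and by the appropriate $z^{-1}$-weight, equal $\pm q_1/(1-q_1)^2$, matching the commutator's prefactor. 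The one stylistic caveat is that you state this last balance without carrying it out; since it is the only place the three normalizations $(1-q_1)^{-1}$, $(q_1^{-1}-1)^{-1}$, $\beta_1^{-1}$ interact nontrivially, it would be worth writing the residue computation explicitly rather than deferring it.
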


 Define $\delta^+(w):=1+w+w^2+\ldots=(\frac{1}{1-w})^+$.
 Our next result provides an analogous construction of \emph{vector representations}
 $\{^aV(u)\}_{u\in \CC}$ for the case of $\ddot{Y}_{h_1,h_2,h_3}(\gl_1)$
 (the proof is straightforward and is left to the interested reader):

\begin{prop}[Vector representation of $\ddot{Y}_{h_1,h_2,h_3}(\gl_1)$]
 For $u\in \CC$, let $^aV(u)$ be a $\CC$-vector space with the basis $\{[u]_j\}_{j\in \ZZ}$.
 The following formulas define a $\ddot{Y}_{h_1,h_2,h_3}(\gl_1)$-action on $^aV(u)$:
  $$e(z)[u]_i=\frac{1}{h_1z}\delta^+((ih_1+u)/z)[u]_{i+1}=\left(\frac{1}{h_1(z-ih_1-u)}\right)^+\cdot[u]_{i+1},$$
  $$f(z)[u]_i=-\frac{1}{h_1z}\delta^+(((i-1)h_1+u)/z)[u]_{i-1}=\left(\frac{-1}{h_1(z-(i-1)h_1-u)}\right)^+\cdot[u]_{i-1},$$
  $$\psi(z)[u]_i=\left(\frac{(z-(ih_1+h_2+u))(z-(ih_1+h_3+u))}{(z-(ih_1+u))(z-((i-1)h_1+u))}\right)^+\cdot [u]_i.$$
\end{prop}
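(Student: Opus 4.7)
The plan is to verify the defining relations (Y0)--(Y6) of $\ddot{Y}_{h_1,h_2,h_3}(\gl_1)$ directly on the basis $\{[u]_i\}_{i \in \ZZ}$, using the generating-series reformulations from Proposition~\ref{yangian_generating}. Since $e(z)$ and $f(z)$ shift the index $[u]_i \mapsto [u]_{i\pm 1}$ with rational-function coefficients expanded in $z^{-1}$, while $\psi(z)$ acts diagonally with a rational-function eigenvalue, each relation reduces to an identity of formal Laurent series in the spectral parameters, and ultimately to rational-function identities that should follow from $h_1 + h_2 + h_3 = 0$.

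The first preparatory step is to decompose the $\psi(z)$-eigenvalue into partial fractions. Setting $\alpha_k := u + kh_1$ and using the identities $-h_1 - h_2 = h_3$, $-h_1 - h_3 = h_2$ coming from the constraint, a residue computation at the simple poles $z = \alpha_i$ and $z = \alpha_{i-1}$ should give
\[
\psi(z)|_{[u]_i} = \left(1 + \frac{h_2 h_3 / h_1}{z-\alpha_i} - \frac{h_2 h_3 / h_1}{z-\alpha_{i-1}}\right)^+.
\]

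The core check will be (Y3), in the form $\sigma_3(w-z)[e(z), f(w)] = \psi(z) - \psi(w)$. A direct computation gives
\[
[e(z), f(w)]\,[u]_i = \left(\frac{1}{h_1^2(z-\alpha_i)(w-\alpha_i)} - \frac{1}{h_1^2(z-\alpha_{i-1})(w-\alpha_{i-1})}\right)^+ [u]_i,
\]
and the elementary identity $\frac{w-z}{(z-\alpha)(w-\alpha)} = \frac{1}{z-\alpha} - \frac{1}{w-\alpha}$ converts $\sigma_3(w-z)$ times the above into exactly the partial-fraction form of $\psi(z) - \psi(w)$ displayed previously. Relation (Y0) is immediate from the diagonal action. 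Relations (Y4)+(Y4$'$) and (Y5)+(Y5$'$) follow from Proposition~\ref{yangian_generating}(e,f) by factoring out the common rational coefficient from $\psi(z) e(w)[u]_i$ and $e(w)\psi(z)[u]_i$ and comparing the resulting scalar $\psi$-eigenvalues, whose zero/pole patterns are designed to match the three linear factors of $P^a$.

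Relations (Y1) and (Y2) follow similarly from Proposition~\ref{yangian_generating}(b,c) via the factorization of $e(z)e(w)[u]_i$ as a product of two one-variable rational functions, together with the fact that $P^a(z,w)$ contains the factor $(z-w-h_1)$ vanishing at the relevant double-pole locus. The Serre relations (Y6) follow from the analogous triple-factorization of $e(z_1)e(z_2)e(z_3)[u]_i$ and an elementary symmetrization identity. The main obstacle I anticipate is the bookkeeping for (Y1), (Y2), and (Y6): one must check that the cubic and Serre expressions really do cancel in full detail after multiplication by the appropriate $P^a$-factors. This is elementary but tedious, with all nontrivial arithmetic concentrated in the constraint $h_1+h_2+h_3 = 0$ and the partial-fraction identity used for (Y3).
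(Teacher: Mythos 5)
The paper states this proposition by analogy to the $\ddot{U}_{q_1,q_2,q_3}(\gl_1)$-case (which it cites from FFJMM) and gives no proof of its own, so your direct verification is the natural approach. The core computations you perform are correct: the partial-fraction decomposition $\psi(z)|_{[u]_i} = 1 + \frac{h_2h_3/h_1}{z-\alpha_i} - \frac{h_2h_3/h_1}{z-\alpha_{i-1}}$ (using $-h_1-h_2=h_3$ and $-h_1-h_3=h_2$), the formula for $[e(z),f(w)][u]_i$, and the identity $\frac{w-z}{(z-\alpha)(w-\alpha)} = \frac{1}{z-\alpha}-\frac{1}{w-\alpha}$ together give (Y3) correctly. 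The check of (Y4)$+$(Y4$'$) and (Y5)$+$(Y5$'$) via the ratio $\psi(z)|_{[u]_{k+1}}/\psi(z)|_{[u]_k} = \frac{(z-\alpha_k+h_1)(z-\alpha_k+h_2)(z-\alpha_k+h_3)}{(z-\alpha_k-h_1)(z-\alpha_k-h_2)(z-\alpha_k-h_3)}$ matching the three linear factors of $P^a$ is also sound.

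Your stated reason for (Y1), (Y2), and (Y6) is, however, not quite right. The factor $(z-w-h_1)$ of $P^a(z,w)$ does \emph{not} kill the poles of $e(z)e(w)[u]_k$: one has $\frac{z-w-h_1}{(z-\alpha_{k+1})(w-\alpha_k)} = \frac{1}{w-\alpha_k}-\frac{1}{z-\alpha_{k+1}}$, which still has both simple poles, so "vanishing at the double-pole locus" does not by itself give (Y1). The termwise verification is what actually works and is cleaner: with $p:=\alpha_k$, $q:=\alpha_{k+1}=p+h_1$, one has $[e_m,e_n][u]_k = \frac{p^nq^m - p^mq^n}{h_1^2}[u]_{k+2}$, and (Y1) collapses to the scalar identity $(q-p)\bigl[(q-p)^2+\sigma_2\bigr]=\sigma_3$, which holds because $h_1^2+\sigma_2 = h_1(h_1+h_2+h_3)+h_2h_3 = h_2h_3$. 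Likewise the Serre relation (Y6), computed termwise, reduces to a symmetrization identity in the three quantities $\alpha_k,\alpha_{k+1},\alpha_{k+2}$, and the key input is the arithmetic-progression relation $\alpha_k-2\alpha_{k+1}+\alpha_{k+2}=0$, not a vanishing of $P^a$. With these replacements your argument is complete; the strategy and the genuinely nontrivial checks (Y3), (Y4) are already right.
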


\subsection{Fock representations $F(u)$ and $^aF(u)$}
$\ $

 A more interesting family of $\ddot{U}_{q_1,q_2,q_3}(\gl_1)$-representations,
 whose basis is labeled by Young diagrams $\{\lambda\}$,
 was established in~\cite[Theorem 4.3]{FFJMM}.

\begin{prop}[Fock representation of $\ddot{U}_{q_1,q_2,q_3}(\gl_1)$]
 For $u\in \CC^*$, let $F(u)$ be a $\CC$-vector space with the basis $\{|\lambda\rangle\}$.
 The following formulas define a $\ddot{U}_{q_1,q_2,q_3}(\gl_1)$-action on $F(u)$:
  $$e(z)|\lambda\rangle=
    \sum_{i\geq 1}\prod_{j=1}^{i-1}
    \frac{(1-q_1^{\lambda_i-\lambda_j}q_2^{i-j-1})(1-q_1^{\lambda_i-\lambda_j+1}q_2^{i-j+1})}
         {(1-q_1^{\lambda_i-\lambda_j}q_2^{i-j})(1-q_1^{\lambda_i-\lambda_j+1}q_2^{i-j})}
    \cdot \frac{\delta(q_1^{\lambda_i}q_2^{i-1}u/z)}{1-q_1}\cdot|\lambda+i\rangle,$$
  $$f(z)|\lambda\rangle=
    q^{-1}\sum_{i\geq 1}\prod_{j=i+1}^{\infty}
    \frac{(1-q_1^{\lambda_j-\lambda_i+1}q_2^{j-i+1})(1-q_1^{\lambda_{j}-\lambda_i}q_2^{j-i-1})}
         {(1-q_1^{\lambda_{j}-\lambda_i+1}q_2^{j-i})(1-q_1^{\lambda_j-\lambda_i}q_2^{j-i})}
    \cdot\frac{\delta(q_1^{\lambda_i-1}q_2^{i-1}u/z)}{q_1^{-1}-1}\cdot|\lambda-i\rangle,$$
  $$\psi^{\pm}(z)|\lambda\rangle=
    q^{-1}\cdot \left(\frac{z-q_1^{\lambda_1-1}q_2^{-1}u}{z-q_1^{\lambda_1}u}
    \prod_{i=1}^{\infty}\frac{(z-q_1^{\lambda_i}q_2^iu)(z-q_1^{\lambda_{i+1}-1}q_2^{i-1}u)}
    {(z-q_1^{\lambda_{i+1}}q_2^iu)(z-q_1^{\lambda_i-1}q_2^{i-1}u)}\right)^{\pm}\cdot|\lambda\rangle.$$
\end{prop}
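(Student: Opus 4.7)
The plan is to reduce the claim to Theorem~\ref{DIm} by identifying $F(u)$, after rescaling, with the geometric $\ddot{U}_{q_1,q_2,q_3}(\gl_1)$-representation $M=\bigoplus_n K^{\TT}(X^{[n]})$ of Section~2 (equivalently, the $r=1$ specialization of $M^r$). Under the parameter dictionary $q_i=t_i$, with $u$ absorbed into an overall framing scalar, the Fock basis $\{|\lambda\rangle\}$ should equal $c_\lambda\cdot [\lambda]$ for explicit $c_\lambda\in \CC(q_1,q_2,u)$, producing a $\ddot{U}_{q_1,q_2,q_3}(\gl_1)$-equivariant isomorphism $F(u)\iso M$; the relations (T0)--(T6) on $F(u)$ then follow from Theorem~\ref{DIm}.

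First I would verify the $\psi^\pm$-part. Both actions are diagonal in their distinguished bases, and the geometric eigenvalue on $[\lambda]$ (the $r=1$ case of Lemma~\ref{matrix_coeff_Gis_m}(b)) is a product over all boxes of $\lambda$ times a framing prefactor. Cancelling contributions from boxes in the interior of $\lambda$ telescopes this into a product over outer corners of $\lambda$, which should match the Fock formula for $\psi^\pm(z)|\lambda\rangle$ term-by-term and thereby pin down the parameter identification. Next I would determine the rescaling ratio $c_{\lambda+\square_i}/c_\lambda$ by matching the coefficient of $\delta(q_1^{\lambda_i}q_2^{i-1}u/z)$ in $e(z)|\lambda\rangle$ with the geometric matrix element $e_{[\lambda+\square_i,\lambda]}$ (Lemma~\ref{matrix_coeff_Gis_m}(a) at $r=1$); the symmetric comparison for $f(z)$ must return the same ratio, and consistency of $c_\lambda$ around any unit cell of the Young lattice is equivalent to the residue identity encoded by relation (T3). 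Once these checks go through, the resulting linear isomorphism is automatically $\ddot{U}_{q_1,q_2,q_3}(\gl_1)$-equivariant.

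The main obstacle is the combinatorial identity needed to show that the infinite products appearing in the Fock formulas (which collapse to finite products on any honest Young diagram, since $\lambda_i=0$ eventually) reproduce the Nakajima tangent/normal-bundle character products governing $e_{[\lambda+\square_i,\lambda]}$ and $f_{[\lambda+\square_i,\lambda]}$ on the geometric side. If one prefers a self-contained route, all six relations can instead be checked directly on the basis: (T0), (T4), (T5) reduce, via Proposition~1.2, to the explicit product formula for $\psi^\pm(z)|\lambda\rangle$; the delta-function support of $e(z), f(z)$ reduces (T1)--(T3) to rational identities at the simple poles $z=q_1^{\lambda_i}q_2^{i-1}u$; and the Serre relation (T6), by Proposition~\ref{Serre_0}, reduces to its special case (T6t), a short scalar check per basis vector. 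Either approach concentrates the real work in an identity between two products over rows and columns of $\lambda$.
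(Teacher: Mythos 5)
The paper gives no proof of this proposition: it is imported wholesale from \cite[Theorem~4.3, Corollary~4.4]{FFJMM}, and Remark~\ref{coproduct_m} records that the construction in that reference proceeds via the semi-infinite wedge of the vector representations $V(u)$ using the formal coproduct $\Delta$, not via geometry. Your route --- matching matrix elements against the geometric $K$-theoretic action (Lemma~\ref{matrix_coeff_Gis_m}) to produce a $\ddot{U}_{q_1,q_2,q_3}(\gl_1)$-equivariant isomorphism and then citing Theorem~\ref{Gieseker_m} --- is therefore genuinely different. It is, however, exactly the strategy the paper itself deploys for the Yangian Fock module ${}^aF(u)$ (the lemma following Proposition~\ref{Fock_a} establishes $V\iso{}^aF(0)$ and twists by $\phi^a_u$), so it is a perfectly reasonable way to establish the proposition independently of \cite{FFJMM}; the ``main obstacle'' you flag, the identity between the Fock products and the Nakajima tangent/normal characters, is precisely what Lemma~\ref{matrix_coeff_Gis_m} already encodes.

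Two preparatory statements are loose enough to be wrong as written and must be tightened before the plan runs. First, the module $M$ of Section~2.2 carries no parameter: its $e$- and $f$-matrix elements are delta functions supported at $z=t_1^{\lambda_i-1}t_2^{i-1}$, whereas the Fock formulas have supports $z=q_1^{\lambda_i}q_2^{i-1}u$. A rescaling $|\lambda\rangle=c_\lambda[\lambda]$ of the basis cannot move the support of a delta function, so $F(u)\not\iso M$ for generic $u$. The phrase ``$u$ absorbed into an overall framing scalar'' must therefore be replaced by one of two concrete steps: either work over the Gieseker space $M(1,n)$, whose extra framing weight $\chi_1$ supplies the missing parameter via Lemma~\ref{matrix_coeff_Gis_m} at $r=1$ with $u=\chi_1^{-1}$, or first prove $F(1)\iso M$ and then twist by the shift automorphism $\phi_u$ of Remark~\ref{Fock_vs_K-theory}(b). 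Second, the geometric eigenvalue of $\psi^\pm(z)$ in Lemma~\ref{matrix_coeff_Gis_m}(b) carries the overall multiplier $(-1)^rt_1^{r+1}t_2^{r+1}\chi_1\cdots\chi_r$, while the Fock eigenvalue tends to $1$ as $z\to\infty$; this is a rescaling of the generators $\psi^\pm_j, f_i$, not of the basis, and has to be absorbed by twisting by the automorphism $\kappa$ of Theorem~\ref{Tensor_Fock_m} at $r=1$. With these two normalizations fixed, the remaining program --- determining $c_\lambda$ from the $e$-matrix elements, cross-checking against $f$, and verifying consistency around a unit cell of the Young lattice --- is sound.
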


\begin{rem}\label{coproduct_m}
 The Fock module $F(u)$ was originally constructed from $V(u)$ by using the semi-infinite wedge construction
 and the formal coproduct structure on $\ddot{U}_{q_1,q_2,q_3}(\gl_1)$ defined by
  $$\Delta(e(z))=e(z)\otimes 1+\psi^-(z)\otimes e(z),\
    \Delta(f(z))=f(z)\otimes \psi^+(z)+1\otimes f(z),\
    \Delta(\psi^\pm(z))=\psi^\pm(z)\otimes \psi^\pm(z).$$
\end{rem}

\begin{rem}\label{Fock_vs_K-theory}
 (a) According to~\cite[Corollary 4.5]{FFJMM}, there exist constants $\{c_\lambda\}$ such that
  the map $[\lambda]\mapsto c_\lambda|\lambda\rangle$ induces an
  isomorphism $M\iso F(1)$ of $\ddot{U}_{q_1,q_2,q_3}(\gl_1)$-representations,
  where $M$ is the representation from Theorem~\ref{DIm} and $q_i=t_i$.

\noindent
 (b) For $u\in \CC^*$, let $\phi_u$ be the \emph{shift automorphism}
 of $\ddot{U}_{q_1,q_2,q_3}(\gl_1)$ defined on the generators by
  $$\psi_0^{-1}\mapsto \psi_0^{-1},\ \
    \psi_i\mapsto u^{-i}\cdot \psi_i,\ \
    e_i\mapsto u^{-i}\cdot e_i, \ \
    f_i\mapsto u^{-i}\cdot f_i\ \  \mathrm{for}\ i\in \ZZ.$$
 Then the modules $F(u)$ and $V(u)$ are obtained from $F(1)$ and $V(1)$ via
 a twist by $\phi_{1/u}$.\footnote{For $\sigma\in \mathrm{Aut}(A)$, the twist of an
 $A$-representation $\rho:A\to \End(V)$ is $\rho^\sigma: A\to \End(V)$ with $\rho^\sigma(a)=\rho(\sigma(a))$.}
\end{rem}

 Let us analogously define \emph{Fock representations} $\{^aF(u)\}_{u\in \CC}$ for the case of $\ddot{Y}_{h_1,h_2,h_3}(\gl_1)$.

\begin{prop}[Fock representation of $\ddot{Y}_{h_1,h_2,h_3}(\gl_1)$]\label{Fock_a}
 For $u\in \CC$, let $^aF(u)$ be a $\CC$-vector space with the basis $\{|\lambda\rangle\}$.
 The following formulas define a $\ddot{Y}_{h_1,h_2,h_3}(\gl_1)$-action on $^aF(u)$:
  $$e(z)|\lambda\rangle=
    \frac{1}{h_1z}\sum_{i\geq 1}\prod_{j=1}^{i-1}
    \frac{((\lambda_i-\lambda_j)h_1+(i-j-1)h_2)((\lambda_i-\lambda_j+1)h_1+(i-j+1)h_2)}
         {((\lambda_i-\lambda_j)h_1+(i-j)h_2)((\lambda_i-\lambda_j+1)h_1+(i-j)h_2)}\times$$
    $$\delta^+\left(\frac{\lambda_ih_1+(i-1)h_2+u}{z}\right)\cdot|\lambda+i\rangle,$$
  $$f(z)|\lambda\rangle=
    -\frac{1}{h_1z}\sum_{i\geq 1}\prod_{j=i+1}^{\infty}
    \frac{((\lambda_j-\lambda_i+1)h_1+(j-i+1)h_2)((\lambda_{j+1}-\lambda_i)h_1+(j-i)h_2)}
         {((\lambda_{j+1}-\lambda_i+1)h_1+(j-i+1)h_2)((\lambda_j-\lambda_i)h_1+(j-i)h_2)}\times$$
    $$\frac{(\lambda_{i+1}-\lambda_i)h_1}{(\lambda_{i+1}-\lambda_i+1)h_1+h_2}
      \delta^+\left(\frac{(\lambda_i-1)h_1+(i-1)h_2+u}{z}\right)\cdot|\lambda-i\rangle,$$
  $$\psi(z)|\lambda\rangle=
    \left(\prod_{i=1}^{\infty}\frac{(z-(\lambda_ih_1+ih_2+u))(z-((\lambda_{i+1}-1)h_1+(i-1)h_2+u))}
    {(z-(\lambda_{i+1}h_1+ih_2+u))(z-((\lambda_i-1)h_1+(i-1)h_2+u))}\right)^+\times$$
    $$\left(\frac{z-((\lambda_1-1)h_1-h_2+u)}{z-(\lambda_1h_1+u)}\right)^+\cdot |\lambda\rangle.$$
\end{prop}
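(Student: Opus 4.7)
The plan is to mirror the strategy used for the quantum toroidal Fock representation in \cite{FFJMM}, transporting the action from the geometric cohomology module $V$ of Theorem~\ref{DIa} and then extending it to arbitrary spectral parameter by translation. First I would reduce to $u=0$ by constructing a translation automorphism $\tau_u$ of $\ddot{Y}_{h_1,h_2,h_3}(\gl_1)$. Using the generating-series forms in Proposition~\ref{yangian_generating}, one checks that $\psi(z)\mapsto \psi(z-u)$, $e(z)\mapsto e(z-u)$, $f(z)\mapsto f(z-u)$ preserves (Y0)--(Y6); in mode form this reads $e_k\mapsto\sum_{j=0}^k\binom{k}{j}u^{k-j}e_j$ and similarly for $f_k, \psi_k$. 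Since the formulas for ${}^aF(u)$ are obtained from those for ${}^aF(0)$ by exactly this shift $z\to z-u$ inside every $\delta^+$ factor and every $\psi$-product, ${}^aF(u)$ is the pullback of ${}^aF(0)$ along $\tau_u$, so it suffices to treat ${}^aF(0)$.

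Next I would identify ${}^aF(0)$ with $V$ following the pattern of Remark~\ref{Fock_vs_K-theory}(a). The $\psi(z)$-eigenvalue from Lemma~\ref{matrix_elements_a}(b) and the one prescribed in the proposition are two different rational-function presentations of the same function: translating the infinite row-by-row product of the Fock formula into a box-by-box product by telescoping the column contributions yields the expression in Lemma~\ref{matrix_elements_a}(b) after $h_i\leftrightarrow s_i$. For $e_k$ and $f_k$ I would define rescaling constants $c_\lambda\in\CC(h_1,h_2)^{\times}$ inductively with $c_\emptyset=1$, using the $e_0$ coefficients from Lemma~\ref{matrix_elements_a}(a) to pin down $c_{\lambda+i}/c_\lambda$; the linear map $[\lambda]\mapsto c_\lambda|\lambda\rangle$ will then intertwine the two actions as soon as a single consistency identity is verified.

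The main obstacle is this consistency, which is equivalent to showing that the product $e_{0[\lambda,\lambda+i]}\cdot f_{0[\lambda+i,\lambda]}$ of matrix coefficients computed from Lemma~\ref{matrix_elements_a}(a) equals the analogous product extracted from the formulas for ${}^aF(0)$. This is a rational-function identity in $h_1, h_2$ whose two sides are products of linear factors indexed by the addable and removable boxes of $\lambda$. I expect to verify it by the residue technique of Lemma~\ref{psi_0,1,2}(c): write both sides as rational functions in the variables $y_i=(\lambda_i-1)h_1+(i-1)h_2$, remove the apparent simple poles by explicit residue computation at $y_i=y_j$ and $y_i=y_j+h_1$, and match the leading behaviour on the empty diagram. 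Once this identity is established, the isomorphism ${}^aF(0)\simeq V$ transports the $\ddot{Y}_{h_1,h_2,h_3}(\gl_1)$-action of Theorem~\ref{DIa} to ${}^aF(0)$, and twisting by $\tau_u$ yields the proposition.
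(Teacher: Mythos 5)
Your proposal follows essentially the same route as the paper: construct the shift automorphism $\phi^a_u$ (sending $e(z),f(z),\psi(z)$ to $e(z-u),f(z-u),\psi(z-u)$) to reduce to $u=0$, then exhibit a diagonal change of basis $[\lambda]\mapsto c_\lambda|\lambda\rangle$ matching ${}^aF(0)$ with the geometric module $V$ of Theorem~\ref{DIa}. The only difference is that the paper records the rescaling constants $c^a_\lambda$ in closed form and treats the resulting intertwining check as a routine verification, whereas you define the constants inductively from the $e_0$ coefficients and propose a residue argument for the consistency identity.
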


 The proof of this proposition follows from the following lemma:

\begin{lem}\label{iso}
 (a) For $u\in \CC$, there exists the \emph{shift automorphism} $\phi^a_u$ of $\ddot{Y}_{h_1,h_2,h_3}(\gl_1)$ such that
      $$\phi^a_u:\ e(z)\mapsto e(z+u), \ f(z)\mapsto f(z+u),\ \psi(z)\mapsto \psi(z+u).$$

\noindent
 (b) The Fock representation $^aF(u)$ is obtained from $^aF(0)$ via a twist by $\phi^a_{-u}$.

\noindent
 (c) There exist constants $\{c^a_\lambda\}$ such that the map
   $[\lambda]\mapsto c^a_\lambda|\lambda\rangle$
 induces an isomorphism $V\iso {^aF(0)}$ of $\ddot{Y}_{h_1,h_2,h_3}(\gl_1)$-representations,
 where $V$ is the representation from Theorem~\ref{DIa} and $h_i=s_i$.
\end{lem}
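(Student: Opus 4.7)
The plan is to prove the three parts in sequence; parts (a) and (b) are formal consequences of the generating-series presentation of $\ddot{Y}_{h_1,h_2,h_3}(\gl_1)$, while part (c) is the substantive step and reduces to a combinatorial identity together with a routine propagation argument.

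For (a), I invoke the generating-series reformulation of the relations (Y0)--(Y6) given in Proposition~\ref{yangian_generating}. Every defining relation is built out of the difference $z-w$, the parameters $h_1,h_2,h_3$, and the index-shift operators $\sigma^\pm$, which act on a generator's index but not on the spectral variable. Hence the substitution $z\mapsto z-u$, $w\mapsto w-u$ leaves every relation invariant. Expanding $(z-u)^{-m-1}=\sum_{k\geq 0}\binom{m+k}{k}u^k z^{-m-k-1}$ shows that $\phi^a_u(e_m)=\sum_{j=0}^m\binom{m}{j}u^{m-j}e_j$ is a finite polynomial in the original generators, with analogous formulas for $\phi^a_u(f_m)$ and $\phi^a_u(\psi_m)$; the identity $\phi^a_{-u}\circ\phi^a_u=\mathrm{id}$ then yields invertibility.

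For (b), I compare the formulas of Proposition~\ref{Fock_a} for $^aF(u)$ and $^aF(0)$. Using $\frac{1}{h_1 z}\delta^+(a/z)=\left(\frac{1}{h_1(z-a)}\right)^+$, one sees that substituting $z\mapsto z-u$ in the $^aF(0)$-formulas reproduces verbatim the $^aF(u)$-formulas: each simple pole in the $e$- and $f$-action shifts by $u$, and each linear factor $z-(\cdots)$ in the $\psi$-action becomes $z-(\cdots+u)$. Thus pulling the module structure on $^aF(0)$ back along $\phi^a_u$ recovers $^aF(u)$.

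For (c), both $V$ and $^aF(0)$ are $\ZZ_+$-graded with a distinguished highest-weight vector ($[\emptyset]$, respectively $|\emptyset\rangle$) killed by every $f_j$, and each carries a $\psi(z)$-action diagonalized in the Young-diagram basis. The decisive step is to verify that $\psi(z)$ has the same eigenvalue on $[\lambda]$ (Lemma~\ref{matrix_elements_a}(b)) and on $|\lambda\rangle$ (Proposition~\ref{Fock_a}), under the identification $s_i=h_i$. This reduces to the telescoping identity rewriting the box-product
\[
\left(1-\frac{h_3}{z}\right)\prod_{\square\in\lambda}\prod_{k=1}^{3}\frac{1-(\chi(\square)-h_k)/z}{1-(\chi(\square)+h_k)/z}
\]
as the row-product in Proposition~\ref{Fock_a}: collecting boxes within each row via $\chi(\square_{i,j})=(i-1)h_1+(j-1)h_2$ and cancelling between consecutive columns and consecutive rows, the three $k$-triples collapse into the advertised form; this is the direct additive analogue of the identity used in~\cite{FFJMM} for the $K$-theoretic Fock module. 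Given this eigenvalue coincidence, I define $\Phi\colon V\to {^aF(0)}$ by $\Phi([\emptyset])=|\emptyset\rangle$ and extend by demanding that $\Phi$ intertwine the $e_j$; the scalars $c^a_\lambda$ are read off from the ratio of the $e$-matrix coefficients in Lemma~\ref{matrix_elements_a}(a) and Proposition~\ref{Fock_a}, and well-definedness (independence of the order in which boxes are added) is forced by the matching $\psi$-spectrum on intermediate diagrams. The intertwining for the $f_j$ is then automatic: the defect $D_j:=\Phi f_j-f_j\Phi$ commutes with every $e_i$ by (Y3) and vanishes on $[\emptyset]$, hence vanishes on the $e$-cyclic module $V$. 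The principal obstacle is the telescoping identity for the $\psi$-eigenvalues; everything else reduces to mechanical bookkeeping.
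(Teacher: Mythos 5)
Your proofs of (a) and (b) are fine and essentially agree with the paper (which simply declares them ``straightforward''). For (c), you take a genuinely different route from the paper: the paper writes down the explicit product formula for $c^a_\lambda$ and leaves the check of intertwining against Lemma~\ref{matrix_elements_a} and Proposition~\ref{Fock_a} as a routine verification, whereas you attempt a structural argument --- establish the $\psi$-eigenvalue coincidence (the telescoping identity), define $\Phi$ by forcing $e_j$-intertwining, and deduce $f_j$-intertwining abstractly from~(Y3) plus $e$-cyclicity. That last deduction is sound, and the telescoping identity is indeed the right way to match the $\psi$-eigenvalues of Lemma~\ref{matrix_elements_a}(b) with Proposition~\ref{Fock_a}.

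However, there is a gap in the central step. You assert that well-definedness of the recursively defined $c^a_\lambda$ --- i.e.\ that the ratio $r_{\lambda,\square}:=e^V_{[\lambda+\square,\lambda]}/e^F_{[\lambda+\square,\lambda]}$ satisfies the plaquette cocycle condition $r_{\lambda,\square_1}\,r_{\lambda+\square_1,\square_2}=r_{\lambda,\square_2}\,r_{\lambda+\square_2,\square_1}$ --- ``is forced by the matching $\psi$-spectrum on intermediate diagrams.'' That does not follow. Matching $\psi$-spectrum together with~(Y3) only pins down the products $e_{[\lambda+\square,\lambda]}\cdot f_{[\lambda,\lambda+\square]}$ on each single link (via the residues of $\psi(z)$ at $z=\chi(\square)$); it imposes no constraint relating $e$-coefficients along the two composite paths from $\lambda$ to $\lambda+\square_1+\square_2$. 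What actually forces the cocycle condition is the quadratic relation~(Y1) (in the generating-series form of Proposition~\ref{yangian_generating}(b)): evaluating it on the matrix element from $[\lambda]$ to $[\mu]$ in each module yields
$a_{\square_2}(\lambda+\square_1)\,a_{\square_1}(\lambda)\,P^a(\chi(\square_2),\chi(\square_1))
=-\,a_{\square_1}(\lambda+\square_2)\,a_{\square_2}(\lambda)\,P^a(\chi(\square_1),\chi(\square_2))$,
and since the factors $P^a(\chi(\square_i),\chi(\square_j))$ are the same combinatorial quantities in $V$ and in ${}^aF(0)$, dividing the two identities gives exactly the cocycle condition. You should either invoke~(Y1) in this way, or simply verify the cocycle against the explicit formulas as the paper does; the $\psi$-spectrum alone is not enough.
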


\begin{proof}
$\ $

 Parts (a) and (b) are straightforward.

 We define $c^a_\lambda$ by the following formula:
  $$c^a_\lambda=\prod_{i\geq 1} \prod_{p=0}^{\lambda_i-1}(-ph_1+h_2)\cdot
     \prod_{i\geq 2} \prod_{j=1}^{i-1} \prod_{p=1}^{\lambda_i}\frac{(p-\lambda_j)h_1+(i-j+1)h_2}{(p-\lambda_j)h_1+(i-j)h_2}.$$
 It is a routine verification to check that the map $[\lambda]\mapsto c^a_\lambda|\lambda\rangle$
 intertwines the formulas for the matrix coefficients of $e_k, f_k, \psi_k\ (k\in \ZZ_+)$
 from Lemma~\ref{matrix_elements_a} and Proposition~\ref{Fock_a}.
\end{proof}

\begin{defn}
 We say that a $\ddot{Y}_{h_1,h_2,h_3}(\gl_1)$-representation $U$ has a \emph{central charge} $(c_0,c_1)\in \CC^2$ if
 $\psi_0$ acts on $U$ via $c_0\cdot \mathrm{Id}_U$ and $\psi_1$ acts on $U$ via $c_1\cdot \mathrm{Id}_U$.
\end{defn}

 Thus $^aV(u)$ has a central charge $\left(0,\frac{1}{h_1}\right)$,
 while $^aF(u)$ has a central charge $\left(-\frac{1}{h_1h_2},-\frac{u}{h_1h_2}\right)$.

\subsection{Tensor products $\otimes F(u_i)$}
$\ $

 In this section, we relate the representation $M^r$ from Theorem~\ref{Gieseker_m}
 to the Fock modules $F(u)$. Let $\Delta$ be the \emph{formal} comultiplication on
 $\ddot{U}_{q_1,q_2,q_3}(\gl_1)$ from Remark~\ref{coproduct_m}.
 This is not a comultiplication in the usual sense, since $\Delta(e_i)$ and $\Delta(f_i)$
 contain infinite sums. However, for all modules of interest
 ``with general spectral parameters'' these formulas make sense
 (this is explained below in the particular case of Fock modules).
 The following theorem can be viewed as a higher rank generalization of Remark~\ref{Fock_vs_K-theory}.

\begin{thm}\label{Tensor_Fock_m}
 There exists a unique collection of constants $\{c_{\bar{\lambda}}\}$ from
 the field $\mathbb{F}_r$ with $c_{\bar{\emptyset}}=1$ such that the map
   $[\bar{\lambda}]=[(\lambda^1,\ldots,\lambda^r)]\mapsto c_{\bar{\lambda}}\cdot |\lambda^1\rangle\otimes\cdots\otimes |\lambda^r\rangle$
 induces an isomorphism $M^r\iso F(\chi_1^{-1})\otimes \cdots \otimes F(\chi_r^{-1})$
 of $\ddot{U}_{q_1,q_2,q_3}(\gl_1)$-representations with $q_i=t_i$.
\end{thm}

 Let us first make sense of the tensor product
 $F(\chi_1)\otimes F(\chi_2)$ (the case $r>2$ is completely analogous).
 The action of $e(z)$ and $f(z)$ in the basis $\{|\lambda\rangle\}$ of $F(\chi)$ can be written as follows:
   $$e(z)|\lambda\rangle=\sum_\square  a_{\lambda,\square}\delta\left(\frac{\chi(\square)}{z}\right)|\lambda+\square\rangle,\
     f(z)|\lambda\rangle=\sum_\square  b_{\lambda,\square}\delta\left(\frac{\chi(\square)}{z}\right)|\lambda-\square\rangle,$$
 where $a_{\lambda,\square}, b_{\lambda,\square}\in \mathbb{F}_r$,
 the first sum is over $\square\notin \lambda$ such that $\lambda+\square$ is a Young diagram,
 while the second sum is over $\square\in \lambda$ such that $\lambda-\square$ is a Young diagram.

 According to the comultiplication formula, we have
   $$\Delta(e(z))(|\lambda^1\rangle \otimes |\lambda^2\rangle)=
     e(z)(|\lambda^1\rangle)\otimes |\lambda^2\rangle+\psi^-(z)(|\lambda^1\rangle)\otimes e(z)(|\lambda^2\rangle).$$
 The first summand is well defined.
 To make sense of the second summand, we use the formula
\begin{equation}\tag{*}
   g(z)\delta(a/z)=g(a)\delta(a/z)\ \mathrm{for\ any\ rational\ function}\ g(z).
\end{equation}
 Recall that $\psi^{\pm}(z)(|\lambda\rangle)=\gamma_{\lambda}(z)^\pm\cdot |\lambda\rangle$,
 where $\gamma_{\lambda}(z)$ is a rational function in $z$ depending on $\lambda$.
 Combining this with (*), we find
   $$\psi^-(z)(|\lambda^1\rangle)\otimes e(z)(|\lambda^2\rangle)=
     \sum_\square a_{\lambda^2,\square}\gamma_{\lambda^1}(\chi(\square))
     \delta\left(\frac{\chi(\square)}{z}\right)\cdot|\lambda^1\rangle\otimes |\lambda^2+\square\rangle.$$
 The action of $f_i$ on $F(\chi_1)\otimes F(\chi_2)$ is defined analogously.
 Finally, the formula $\Delta(\psi^{\pm}(z))=\psi^{\pm}(z)\otimes \psi^{\pm}(z)$
 provides a well defined action of $\psi_i$ on $F(\chi_1)\otimes F(\chi_2)$.

\begin{proof}[Proof of Theorem~\ref{Tensor_Fock_m}]$\ $

 Due to Remark~\ref{Fock_vs_K-theory}, we can identify $F(\chi_k^{-1})\simeq M^{\phi_{\chi_k}}$,
 the twist of $M$ by $\phi_{\chi_k}$. For any $r$-tuple of diagrams
 $\bar{\lambda}=(\lambda^1,\ldots,\lambda^r)$, Lemma~\ref{matrix_coeff_Gis_m}(b)
 implies that the eigenvalue of $\psi^\pm(z)$ on $[\bar{\lambda}]\in M^r$
 equals the eigenvalue of $\psi^\pm(z)$ on
  $|\lambda^1\rangle \otimes \cdots \otimes |\lambda^r\rangle\in F(\chi_1^{-1})\otimes \cdots\otimes F(\chi_r^{-1})$.
 Hence, the map
  $[\bar{\lambda}]\mapsto c_{\bar{\lambda}}\cdot |\lambda^1\rangle\otimes\cdots\otimes |\lambda^r\rangle$
 intertwines actions of $\psi_i$ and $\psi_0^{-1}$ for any constants $c_{\bar{\lambda}}$.

 Consider constants $c_{\bar{\lambda}}$ such that $c_{\bar{\emptyset}}=1$ and
 $c_{\bar{\lambda}+\square^k_i}/c_{\bar{\lambda}}=d_{\bar{\lambda},\square^k_i}$ with
\begin{equation}\label{compatibility}
    d_{\bar{\lambda},\square^k_i}:=
    \frac{t^{k-1}t_2(-t_1\chi^{(k)}_i)^{-r}}{\chi_1\cdots \chi_r}\cdot
    \prod_{(l,j)\prec (k,i)}  \frac{1-t_1t_2\chi^{(k)}_i/\chi^{(l)}_j}{1-t_1\chi^{(k)}_i/\chi^{(l)}_j}\cdot
    \prod_{(l,j)\succ (k,i)}  \frac{1-\chi^{(k)}_i/\chi^{(l)}_j}{1-t_2^{-1}\chi^{(k)}_i/\chi^{(l)}_j},
\end{equation}
 where $\chi^{(m)}_p=t_1^{\lambda^m_p-1}t_2^{p-1}\chi_m^{-1}$ as before and
 $(l,j)\prec (k,i)$ if and only if $l<k$ or $l=k\ \mathrm{and}\ j<i$.
 Since $\chi^{(m)}_{p+1}=t_2\chi^{(m)}_p$ for $p\geq |\lambda^m|$, it is easy to see
 that the infinite products in (\ref{compatibility}) are actually finite.
 The existence of $c_{\bar{\lambda}}$ satisfying
  $c_{\bar{\lambda}+\square^k_i}/c_{\bar{\lambda}}=d_{\bar{\lambda},\square^k_i}$
 is equivalent to
   $$d_{\bar{\lambda}+\square^k_i, \square^l_j}\cdot d_{\bar{\lambda},\square^k_i}=
     d_{\bar{\lambda}+\square^l_j, \square^k_i}\cdot d_{\bar{\lambda},\square^l_j}\
     \mathrm{for\ all\ possible}\ \square^k_i,\square^l_j.$$
 The proof of these equalities as well as verification that the map
  $[\bar{\lambda}]\mapsto c_{\bar{\lambda}}\cdot |\lambda^1\rangle\otimes\cdots\otimes |\lambda^r\rangle$
 intertwines actions of $e_i$ and $f_i$ are left to the interested reader.
 The result follows.
\end{proof}

\subsection{Tensor products $\otimes\ ^aF(u_i)$}
$\ $

 In this section, we relate the representation $V^r$ from Theorem~\ref{Gieseker_a}
 to the Fock modules $^aF(u)$. First we need to define the tensor product
 $W_1\otimes W_2$ of $\ddot{Y}_{h_1,h_2,h_3}(\gl_1)$-representations $W_1, W_2$.

\begin{defn}
 A $\ddot{Y}_{h_1,h_2,h_3}(\gl_1)$-representation $W$ is called \emph{admissible}
 if there exists a basis $\{w_\alpha\}_{\alpha\in I}$ of $W$ such that

$\circ$
   $e(z)(w_\alpha)=\sum_{\alpha'\in I} \frac{c_{\alpha,\alpha'}}{z}\delta^+(\lambda_{\alpha,\alpha'}/z)w_{\alpha'},
    f(z)(w_\alpha)=\sum_{\alpha''\in I} \frac{d_{\alpha,\alpha''}}{z}\delta^+(\lambda_{\alpha'',\alpha}/z)w_{\alpha''}$
 for some $c_{\alpha,\alpha'}, d_{\alpha,\alpha''}, \lambda_{\alpha,\alpha'}\in \CC$,
 so that both sums are finite for every $\alpha\in I$.

$\circ$
   $\psi(z)(w_\alpha)=\gamma_W(\alpha,z)^+ \cdot w_\alpha$ for a rational function
   $\gamma_W(\alpha,z)$ defined by
   $$\gamma_W(\alpha,z)=
     1+\sigma_3\sum_{\alpha''\in I}\frac{d_{\alpha,\alpha''}c_{\alpha'',\alpha}}{z-\lambda_{\alpha'',\alpha}}-
        \sigma_3\sum_{\alpha'\in I}\frac{c_{\alpha,\alpha'}d_{\alpha',\alpha}}{z-\lambda_{\alpha,\alpha'}}.$$

$\circ$
 For any $\alpha_1\ne \alpha_2\in I$, there is a bijection between
  $\{\alpha'\in I|c_{\alpha_1,\alpha'}d_{\alpha',\alpha_2}\ne 0\}$
 and
  $\{\alpha''\in I|d_{\alpha_1,\alpha''}c_{\alpha'',\alpha_2}\ne 0\}$
 such that
  $\lambda_{\alpha_1,\alpha'}=\lambda_{\alpha'',\alpha_2}$ and
  $\lambda_{\alpha_2,\alpha'}=\lambda_{\alpha'',\alpha_1}$.
\end{defn}

\begin{ex}\label{admissible}
  The modules $^aV(u)$ and $^aF(u)$ are admissible.
\end{ex}

 Let $W_1$ and $W_2$ be admissible $\ddot{Y}_{h_1,h_2,h_3}(\gl_1)$-representations with the
 corresponding bases $\{w^1_\alpha\}_{\alpha\in I}$ and $\{w^2_\beta\}_{\beta\in J}$.
 Define the operator series $\psi(z),e(z),f(z)\in \mathrm{End}(W_1\otimes W_2)[[z^{-1}]]$ by
  $$\psi(z)(w^1_\alpha\otimes w^2_\beta)=\psi(z)(w^1_\alpha)\otimes \psi(z)(w^2_\beta),$$
  $$e(z)(w^1_\alpha\otimes w^2_\beta)=
    \sum_{\alpha'\in I}\frac{c^1_{\alpha,\alpha'}}{z}\delta^+(\lambda^1_{\alpha,\alpha'}/z)w^1_{\alpha'}\otimes w^2_\beta+
    \sum_{\beta'\in J}\frac{c^2_{\beta,\beta'}\gamma_{W_1}(\alpha, \lambda^2_{\beta,\beta'})}{z}\delta^+(\lambda^2_{\beta,\beta'}/z)w^1_\alpha\otimes w^2_{\beta'},$$
  $$f(z)(w^1_\alpha\otimes w^2_\beta)=
    \sum_{\beta'\in J}\frac{d^2_{\beta,\beta''}}{z}\delta^+(\lambda^2_{\beta'',\beta}/z)w^1_{\alpha}\otimes w^2_{\beta''}+
    \sum_{\alpha''\in I}\frac{d^1_{\alpha,\alpha''}\gamma_{W_2}(\beta, \lambda^1_{\alpha'',\alpha})}{z}\delta^+(\lambda^1_{\alpha'',\alpha}/z)w^1_{\alpha''}\otimes w^2_{\beta}.$$

\begin{rem}\label{well-defined}
 These formulas are well defined only if
 $\gamma_{W_1}(\alpha,z)$ is regular at $\{\lambda^2_{\beta,\beta'}|c^2_{\beta,\beta'}\ne 0\}$ and
 $\gamma_{W_2}(\beta,z)$ is regular at $\{\lambda^1_{\alpha'',\alpha}|d^1_{\alpha,\alpha''}\ne 0\}$
 for any $\alpha\in I, \beta\in J$.
\end{rem}

 The following is straightforward:

\begin{lem}\label{tensor}
 If $W_1$ and $W_2$ are admissible $\ddot{Y}_{h_1,h_2,h_3}(\gl_1)$-representations
 and the assumptions of Remark~\ref{well-defined} hold, then the above formulas
 define an action of $\ddot{Y}_{h_1,h_2,h_3}(\gl_1)$ on $W_1\otimes W_2$.
\end{lem}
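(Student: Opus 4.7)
My approach is to verify each of (Y0)--(Y6), or the equivalent generating-series reformulations from Proposition~\ref{yangian_generating}, directly on $W_1\otimes W_2$. Admissibility of $W_1,W_2$ together with Remark~\ref{well-defined} ensures that every expression in sight is a well-defined element of $(W_1\otimes W_2)[[z^{-1}]]$: no $\gamma_{W_i}$ is ever evaluated at a pole, and the action of a generator on a basis vector is always a finite sum. Throughout I would work in the basis $\{w^1_\alpha\otimes w^2_\beta\}$ and exploit the $\delta^+$-analogue of the substitution identity $g(z)\delta(a/z)=g(a)\delta(a/z)$ that already underlies the parallel $\ddot{U}_{q_1,q_2,q_3}(\gl_1)$-construction of Remark~\ref{coproduct_m}.

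\textbf{The core identity.} I would start with (Y3) in the form $\sigma_3(w-z)[e(z),f(w)]=\psi(z)-\psi(w)$. Applied to $w^1_\alpha\otimes w^2_\beta$, the commutator splits into four groups of summands coming from the two pieces of $\Delta(e(z))$ and the two pieces of $\Delta(f(z))$. The two ``pure'' contributions (the $e$ and $f$ acting on the same tensor factor) yield $(\gamma_{W_1}(\alpha,z)-\gamma_{W_1}(\alpha,w))\otimes 1$ and $1\otimes(\gamma_{W_2}(\beta,z)-\gamma_{W_2}(\beta,w))$ by the very definition of $\gamma_{W_i}$ in the admissibility axiom. The two ``cross'' contributions, after replacing the $\psi(\bullet)$-prefactors by their scalar values $\gamma_{W_i}(\cdot,\lambda^j_{\cdot,\cdot})$ using the substitution rule, are equal up to sign and cancel term by term. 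What survives reassembles into $\Delta(\psi(z))-\Delta(\psi(w))=\psi(z)\otimes\psi(z)-\psi(w)\otimes\psi(w)$, matching the right-hand side.

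\textbf{Remaining relations and main obstacle.} Relation (Y0) is immediate from $\Delta(\psi(z))=\psi(z)\otimes\psi(z)$ and the fact that $[\psi(z),\psi(w)]=0$ on each factor. Relations (Y4)+(Y4$'$) and (Y5)+(Y5$'$) in the compact form $P^a(z,\sigma^+)\psi(z)e_j+P^a(\sigma^+,z)e_j\psi(z)=0$ of Proposition~\ref{yangian_generating}(e,f) are verified by splitting $\Delta(e(z))$ into its two summands: on each one, the relation reduces to the same identity on the corresponding factor, because the shift produced by $\psi(\bullet)$ in the mixed term is precisely what makes $P^a$ vanish at the support of the relevant $\delta^+$-function. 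Relations (Y1) and (Y2) in the form of Proposition~\ref{yangian_generating}(b,c) are checked analogously, the triple derivative $\partial_z^3$ absorbing the same polynomial corrections. The main obstacle is the Serre relation (Y6), whose cubic expansion under $\Delta$ produces the largest proliferation of mixed terms across $W_1\otimes W_2$. My plan is to establish the Yangian analogue of Proposition~\ref{Serre_0}, thereby reducing (Y6) to a single cubic identity; once the weight-shifting analogues of (T4t)/(T5t) are known on the tensor product, this residual identity decomposes across the two factors and follows from (Y6) on each $W_i$ combined with the rational-function identities satisfied by $\gamma_{W_i}$ at the relevant shift arguments.
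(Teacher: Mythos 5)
Your overall strategy --- direct verification of (Y0)--(Y6) on the basis $\{w^1_\alpha\otimes w^2_\beta\}$ using the $\delta^+$-substitution rule --- is exactly what the paper has in mind (the lemma is stated as ``straightforward'' with no written proof), but your account of how (Y3) closes contains a genuine computational error, and the ``cancel term by term'' claim for the cross terms hides a functional identity that must be invoked.

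The diagonal part of $\sigma_3(w-z)[e(z),f(w)]$ on $w^1_\alpha\otimes w^2_\beta$ must reproduce $\gamma_{W_1}(\alpha,z)\gamma_{W_2}(\beta,z)-\gamma_{W_1}(\alpha,w)\gamma_{W_2}(\beta,w)$, a \emph{product}, not the \emph{sum} $(\gamma_{W_1}(\alpha,z)-\gamma_{W_1}(\alpha,w))+(\gamma_{W_2}(\beta,z)-\gamma_{W_2}(\beta,w))$ that your claimed ``pure contributions'' would give. What actually happens is that the diagonal term on factor $1$ associated with the pole $\lambda^1_{\alpha'',\alpha}$ carries the prefactor $\gamma_{W_2}(\beta,\lambda^1_{\alpha'',\alpha})$ (the $\psi(\bullet)$-prefactor attached to $f$ acting on the first factor), and symmetrically for factor $2$. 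Since $\mathrm{Res}_{z=\lambda^1_{\alpha'',\alpha}}\,\gamma_{W_1}(\alpha,z)\gamma_{W_2}(\beta,z)=\sigma_3 d^1_{\alpha,\alpha''}c^1_{\alpha'',\alpha}\,\gamma_{W_2}(\beta,\lambda^1_{\alpha'',\alpha})$ and Remark~\ref{well-defined} ensures the two pole sets are disjoint, these weighted partial fractions reassemble into exactly the partial-fraction expansion of the product $\gamma_{W_1}(\alpha,z)\gamma_{W_2}(\beta,z)$. Drop the $\psi(\bullet)$-prefactors, as your description implicitly does, and (Y3) fails.

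Similarly, the cross terms (those with $e$ on factor $2$ and $f$ on factor $1$) do not cancel on the nose: the contribution from $e(z)f(w)$ carries the prefactor $\gamma_{W_2}(\beta,\mu)\gamma_{W_1}(\alpha'',\nu)$ while that from $f(w)e(z)$ carries $\gamma_{W_1}(\alpha,\nu)\gamma_{W_2}(\beta',\mu)$, with $\mu=\lambda^1_{\alpha'',\alpha}$ and $\nu=\lambda^2_{\beta,\beta'}$; note the shifted indices $\alpha''$ vs.\ $\alpha$ and $\beta'$ vs.\ $\beta$. Equality of these prefactors is the reciprocity
\begin{equation*}
\frac{\gamma_{W_1}(\alpha'',\nu)}{\gamma_{W_1}(\alpha,\nu)}=\frac{\gamma_{W_2}(\beta',\mu)}{\gamma_{W_2}(\beta,\mu)},
\end{equation*}
which follows from (Y4)+(Y4$'$) and (Y5)+(Y5$'$) holding on each factor: these yield $\gamma_{W}(\alpha',z)/\gamma_{W}(\alpha,z)=-P^a(\lambda,z)/P^a(z,\lambda)$ for an $e$-step of parameter $\lambda$ (and its reciprocal for an $f$-step), so both sides of the reciprocity equal $-P^a(\nu,\mu)/P^a(\mu,\nu)$. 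You must invoke this explicitly; the cancellation is not term-by-term trivial. With these two corrections, the remaining verifications you sketch proceed by the same bookkeeping.
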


\begin{rem}
 We refer the interested reader to~\cite[Section 1]{TB} for an alternative viewpoint.
\end{rem}

 It might be still possible to define an action of $\ddot{Y}_{h_1,h_2,h_3}(\gl_1)$ on a
 submodule or a factor-module of $W_1\otimes W_2$ if the assumptions of Remark~\ref{well-defined} fail,
 due to our next result:

\begin{lem}\label{subspace}
 Let $S$ be a subset of $I\times J$ such that
 $e(z)(w^1_\alpha\otimes w^2_\beta), f(z)(w^1_\alpha\otimes w^2_\beta)$ are well defined
 (in the sense of Remark~\ref{well-defined}) for any $(\alpha,\beta)\in S$ and satisfy one of the following conditions:

\noindent
 (a) For any $(\alpha,\beta)\in S, (\alpha',\beta')\notin S$, $w^1_{\alpha'}\otimes w^2_{\beta'}$ doesn't appear in
  $e(z)(w^1_\alpha\otimes w^2_\beta), f(z)(w^1_\alpha\otimes w^2_\beta)$.

\noindent
 (b) For any $(\alpha,\beta)\in S, (\alpha',\beta')\notin S$, $w^1_{\alpha}\otimes w^2_{\beta}$ doesn't appear in
  $e(z)(w^1_{\alpha'}\otimes w^2_{\beta'}), f(z)(w^1_{\alpha'}\otimes w^2_{\beta'})$.

\noindent
 Then the above formulas define a $\ddot{Y}_{h_1,h_2,h_3}(\gl_1)$-action on
 $\spa\{w^1_\alpha\otimes w^2_\beta\}_{(\alpha,\beta)\in S}$.
\end{lem}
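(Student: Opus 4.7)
The plan is a deformation argument: use the shift automorphism $\phi^a_\epsilon$ introduced above to twist $W_2$ into generic spectral position, apply Lemma~\ref{tensor} to the resulting tensor product, and then specialise $\epsilon \to 0$, exploiting the hypotheses (a) or (b) to control which intermediate basis vectors leak off $S$.

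Concretely, set $W_2^{(\epsilon)} := (W_2)^{\phi^a_\epsilon}$. This is again admissible, with the same structure constants $c^2_{\beta,\beta'}, d^2_{\beta,\beta''}$ but spectral parameters shifted by $\epsilon$ and $\gamma_{W_2^{(\epsilon)}}(\beta, z) = \gamma_{W_2}(\beta, z-\epsilon)$. Treating $\epsilon$ as an indeterminate and working over $\CC(\epsilon)$, the poles of $\gamma_{W_1}(\alpha, \bullet)$ avoid every shifted evaluation point $\lambda^2_{\beta,\beta'}+\epsilon$, and symmetrically. Hence Remark~\ref{well-defined} applies and Lemma~\ref{tensor} endows $W_1\otimes W_2^{(\epsilon)}$ with the structure of a $\ddot{Y}_{h_1,h_2,h_3}(\gl_1)$-representation. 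All matrix coefficients of $e(z), f(z), \psi(z)$ in this setting are rational functions of $\epsilon$, and (Y0)--(Y6) are polynomial identities in them.

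In case (a), the well-definedness assumption of the lemma is exactly that every matrix coefficient between basis vectors in $S$ is regular at $\epsilon=0$, with value given by the formulas preceding Lemma~\ref{tensor}. Condition (a) further forces the matrix coefficients from $S$ into $S^c$ to vanish at $\epsilon=0$: either the corresponding structure constant is zero, or the critical $\gamma$-value is. Writing each relation in (Y0)--(Y6) applied to a basis vector in $S$ as a sum over intermediate states in $I\times J$, the summands routed through $S^c$ all vanish upon $\epsilon \to 0$ by this observation, and the surviving $S$-supported sum yields the required identity on the $S$-subspace. Case (b) is dual: hypothesis (b) forces vanishing of matrix coefficients from $S^c$ into $S$, so $U_{S^c}^{(\epsilon)}$ becomes invariant in the limit and the action descends to the quotient whose basis is naturally $\{w^1_\alpha \otimes w^2_\beta : (\alpha,\beta) \in S\}$.

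The main delicate point, and what I expect to be the heart of the argument, is verifying that under conditions (a)/(b) the ``leaked'' matrix coefficients really do vanish at $\epsilon = 0$; this is the cleanest translation of the combinatorial hypotheses into the analytic language of rational structure constants and is precisely what lets the deformed relations descend to genuine identities on the $S$-subspace. Admissibility ensures that all sums involved are finite, so no convergence issue arises when specialising $\epsilon \to 0$.
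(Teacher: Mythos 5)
The paper states this lemma without proof, so there is no ``paper's approach'' to compare against; your deformation strategy is a reasonable and natural route. However, the key step of your argument — that the summands routed through $S^c$ vanish as $\epsilon\to 0$ — is not justified by the observation you invoke, and I believe this is where the real content of the lemma lies.

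Here is the problem. Consider verifying (Y3) on a vector $w=w^1_\alpha\otimes w^2_\beta$ with $(\alpha,\beta)\in S$. Expanding $\langle e_if_j w,w'\rangle$ as $\sum_u\langle f_jw,u\rangle\langle e_iu,w'\rangle$, the contribution of an intermediate $u=w^1_{\alpha''}\otimes w^2_\beta$ with $(\alpha'',\beta)\in S^c$ is a \emph{product} of two factors. The first factor $\langle f_jw,u\rangle = d^1_{\alpha,\alpha''}\,\gamma_{W_2}(\beta,\lambda^1_{\alpha'',\alpha}-\epsilon)\,(\cdots)$ is regular and tends to $0$ at $\epsilon=0$ by condition (a), as you say. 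But the second factor $\langle e_iu,w'\rangle$, when $w'=w^1_{\alpha''}\otimes w^2_{\beta'}$, involves $\gamma_{W_1}(\alpha'',\lambda^2_{\beta,\beta'}+\epsilon)$, and the well-definedness hypothesis in the lemma only controls $\gamma_{W_1}(\alpha,\bullet)$ at the points $\lambda^2_{\beta,\beta'}$ for $(\alpha,\beta)\in S$ — it says nothing about $\gamma_{W_1}(\alpha'',\bullet)$ for the \emph{different} index $\alpha''$ with $(\alpha'',\beta)\notin S$. Thus the second factor can have a simple pole at $\epsilon=0$, giving a $0\times\infty$ product whose limit is a nonzero constant. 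Condition (a) also kills the companion term $\langle e_iw,u\rangle\langle f_ju,w'\rangle$ identically (since $c^1_{\alpha,\alpha''}$ is $\epsilon$-independent and must be $0$), so there is no obvious cancellation from the other half of the commutator to save you. The analogous issue appears in case (b) when passing to the quotient.

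So the sentence ``the summands routed through $S^c$ all vanish upon $\epsilon\to 0$ by this observation'' asserts more than the observation supports. You yourself flag this as ``the main delicate point,'' but you then treat it as a translation exercise rather than as something to be established. To complete the argument you would need either (i) a genuine cancellation argument showing that the potentially divergent $S^c$-routed pieces in $e_if_j$ and $f_je_i$ combine to something tending to $0$ — this would require a computation with the explicit form of $\gamma_{W_1},\gamma_{W_2}$ and the zero/pole structure forced by admissibility, not just the order-zero observation — or (ii) abandon the deformation route and directly verify (Y0)--(Y6) on the $S$-span using the closed formulas for the matrix coefficients, checking in particular that the identity behind (Y3) closes up using only $S$-intermediates. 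As written, the proof has a gap at exactly the point you identify as the heart of the matter.
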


 The key result of this section is proved completely analogously to Theorem~\ref{Tensor_Fock_m}:

\begin{thm}\label{Tensor_Fock_a}
 There exists a unique collection of constants $\{c^a_{\bar{\lambda}}\}$ from
 $\CC(s_1,s_2,x_1,\ldots,x_r)$ with $c^a_{\bar{\emptyset}}=1$ such that the map
   $[\bar{\lambda}]=[(\lambda^1,\ldots,\lambda^r)]\mapsto c^a_{\bar{\lambda}}\cdot |\lambda^1\rangle\otimes\cdots\otimes |\lambda^r\rangle$
 induces an isomorphism $V^r\iso\ ^aF(-x_1)\otimes \cdots \otimes {^aF(-x_r)}$
 of $\ddot{Y}_{h_1,h_2,h_3}(\gl_1)$-representations with $h_i=s_i$.
\end{thm}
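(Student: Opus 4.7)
The proof will follow the blueprint of Theorem~\ref{Tensor_Fock_m}, replacing the multiplicative $K$-theoretic formulas by their additive cohomological counterparts. First I would verify that the iterated tensor product ${^aF(x_1)}\otimes\cdots\otimes {^aF(x_r)}$ is a well-defined $\ddot{Y}_{h_1,h_2,h_3}(\gl_1)$-representation. By Example~\ref{admissible}, each factor is admissible, so I would apply Lemma~\ref{tensor} inductively, after first checking the regularity hypothesis of Remark~\ref{well-defined}: for generic values of $x_1,\ldots,x_r$, the poles of the rational function $\gamma_{{^aF(x_a)}}(\lambda^a,z)$ lie at points of the form $\lambda^a_ih_1+(i-1)h_2+x_a$, and these do not coincide with the arguments $\lambda^b_jh_1+(j-1)h_2+x_b$ of the $\delta^+$-functions appearing in $e(z),f(z)$ on a different tensor factor. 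Where coincidences do occur, one passes to the admissible subfactor via Lemma~\ref{subspace}.

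Next I would compare the action of $\psi(z)$. By Lemma~\ref{matrix_coeff_Gis_a}(b), the eigenvalue of $\psi(z)$ on $[\bar{\lambda}]\in V^r$ factors as a product over $a=1,\ldots,r$ of the eigenvalue of $\psi(z)$ on $|\lambda^a\rangle\in {^aF(x_a)}$, which matches $\Delta^{(r-1)}(\psi(z))$ acting on $|\lambda^1\rangle\otimes\cdots\otimes|\lambda^r\rangle$. Consequently, for any choice of scalars $c^a_{\bar{\lambda}}$ the proposed map intertwines the Cartan currents.

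To determine the constants $c^a_{\bar{\lambda}}$, I would impose the intertwining condition for $e(z)$ on each pair $\bar{\lambda},\bar{\lambda}+\square^l_j$. Using the explicit matrix coefficient $e_{p[\bar{\lambda}-\square^l_j,\bar{\lambda}]}$ from Lemma~\ref{matrix_coeff_Gis_a}(a) on the one hand, and the explicit action of $\Delta^{(r-1)}(e(z))$ on the Fock side (where the $\psi(\bullet)$-factors evaluate, by formula $(*)$, at the spectral parameter $x^{(l)}_j+s_1=\lambda^l_jh_1+(j-1)h_2-x_l+s_1$ of the added box), I would solve for the ratio $d^a_{\bar{\lambda},\square^l_j}:=c^a_{\bar{\lambda}+\square^l_j}/c^a_{\bar{\lambda}}$ as an explicit product of linear factors, the additive analogue of~(\ref{compatibility}). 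The infinite products become finite because $x^{(m)}_{p+1}=x^{(m)}_p+h_2$ stabilizes once $p$ exceeds the length of $\lambda^m$. Normalizing $c^a_{\bar{\emptyset}}=1$, I then verify the cocycle condition
\[
d^a_{\bar{\lambda}+\square^l_j,\square^k_a}\,d^a_{\bar{\lambda},\square^l_j}=d^a_{\bar{\lambda}+\square^k_a,\square^l_j}\,d^a_{\bar{\lambda},\square^k_a},
\]
which guarantees the $c^a_{\bar{\lambda}}$ are unambiguously defined and unique.

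Finally, the action of $f(z)$ is intertwined automatically. Indeed, given that the map commutes with all $\psi_j$ and $e_j$ on a basis, and that relation~(Y3) expresses $\psi_{i+j}$ as $[e_i,f_j]$, one can deduce $f_j$-intertwining by a standard argument comparing matrix coefficients: using Lemma~\ref{matrix_coeff_Gis_a}(a), one checks directly that the ratio $c^a_{\bar{\lambda}}/c^a_{\bar{\lambda}+\square^l_j}$ already multiplied by the Fock-side tensor matrix coefficient of $f_p$ reproduces $f_{p[\bar{\lambda}+\square^l_j,\bar{\lambda}]}$. The main obstacle is the cocycle verification in the third paragraph, which is a nontrivial but routine manipulation of the additive infinite products; everything else reduces to bookkeeping of $\delta^+$-function specializations via $(*)$.
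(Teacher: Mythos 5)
Your proposal mirrors the paper's proof of the multiplicative version, Theorem~\ref{Tensor_Fock_m}, step for step: verify well-definedness of the tensor product via admissibility, observe that the $\psi(z)$-eigenvalues already match for any choice of scalars thanks to Lemma~\ref{matrix_coeff_Gis_a}(b), extract the ratios $d^a_{\bar{\lambda},\square^l_j}=c^a_{\bar{\lambda}+\square^l_j}/c^a_{\bar{\lambda}}$ from the $e(z)$-intertwining condition as an additive analogue of~(\ref{compatibility}), check the cocycle identity, and then verify $f(z)$-intertwining. The paper itself leaves the proof of Theorem~\ref{Tensor_Fock_a} implicit by appeal to this analogy, so your route is exactly the expected one.

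One small wobble worth flagging: the opening claim that ``the action of $f(z)$ is intertwined automatically'' is not justified and is then immediately retracted in favor of a direct matrix-coefficient check. The relation~(Y3) alone does not yield $f_j$-intertwining from $e_j$- and $\psi_j$-intertwining without an additional input (e.g.\ irreducibility plus a nondegeneracy statement); since the constants $c^a_{\bar\lambda}$ have already been fixed by the $e(z)$-side, the $f(z)$-side really must be verified independently, exactly as you then go on to do and exactly as the paper does in the proof of Theorem~\ref{Tensor_Fock_m}. You should simply drop the word ``automatically.''
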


\begin{rem}
 As $^aF(0)\simeq V$ by Lemma~\ref{iso}(c), we get
   $V^r\simeq V^{\phi^a_{x_1}}\otimes\cdots\otimes V^{\phi^a_{x_r}}$.
 In other words, the representation of $\ddot{Y}_{h_1,h_2,h_3}(\gl_1)$
 on the sum of equivariant cohomology groups of $M(r,n)$ is a tensor product of
 $r$ copies of such representations for $M(1,m)$.
\end{rem}

\subsection{Other families of representations}
$\ $

 We recall some other series of $\ddot{U}_{q_1,q_2,q_3}(\gl_1)$-representations
 from~\cite{FFJMM, FFJMM2}. All of them admit straightforward generalizations
 for the case of $\ddot{Y}_{h_1,h_2,h_3}(\gl_1)$. These have the same bases,
 while the matrix coefficients of $e(z),f(z),\psi(z)$ in these bases are modified as follows:
   $$1-q_1^iq_2^jq_3^ku/z\rightsquigarrow z-ih_1-jh_2-kh_3-u,\
     \delta(q_1^iq_2^jq_3^ku/z)\rightsquigarrow \pm \frac{1}{z}\delta^+((ih_1+jh_2+kh_3+u)/z),$$
 where the latter sign is ``$+$'' for $e(z)$ and ``$-$'' for $f(z)$.

\medskip
\noindent
 $\bullet$ \emph{Representations $W^N(u)$.}

 Consider the tensor product
   $V^N(u):=V(u)\otimes V(uq_3^{-1})\otimes V(uq_3^{-2})\otimes \cdots\otimes V(uq_3^{1-N})$.
 Define
   $\PP^N:=\{\lambda=(\lambda_1,\ldots,\lambda_N)\in \ZZ^N|\lambda_1\geq \cdots \geq \lambda_N\},
    \PP^{N,+}:=\{\lambda\in \PP^N|\lambda_N\geq 0\}$.
 Let $W^N(u)\subset V^N(u)$ be the subspace spanned by
   $[u]_\lambda:=[u]_{\lambda_1}\otimes [uq_3^{-1}]_{\lambda_2-1} \otimes \cdots \otimes [uq_3^{1-N}]_{\lambda_N-N+1}$
 for $\lambda\in \PP^N$.

 According to~\cite[Lemma 3.7]{FFJMM},
 $W^N(u)$ is a $\ddot{U}_{q_1,q_2,q_3}(\gl_1)$-submodule of $V^N(u)$.
 The subspace $W^{N,+}(u)\subset W^N(u)$ corresponding to $\PP^{N,+}$ is not a submodule.
 However, its limit as $N\to \infty$ is well defined (after a renormalization)
 and coincides with the Fock module $F(u)$.

\medskip
\noindent
 $\bullet$  \emph{Representations $G_{\ba}^{k,r}$.}

 Let $q_1,q_2$ be in the $(k,r)$-\emph{resonance relation}: $q_1^aq_2^b=1$
 iff $a=(1-r)c, b=(k+1)c$ for some $c\in \ZZ$ (assume $k\geq 1, r\geq 2$).
 In this case the action of $\ddot{U}_{q_1,q_2,q_3}(\gl_1)$ on $W^N(u)$ is ill defined.
 Consider the set of \emph{$(k,r)$-admissible} partitions
   $S^{k,r,N}:=\{\lambda\in \PP^N| \lambda_i-\lambda_{i+k}\geq r\ \forall\ i\leq N-k\}$.

 Let $W^{k,r,N}(u)$ be the subspace of $W^N(u)$ spanned by the vectors
 $[u]_\lambda$ for $\lambda\in S^{k,r,N}$. According to~\cite[Lemma 6.2]{FFJMM},
 the comultiplication rule makes $W^{k,r,N}(u)$ into a $\ddot{U}_{q_1,q_2,q_3}(\gl_1)$-module.
 We think of it as ``a submodule of $W^N(u)$ or $V^N(u)$'' even
 though none of them has a $\ddot{U}_{q_1,q_2,q_3}(\gl_1)$-module structure
(we use an analogue of Lemma~\ref{subspace} there).

 Let us fix a sequence of non-negative integers $\ba=(a_1,\ldots,a_k)$ satisfying $\sum_{i=1}^k a_i=r$.
 Define
  $\PP_{\ba}^{k,r}:=\{(\lambda_1\geq \lambda_2\geq \cdots)|
                       \lambda_j-\lambda_{j+k}\geq r\ \forall\ j\geq 1\ \mathrm{and}\ \lambda_j=\lambda_j^0\ \forall\ j\gg0 \}$,
 where we set $\lambda^{0}_{\nu k+i+1}:=-\nu r-\sum_{j=1}^i a_j$ for $\nu\in \ZZ_+, 0\leq i\leq k-1$.
 One can define an action of $\ddot{U}_{q_1,q_2,q_3}(\gl_1)$ on the $N\to \infty$ limit of $W^{k,r,N}(u)$,
 yielding an action of $\ddot{U}_{q_1,q_2,q_3}(\gl_1)$ on the space
 $G_{\ba}^{k,r}$ whose basis is labeled by $\lambda\in \PP_{\ba}^{k,r}$, see~\cite[Theorem 6.5]{FFJMM}.

\medskip
\noindent
 $\bullet$ \emph{Representations $\M_{\ba,\bb}(u)$.}

 Consider the tensor product $F(u_1)\otimes \cdots \otimes F(u_n)$.
 It is well defined as a $\ddot{U}_{q_1,q_2,q_3}(\gl_1)$-module if
 $q_1,q_2,u_1,\ldots,u_n$ are \emph{generic}, that is,
   $q_1^aq_2^bu_1^{c_1}\cdots u_n^{c_n}=1 \Longleftrightarrow a=b=c_1=\ldots=c_n=0$.

 Consider the \emph{resonance case}
 $u_i=u_{i+1}q_1^{a_i+1}q_2^{b_i+1}$ for $1\leq i\leq n-1$ and some $a_i,b_i\in \ZZ_+$.
 Set $u:=u_1$. Let $\M_{\ba,\bb}(u)\subset F(u_1)\otimes \cdots\otimes F(u_n)$ be the subspace spanned by
 $|\lambda^1,\ldots,\lambda^n\rangle:=[u_1]_{\lambda^1}\otimes\cdots\otimes [u_n]_{\lambda^n}$,
 where Young diagrams $\lambda^1,\ldots,\lambda^n$ satisfy
 $\lambda^i_s\geq \lambda^{i+1}_{s+b_i}-a_i$ for $1\leq i\leq n-1, s\in \NN$.
 According to~\cite[Proposition 3.3]{FFJMM2}, the comultiplication rule makes $\M_{\ba,\bb}(u)$
 into a $\ddot{U}_{q_1,q_2,q_3}(\gl_1)$-module for \emph{generic} $q_1,q_2,u$.

\medskip
\noindent
 $\bullet$ \emph{Representations $\M^{p',p}_{\ba,\bb}(u)$.}

 Assume further that $q_1,q_2$ are not generic: there exist
 $p\ne p'\in \NN$ such that $q_1^aq_2^b=1$ iff $a=p'c, b=pc$ for some $c\in \ZZ$.
 We require that $a_n:=p'-1-\sum_{i=1}^{n-1}(a_i+1), b_n:=p-1-\sum_{i=1}^{n-1}(b_i+1)$ are non-negative.
 In this case, the action of $\ddot{U}_{q_1,q_2,q_3}(\gl_1)$ on $\M_{\ba,\bb}(u)$ is ill defined.

 Consider a subspace $\M^{p',p}_{\ba,\bb}(u)\subset F(u_1)\otimes \cdots\otimes F(u_n)$
 spanned by $|\lambda^1,\ldots,\lambda^n\rangle$ satisfying the same conditions
 $\lambda^i_s\geq \lambda^{i+1}_{s+b_i}-a_i$ but for $1\leq i\leq n$, where we set $\lambda^{n+1}:=\lambda^1$.
 The comultiplication rule makes it into a $\ddot{U}_{q_1,q_2,q_3}(\gl_1)$-module,
 due to~\cite[Proposition 3.7]{FFJMM2}.
 We think of $\M^{p',p}_{\ba,\bb}(u)$ as ``a subquotient of $F(u_1)\otimes \cdots\otimes  F(u_n)$''.
 Their characters coincide with the characters from the $\mathcal{W}_n$-minimal series,
 according to the main result of~\cite{FFJMM2}.

\subsection{Categories $\Oo$}
$\ $

  We conclude this section by introducing the appropriate categories $\Oo$ for both algebras.

\medskip
\noindent
  $\bullet$ Category $\Oo$ for $\ddot{U}_{q_1,q_2,q_3}(\gl_1)$.

\noindent
  We equip $\ddot{U}_{q_1,q_2,q_3}(\gl_1)$ with the $\ZZ$-grading by assigning
    $\deg(e_i)=-1, \deg(f_i)=1, \deg(\psi_i)=0$.

\begin{defn}\label{Cat O for toroidal}
  We say that a $\ZZ$-graded $\ddot{U}_{q_1,q_2,q_3}(\gl_1)$-module $L$ is in the category $\Oo$ if

\noindent
 (i) for any $v\in L$ there exists $N\in \ZZ$ such that $\ddot{U}_{q_1,q_2,q_3}(\gl_1)_{\geq N}(v)=0$,

\noindent
 (ii) $L$ is of \emph{finite type}, that is, the graded components $L_k$ are finite dimensional for all $k\in \ZZ$.
\end{defn}

 We say that $L$ is a \emph{highest weight} module if there exists $v_0\in L$
 generating $L$ and such that $f_i(v_0)=0, \psi_i(v_0)=p_i\cdot v_0, \psi_0^{-1}(v_0)=p_0^{-1}\cdot v_0$ for all $i\in \ZZ$
 and some $p_i\in \CC$ with $p_0\ne 0$.
 To such a collection $\{p_i\}$, we associate two series
   $p^\pm(z):=p_0^{\pm 1}+\sum_{m=1}^{\infty}p_{\pm m} z^{\mp m}\in \CC[[z^{\mp 1}]]$.
 For any such series $p^{\pm}(z)\in \CC[[z^{\mp 1}]]$,
 there is a universal highest weight representation $M_{p^+,p^-}$, which
 may be defined as the quotient of $\ddot{U}_{q_1,q_2,q_3}(\gl_1)$ by
 the left ideal generated by $\{f_i,\psi_i-p_i,\psi_0^{-1}-p_0^{-1}\}_{i\in \ZZ}$.
 Standard arguments show that $M_{p^+,p^-}$ has a unique irreducible quotient $V_{p^+,p^-}$.

 Module $V_{p^+,p^-}$ obviously satisfies the condition (i) of Definition~\ref{Cat O for toroidal}.
 Our next result provides a criterion for $V_{p^+,p^-}$ to satisfy the condition (ii),
 or equivalently to be in the category $\Oo$.

\begin{prop}\label{O_Tor}
 The module $V_{p^+,p^-}$ is of finite type if and only if there exists a
 rational function $P(z)$ such that $p^\pm(z)=P(z)^\pm$ and $P(0)P(\infty)=1$.
\end{prop}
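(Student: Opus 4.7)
My plan is to reduce the statement to a Hankel-matrix computation for the first graded piece $L_{-1}$ of $V_{p^+,p^-}$, and to realize the converse by an explicit tensor product construction with the Fock modules recalled in Section~4.2. Throughout, $v_0 \in L_0$ generates the module by applying only the $e_i$'s, so $L_{-k}$ is spanned by the images of the monomials $e_{i_1}\cdots e_{i_k}v_0$; condition~(i) in the definition of $\Oo$ is automatic, and finite type reduces to $\dim L_{-k}<\infty$ for every $k\geq 0$.

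For the necessity, I would apply (T3) at $v_0$: since $f(w)v_0=0$, one obtains
\[
 f(w)e(z)v_0 \;=\; -\frac{\delta(z/w)}{(1-q_1)(1-q_2)(1-q_3)}\bigl(p^+(w)-p^-(z)\bigr)v_0,
\]
so extracting the coefficient of $z^{-i}w^{-j}$ gives $f_j e_i v_0 = \tau\cdot \tilde p_{i+j}\,v_0$ for a nonzero scalar $\tau$, where $\tilde p_n := p^+_n$ for $n > 0$, $\tilde p_n := -p^-_{-n}$ for $n<0$, and $\tilde p_0 := p_0^+ - p_0^-$. A vector $w=\sum_i a_i e_i v_0$ of the Verma module $M_{p^+,p^-}$ descends to zero in $V_{p^+,p^-}$ iff $f_j w = 0$ for every $j$, for otherwise $v_0$ would lie in the submodule generated by $w$. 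This identifies $\dim L_{-1}$ with the rank of the doubly-infinite Hankel matrix $(\tilde p_{i+j})_{i,j\in \ZZ}$, which by Kronecker's theorem is finite iff the sequence $\{\tilde p_n\}_{n\in\ZZ}$ satisfies a global linear recurrence---equivalently, iff $p^+(z)$ and $p^-(z)$ are the expansions at $\infty$ and $0$, respectively, of one and the same rational function $P(z)$. The normalization $p_0^+ p_0^- = 1$ (built into the quotient by $\psi_0^- - (\psi_0^+)^{-1}$) then matches $P(\infty)P(0)=1$.

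For the sufficiency, given such a $P(z)$ my plan is to factor it as $P(z) = \prod_{k=1}^m H_{u_k}(z)$, where $H_{u_k}(z)$ is the $\psi^\pm$-eigenvalue of the vacuum $|\emptyset\rangle$ of the Fock module $F(u_k)$ (allowing resonant submodules such as $\M_{\ba,\bb}(u)$ or $\M^{p',p}_{\ba,\bb}(u)$ from Section~4.5 when needed). By the formal coproduct $\Delta(\psi^\pm(z)) = \psi^\pm(z) \otimes \psi^\pm(z)$ and the rules of Remark~\ref{coproduct_m}, the tensor product $|\emptyset\rangle \otimes \cdots \otimes |\emptyset\rangle$ is a highest weight vector in $F(u_1)\otimes\cdots\otimes F(u_m)$ with the prescribed eigenvalues $p^\pm(z)$. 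The $\ddot{U}'_{q_1,q_2,q_3}(\gl_1)$-submodule it generates is then a highest weight module of finite type (each graded piece injects into a direct sum indexed by tuples of Young diagrams of fixed total size, exactly as in Theorem~\ref{Tensor_Fock_m}); since $V_{p^+,p^-}$ is its unique irreducible quotient, it is itself of finite type. The principal obstacle I expect is precisely this factorization step: matching the zeros and poles of an arbitrary $P(z)$ with the explicit Fock highest weights of Section~4.2, and in particular treating the resonant configurations where generic tensor products become ill-defined and one must work with the subquotients $\M^{p',p}_{\ba,\bb}(u)$. The forward direction, by contrast, is essentially Kronecker's classical theorem, the only subtlety being that the recurrence is global on the two-sided sequence $\tilde p$, which is what pins down a single rational $P(z)$ governing both expansions simultaneously.
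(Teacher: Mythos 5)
Your necessity argument is correct and is essentially the same as the paper's. The identity $f_je_iv_0 = -\beta_1^{-1}\,\tilde p_{i+j}\,v_0$ together with the characterization of degree-$(-1)$ singular vectors (those killed by all $f_j$) as exactly the kernel of the projection $M_{p^+,p^-}\to V_{p^+,p^-}$ in degree $-1$ identifies $\dim L_{-1}$ with the rank of the two-sided Hankel matrix $(\tilde p_{i+j})_{i,j\in\ZZ}$, and finiteness of that rank is equivalent to a global linear recurrence on $\{\tilde p_n\}_{n\in\ZZ}$. The paper makes this same recurrence visible by picking a minimal spanning set $e_k v_0,\ldots,e_{k+l}v_0$ of $L_{-1}$ and hitting the dependence relation by $f_{r-k}$; your Hankel/Kronecker packaging is just a cleaner way to say the same thing, and the sign convention $\tilde p_{-n}=-p_n^-$ is exactly what makes both expansions come from one $P(z)$.

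The sufficiency direction has a genuine gap, and it is precisely the factorization step you flagged as the ``principal obstacle.'' The $\psi$-eigenvalue of the Fock vacuum $|\emptyset\rangle\in F(u)$ is $H_u(z) = \frac{z - q_3 u}{z - u}$; even allowing the scalar twist $e_i\mapsto e_i,\ f_i\mapsto cf_i,\ \psi_j^\pm\mapsto c\psi_j^\pm$ (which is needed anyway, since $F(u)$ as given has $p_0^+p_0^-=q_3\neq 1$ and so is not even a $\ddot{U}'_{q_1,q_2,q_3}(\gl_1)$-module), every finite product of such factors has its zero set equal to $q_3$ times its pole set. A generic rational $P$ satisfying only $P(0)P(\infty)=1$ --- say $P(z)=\sqrt{b/a}\,\frac{z-a}{z-b}$ with $a/b\notin q_3^{\ZZ}$ --- admits no such factorization, so the tensor-product-of-Fock model cannot realize the corresponding $V_{p^+,p^-}$. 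The resonant subquotients $\M_{\ba,\bb}(u)$, $\M^{p',p}_{\ba,\bb}(u)$ do not help: those live at special resonant points of $(q_1,q_2)$, not at generic $P$. The paper's argument for sufficiency avoids any explicit model and is much more direct: it simply reverses the recurrence. Writing $P=A/B$ with $B(z)=\sum_{i=0}^{l}b_iz^i$ and no zero or pole at $0,\infty$, one gets $\sum_i b_i\tilde p_{r+i}=0$ for all $r$, so for each $r$ the vector $\sum_i b_i e_{r+i}v_0$ is killed by all $f_j$ and hence vanishes in $V_{p^+,p^-}$; this bounds $\dim(V_{p^+,p^-})_{-1}$, and then the quadratic relation (T1) propagates the finiteness to $\dim(V_{p^+,p^-})_{-l}$ for all $l$ by induction. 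If you want to preserve the model-theoretic flavor of your sufficiency argument, you would first have to produce a supply of highest weight $\ddot{U}'$-modules whose vacuum $\psi$-eigenvalues exhaust all rational $P$ with $P(0)P(\infty)=1$ --- and the Fock modules alone do not do this.
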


\begin{proof}
$\ $

 Our proof is standard and is based on the arguments from~\cite{CP}.
 Define constants $\{\bar{p}_i\}_{i\in \ZZ}$ as
 $p_i$ (for $i>0$), $-p_i$ (for $i<0$), and $p_0-p_0^{-1}$ (for $i=0$).
 To prove the ``only if'' part, we choose indices $k\in \ZZ, l\in \ZZ_+$ such
 that $\{e_k(v_0),\ldots,e_{k+l}(v_0)\}$ span the degree $-1$ component $(V_{p^+,p^-})_{-1}$
 and $a_0e_k(v_0)+a_1e_{k+1}(v_0)+\ldots+a_le_{k+l}(v_0)=0$
 for some complex numbers $a_0,\ldots,a_l\in \CC$ with $a_l\ne 0$.
 Applying $f_{r-k}$ to the above equality and using
 the relation (T3) in the form $f_ie_j(v_0)=-\beta_1^{-1}\bar{p}_{i+j}\cdot v_0$,
 we get $a_0\bar{p}_r+a_1\bar{p}_{r+1}+\ldots+a_l\bar{p}_{r+l}=0$ for all $r\in \ZZ$.
 Therefore, the collection $\{\bar{p}_i\}_{i\in \ZZ}$ satisfies a simple recurrence relation.
 Solving this recurrence relation and using the condition $\bar{p}_0=p_0-p_0^{-1}$,
 we immediately see that $p^\pm(z)$ are extensions in $z^{\mp 1}$ of the same rational function.

 To prove the ``if'' direction, let us assume that $p^\pm(z)=P(z)^\pm$ for a rational function $P(z)$.
 Reversing the arguments from above, we get $\dim (V_{p^+,p^-})_{-1}<\infty$.
 Combining this with the relation (T1), a simple induction
 argument implies that $\dim (V_{p^+,p^-})_{-l}<\infty$ for all $l\in \NN$.
\end{proof}

\noindent
  $\bullet$ Category $\Oo$ for $\ddot{Y}_{h_1,h_2,h_3}(\gl_1)$.

\noindent
 We equip $\ddot{Y}_{h_1,h_2,h_3}(\gl_1)$ with the $\ZZ$-grading by assigning
   $\deg(e_j)=-1, \deg(f_j)=1, \deg(\psi_j)=0$.

\begin{defn}\label{Cat O for Yangian}
  We say that a $\ZZ$-graded $\ddot{Y}_{h_1,h_2,h_3}(\gl_1)$-module $L$ is in the category $\Oo$ if

\noindent
 (i) for any $v\in L$ there exists $N\in \ZZ$ such that $\ddot{Y}_{h_1,h_2,h_3}(\gl_1)_{\geq N}(v)=0$,

\noindent
 (ii) $L$ is of \emph{finite type}, that is, the graded components $L_k$ are finite dimensional for all $k\in \ZZ$.
\end{defn}

 We say that $L$ is a \emph{highest weight} module if there exists $v_0\in L$ generating $L$ and
 such that $f_j(v_0)=0,\ \psi_j(v_0)=p_j\cdot v_0$ for all $j\in \ZZ_+$ and some $p_j\in \CC$.
 Set $p(z):=1+\sum_{j\geq 0}p_j z^{-j-1}\in \CC[[z^{-1}]]$.
 For any $\{p_j\}$, there is a universal highest weight representation $M_{p}$,
 which may be defined as the quotient of $\ddot{Y}_{h_1,h_2,h_3}(\gl_1)$ by
 the left ideal generated by $\{f_j,\psi_j-p_j\}_{j\in \ZZ_+}$.
 It has a unique irreducible quotient $V_p$.

 Module $V_p$ obviously satisfies the condition (i) of Definition~\ref{Cat O for Yangian}.
 The following criterion for $V_p$ to be in the category $\Oo$ is completely analogous
 to the one for $V_{p^+,p^-}$:

\begin{prop}\label{O_Yan}
 The module $V_p$ is in the category $\Oo$ if and only if there exists a
 rational function $P(z)$ such that $p(z)=P(z)^+$ and $P(\infty)=1$.
\end{prop}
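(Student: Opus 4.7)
The plan is to adapt the proof of Proposition~\ref{O_Tor} to the Yangian setting, with the sole structural change being that the generators $e_j, f_j, \psi_j$ are now indexed by $\ZZ_+$ and the toroidal commutation is replaced by (Y3) in the form $[e_i, f_j] = \psi_{i+j}$. The basic identity I would use throughout is $f_r e_i(v_0) = -p_{r+i}\cdot v_0$ for $r, i \in \ZZ_+$, which follows immediately from (Y3), the highest-weight conditions $f_j(v_0) = 0$, and $\psi_j(v_0) = p_j\cdot v_0$.

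For the ``only if'' direction, suppose $V_p \in \Oo$. Since $(V_p)_{-1}$ is finite-dimensional and spanned by $\{e_j(v_0)\}_{j\geq 0}$, I would pick the smallest $l\geq 0$ such that $e_l(v_0)\in \spa\{e_0(v_0),\ldots,e_{l-1}(v_0)\}$, producing scalars $a_0,\ldots,a_l\in\CC$ with $a_l\ne 0$ and $\sum_{i=0}^{l} a_i e_i(v_0) = 0$ in $V_p$. Applying $f_r$ and invoking the basic identity yields the linear recurrence $\sum_{i=0}^{l} a_i\, p_{r+i} = 0$ for all $r\geq 0$. Solving this recurrence shows that $\sum_{j\geq 0} p_j z^{-j-1}$ is the expansion at infinity of a proper rational function; adding $1$ produces a rational $P(z)$ with $p(z) = P(z)^+$ and $P(\infty)=1$.

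For the converse, assume $p(z) = P(z)^+$ for some rational $P(z)$ with $P(\infty) = 1$. Then $\{p_j\}$ satisfies a linear recurrence, providing $a_0, \ldots, a_l\in\CC$ with $a_l \ne 0$ so that $\sum_{i=0}^{l} a_i p_{r+i} = 0$ for all $r \geq 0$. Setting $w := \sum_{i=0}^l a_i e_i(v_0) \in (M_p)_{-1}$, the basic identity gives $f_r(w) = 0$ for every $r\geq 0$. The main technical step, and the one I expect to be the principal obstacle, is to deduce from this that $w = 0$ in $V_p$: by the triangular decomposition recalled in Appendix~A, one has $\ddot{Y}_{h_1,h_2,h_3}(\gl_1) = \ddot{Y}^-\otimes \ddot{Y}^0\otimes \ddot{Y}^+$, and the degree $+1$ component of $\ddot{Y}^+$ is spanned by $\{f_r\}_{r\geq 0}$; hence the condition $f_r(w) = 0$ for all $r$ forces the submodule generated by $w$ to have trivial degree $0$ part, making it proper and therefore contained in the maximal proper submodule, whose quotient is $V_p$. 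This establishes $\dim(V_p)_{-1}\leq l$.

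To propagate the bound to higher components, I would run an induction on $n$: the relation (Y1) expresses each $[e_{i+3},e_j](v_0)$ as a linear combination of products $e_a e_b(v_0)$ with lower leading index together with anticommutator terms $\{e_i,e_j\}(v_0)$, so the finiteness of $\dim(V_p)_{-1}$ combined with (Y1) inductively yields $\dim(V_p)_{-n} < \infty$ for every $n\geq 1$. Everything outside the triangular-decomposition step is a direct transcription of the toroidal argument of Proposition~\ref{O_Tor}, with (T3) replaced by (Y3) and the indexing adjusted from $\ZZ$ to $\ZZ_+$.
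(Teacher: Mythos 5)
Your proposal is correct and essentially reproduces the paper's intended argument: the paper merely states that the proof is analogous to Proposition~\ref{O_Tor}, and you have carried out exactly the required transcription, using the basic identity $f_re_i(v_0)=-p_{r+i}v_0$, extracting the linear recurrence by applying $f_r$'s in the ``only if'' direction, reversing the argument for the ``if'' direction, and invoking (Y1) for the higher-degree induction.

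One correction to the wording of the key technical step is needed. You write that ``the degree $+1$ component of $\ddot{Y}^+$ is spanned by $\{f_r\}_{r\geq 0}$,'' but with the paper's conventions $\ddot{Y}^+$ is the subalgebra generated by the $e_j$'s and hence lives in degrees $\leq 0$; and even after replacing $\ddot{Y}^+$ by $\ddot{Y}^-$, the degree-$1$ graded piece of the \emph{full} algebra $\ddot{Y}_{h_1,h_2,h_3}(\gl_1)$ is strictly larger than $\mathrm{span}\{f_r\}$ (it contains, for example, monomials $f_af_be_c$), so the hypothesis $f_r w=0$ does not by itself preclude degree-$1$ elements of the algebra from reaching $v_0$. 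The clean argument uses the triangular decomposition in the ordering $\ddot{Y}_{h_1,h_2,h_3}(\gl_1)\cong \ddot{Y}^+\otimes\ddot{Y}^0\otimes\ddot{Y}^-$, so that $\ddot{Y}^-$ acts first on $w$: the hypothesis $f_rw=0$ gives $\ddot{Y}^-w=\CC w$, hence the submodule $\ddot{Y}w=\ddot{Y}^+\ddot{Y}^0w$ is concentrated in degrees $\leq -1$ (as $\ddot{Y}^0$ preserves and $\ddot{Y}^+$ lowers the degree), so $v_0\notin\ddot{Y}w$, the submodule is proper in $M_p$, and $w$ vanishes in the simple quotient $V_p$ as you conclude. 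With this adjustment the argument is complete and matches what the paper implicitly expects.
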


%%%%%%%%%%%%%%%%%%%%%%%%%%%%%%%%%%%%%%%%%%%%%%%%%%%%%%%%%%%%%%%%%%%%%%%%%%%%%%%%%%%%%%%%%%%%%%%%%%%%%%%%
%%%%%%%%%%%%%%%%%%%%%%%%%%%%%%%%%%%%%%%%%% SECTION 5 %%%%%%%%%%%%%%%%%%%%%%%%%%%%%%%%%%%%%%%%%%%%%%%%%%%
%%%%%%%%%%%%%%%%%%%%%%%%%%%%%%%%%%%%%%%%%%%%%%%%%%%%%%%%%%%%%%%%%%%%%%%%%%%%%%%%%%%%%%%%%%%%%%%%%%%%%%%%

\section{Limit algebras}

 The goal of this section is to relate certain limits of our two algebras of interest
 to the well-known algebras of difference operators on $\CC$ and $\CC^*$.
 In this section, $h$ is a formal variable.

\subsection{Difference operators on $\CC^*$}
$\ $

 Set $q=\exp(h)\in \CC[[h]]$.
 The algebra of \emph{$q$-difference operators on $\CC^*$}, denoted by $\dd_q$, is the
 unital associative $\CC[[h]]$-algebra topologically generated by $Z^{\pm 1}, D^{\pm 1}$
 subject to the relations:
   $$Z\cdot Z^{-1}=Z^{-1}\cdot Z=1,\ D\cdot D^{-1}=D^{-1}\cdot D=1,\ D\cdot Z=qZ\cdot D.$$
 We will view $\dd_q$ as a Lie algebra with the natural commutator-Lie bracket $[\cdot,\cdot]$.
 It is easy to check that the following formula defines a 2-cocycle $\phi_{\dd}\in C^2(\dd_q,\CC[[h]])$:
  $$\phi_{\dd}(Z^aD^j,Z^bD^{-j'})=
    \left\{
     \begin{array}{llr}
        \sum_{i=-j}^{-1}q^{ai+b(i+j)} & \mathrm{if}\ \ j=j'>0\\
        -\sum_{i=j}^{-1}q^{bi+a(i-j)} & \mathrm{if}\ \ j=j'<0\\
        0 & \mathrm{otherwise}
     \end{array}
    \right. .$$

 This endows $\bar{\dd}_q=\dd_q\oplus \CC[[h]]\cdot c_\dd$ with the Lie algebra structure.

\subsection{Difference operators on $\CC$}
$\ $

 The algebra of \emph{$h$-difference operators on $\CC$}, denoted by $\D_h$, is the
 unital associative $\CC[[h]]$-algebra topologically generated by $x, \partial^{\pm 1}$
 subject to the following defining relations:
   $$\partial\cdot \partial^{-1}=\partial^{-1}\cdot \partial=1,\ \partial\cdot x=(x+h)\cdot \partial.$$
 We will view $\D_h$ as a Lie algebra with the natural commutator-Lie bracket $[\cdot,\cdot]$.
 It is easy to check that the following formula defines a 2-cocycle $\phi_\D\in C^2(\D_h,\CC[[h]])$:
  $$\phi_{\D}(f(x)\partial^r,g(x)\partial^{-s})=
    \left\{
     \begin{array}{llr}
        \sum_{l=-r}^{-1}f(lh)g((l+r)h) & \mathrm{if}\ \ r=s>0\\
        -\sum_{l=r}^{-1}g(lh)f((l-r)h) & \mathrm{if}\ \ r=s<0\\
        0 & \mathrm{otherwise}
     \end{array}
    \right. .$$

 This endows $\bar{\D}_h=\D_h\oplus \CC[[h]]\cdot c_\D$ with the Lie algebra structure.

\subsection{Isomorphism $\Upsilon_0$}
$\ $

 Let us introduce the appropriate completions of the algebras $\bar{\dd}_q$ and $\bar{\D}_h$:

\noindent
 $\circ$
 $\widehat{\bar{\dd}_q}$ is the completion of $\bar{\dd}_q$
 with respect to the powers of the two-sided ideal $J_{\dd}=(Z-1,q-1)$;

\noindent
 $\circ$
 $\widehat{\bar{\D}_h}$ is the completion of $\bar{\D}_h$
 with respect to the powers of the two-sided ideal $J_{\D}=(x,h)$.

 In other words, we have:
  $$\widehat{\bar{\dd}_q}:=\underset{\longleftarrow}\lim\ \bar{\dd}_q/\bar{\dd}_q\cdot (Z-1,q-1)^j,\
    \widehat{\bar{\D}_h}:=\underset{\longleftarrow}\lim\ \bar{\D}_h/\bar{\D}_h\cdot (x,h)^j.$$

\begin{rem}
 Taking completions of $\dd_q$ and $\D_h$ with respect to the ideals $J_{\dd}$ and $J_\D$ commutes
 with taking central extensions with respect to the 2-cocycles $\phi_{\dd}$ and $\phi_{\D}$.
\end{rem}

 The following result is straightforward:

\begin{prop}\label{Upsilon_0}
 The assignment
   $$D^{\pm 1}\mapsto \partial^{\pm 1},\ Z^{\pm 1}\mapsto e^{\pm x},\ c_\dd\mapsto c_\D$$
 extends to an isomorphism
   $\Upsilon_0:\widehat{\bar{\dd}_q}\iso  \widehat{\bar{\D}_h}$ of $\CC[[h]]$-algebras.
\end{prop}

\begin{rem}
 Specializing $h$ to a complex parameter $h_0\in \CC$, we get the classical
 $\CC$-algebras of difference operators $\dd_{q}$ and $\D_{h_0}$ as well as
 their one-dimensional central extensions $\bar{\dd}_{q}$ and $\bar{\D}_{h_0}$,
 where $q=\exp(h_0)\in \CC^*$. In other words, we consider the $\CC$-algebras
 given by the same collections of the generators and the defining relations.
 However, one can not define their completions as above.
 This explains our preference to work over $\CC[[h]]$.
\end{rem}

\subsection{Algebras $\ddot{U}'_{h_1,h_2,h_3}(\gl_1)$ and $\ddot{U}'_h(\gl_1)$}
$\ $

 Throughout this section, we let $h_2, h_3$ be formal variables and set $h_1:=-h_2-h_3$.
 We define $q_i:=\exp(h_i)\in \CC[[h_2,h_3]]$ for $i=1,2,3$.
 In order to consider a \emph{formal version} of $\ddot{U}_{q_1,q_2,q_3}(\gl_1)$,
 that is, the $\CC[[h_2,h_3]]$-algebra with the same generators and defining relations,
 we need to modify (T3) in an appropriate way.
 First, we \emph{renormalize} (T3) to the following form:
\begin{equation}\tag{T3$'$}\label{T3'}
  [e(z),f(w)]=\delta(z/w)(\psi^{+}(w)-\psi^{-}(z))/(1-q_3).
\end{equation}
 In the case of specialized values $q_i\in \CC\backslash\{0,1\}$,
 this corresponds to \emph{rescaling} $e_i, f_i$ by $1-q_1,1-q_2$.
 Next, we present $\psi^\pm(z)$ in the form
   $$\psi^\pm(z)=\exp\left(\mp \frac{h_3}{2}H_0\right)\cdot \exp\left(\pm(1-q_3)\sum_{m>0} H_{\pm m}z^{\mp m}\right).$$
 Switching from the generators $\{\psi_i,\psi_0^{-1}\}$ to $\{H_i\}$, the relations (T4,T5) get modified to:
\begin{equation}\tag{T4H}
  [H_0,e_j]=0,\ \ [H_i,e_j]=-\frac{(1-q_1^i)(1-q_2^i)(1-q_3^i)}{i(1-q_3)}e_{i+j}\ \mathrm{for}\ i\in \ZZ^*, j\in \ZZ,
\end{equation}
\begin{equation}\tag{T5H}
  [H_0,f_j]=0,\ \ [H_i,f_i]=\frac{(1-q_1^i)(1-q_2^i)(1-q_3^i)}{i(1-q_3)}f_{i+j}\ \mathrm{for}\ i\in \ZZ^*, j\in \ZZ.
\end{equation}
 These relations are well defined in the formal setting since
  $\frac{(1-q_1^i)(1-q_2^i)(1-q_3^i)}{1-q_3}\in \CC[[h_2,h_3]]$.
 Note that the right-hand side of (T3$'$) is also a polynomial in $H_i$ with coefficients in $\CC[[h_2,h_3]]$.

\begin{defn}
 $\ddot{U}'_{h_1,h_2,h_3}(\gl_1)$ is the unital associative $\CC[[h_2,h_3]]$-algebra topologically generated by
 $\{e_i,f_i,H_i|i\in \ZZ\}$ with the defining relations (T0,T1,T2,T3$'$,T4H,T5H,T6).
\end{defn}

 Finally, we define $\ddot{U}'_h(\gl_1)$ by
   $$\ddot{U}'_h(\gl_1):=\ddot{U}'_{-h-h_3,h,h_3}(\gl_1)/(h_3).$$
 It is a unital associative $\CC[[h]]$-algebra topologically generated by $\{e_i,f_i,H_i|i\in \ZZ\}$
 subject to the relations (T1,T2,T6) and
\begin{equation}\tag{T0L}\label{T40L}
  [H_i, H_j]=0,
\end{equation}
\begin{equation}\tag{T3L}
  [e_i,f_j]=H_{i+j},
\end{equation}
\begin{equation}\tag{T4tL}\label{T4tL}
  [H_i, e_j]=-(1-q^i)(1-q^{-i})e_{i+j},
\end{equation}
\begin{equation}\tag{T5tL}\label{T5tL}
  [H_i, f_j]=(1-q^i)(1-q^{-i})f_{i+j},
\end{equation}
 where $i,j\in \ZZ$ and $q=\exp(h)\in \CC[[h]]$.

\begin{rem}
 For $h_0\in \CC$, define $\ddot{U}_{h_0}(\gl_1)$ as the $\CC$-algebra generated by
 $\{e_i,f_i,H_i|i\in \ZZ\}$ with the same defining relations (T0L,T1,T2,T3L,T4tL,T5tL,T6),
 where $q=e^{h_0}\in \CC^*$.
\end{rem}

 The following result is straightforward:

\begin{prop}\label{limit_m}
 The assignment
   $$e_i\mapsto Z^iD,\ f_i\mapsto -D^{-1}Z^i,\ H_i\mapsto -(1-q^{-i})Z^i-q^{-i}c_\dd$$
 extends to a homomorphism $\theta:\ddot{U}'_h(\gl_1)\to U(\bar{\dd}_q)$.
\end{prop}

\begin{proof}
$\ $

 It suffices to show that all the defining relations of $\ddot{U}'_h(\gl_1)$ are preserved under
 the above assignment. This is a simple exercise, which we leave to the interested reader.
\end{proof}

 Let $\bar{\dd}^0_q\subset \bar{\dd}_q$ be the free $\CC[[h]]$-submodule spanned by
   $$\{c_\dd, hZ^kD^0, h^{j-1}Z^iD^{\pm j}|i\in \ZZ, k\in \ZZ^*, j\in \NN\}.$$

\begin{lem}
 $\bar{\dd}^0_q$ is a Lie subalgebra of $\bar{\dd}_q$ and $\mathrm{Im}(\theta)\subset U(\bar{\dd}^0_q)$.
\end{lem}

 In fact, we have the following result:

\begin{thm}\label{limit_1}
 The homomorphism $\theta$ provides an isomorphism $\theta:\ddot{U}'_h(\gl_1)\iso U(\bar{\dd}^0_q)$.
\end{thm}

 Note that all the defining relations of $\ddot{U}'_h(\gl_1)$ are of Lie-type.
 Hence, $\ddot{U}'_h(\gl_1)$ is an enveloping algebra of the Lie algebra generated by
 $e_i,f_i,H_i$ with the aforementioned defining relations.
 Thus, Theorem~\ref{limit_1} provides a presentation
 of the Lie algebra $\bar{\dd}^0_q$ by generators and relations.

 Actually, we will prove a more general result in Appendix C:

\begin{thm}\label{limit_1.1}
 If $h_0\in \CC\backslash \{\mathbb{Q}\cdot \pi \sqrt{-1}\}$,
 then $\theta$ induces an isomorphism of the $\CC$-algebras:
   $\theta:\ddot{U}_{h_0}(\gl_1)\iso U(\bar{\dd}^0_{q})$,
 where $\bar{\dd}^0_{q}\subset \bar{\dd}_{q}$ is a $\CC$-Lie subalgebra
 spanned by $\{c_\dd\}\cup\{Z^iD^j\}_{(i,j)\ne (0,0)}$.
\end{thm}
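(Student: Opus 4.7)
The plan is to mimic the structure of Theorem~\ref{limit_1} but at a complex specialization $h = h_0$, with the genericity of $q := e^{h_0}$ replacing the formal manipulations that were available over $\CC[[h]]$.

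First I would extend the formulas of Proposition~\ref{limit_m} to complex $h_0$: the assignments
\[
e_i \mapsto Z^iD,\quad f_i \mapsto -D^{-1}Z^i,\quad H_k \mapsto -(1-q^{-k})Z^k - q^{-k}c_\dd,\quad \kappa \mapsto c_\dd
\]
define a $\CC$-algebra map $\theta_m: \ddot{U}'_{h_0}(\gl_1) \to U(\bar{\dd}_{h_0})$, since the verification of (T1), (T2), (T3L), (T4tL), (T5tL), (T6), (T6t) is purely algebraic in $q$ and transfers verbatim from the formal case. The image clearly lies in $U(\bar{\dd}'_{h_0})$.

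Next I would prove surjectivity. Because $h_0 \notin \QQ\cdot \pi i$, the element $q = e^{h_0}$ is not a root of unity, so $q^N \neq 1$ for all nonzero $N \in \ZZ$. From $\theta_m(H_k)$ and $\theta_m(\kappa) = c_\dd$ I recover every $Z^k$ with $k \neq 0$. Iterating the identity $[Z^aD^m,Z^bD^n] = (q^{bm} - q^{an})Z^{a+b}D^{m+n}$ (plus the central cocycle only when $m+n=0$) and starting from the known images $Z^iD$ and $D^{-1}Z^i$, genericity of $q$ lets me pick $a,b$ at each step so that the scalar $q^{bm}-q^{an}$ is nonzero, inductively producing every $Z^iD^j$ with $|j| \geq 2$. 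Together with $c_\dd$ this gives all of $U(\bar{\dd}'_{h_0})$.

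The main obstacle is injectivity, which I would attack via the triangular-decomposition/PBW strategy. The analogue of Proposition~\ref{triangular_m} for $\ddot{U}'_{h_0}(\gl_1)$, proved in Appendix~A, gives a factorization $\ddot{U}'_{h_0}(\gl_1) \simeq \ddot{U}'^{-}_{h_0} \otimes \ddot{U}'^{0}_{h_0} \otimes \ddot{U}'^{+}_{h_0}$, and $U(\bar{\dd}'_{h_0})$ admits the matching PBW factorization along the $D$-grading into $U(\bar{\dd}'^{<0}) \otimes U(\bar{\dd}'^{0}) \otimes U(\bar{\dd}'^{>0})$. Since $\theta_m$ respects the gradings, it suffices to check injectivity on each factor. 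The Cartan piece is easy: $\ddot{U}'^0_{h_0}$ is a polynomial algebra in $\kappa$ and $\{H_m\}_{m \neq 0}$ by (T40L), and their images $c_\dd$ and $\{-(1-q^{-m})Z^m - q^{-m}c_\dd\}_{m\neq 0}$ are algebraically independent in $U(\bar{\dd}'^0_{h_0})$ because $1 - q^{-m} \neq 0$ for $m \neq 0$. For the $\pm$ factors the hardest point — and where the hypothesis $h_0 \notin \QQ \cdot \pi i$ is indispensable — is ruling out hidden relations in $\ddot{U}'^{+}_{h_0}$ beyond those forced by the Lie-algebra relations of $\bar{\dd}'^{>0}_{h_0}$. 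My plan would be to reduce every element of $\ddot{U}'^{+}_{h_0}$ to a PBW-normal monomial in the $e_i$'s using (T1) and (T6t), verify via a direct bracket computation that its image is a nonzero PBW-normal monomial in $U(\bar{\dd}'^{>0}_{h_0})$, and bootstrap from there by deducing the specialized statement from the formal Theorem~\ref{limit_1} after a flatness check on $\ddot{U}'_h(\gl_1)/(h-h_0)$ in the region where $q$ avoids roots of unity. The $-$ side is handled symmetrically via (T2) and (T6t).
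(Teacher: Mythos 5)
The crucial difficulty here is injectivity, and your plan for it has a real gap. The paper does \emph{not} run a PBW-normal-form reduction inside $\ddot U'^{+}_{h_0}$. Instead it exploits the observation (made after Theorem~\ref{limit_1}) that every defining relation of $\ddot U'_{h_0}(\gl_1)$ is a Lie relation, so the algebra is literally $U(\ddot u'_{h_0})$ for a concretely presented Lie algebra $\ddot u'_{h_0}$. That reduces the whole theorem, by PBW, to showing a \emph{Lie algebra} map $\ddot u_{h_0}\to\dd'_{h_0}$ is bijective, which in turn reduces — since both sides are $\ZZ^2$-graded, $\theta_m$ is graded, and $\dim(\dd'_{h_0})_{i,j}=1$ — to the dimension count $\dim(\ddot u_{h_0})_{i,j}\le 1$ for all $(i,j)\ne(0,0)$. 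Your proposal never makes this reduction, and the step ``reduce every element of $\ddot U'^{+}_{h_0}$ to a PBW-normal monomial in the $e_i$'s using (T1) and (T6t)'' is not something (T1) and (T6t) alone can deliver: the paper's induction on $D$-degree explicitly needs the \emph{multiple} Serre relations
\[
  [e_0;e_1;e_0;\ldots;e_0;e_{-1}]_n=0,
\]
which it imports from Proposition~\ref{alt_gener_m} (commutativity of the $L^m_j$ in the shuffle algebra $\A^m$), i.e.\ from an input genuinely beyond the quadratic relation (T1) and the single cubic relation (T6t). Without these, the inductive elimination of $[e_1;e_0;\ldots;e_0;e_N]_n$ against $[e_0;\ldots;e_0;e_{N+1}]_n$ cannot be started, and at several stages the induction also uses that $q$ is not a root of unity to guarantee non-proportionality of the two linear relations being compared.

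The final ``bootstrap by a flatness check on $\ddot U'_h(\gl_1)/(h-h_0)$'' is not a safe substitute either: Theorem~\ref{limit_1} lives over $\CC[[h]]$ with a completion that, by the paper's own Remark preceding Section 5.4, does not behave well under specialization of $h$ to a fixed complex number. Theorem~\ref{limit_1.1} is proved independently in Appendix C.1 precisely because one cannot just specialize the formal statement. Your surjectivity argument and your treatment of the Cartan factor are both fine and essentially match the paper, but the proof of injectivity on the $\pm$ parts needs to be replaced by the graded dimension count for the Lie algebra $\ddot u_{h_0}$, bolstered by the higher Serre relations coming from $\A^m$.
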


\subsection{Algebras $\ddot{Y}'_{h_1,h_2,h_3}(\gl_1)$ and $\ddot{Y}'_h(\gl_1)$}
$\ $

 Analogously to the previous section, let $h_2,h_3$ be formal
 variables and set $h_1:=-h_2-h_3$. We view $\ddot{Y}_{h_1,h_2,h_3}(\gl_1)$
 as a \emph{formal version} of the corresponding algebra introduced in Section 1.3.
 In other words, $\ddot{Y}_{h_1,h_2,h_3}(\gl_1)$ is the unital associative $\CC[[h_2,h_3]]$-algebra
 topologically generated by $\{e_j,f_j,\psi_j|j\in \ZZ_+\}$ subject to the relations (Y0--Y6).

 Let us renormalize the relations (Y4$'$) and (Y5$'$) to make them homogeneous:
\begin{equation}\tag{Y4$'$H}
  [\psi_0,e_j]=0,\ \ [\psi_1,e_j]=0,\ \ [\psi_2,e_j]=-2h_1h_2e_j,
\end{equation}
\begin{equation}\tag{Y5$'$H}
  [\psi_0,f_j]=0,\ \ [\psi_1,f_j]=0,\ \ [\psi_2,f_j]=2h_1h_2f_j.
\end{equation}

\begin{defn}
 The algebra $\ddot{Y}'_{h_1,h_2,h_3}(\gl_1)$ is the unital associative $\CC[[h_2,h_3]]$-algebra
 topologically generated by $\{e_j,f_j,\psi_j|j\in \ZZ_+\}$ subject to the relations (Y0--Y3,Y4,Y4$'$H,Y5,Y5$'$H,Y6).
\end{defn}

 We equip the algebra $\ddot{Y}'_{h_1,h_2,h_3}(\gl_1)$ with the $\ZZ_+$-grading by assigning
   $$\deg(e_j):=j,\ \deg(f_j):=j,\ \deg(\psi_j):=j,\ \deg(h_k):=1\ \ \mathrm{for}\ j\in \ZZ_+, k\in \{1,2,3\}.$$
 Finally, we define $\ddot{Y}'_h(\gl_1)$ by
    $$\ddot{Y}'_{h}(\gl_1):=\ddot{Y}'_{-h-h_3,h,h_3}(\gl_1)/(h_3).$$
 It is an associative algebra over $\CC[[h]]$.
 Its specialization at $h_0\in\CC$ is denoted by $\ddot{Y}_{h_0}(\gl_1)$.

 The following result is straightforward:

\begin{prop}\label{limit_a}
  The assignment
    $$e_j\mapsto x^j\partial,\ f_j\mapsto -\partial^{-1}x^j,\ \psi_j\mapsto (x-h)^j-x^j-(-h)^jc_\D$$
  extends to an algebra homomorphism $\vartheta:\ddot{Y}'_h(\gl_1)\to U(\bar{\D}_h)$.
\end{prop}

 Let $\bar{\D}^0_h\subset \bar{\D}_h$ be the free $\CC[[h]]$-submodule spanned by
   $\{c_\D, hx^i\partial^0, h^{j-1}x^i\partial^{\pm j}|i\in \ZZ_+, j\in \NN\}.$

\begin{lem}
 $\bar{\D}^0_h$ is a Lie subalgebra of $\bar{\D}_h$ and $\mathrm{Im}(\vartheta)\subset U(\bar{\D}^0_h)$.
\end{lem}

 In fact, we have the following result:

\begin{thm}\label{limit_2}
  The homomorphism $\vartheta$ provides an isomorphism $\vartheta:\ddot{Y}'_h(\gl_1)\iso U(\bar{\D}^0_h)$.
\end{thm}

 Note that all the defining relations of $\ddot{Y}'_h(\gl_1)$ are of Lie-type.
 Hence, $\ddot{Y}'_h(\gl_1)$ is an enveloping algebra of the Lie algebra generated by
 $e_j,f_j,\psi_j$ with the aforementioned defining relations.
 Thus, Theorem~\ref{limit_2} provides a presentation of the Lie algebra $\bar{\D}^0_h$ by generators and relations.

 Actually, we will prove a more general result in Appendix C:

\begin{thm}\label{limit_2.1}
 For $h_0\in \CC^*$, $\vartheta$ induces an isomorphism of $\CC$-algebras
 $\vartheta:\ddot{Y}_{h_0}(\gl_1)\iso U(\bar{\D}_{h_0})$.
\end{thm}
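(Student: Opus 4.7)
The plan is to deduce Theorem~\ref{limit_2.1} from its formal-parameter counterpart Theorem~\ref{limit_2} by specializing $h \mapsto h_0 \in \CC^*$. Note that, in contrast with the multiplicative case (Theorem~\ref{limit_1.1}) where $q=\exp(h_0)$ becoming a root of unity forces the exclusion $h_0\notin \mathbb{Q}\cdot\pi i$, the additive case has no such pathology: the parameter $h$ enters the defining relations only polynomially (via $\sigma_2=-h_1h_2-h_1h_3-h_2h_3$ and $\sigma_3=h_1h_2h_3$), so no root-of-unity degeneration occurs.

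The key structural observation is that both $\ddot{Y}'_h(\gl_1)$ and $U(\overline{\D}^0_h)$ carry a $\ZZ_+$-grading with $\deg(h)=1$, and can therefore be viewed as \emph{graded} $\CC[h]$-algebras; under this viewpoint Theorem~\ref{limit_2} asserts a graded $\CC[h]$-algebra isomorphism. Since $U(\overline{\D}^0_h)$ is free over $\CC[h]$ by the PBW theorem (applied to the explicit $\CC[[h]]$-basis listed in the preceding lemma), the same holds for $\ddot{Y}'_h(\gl_1)$. Flatness then ensures that tensoring with $\CC[h]/(h-h_0)$ preserves the isomorphism.

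Next I would identify the specialization of the target. The Lie subalgebra $\overline{\D}^0_h$ is the $\CC[h]$-span of $\{c_\D,\, h\cdot x^i\partial^0,\, h^{j-1}x^i\partial^j,\, h^{j-1}x^i\partial^{-j}\}$; after specialization the scalar factors $h_0,\, h_0^{j-1}$ are units of $\CC^*$, so the resulting $\CC$-span equals $\CC\cdot c_\D\oplus\bigoplus_{i\ge 0,\, j\in\ZZ}\CC\cdot x^i\partial^j = \overline{\D}_{h_0}$. Hence $U(\overline{\D}^0_h)\otimes_{\CC[h]}\CC[h]/(h-h_0)\cong U(\overline{\D}_{h_0})$, and combining with the previous step gives the desired isomorphism $\theta_a:\ddot{Y}'_{h_0}(\gl_1)\iso U(\overline{\D}_{h_0})$.

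The main obstacle is establishing $\CC[h]$-freeness (equivalently, flatness) of $\ddot{Y}'_h(\gl_1)$, since without it one cannot guarantee that the injectivity part of Theorem~\ref{limit_2} survives specialization at a single $h_0$. While freeness follows tautologically once Theorem~\ref{limit_2} is in hand, if Theorem~\ref{limit_2.1} were to be proved independently the bulk of the work would lie here: one would need a PBW argument for the abstract presentation, based on a triangular decomposition for the affine Yangian analogous to Proposition~\ref{triangular_m}, together with an ordering/spanning argument that matches the image of $\theta_a$ with the standard PBW basis of $U(\overline{\D}_{h_0})$.
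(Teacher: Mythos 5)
Your strategy—deduce the specialization at a fixed $h_0\in\CC^*$ from the formal statement Theorem~\ref{limit_2}—is a genuinely different route from what the paper does, and it is not circularity-free. In the paper's own logical flow, Theorem~\ref{limit_2} is stated but not proved independently; the text explicitly says that the \emph{more general} fact to be proved in Appendix~C is Theorem~\ref{limit_2.1}, and the appendix carries out a direct argument for the abstractly presented Lie algebra $\ddot{y}_{h_0}$. Concretely, the proof there exhibits a $\ZZ$-grading and a $\ZZ_+$-filtration on $\ddot{y}_{h_0}$, shows $\theta_a$ is filtered-graded onto $\D_{h_0}$, and then runs a lengthy induction on commutators $[e_{i_1};\ldots;e_{i_n}]$ to prove the dimension bound $\dim(\ddot{y}_{h_0}^{\geq 0})_{\leq i,j}-\dim(\ddot{y}_{h_0}^{\geq 0})_{\leq i-1,j}\leq 1$, which forces injectivity. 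So treating Theorem~\ref{limit_2} as available and pulling back to $h=h_0$ reverses the actual direction of the argument.

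Apart from the circularity, the bookkeeping you do propose is mostly sound once untangled: the observation that $\overline{\D}^0_{h}\otimes_{\CC[h]}\CC$ equals all of $\overline{\D}_{h_0}$ (because the scalar prefactors $h_0, h_0^{j-1}$ are units) is correct and is implicitly used in the paper; and $U(-)$ does commute with base change, so $U(\overline{\D}^0_h)\otimes_{\CC[h]}\CC\cong U(\overline{\D}_{h_0})$. The place where the real work hides is exactly what you flag in your final paragraph: one needs an a priori spanning/PBW statement for the abstract presentation of $\ddot{Y}'_{h_0}(\gl_1)$. You correctly diagnose that as the heart of the matter, but then defer it rather than doing it; the paper's Appendix~C.2 \emph{is} that deferred argument (its triangular decomposition input is the Yangian analogue of Lemma~\ref{triangular_m'}, and its spanning argument is the explicit induction showing each filtered jump is at most one-dimensional). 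Also a small point: once one has an algebra isomorphism over $\CC[h]$, specializing preserves it for free—flatness is not what is needed to ``preserve'' an isomorphism under $\otimes_{\CC[h]}\CC$; flatness would only matter if you wanted the specialization to recover the abstractly presented $\ddot{Y}'_{h_0}(\gl_1)$, and even that follows from right-exactness without flatness when the presentation is by generators and relations polynomial in $h$.
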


%%%%%%%%%%%%%%%%%%%%%%%%%%%%%%%%%%%%%%%%%%%%%%%%%%%%%%%%%%%%%%%%%%%%%%%%%%%%%%%%%%%%%%%%%%%%%%%%%%%%%%%%
%%%%%%%%%%%%%%%%%%%%%%%%%%%%%%%%%%%%%%%%%% SECTION 6 %%%%%%%%%%%%%%%%%%%%%%%%%%%%%%%%%%%%%%%%%%%%%%%%%%%
%%%%%%%%%%%%%%%%%%%%%%%%%%%%%%%%%%%%%%%%%%%%%%%%%%%%%%%%%%%%%%%%%%%%%%%%%%%%%%%%%%%%%%%%%%%%%%%%%%%%%%%%

\section{Key homomorphism and flatness of deformations}

 Following~\cite{GTL}, we construct an algebra homomorphism
   $\Upsilon: \ddot{U}'_{h_1,h_2,h_3}(\gl_1)\to \widehat{\ddot{Y}}'_{h_1,h_2,h_3}(\gl_1)$
 and establish a \emph{compatible} isomorphism of their faithful representations.
 We also prove the flatness result (Theorem~\ref{flatness})
 for both $\ddot{U}'_{h_1,h_2,h_3}(\gl_1)$ and $\ddot{Y}'_{h_1,h_2,h_3}(\gl_1)$.

\subsection{Homomorphism $\Upsilon$}
$\ $

 Let $\widehat{\ddot{Y}}'_{h_1,h_2,h_3}(\gl_1)$ be the completion of
 $\ddot{Y}'_{h_1,h_2,h_3}(\gl_1)$ with respect to the $\ZZ_+$-grading from Section 5.5.
 To state our main result, we introduce the following notation (compare with~\cite{GTL}):

\noindent
 $\bullet$
  Define $\psi(z)$ as in Section 1.4:
   $\psi(z):=1-h_3\sum_{j\geq 0}\psi_jz^{-j-1}\in \ddot{Y}'_{h_1,h_2,h_3}(\gl_1)[[z^{-1}]]$.

\noindent
 $\bullet$
 Define $k_j\in \CC[\psi_0,\psi_1,\psi_2,\ldots]$ via
   $\sum_{j\geq  0}k_jz^{-j-1}=k(z):=\log(\psi(z))$.

\noindent
 $\bullet$
 Define the \emph{inverse Borel transform} $B:z^{-1}\CC[[z^{-1}]]\to \CC[[w]]$ by
   $\sum_{j=0}^\infty \frac{a_j}{z^{j+1}}\mapsto \sum_{j=0}^\infty \frac{a_j}{j!}w^j$.

\noindent
 $\bullet$
 Define $B(w)\in h_3\ddot{Y}'_{h_1,h_2,h_3}(\gl_1)[[w]]$ to be the inverse Borel transform of $k(z)$.

\noindent
 $\bullet$
 Define a function
   $G(v):=\log\left(\frac{v}{e^{v/2}-e^{-v/2}}\right)\in v\mathbb{Q}[[v]]$.

\noindent
 $\bullet$
 Define $\gamma(v):=-B(-\partial_v)G'(v)\in \widehat{\ddot{Y}}'_{h_1,h_2,h_3}(\gl_1)[[v]]$.

\noindent
 $\bullet$
 Define $\Gg(v)=\sum_{j\geq 0} \Gg_j v^j\in \widehat{\ddot{Y}}'_{h_1,h_2,h_3}(\gl_1)[[v]]$ by
   $\Gg(v):=\left(\frac{h_3}{q_3-1}\right)^{1/2} \cdot \exp\left(\frac{\gamma(v)}{2}\right)$.

\medskip
 The identity $B(\log(1-s/z))=(1-e^{sw})/w$ immediately implies the following result:

\begin{cor}\label{Borel}
 The equalities from Proposition~\ref{yangian_generating}(e,f) are equivalent to
  $$[B(w),e_j]=\frac{\sum_{i=1}^3(e^{h_iw}-e^{-h_iw})}{w}\ e^{w\sigma^+}e_j,\
    [B(w),f_j]=\frac{\sum_{i=1}^3(e^{-h_iw}-e^{h_iw})}{w}\ e^{w\sigma^-}f_j.$$
\end{cor}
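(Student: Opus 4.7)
The plan is to rewrite Proposition~\ref{yangian_generating}(e) in multiplicative form and then take logarithms. Since $\sigma^+$ acts only on the $e$-generators and commutes with each $\psi_i$, the relation $P^a(z,\sigma^+)\psi(z)e_j+P^a(\sigma^+,z)e_j\psi(z)=0$ is equivalent to the adjoint form
$$\psi(z)e_j\psi(z)^{-1}=R(z,\sigma^+)\,e_j,\qquad R(z,w):=\prod_{i=1}^3\frac{z-w+h_i}{z-w-h_i},$$
understood as a formal power series in $(z-\sigma^+)^{-1}$. Iterating yields $\psi(z)^n e_j\psi(z)^{-n}=R(z,\sigma^+)^n e_j$ for $n\in\ZZ_+$; since the $\psi_i$ commute with one another by (Y0) and commute with $\sigma^+$, the families $\psi(z)^t=\exp(tk(z))$ and $R(z,\sigma^+)^t=\exp(t\ln R(z,\sigma^+))$ are well defined in the appropriate completion, and differentiating the relation at $t=0$ produces
$$[k(z),e_j]=\ln R(z,\sigma^+)\cdot e_j.$$

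Next I would expand $\ln R(z,\sigma^+)$ via $\ln\tfrac{1+t}{1-t}=2\sum_{m\geq 0}\tfrac{t^{2m+1}}{2m+1}$ with $t=h_i/(z-\sigma^+)$, giving
$$\ln R(z,\sigma^+)=\sum_{i=1}^3\sum_{m\geq 0}\frac{2h_i^{2m+1}}{(2m+1)(z-\sigma^+)^{2m+1}}.$$
Applying the inverse Borel transform in $z$ to both sides of $[k(z),e_j]=\ln R(z,\sigma^+)e_j$ converts the left-hand side into $[B(w),e_j]$ by definition, while on the right I would use the key identity
$$B\Bigl((z-\sigma^+)^{-k}\Bigr)=\frac{w^{k-1}}{(k-1)!}\,e^{w\sigma^+},$$
proved by expanding $(z-\sigma^+)^{-k}=\sum_{l\geq 0}\binom{k+l-1}{l}(\sigma^+)^l z^{-k-l}$ and applying the rule $z^{-n-1}\mapsto w^n/n!$. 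Collecting the resulting sums via
$$\sum_{m\geq 0}\frac{h_i^{2m+1}w^{2m}}{(2m+1)!}=\frac{\sinh(h_iw)}{w}=\frac{e^{h_iw}-e^{-h_iw}}{2w}$$
for each $i\in\{1,2,3\}$ produces precisely the claimed formula for $[B(w),e_j]$.

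The argument for $f_j$ proceeds identically, starting from Proposition~\ref{yangian_generating}(f), which gives $\psi(z)f_j\psi(z)^{-1}=R(z,\sigma^-)^{-1}f_j$ and hence introduces the overall sign change matching the stated formula. Since the inverse Borel transform is a bijection $z^{-1}\CC[[z^{-1}]]\to\CC[[w]]$ and every step above is invertible, the implication runs in both directions, establishing the claimed equivalence. The main delicate point is the passage from the integer-power identity $\psi(z)^n e_j\psi(z)^{-n}=R(z,\sigma^+)^n e_j$ to the infinitesimal statement $[k(z),e_j]=\ln R(z,\sigma^+)e_j$; this is rigorous because $k(z)$ and $\ln R(z,\sigma^+)$ commute (both lie in the centralizer of $\sigma^+$ and of the abelian subalgebra generated by the $\psi_i$), so one checks inductively that $\ad_{k(z)}^n(e_j)=(\ln R(z,\sigma^+))^n e_j$, and exponentiating recovers the adjoint relation.
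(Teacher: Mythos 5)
Your proof is correct and follows essentially the same route as the paper: rewrite Proposition~\ref{yangian_generating}(e,f) in the adjoint form $\psi(z)e_j\psi(z)^{-1}=R(z,\sigma^+)e_j$, take logarithms to get $[k(z),e_j]=\ln R(z,\sigma^+)\,e_j$, and then apply the inverse Borel transform $B$. The only cosmetic difference is that the paper packages the final computation through the single identity $B(\log(1-\gamma/z))=(1-e^{\gamma w})/w$ (applied with $\gamma=\sigma^+\mp h_i$), whereas you expand $\ln R(z,\sigma^+)$ via $\ln\frac{1+t}{1-t}$ together with $B\bigl((z-\sigma^+)^{-k}\bigr)=\frac{w^{k-1}}{(k-1)!}e^{w\sigma^+}$ and resum to $\sinh(h_iw)$; both give the same answer.
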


 Now we are ready to state the main result of this section.

\begin{thm}\label{homomorphism}
  The assignment
\begin{equation}\label{Upsilon}\tag{$\ddag$}
  H_0 \mapsto \psi_0,\ H_m\mapsto \frac{B(m)}{1-q_3},\
  e_k\mapsto e^{k\sigma^+}\Gg(\sigma^+)e_0,\
  f_k\mapsto e^{k\sigma^-}\Gg(\sigma^-)f_0\ \
  \mathrm{for}\ k\in \ZZ, m\in \ZZ^*
\end{equation}
  extends to an algebra homomorphism
    $$\Upsilon:\ddot{U}'_{h_1,h_2,h_3}(\gl_1)\to \widehat{\ddot{Y}}'_{h_1,h_2,h_3}(\gl_1).$$
\end{thm}

 The proof of this theorem is presented in Section~\ref{Proof}.

\subsection{Limit of $\Upsilon$}
$\ $

 Recall the isomorphisms from Theorems~\ref{limit_1} and~\ref{limit_2}:
  $$\theta:\ddot{U}'_{-h-h_3,h,h_3}(\gl_1)/(h_3)\iso U(\bar{\dd}^0_q),$$
  $$\vartheta:\ddot{Y}'_{-h-h_3,h,h_3}(\gl_1)/(h_3)\iso U(\bar{\D}^0_h).$$
 The homomorphism $\Upsilon$ factors through the factors by $(h_3)$, inducing
 $\Upsilon_{\mid_{h_3=0}}:U(\bar{\dd}^0_q)\to \widehat{U(\bar{\D}^0_h)}$.

\begin{prop}\label{limit_Upsilon}
 The limit homomorphism $\Upsilon_{\mid_{h_3=0}}$ is induced by $\Upsilon_0$.
\end{prop}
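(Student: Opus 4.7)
My plan is a direct generator-by-generator verification of the equality $\theta_a \circ \Upsilon_{|_{h_3=0}} = \Upsilon_0 \circ \theta_m^{-1}$. Since $\theta_m$ is an isomorphism (Theorem~\ref{limit_1}) and $\ddot{U}'_h(\gl_1)$ is generated as an associative $\CC[[h]]$-algebra by the four families $\kappa$, $\{H_m\}$, $\{e_k\}$, $\{f_k\}$, it suffices to check agreement on each of these.

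The main preliminary step is to determine the $h_3 \to 0$ limits of the auxiliary series entering the definition of $\Upsilon$. Since $\psi(z) = 1 - h_3\sum_{i\geq 0}\psi_i z^{-i-1}$, expanding $k(z) = \ln \psi(z)$ shows every coefficient $k_i$ lies in $h_3 \cdot \ddot{Y}'_{h_2,h_3}(\gl_1)$, hence so does the inverse Borel transform $B(w)$. Consequently $\gamma(v) = -B(-\partial_v)G'(v)$ is itself $O(h_3)$, and combined with the elementary expansion $h_3/(q_3-1)\to 1$ as $h_3\to 0$, this forces $g(v)_{|_{h_3=0}} = 1$. Moreover, the ratio $B(m)/(1-q_3)$ has a well-defined limit: since the leading $O(h_3)$ part of $k(z)$ is $-h_3\sum_i \psi_i z^{-i-1}$, one obtains $(B(m)/(1-q_3))_{|_{h_3=0}} = \sum_{i\geq 0}\psi_i m^i/i!$.

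With these limits in place, the four verifications are short. For $\kappa$: $\Upsilon(\kappa) = -\psi_0$, and $\theta_a(-\psi_0) = c_\D = \Upsilon_0(c_\dd) = \Upsilon_0(\theta_m^{-1}(\kappa))$. For $H_m$: applying $\theta_a$ to the limit $\sum_i \psi_i m^i/i!$ yields $e^{m(x-h)} - e^{mx} - e^{-mh} c_\D$, which coincides with $\Upsilon_0(-(1-q^{-m})Z^m - q^{-m}c_\dd) = -(1-e^{-mh})e^{mx} - e^{-mh} c_\D = \Upsilon_0(\theta_m^{-1}(H_m))$. For $e_k$: using $g(\sigma^+)_{|_{h_3=0}} = 1$, one has $\Upsilon(e_k)_{|_{h_3=0}} = e^{k\sigma^+}e_0 = \sum_n (k^n/n!)e_n$, whose image under $\theta_a$ is $e^{kx}\partial = \Upsilon_0(Z^k D) = \Upsilon_0(\theta_m^{-1}(e_k))$; the case of $f_k$ is symmetric, producing $-\partial^{-1}e^{kx} = \Upsilon_0(-D^{-1}Z^k)$.

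The only genuinely delicate point will be justifying the limit $g(v)_{|_{h_3=0}} = 1$ rigorously, which requires careful tracking of $h_3$-valuations through the inverse Borel transform together with the formal substitution $w \mapsto -\partial_v$ (and the observation that $G'(v)\in v\mathbb{Q}[[v]]$ so no divergent constant terms appear). Once this is in place the remaining identifications reduce to elementary power series manipulations.
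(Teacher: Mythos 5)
Your proof is correct and takes the same generator-by-generator approach as the paper, with the useful extra step of explicitly establishing the limits $g(v)_{|_{h_3=0}}=1$ and $\bigl(B(m)/(1-q_3)\bigr)_{|_{h_3=0}}=\sum_{i\geq 0}\psi_i m^i/i!$, which the paper uses without comment. One small notational slip: the identity you mean to verify is $\theta_a\circ\Upsilon_{|_{h_3=0}}=\Upsilon_0\circ\theta_m$ (not $\Upsilon_0\circ\theta_m^{-1}$), and in the $\kappa$ check the relation used is $\theta_m(\kappa)=c_\dd$, not $\theta_m^{-1}(\kappa)$; the actual computations are consistent with this correction.
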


\begin{proof}$\ $

 To prove this result, we explicitly compute the images of the generators under $\Upsilon_{\mid_{h_3=0}}$:

\noindent
 $\circ$
   $\Upsilon_{\mid_{h_3=0}}(c_\dd)=c_\D$.

\noindent
 $\circ$
   $\Upsilon_{\mid_{h_3=0}}((q^{-i}-1)Z^i-q^{-i}c_\dd)=
    \sum_{k\geq 0}((x-h)^k-x^k-(-h)^kc_\D)\frac{i^k}{k!}=(q^{-i}-1)e^{ix}-q^{-i}c_\D$.

\noindent
 $\circ$
  $\Upsilon_{\mid_{h_3=0}}(Z^iD)=\sum_{k\geq 0} \frac{i^k}{k!}\cdot x^k\partial=e^{ix}\partial$.

\noindent
 $\circ$
  $\Upsilon_{\mid_{h_3=0}}(-D^{-1}Z^i)=-\sum_{k\geq 0} \frac{i^k}{k!}\partial^{-1}\cdot x^k=-\partial^{-1}e^{ix}$.
\end{proof}

\subsection{Elliptic Hall algebra}
$\ $

 We recall the elliptic Hall algebra studied in~\cite{BS}.
 First, we introduce the following notation:

\noindent
 $\bullet$
 Define $(\ZZ^2)^*:=\ZZ^2\backslash\{(0,0)\}$
 and $(\ZZ^2)^{\pm}:=\{(a,b)|\pm a>0\ \mathrm{or}\ a=0, \pm b>0\}.$

\noindent
 $\bullet$
 For $\xx=(a,b)\in (\ZZ^2)^*$, we define $\deg(\xx):=\mathrm{gcd}(a,b)\in \NN$.

\noindent
 $\bullet$
 For $\xx\in (\ZZ^2)^*$, we define
 $\epsilon_\xx:=1$ if $\xx\in (\ZZ^2)^+$ and $\epsilon_\xx:=-1$ if $\xx\in (\ZZ^2)^-$.

\noindent
 $\bullet$
 For non-collinear $\xx,\yy\in (\ZZ^2)^*$, we define
  $\epsilon_{\xx,\yy}:=\mathrm{sign}(\det(\xx,\yy))\in \{\pm 1\}$.

\noindent
 $\bullet$
 For non-collinear $\xx,\yy\in (\ZZ^2)^*$,
 we denote the triangle with vertices $\{(0,0),\xx,\xx+\yy\}$ by $\triangle_{\xx,\yy}$.

\noindent
 $\bullet$
 We say that $\triangle_{\xx,\yy}$ is \emph{empty} if there are no lattice points inside this triangle.

\noindent
 $\bullet$
 For $n\in \ZZ^*$, we define
   $\alpha_n:=-\frac{\beta_n}{n}=\frac{(1-q_1^{-n})(1-q_2^{-n})(1-q_3^{-n})}{n}$.

\begin{defn}[\cite{BS}]
 The (central extension of) elliptic Hall algebra $\widetilde{\E}$ is the unital
 associative algebra generated by $\{u_\xx, \kappa_\yy| \xx\in (\ZZ^2)^*, \yy\in \ZZ^2\}$
 with the following defining relations:
\begin{equation}\tag{E0}\label{E0}
 \kappa_\xx \kappa_\yy=\kappa_{\xx+\yy},\ \kappa_{0,0}=1,
\end{equation}
\begin{equation}\tag{E1}\label{E1}
 [u_\yy,u_\xx]=\delta_{\xx,-\yy}\cdot\frac{\kappa_\xx-\kappa_\xx^{-1}}{\alpha_{\deg(\xx)}}\
 \mathrm{if}\ \xx,\yy\ \mathrm{are\ collinear},
\end{equation}
\begin{equation}\tag{E2}\label{E2}
 [u_\yy,u_\xx]=\epsilon_{\xx,\yy}\kappa_{\alpha(\xx,\yy)}\frac{\theta_{\xx+\yy}}{\alpha_1}\
 \mathrm{if}\ \triangle_{\xx,\yy}\ \mathrm{is\ empty\ and}\ \deg(\xx)=1,
\end{equation}
 where the elements $\{\theta_\xx|\xx\in (\ZZ^2)^*\}$ are determined from the equality
\begin{equation}\tag{E3}\label{E3}
 \sum_{n\geq 0}\theta_{n\xx_0}x^n=\exp\left(\sum_{r>0}\alpha_r u_{r\xx_0} x^r\right)
 \mathrm{for}\ \xx_0\in (\ZZ^2)^*\ \mathrm{with}\ \deg(\xx_0)=1,
\end{equation}
 while $\alpha(\xx,\yy)$ is defined by
\begin{equation}\tag{E4}\label{E4}
 \alpha(\xx,\yy)=\left\{
     \begin{array}{llr}
        \epsilon_\xx(\epsilon_\xx \xx+\epsilon_\yy \yy-\epsilon_{\xx+\yy}(\xx+\yy))/2\ \ \mathrm{if}\  \epsilon_{\xx,\yy}=1\\
        \epsilon_\yy(\epsilon_\xx \xx+\epsilon_\yy \yy-\epsilon_{\xx+\yy}(\xx+\yy))/2\ \ \mathrm{if}\  \epsilon_{\xx,\yy}=-1
     \end{array}
    \right..
\end{equation}
\end{defn}

 The relation of this algebra to the quantum toroidal of $\gl_1$ is given in the following theorem:

\begin{thm}\cite{S}\label{Hall}
  The assignment
   $$u_{1,i}\mapsto e_i,\ u_{-1,i}\mapsto f_i,\ \theta_{0,\pm j}\mapsto \psi_{\pm j}\cdot \psi_0^{\mp 1},\
     \kappa_{a,b}\mapsto \psi_0^a\ \ \mathrm{for}\ i,a,b\in\ZZ, j\in \NN$$
  extends to an isomorphism of algebras $\Xi: \widetilde{\E}/(\kappa_{0,1}-1)\iso \ddot{U}_{q_1,q_2,q_3}(\gl_1)$.
\end{thm}

\begin{rem}
 This theorem was proved in~\cite{S} only for $\E:=\widetilde{\E}/(\kappa_\yy-1)_{\yy\in \ZZ^2}$,
 but the above generalization is straightforward.
 According to~\cite{BS}, $\E$ is also isomorphic to the Drinfeld double of
 the spherical Hall algebra of an elliptic curve over a finite field.
\end{rem}

 This result provides distinguished elements $\{u_{\xx}|\xx\in (\ZZ^2)^*\}$
 of $\ddot{U}'_{h_1,h_2,h_3}(\gl_1)$. Their images in
 $\ddot{U}'_{h_2}(\gl_1)=\ddot{U}'_{-h_2-h_3,h_2,h_3}(\gl_1)/(h_3)$ will be denoted by $\bar{u}_\xx$.

\begin{lem}
 The isomorphism $\theta$ maps these elements $\bar{u}_\xx$ as follows:
\begin{equation}\label{Pick1}
 \bar{u}_{0,r}\mapsto
 \mathrm{sign}(r)\frac{(1-q_2^{-1})(1-q_2)}{1-q_2^r}\left(Z^r-\frac{1}{1-q_2^r}c_\dd\right)\
 \mathrm{for}\ r\in \ZZ^*,
\end{equation}
\begin{equation}\label{Pick2}
 \bar{u}_{\pm k,\pm l}\mapsto
 \pm q_2^{\pm f(k,l)}\frac{1-q_2^{\mp 1}}{1-q_2^{\mp d}}(1-q_2^{\pm 1})^{k-1}Z^{\pm l}D^{\pm k}q_2^{\frac{kl}{2}\mp \frac{kl}{2}}\
 \mathrm{for}\ k\in \NN,l\in \ZZ,\
\end{equation}
 where $d:=\mathrm{gcd}(k,l)\in \NN$ and $f(k,l):=\frac{kl-k-l-d+2}{2}\in \ZZ$
 (note that $f(k,l)$ equals the number of lattice points inside
 the triangle with vertices $\{(0,0), (0,l), (k,l)\}$ if $k,l>0$).
\end{lem}

\begin{proof}
$\ $

 Considering the ``$h_3\to 0$ limit'' of the relation~(\ref{E2}), we find
\begin{equation}\tag{E2$'$}\label{E2'}
  [\bar{u}_\yy,\bar{u}_\xx]=\epsilon_{\xx,\yy}\frac{\alpha_{\deg(\xx+\yy)}}{\alpha_1}{\bar{u}_{\xx+\yy}}\
  \mathrm{if}\ \triangle_{\xx,\yy}\ \mathrm{is\ empty\ and}\ \deg(\xx)=1.
\end{equation}
 In particular, we get
  $\bar{u}_{0,r}=\mathrm{sign}(r)\frac{\alpha_1}{\alpha_r}[\bar{u}_{-1,0},\bar{u}_{1,r}]$.
 Applying $\theta$ to both sides, we recover~(\ref{Pick1}).

 We prove~(\ref{Pick2}) by an induction on $k$; we will consider only the sign ``+'' case.
 Case $k=1$ is trivial. Given $(k,l)\in \ZZ_{>1}\times \ZZ$, choose unique
 $\xx=(k_1,l_1),\yy=(k_2,l_2), 0< k_1,k_2<k,$ such that
 $\xx+\yy=(k,l)$, $\epsilon_{\xx,\yy}=1$, $\deg(\xx)=\deg(\yy)=1$, and $\triangle_{\xx,\yy}$ is empty.
 Combining the formula~(\ref{E2'}) with the induction assumption on
 $\theta(\bar{u}_\xx)$ and $\theta(\bar{u}_\yy)$, we find
   $$\theta(\bar{u}_{k,l})=
     \frac{(1-q_2)(1-q_2^{-1})}{(1-q_2^d)(1-q_2^{-d})}q_2^{f(k_1,l_1)+f(k_2,l_2)}(q_2^{k_2l_1}-q_2^{k_1l_2})(1-q_2)^{k_1+k_2-2}Z^{l_1+l_2}D^{k_1+k_2}.$$
 Our choice of $\xx,\yy$ and the Pick's formula imply that $k_1l_2-k_2l_1=d$.
 As a result, we have
  $q_2^{k_2l_1}-q_2^{k_1l_2}=q_2^{k_2l_1}(1-q_2^d)$ and
  $f(k_1,l_1)+f(k_2,l_2)+k_2l_1=f(k_1+k_2,l_1+l_2)=f(k,l)$.
 This completes the induction step.
\end{proof}

\subsection{Flatness and faithfulness}
$\ $

 The main result of this section is:

\begin{thm}\label{flatness}
 (a) The algebra $\ddot{U}'_{h_1,h_2,h_3}(\gl_1)$ is a flat $\CC[[h_3]]$-deformation of
 $\ddot{U}'_{h_2}(\gl_1)\simeq U(\bar{\dd}^0_{q_2})$.

\noindent
 (b) The algebra $\ddot{Y}'_{h_1,h_2,h_3}(\gl_1)$ is a flat $\CC[[h_3]]$-deformation of
 $\ddot{Y}'_{h_2}(\gl_1)\simeq U(\bar{\D}^0_{h_2})$.
\end{thm}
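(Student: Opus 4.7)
The plan is to prove both (a) and (b) simultaneously by exhibiting a PBW-type basis for each algebra whose reduction modulo $h_3$ is a PBW basis of the limit universal enveloping algebra. Since $\CC[[h_3]]$ is a DVR, flatness over it is equivalent to $h_3$-torsion-freeness, so it suffices to show that the ordered monomials in suitable lifts of Lie-algebra generators are $R$-linearly independent and span the deformed algebra, where $R=\CC[[h_2,h_3]]$.

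First I would fix an ordered topological basis of $\bar{\dd}^0_{h_2}$ (resp.\ $\overline{\D}^0_{h_2}$), for instance $\{c_\dd,\, Z^iD^j\}$ (resp.\ $\{c_\D,\, x^i\partial^j\}$), and lift each element to $\ddot{U}'_{h_2,h_3}(\gl_1)$ (resp.\ $\ddot{Y}'_{h_2,h_3}(\gl_1)$). In the toroidal case, canonical lifts are provided by the elliptic Hall generators $u_{k,\ell}$ of Theorem~\ref{Hall} (which reduce to the Lie generators under $\theta_m$ by~(\ref{Pick1})--(\ref{Pick2})); in the Yangian case the generators $e_j,f_j,\psi_j,\kappa$ themselves serve as natural lifts. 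Let $\{M_\alpha\}$ denote the ordered monomials in these lifts. Linear independence over $R$ is automatic: a nontrivial relation $\sum c_\alpha M_\alpha=0$ with $c_\alpha\in R$, after factoring out the maximal common power of $h_3$ dividing all coefficients, would reduce modulo $h_3$ to a nontrivial linear relation among PBW monomials in $U(\bar{\dd}^0_{h_2})$ (resp.\ $U(\overline{\D}^0_{h_2})$), contradicting Theorem~\ref{limit_1} (resp.\ Theorem~\ref{limit_2}). Spanning is proved by induction on a natural length filtration: each defining relation~(T1)--(T6) or (Y1)--(Y6) is of Lie type, expressing $[X,Y]$ as a single ordered term plus an $h_3$-correction of strictly smaller filtration degree, so successive reorderings terminate modulo any fixed power of $h_3$.

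The main obstacle is controlling the $h_3$-corrections, which \emph{a priori} could accumulate into subtle torsion or obstruct the convergence of the spanning argument. To bypass this, I would invoke the faithful action of each algebra on $\bigoplus_{r\ge 0} V^r$ (resp.\ $\bigoplus_{r\ge 0} M^r$), whose faithfulness for the Yangian was already used in the proof of Theorem~\ref{homomorphism} and which for the toroidal case follows by the analogous argument. By Lemma~\ref{matrix_coeff_Gis_a} (resp.\ Lemma~\ref{matrix_coeff_Gis_m}), the matrix coefficients in the fixed-point basis are rational functions in $s_1,s_2,x_1,\ldots,x_r$ that are regular at $s_3=0$, so each geometric module admits a natural torsion-free $R$-form. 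Consequently the algebra embeds into the endomorphism ring of a torsion-free $\CC[[h_3]]$-module and therefore has no $h_3$-torsion, which upgrades the termination-modulo-$h_3^N$ conclusion of the spanning step to genuine spanning and yields the desired flatness.
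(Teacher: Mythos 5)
Your PBW framework and your linear-independence step are sound: modulo $h_3$ the algebra becomes $U(\bar{\dd}^0_{h_2})$ (resp.\ $U(\overline{\D}^0_{h_2})$) by Theorem~\ref{limit_1} (resp.\ Theorem~\ref{limit_2}), so a nontrivial $R$-linear relation among ordered monomials would reduce to one among PBW monomials of a genuine enveloping algebra. Up to that point, your argument is essentially the same criterion the paper uses, just phrased as an explicit PBW statement rather than the equivalent ``faithful limit module which deforms flatly.''

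The gap is in the step you label as the ``bypass.'' You invoke faithfulness of the $\ddot{Y}'_{h_2,h_3}(\gl_1)$-action on $\bigoplus_r V^r$ (and the analogous toroidal statement) as an already-available fact, pointing to the proof of Theorem~\ref{homomorphism}. But in that proof the paper writes ``we will see (Section 6.4 below) that the action of $\ddot{Y}'_{h_2,h_3}(\gl_1)$ on $\bigoplus_r V^r$ is faithful'' --- a \emph{forward} reference to the very section that contains Theorem~\ref{flatness}. Faithfulness at generic $h_3$ is a consequence of flatness, not an input, so using it to derive torsion-freeness of the algebra is circular. The paper's criterion carefully asks for faithfulness only of the \emph{limit} module over $U(\bar{\dd}^0_{h_2})$ (resp.\ $U(\overline{\D}^0_{h_2})$), and the real technical content of Section 6.4 is establishing that: the Fock modules $F(u)$ degenerate to $\tau_u^*(V_\infty)$ for $\gl_{\infty,\kappa}$, and faithfulness of $\bigoplus_n\bigoplus_{u_1,\ldots,u_n}\tau_{u_1}^*(V_\infty)\otimes\cdots\otimes\tau_{u_n}^*(V_\infty)$ over the limit algebra is proved by a \emph{further} degeneration $h_2\to 0$ (where $\bar{\dd}_0^0$ becomes a Heisenberg-type central extension of an abelian Lie algebra, for which faithfulness on tensor products of Fock spaces is classical), followed by lifting back to generic $h_2$. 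Your proposal never produces this two-step degeneration, so it does not actually establish the faithfulness needed to close the spanning argument. Your observation that the geometric matrix coefficients of Lemma~\ref{matrix_coeff_Gis_a} are regular at $s_3=0$ does give a torsion-free $R$-form of each $V^r$, but torsion-freeness of the module does not by itself give torsion-freeness of the algebra unless you already know the action is faithful --- which is precisely the circle.
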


\begin{proof}
$\ $

 To prove Theorem~\ref{flatness}, it suffices to provide a faithful
 $U(\bar{\dd}^0_{q_2})$-representation (respectively $U(\bar{\D}^0_{h_2})$-representation)
 which admits a flat deformation to a representation of $\ddot{U}'_{h_1,h_2,h_3}(\gl_1)$
 (respectively $\ddot{Y}'_{h_1,h_2,h_3}(\gl_1)$).
 Let $R$ be a localization of  $\CC[[h_2,h_3]]$ by the multiplicative set
 $\{(h_2-\nu_1h_3)\cdots (h_2-\nu_sh_3)|s\in \NN, \nu_j\in  \CC\}$.
 Note that $\bar{R}:=R/(h_3)\simeq \CC((h_2))$.
 The ring $R$ is needed to make use of the representations from Sections 2--4, therefore, we define
   $$\ddot{U}'_R(\gl_1):=\ddot{U}'_{h_1,h_2,h_3}(\gl_1)\otimes_{\CC[[h_2,h_3]]} R,\ \ \
     \ddot{Y}'_R(\gl_1):=\ddot{Y}'_{h_1,h_2,h_3}(\gl_1)\otimes_{\CC[[h_2,h_3]]} R.$$

 Consider the Lie algebra
   $$\gl_{\infty}=\left\{\sum_{i,j\in \ZZ}a_{i,j} E_{i,j}| a_{i,j}\in \CC[[h_2]]\
     \mathrm{and}\ a_{i,j}=0\ \mathrm{for}\ |i-j|\gg 0\right\}.$$
 Let $\bar{\gl}_{\infty}=\gl_\infty\oplus \CC[[h_2]]\cdot \kappa$ be the central extension
 of this Lie algebra via the 2-cocycle
   $$\phi_\gl\left(\sum a_{i,j}E_{i,j}, \sum b_{i,j}E_{i,j}\right)=
     \sum_{i\leq 0<j} a_{i,j}b_{j,i}-\sum_{j\leq 0<i}a_{i,j}b_{j,i}.$$
 For any $u\in 1+h_2\CC[[h_2]]$, consider the homomorphism
 $\tau_u: U_{\bar{R}}(\bar{\dd}^0_{q_2})\to U_{\bar{R}}(\bar{\gl}_{\infty})$ induced by
   $$c_{\dd}\mapsto -\kappa\ \ \mathrm{and}\ \
    Z^kD^l\mapsto \sum_{i\in \ZZ} u^kq_2^{(1-i)k}E_{i,i-l}-\delta_{0,l}\frac{1-q_2^{k}u^k}{1-q_2^{k}}\kappa\
    \ \mathrm{for}\ \ (k,l)\in (\ZZ^2)^*.$$
 Let $\varpi_u:\ddot{U}'_{\bar{R}}(\gl_1)\to U_{\bar{R}}(\bar{\gl}_{\infty})$ be the
 composition of $\theta:\ddot{U}'_{\bar{R}}(\gl_1)\to U_{\bar{R}}(\bar{\dd}^0_{q_2})$ and $\tau_u$.
 Then
   $$\varpi_u(e(z))=\sum_{i\in \ZZ} \delta(q_2^{-i}u/z)E_{i+1,i},\
     \varpi_u(f(z))=-\sum_{i\in \ZZ} \delta(q_2^{-i}u/z)E_{i,i+1}.$$
 Let $F_\infty$ be the fundamental representation of $\bar{\gl}_{\infty}$.
 It is realized on $\wedge^{\infty/2} \CC^\infty$ with the highest
 weight vector $w_0\wedge w_{-1}\wedge w_{-2}\wedge \cdots$
 (here $\CC^\infty$ is a $\CC$-vector spaces with the basis $\{w_i\}_{i\in \ZZ}$).
 Comparing the formulas for the Fock $\ddot{U}'_{R}(\gl_1)$-module $F'_R(u)$ with those for
 the $\bar{\gl}_{\infty}$-action on $F_\infty$, we see that $F'_R(u)$ degenerates to
 $\varpi_u^*(F_\infty)$ (the intertwining linear map is given by
 $|\lambda\rangle \mapsto w_{\lambda_1}\wedge w_{\lambda_2-1}\wedge w_{\lambda_3-2}\wedge\cdots $).
 Moreover, it is easy to see that any legible finite tensor product $F'_R(u_1)\otimes \cdots\otimes F'_R(u_n)$
 degenerates to $\varpi_{u_1}^*(F_\infty)\otimes \cdots \otimes \varpi_{u_n}^*(F_\infty)$.

 It remains to prove that the module
   $\bigoplus_n\bigoplus_{u_1,\ldots,u_n}\tau_{u_1}^*(F_\infty)\otimes \cdots \otimes \tau_{u_n}^* (F_\infty)$
 is a faithful representation of $U(\bar{\dd}^0_{q_2})$, where the sum
 is over all collections  $u_1,\ldots,u_n\in 1+h_2\CC[[h_2]]$ which are not in resonance.
 This follows from the faithfulness of the $\bar{\dd}^0_{q_2}$-action on
 each $\tau_u^*(F_\infty)$  for any $u\in 1+h_2\CC[[h_2]]$,
 which is a simple exercise left to the interested reader.

 For the $\ddot{Y}'_{\bar{R}}(\gl_1)$ case, we consider the homomorphism
   $\varsigma_v:U_{\bar{R}}(\bar{\D}^0_{h_2})\to U_{\bar{R}}(\bar{\gl}_{\infty})$
 induced by
   $$c_\D\mapsto -\kappa \ \ \mathrm{and}\ \
     x^n\partial^l\mapsto \sum_{i\in \ZZ} (v+(1-i)h_2)^n E_{i,i-l}+\delta_{0,l}c_n\kappa
     \ \ \mathrm{for}\ \ n\in \ZZ_+, l\in \ZZ,$$
 where $c_n\in {\bar{R}}$ are determined recursively from
   $$\binom{n}{1}h_2c_{n-1}-\binom{n}{2}h_2^2c_{n-2}+\ldots+(-1)^{n+1}h_2^nc_0-(-h_2)^n+v^n=0.$$
 The rest of the arguments are the same.
 This completes our proof of Theorem~\ref{flatness}.
\end{proof}

\begin{cor}\label{faithfulness}
 (a) The following is a faithful $\ddot{U}'_R(\gl_1)$-representation:
    $$\mathbf{F}'_R:=
      \bigoplus_{n\in \NN}\bigoplus_{u_1,\ldots,u_n\in 1+h_2\CC[[h_2]]-\mathrm{not\ in\ resonance}}
      F'_R(u_1)\otimes \cdots\otimes F'_R(u_n).$$
\noindent
 (b) The following is a faithful $\ddot{Y}'_R(\gl_1)$-representation:
   $$^{a}\mathbf{F}'_R:=
     \bigoplus_{n\in \NN}\bigoplus_{v_1,\ldots,v_n\in h_2\CC[[h_2]]-\mathrm{not\ in\ resonance}}
     {^{a}F}'_R(v_1)\otimes \cdots\otimes {^{a}F}'_R(v_n).$$
\end{cor}

 As another consequence of Theorem~\ref{flatness} and Proposition~\ref{limit_Upsilon}, we have:

\begin{cor}
 The homomorphism $\Upsilon$ is injective.
\end{cor}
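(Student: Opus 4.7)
The plan is to reduce injectivity of $\Upsilon$ to injectivity of its $h_3 \to 0$ specialization, which by Proposition~\ref{limit_Upsilon} is identified with $\Upsilon_0$, and $\Upsilon_0$ is an isomorphism by Proposition~\ref{Upsilon_0}. The bridge between these two statements is the flatness of both source and target as deformations in $h_3$ (Theorem~\ref{flatness}).

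First I would observe that both sides of $\Upsilon$ carry a $\ZZ_+$-grading with $\deg(h_3)=1$, and that the completion in the target is the grading completion; consequently each graded piece of the source and of the target is a $\CC[[h_2]]$-module on which multiplication by $h_3$ acts, and flatness in $h_3$ (parts (a) and (b) of Theorem~\ref{flatness}) guarantees that multiplication by $h_3$ is injective in each graded piece and hence globally. In particular, the source is $h_3$-adically separated: $\bigcap_n h_3^n \ddot{U}'_{h_2,h_3}(\gl_1) = 0$. The same conclusion on the target passes through to the grading completion because the $h_3$-adic and degree-adic filtrations are compatible (again since $\deg(h_3)=1$ makes $h_3$ raise degree).

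Next, suppose for contradiction that $x \in \ker \Upsilon$ is nonzero. Using $h_3$-separatedness of the source, pick the largest $n$ with $x \in h_3^n \ddot{U}'_{h_2,h_3}(\gl_1)$ and write $x = h_3^n y$ with $y \notin h_3 \ddot{U}'_{h_2,h_3}(\gl_1)$. Then $h_3^n \Upsilon(y) = 0$ in $\widehat{\ddot{Y}}'_{h_2,h_3}(\gl_1)$, and by $h_3$-torsion freeness of the target (from flatness part (b) and the compatibility of filtrations above) we deduce $\Upsilon(y) = 0$. Passing to the quotient by $h_3$, the class $\bar{y}$ of $y$ in $\ddot{U}'_{h_2,h_3}(\gl_1)/(h_3) \simeq U(\bar{\dd}^0_{h_2})$ lies in the kernel of the induced map $\Upsilon_{|h_3=0}$, which by Proposition~\ref{limit_Upsilon} equals $\Upsilon_0$. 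Since $\Upsilon_0$ is an isomorphism, $\bar{y}=0$, contradicting $y \notin h_3 \ddot{U}'_{h_2,h_3}(\gl_1)$.

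I expect the only real subtlety to be justifying the $h_3$-torsion freeness of the completed target $\widehat{\ddot{Y}}'_{h_2,h_3}(\gl_1)$. The hard part is not the algebra-level flatness statement in Theorem~\ref{flatness}(b) itself, but the fact that completion in the $\ZZ_+$-grading preserves it; this is what forces the use of the grading-compatibility of $h_3$. Once torsion-freeness of the completion is in hand, the rest of the argument is the standard ``flat deformation implies injectivity lifts from the special fiber'' device.
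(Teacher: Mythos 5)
Your overall strategy — reduce injectivity of $\Upsilon$ to the special fiber $h_3=0$ via flatness (Theorem~\ref{flatness}) and the identification of the special fiber with $\Upsilon_0$ (Proposition~\ref{limit_Upsilon}) — is exactly the argument the paper has in mind when it calls this corollary an ``immediate consequence'' of those two results, and the ``pull out the largest power of $h_3$, reduce mod $h_3$'' device is the right one. However, two of the justifications you supply are not quite right, and one of them is a genuine error of fact.

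First, the assertion that \emph{both} sides of $\Upsilon$ carry a $\ZZ_+$-grading with $\deg(h_3)=1$ is false for the source. The algebra $\ddot{U}'_{h_2,h_3}(\gl_1)$ has generators $e_i,f_i$ with $i\in\ZZ$ and $H_m$ with $m\neq 0$, and there is no compatible $\ZZ_+$-grading; moreover $\Upsilon$ sends $e_k$ to $e^{k\sigma^+}g(\sigma^+)e_0$, a power series of unbounded degree in the Yangian, so it cannot possibly be a graded map out of a $\ZZ_+$-graded source. Only $\ddot{Y}'_{h_2,h_3}(\gl_1)$ carries the $\ZZ_+$-grading with $\deg(h_k)=1$, and your use of it on the target side — to show that the degree completion $\widehat{\ddot{Y}}'_{h_2,h_3}(\gl_1)$ inherits $h_3$-torsion-freeness from $\ddot{Y}'_{h_2,h_3}(\gl_1)$, since $h_3$ raises degree and kills each homogeneous component separately — is sound. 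What you should say about the source instead is that $\ddot{U}'_{h_2,h_3}(\gl_1)$ is constructed as a \emph{topologically} generated algebra over $\CC[[h_2,h_3]]$, hence it is complete, in particular separated, for the $(h_2,h_3)$-adic topology; since $h_3^n M\subset (h_2,h_3)^n M$, $(h_2,h_3)$-adic separatedness implies $h_3$-adic separatedness. Second, and related, your claim that ``flatness guarantees \dots the source is $h_3$-adically separated'' conflates two different properties: flatness over $\CC[[h_3]]$ (equivalently, $h_3$-torsion-freeness, since $\CC[[h_3]]$ is a PID) does \emph{not} imply $h_3$-adic separatedness in general (e.g.\ $\CC[[h_3]][h_3^{-1}]$ is torsion-free but $h_3^n$ stabilizes to the whole module). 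Separatedness of the source must be taken from the topological completeness of the construction, not from Theorem~\ref{flatness}. With those two corrections the argument you sketch goes through exactly as you intend.
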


\begin{rem}
 In contrast to~\cite{GTL}, $\Upsilon$ does not induce an isomorphism of appropriate completions,
 since the homomorphism $\Upsilon_{\mid_{h_3=0}}$ does not induce an isomorphism of
 $\widehat{\bar{\dd}^0_{q_2}}$ and $\widehat{\bar{\D}^0_{h_2}}$.
\end{rem}

\subsection{Compatible isomorphisms of representations}
$\ $

 Given $n\in \NN$, consider two $n$-tuples
   $$\vv=(v_1,\ldots,v_n)\in ((h_2,h_3)\CC[[h_2,h_3]])^n,\
     \uu=(u_1,\ldots,u_n)\in (1+(h_2,h_3)\CC[[h_2,h_3]])^n.$$
 Associated to this data, we have a collection of Fock
 $\ddot{U}'_R(\gl_1)$-representations $\{F'_R(u_i)\}_{i=1}^n$
 and a collection of Fock $\ddot{Y}'_R(\gl_1)$-representations
 $\{{^{a}F}'_R(v_i)\}_{i=1}^n$. Consider the tensor products
   $$F'_R(\uu):=F'_R(u_1)\otimes F'_R(u_2)\otimes \cdots \otimes F'_R(u_n) -
     \mathrm{representation\ of}\ \ddot{U}'_R(\gl_1),$$
   $${^{a}F}'_R(\vv):={^{a}F}'_R(v_1)\otimes {^{a}F}'_R(v_2)\otimes \cdots \otimes {^{a}F}'_R(v_n) -
     \mathrm{representation\ of}\ \ddot{Y}'_R(\gl_1),$$
 whenever these representations are well defined, i.e.,
 $\{u_i\}_{i=1}^n$ and $\{v_i\}_{i=1}^n$ are \emph{not in resonance}.
 Both of these tensor products have natural bases $\{|\bar{\lambda}\rangle\}$
 labeled by $n$-tuples of Young diagrams $\bar{\lambda}$.
 Our key result establishes an isomorphism of these tensor products compatible with $\Upsilon$.

\begin{thm}\label{intertwiner}
 For any $\vv$ as above, define $u_i:=e^{v_i}$.
 There exists a unique collection of constants $\bc_{\bar{\lambda}}\in R$
 such that $\bc_{\bar{\emptyset}}=1$ and the corresponding $R$-linear isomorphism of vector spaces
   $$I_{\vv}: F'_R(\uu)\iso  {^{a}F}'_R(\vv)\ \mathrm{given\ by}\
     |\bar{\lambda}\rangle \mapsto \bc_{\bar{\lambda}}\cdot |\bar{\lambda}\rangle$$
 satisfies the property
\begin{equation}\tag{\dag}\label{dag}
   I_{\vv}(X(w))=\Upsilon(X)(I_{\vv}(w))\
   \mathrm{for\ all}\ w\in F'_R(\uu)\ \mathrm{and}\ X\in \{H_i, e_i, f_i\}_{i\in \ZZ}.
\end{equation}
\noindent
 We say that $I_{\vv}$ is compatible with $\Upsilon$ if ($\dag$) holds.
\end{thm}

\begin{proof}
$\ $

 By straightforward computation, one can see that ($\dag$) holds for all
  $w=|\bar{\lambda}\rangle, X=H_i$
 and an arbitrary choice of $\bc_{\bar{\lambda}}$.
 On the other hand, the equalities
   $$I_{\vv}(e_i(|\bar{\lambda}\rangle))=\Upsilon(e_i)(I_{\vv}(|\bar{\lambda}\rangle))\
     \mathrm{and}\
     I_{\vv}(f_i(|\bar{\lambda}\rangle))=\Upsilon(f_i)(I_{\vv}(|\bar{\lambda}\rangle))\
     \mathrm{for\ all}\ \bar{\lambda}, i$$
 are both equivalent to
\begin{equation}\label{ratio 1}
   \frac{\bc_{\bar{\lambda}+\square^k_i}}{\bc_{\bar{\lambda}}}=\bd_{\bar{\lambda},\square^k_i},
\end{equation}
 where
\begin{multline}\label{compatibility ratio 1}
  \bd_{\bar{\lambda},\square^k_i}:=
  q^{k-1-n/2}\cdot\left(\frac{h_1}{1-q_1^{-1}}\cdot \frac{q_2-1}{h_2}\right)^{1/2}\times \\
  \prod_{(a,j)\ne (k,i)}
   \left\{\left(\frac{(\chi^{(k)}_i-q_2\chi^{(a)}_j)(\chi^{(k)}_i-q_3\chi^{(a)}_j)}
              {(\chi^{(k)}_i-\chi^{(a)}_j)(\chi^{(k)}_i-q_1^{-1}\chi^{(a)}_j)}\right)\cdot
   \left(\frac{(x^{(k)}_i-x^{(a)}_j)(x^{(k)}_i-x^{(a)}_j+h_1)}
              {(x^{(k)}_i-x^{(a)}_j-h_2)(x^{(k)}_i-x^{(a)}_j-h_3)}\right)\right\}^{\epsilon^{(a,j)}_{(k,i)}}.
\end{multline}
 In this formula, we use the following notation:
   $$x^{(a)}_j:=(\lambda^{a}_j-1)h_1+(j-1)h_2+v_a,\
     \chi^{(a)}_j:=\exp(x^{(a)}_j),\
     \epsilon^{(a,j)}_{(k,i)}:=
     \begin{cases}
       1/2 & \text{if}\ \ (a,j)\succ (k,i)\\
       -1/2 & \text{if}\ \ (a,j)\prec (k,i)
     \end{cases}.$$
 The uniqueness of $\bc_{\bar{\lambda}}\in R$ satisfying the relation
 (\ref{ratio 1}) with the initial condition $\bc_{\bar{\emptyset}}=1$ is obvious.
 The existence of such $\bc_{\bar{\lambda}}$ is equivalent to
   $$\bd_{\bar{\lambda}+\square^k_i,\square^l_j}\cdot \bd_{\bar{\lambda}, \square^k_i}=
     \bd_{\bar{\lambda}+\square^l_j,\square^k_i}\cdot \bd_{\bar{\lambda}, \square^l_j}$$
 for all possible $\bar{\lambda}, \square^k_i, \square^l_j$.
 The verification of this identity is straightforward.
\end{proof}

\subsection{Proof of Theorem~\ref{homomorphism}}\label{Proof}
$\ $

 Recall the faithful $\ddot{U}'_R(\gl_1)$-representation ${\bf{F}}'_R$
 and the faithful  $\ddot{Y}'_R(\gl_1)$-representation ${^{a}\bf{F}}'_R$
 from Corollary~\ref{faithfulness}.
 Note that $\exp:(h_2,h_3)\CC[[h_2,h_3]]\to 1+(h_2,h_3)\CC[[h_2,h_3]]$ is a
 bijective map and $\{v_i\}_{i=1}^n$ are not in resonance if and only if
 $\{e^{v_i}\}_{i=1}^n$ are not in resonance.

 According to Theorem~\ref{intertwiner},
 we have an $R$-linear isomorphism
   ${\bf{I}}:{\bf{F}}'_R\iso {^{a}\bf{F}}'_R$ compatible with $\Upsilon$
 in the following sense:
  $${\bf{I}}(X(w))=\Upsilon(X)({\bf{I}}(w))\
    \mathrm{for\ all}\ w\in {\bf{F}}'_R\ \mathrm{and}\ X\in \{H_i,e_i,f_i\}_{i\in \ZZ}.$$
 For any $X\in \{H_i,e_i,f_i\}_{i\in \ZZ}$, consider the assignment $X\mapsto \Upsilon(X)$
 with the right-hand side defined via~(\ref{Upsilon}).
 Then Theorem~\ref{homomorphism} is equivalent to saying that this
 assignment preserves all the defining relations of $\ddot{U}'_R(\gl_1)$.
 The latter follows immediately from the faithfulness of ${^{a}\bf{F}}'_R$
 combined with an existence of the compatible isomorphism $\bf{I}$.

\begin{rem}
 One can directly check that the aforementioned assignment given
 by~(\ref{Upsilon}) preserves all the defining relations of $\ddot{U}'_{h_1,h_2,h_3}(\gl_1)$,
 except for the Serre relations (compare with~\cite{GTL}).
 In particular, the compatibility with the relations~(T4H,T5H) follows from Corollary~\ref{Borel}.
 Actually, we used this approach to determine the formulas in~(\ref{Upsilon}).
 However, the arguments of~\cite{GTL} on the compatibility with the Serre relations
 are not applicable in our settings. Instead, one can prove this compatibility by utilizing
 the shuffle approach from the next section. We refer the interested reader to~\cite{TB},
 where we discuss this in the greater generality.
\end{rem}

%%%%%%%%%%%%%%%%%%%%%%%%%%%%%%%%%%%%%%%%%%%%%%%%%%%%%%%%%%%%%%%%%%%%%%%%%%%%%%%%%%%%%%%%%%%%%%%%%%%%%%%%
%%%%%%%%%%%%%%%%%%%%%%%%%%%%%%%%%%%%%%%%%% SECTION 7 %%%%%%%%%%%%%%%%%%%%%%%%%%%%%%%%%%%%%%%%%%%%%%%%%%%
%%%%%%%%%%%%%%%%%%%%%%%%%%%%%%%%%%%%%%%%%%%%%%%%%%%%%%%%%%%%%%%%%%%%%%%%%%%%%%%%%%%%%%%%%%%%%%%%%%%%%%%%

\section{Shuffle algebras $S^m$ and $S^a$}

 We introduce the \emph{small multiplicative} and \emph{small additive shuffle algebras}.
 Their relation to $\ddot{U}_{q_1,q_2,q_3}(\gl_1)$ and $\ddot{Y}_{h_1,h_2,h_3}(\gl_1)$ is recalled.
 We also discuss their commutative subalgebras.

\subsection{Small multiplicative shuffle algebra $S^m$}
$\ $

 Consider a $\ZZ_+$-graded $\CC$-vector space $\sS^m=\bigoplus_{n\geq 0}\sS^m_n$,
 where $\sS^m_n$ consists of rational functions $\frac{f(x_1,\ldots,x_n)}{\Delta(x_1,\ldots,x_n)}$
 with $f\in \CC[x_1^{\pm 1},\ldots,x_n^{\pm 1}]^{\mathfrak{S}_n}$
 and $\Delta(x_1,\ldots,x_n):=\prod_{i\ne j}(x_i-x_j)$.

 Define the star product $\overset{m}\star:\sS^m_k\times \sS^m_l\to \sS^m_{k+l}$ by
  $$(F\overset{m}\star G)(x_1,\ldots,x_{k+l}):=
    \mathrm{Sym}_{\mathfrak{S}_{k+l}}\left(F(x_1,\ldots,x_k)G(x_{k+1},\ldots,x_{k+l})\prod_{i\leq k}^{j>k}\omega^m(x_j,x_i)\right)$$
 with
  $$\omega^m(x,y):=\frac{(x-q_1y)(x-q_2y)(x-q_3y)}{(x-y)^3}.$$
 This endows $\sS^m$ with a structure of an associative unital $\CC$-algebra
 with the unit ${\bf{1}}\in \sS^m_0$.

 We say that an element $\frac{f(x_1,\ldots,x_n)}{\Delta(x_1,\ldots,x_n)}\in \sS^m_n$
 satisfies the \emph{wheel conditions} if $f(x_1,\ldots,x_n)=0$ once
   $x_{i_1}/x_{i_2}=q_1\ \mathrm{and}\ x_{i_2}/x_{i_3}=q_2\ \mathrm{for\ some}\ 1\leq i_1,i_2,i_3\leq n$.
 Let $S^m\subset \sS^m$ be a $\ZZ_+$-graded subspace consisting of all such elements.
 The subspace $S^m$ is closed with respect to $\overset{m}\star$.

\begin{defn}
 The algebra $(S^m,\overset{m}\star)$ is called the \textit{small multiplicative shuffle algebra}.
\end{defn}

 Recall that $q_1,q_2,q_3$ are \emph{generic} if $q_1^aq_2^bq_3^c=1\Longleftrightarrow a=b=c$.
 We have the following result:

\begin{thm}\cite[Proposition 3.5]{N}\label{generation_m}
 The algebra $S^m$ is generated by $S^m_1$ for generic $q_1,q_2,q_3$.
\end{thm}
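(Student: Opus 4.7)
The plan is to argue by induction on $n$. Let $S' \subseteq S^m$ be the subalgebra generated by $S^m_1$; the base cases are $S^m_0 = \CC$ and $S^m_1 = \CC[x^{\pm 1}]$, since the wheel condition is vacuous in one variable.

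For the inductive step, the main tool is a family of specialization (or linkage) maps. For each ordered composition $\alpha=(\alpha_1,\ldots,\alpha_k)$ of $n$, define $\sigma_\alpha: S^m_n \to \CC(y_1,\ldots,y_k)$ by substituting, block by block, $x_{\alpha_1+\cdots+\alpha_{j-1}+i} = q_2^{i-1} y_j$ for $i=1,\ldots,\alpha_j$. The wheel condition, together with the genericity of $q_1,q_2,q_3$, forces $\sigma_\alpha(F)$ to vanish on the ``collision'' divisors $\{y_j = q_2^{\alpha_j} y_{j+1}\}$ (and symmetric analogues), while any rational function satisfying exactly these vanishings and the appropriate block symmetry lies in the image; this is the standard factorization lemma for wheel-condition shuffle algebras. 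I would first check that the joint collection $\{\sigma_\alpha\}_{\alpha \models n}$ separates points of $S^m_n$, by extracting leading terms along the resonance divisors $x_i = q_k x_j$ and inducting on $n$.

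Next, I would compute $\sigma_\alpha$ on an iterated shuffle product $f_1 \overset{m}{\star} \cdots \overset{m}{\star} f_n$ with each $f_i \in S^m_1$. Because the symmetrization in the definition of $\overset{m}{\star}$ distributes variables across the blocks of $\alpha$, only the summands that place $\alpha_j$ consecutive variables into the $j$-th block contribute a non-vanishing factor $\prod \omega^m(q_2^{i-1}y_j, q_2^{i'-1}y_j)$ that is finite after the $\Delta^{-2}$ normalization; the remaining summands vanish by the wheel condition. Varying the $f_i \in \CC[x^{\pm 1}]$ then shows $\sigma_\alpha(S'_n) = \sigma_\alpha(S^m_n)$ for every $\alpha$. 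Combined with the separation property, this forces $S'_n = S^m_n$.

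The main obstacle is the separation step: one must rule out elements of $S^m_n$ whose every principal specialization is zero. This is exactly where the generic hypothesis $q_1^a q_2^b q_3^c = 1 \Rightarrow a=b=c$ enters. The argument proceeds by a residue analysis along $x_i = q_1 x_j$: given $F \in \bigcap_\alpha \ker\sigma_\alpha$, one extracts its ``leading factor'' on this divisor, uses the wheel condition to express it in terms of a rational function of strictly smaller degree, and invokes the inductive hypothesis, where genericity prevents the unwanted cancellations that could produce a non-trivial kernel element. With separation in hand, the identification $S'_n = S^m_n$ completes the induction.
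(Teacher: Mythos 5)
Your broad strategy---specialization maps $\sigma_\alpha$ indexed by compositions, exploiting the wheel condition and genericity---is close in spirit to what Negut actually does for this proposition (which the paper cites from \cite{N} without reproducing the proof). But the concluding inference has a genuine logical gap: knowing that the $\sigma_\alpha$ jointly separate points of $S^m_n$, together with $\sigma_\alpha(S'_n)=\sigma_\alpha(S^m_n)$ for every $\alpha$ \emph{separately}, does not force $S'_n=S^m_n$. (Take $V=\CC^2$ with the two coordinate projections and $W$ the diagonal line: the projections jointly separate points of $V$, and each projection has the same image on $W$ as on $V$, yet $W\neq V$.) What is required is equality of the \emph{joint} image inside $\bigoplus_\alpha\mathrm{Im}(\sigma_\alpha)$, which means first characterizing the compatibility relations tying the various $\sigma_\alpha(F)$ to one another and then showing $S'$ realizes every compatible tuple. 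Negut's argument achieves this through a triangularity/filtration mechanism along a partial order on the specialization types, peeling off one specialization at a time; a dimension count against a known Poincar\'e series is an alternative route. Your sketch skips exactly this step.

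A secondary problem: the asserted vanishing of $\sigma_\alpha(F)$ on the collision divisors $\{y_{j+1}=q_2^{\alpha_j}y_j\}$ does not follow from the wheel condition. On that locus the two adjacent blocks merge into a single longer $q_2$-geometric ladder, so every consecutive ratio among the specialized variables is still a power of $q_2$ alone; no wheel of the form $x_a/x_b=q_i$, $x_b/x_c=q_j$ with $i\neq j$ is created, and the wheel condition on $f$ imposes nothing there. The constraints that actually cut out $\mathrm{Im}(\sigma_\alpha)$ are of a different nature (coming from Laurent polynomiality of the numerator, symmetry within blocks, and the interaction between different $\sigma_\alpha$'s), and they need to be stated precisely before the surjectivity computation over shuffle products of $S^m_1$ can be carried through.
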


 The connection between the algebras $S^m$ and $\wt{\E}$ was established in~\cite{SV}:

\begin{prop}\cite{SV}\label{Hall_vs_shuffle}
 The map $u_{1,i}\mapsto x_1^i$ extends to an injective homomorphism $\wt{\E}^+\to S^m$,
 where $\wt{\E}^+$ is the subalgebra of $\wt{\E}$ generated by $\{u_{i,j}|i\in \NN, j\in \ZZ\}$.
\end{prop}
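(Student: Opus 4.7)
The plan is to combine the isomorphism $\Xi$ of Theorem~\ref{Hall} with the combinatorial structure of $(S^m,\overset{m}\star)$. Via $\Xi$, the subalgebra $\wt{\E}^+$ corresponds to the subalgebra $U^+\subset \ddot{U}'_{q_1,q_2,q_3}(\gl_1)$ generated by $\{e_i\}_{i\in\ZZ}$, since the elements $u_{i,j}$ with $i>0$ are built from the $u_{1,k}$'s by iterated commutators via (E2)--(E3). Therefore it suffices to construct an injective algebra map $\Theta\colon U^+\to S^m$ sending $e_i\mapsto x_1^i$.

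First I would check well-definedness, i.e., that the images $x_1^i\in S^m_1$ satisfy the defining relations of $U^+$, namely (T1) and the cubic Serre relation (T6). For (T1), the direct computation
\[
x_1^i\overset{m}\star x_1^j = x_1^i x_2^j\,\omega^m(x_2,x_1) + x_2^i x_1^j\,\omega^m(x_1,x_2)
\]
shows that multiplying by $\prod_a(x_2-q_a x_1)$ produces the negative of the analogous expression for $x_1^j\overset{m}\star x_1^i$ multiplied by $\prod_a(x_1-q_a x_2)$, which is exactly (T1) in the shuffle language. For the Serre relation (T6), the image is a symmetric rational function in three variables; the potential third-order pole along $x_i=x_j$ vanishes because $\omega^m(x,y)+\omega^m(y,x)$ has at worst a second-order pole, so the result lies in $\sS^m_3$. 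One then uses the zeros of $\omega^m$ to verify that evaluating the numerator at any triple satisfying $x_1/x_2=q_i$, $x_2/x_3=q_j$ ($i\ne j$) vanishes, confirming that the shuffled expression actually lies in $S^m$ and can then be shown to be zero there by a direct computation in the symmetrized form.

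Next I would prove injectivity by a Hilbert-series comparison. For generic $q_1,q_2,q_3$, Theorem~\ref{generation_m} gives that $S^m$ is generated by $S^m_1=\CC[x^{\pm 1}]$, which lies in the image of $\Theta$, so $\Theta$ is surjective. It therefore suffices to match the graded characters of $\wt{\E}^+$ and $S^m$. For the source, the elliptic Hall algebra admits a PBW-type basis of ordered monomials in $\{u_\xx\}_{\xx\in(\ZZ^2)^+}$; its linear independence can be deduced from Theorem~\ref{flatness} by specializing $h_3\to 0$ and invoking Theorem~\ref{limit_1} to identify the limit with the enveloping algebra of a subalgebra of $\bar{\dd}^0_{h_2}$, where PBW makes dimensions transparent. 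For the target, one computes $\dim S^m_n$ using explicit wheel-condition symmetric Laurent polynomials.

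The hard step is the character estimate for $S^m$: bounding $\dim S^m_n$ from above requires either a careful combinatorial analysis of the wheel condition or a faithful-module argument in which $\wt{\E}^+$ and $S^m$ both act compatibly on a suitable tensor product of Fock modules from Section 4, reducing injectivity to a known faithfulness statement. In either case, the verification of (T1) and (T6) in the first step is essentially mechanical manipulation of $\omega^m$; the real content lies in controlling the sizes of the two algebras.
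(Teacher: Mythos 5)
The paper itself offers no proof of Proposition~\ref{Hall_vs_shuffle}; it cites it from~\cite{SV}. So I will assess your argument on its own terms.

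Your overall blueprint — (i) use $\Xi$ to replace $\wt{\E}^+$ by the subalgebra generated by $\{e_i\}$, (ii) check that $e_i\mapsto x_1^i$ respects (T1) and (T6), (iii) get surjectivity from Theorem~\ref{generation_m}, (iv) conclude injectivity by matching bigraded characters — is the right shape, and step (i) is sound: for $\xx,\yy$ both in the upper half-plane one has $\epsilon_\xx=\epsilon_\yy=\epsilon_{\xx+\yy}=1$, so $\alpha(\xx,\yy)=0$ by (E4), hence none of the $\kappa$'s enter the relations inside $\wt{\E}^+$ and the passage to $\widetilde{\E}/(\kappa_{0,1}-1)$ is harmless. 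The (T1) check also comes out right, though your phrasing is off: $x_1^i\overset{m}\star x_1^j$ is already $\mathfrak{S}_2$-symmetric, so "multiplying by $\prod_a(x_2-q_ax_1)$" is not a well-formed operation on it. The correct verification is to write $x_1^i\overset{m}\star x_1^j=\frac{x_1^ix_2^jP(x_2,x_1)-x_2^ix_1^jP(x_1,x_2)}{(x_2-x_1)^3}$ with $P(z,w)=(z-q_1w)(z-q_2w)(z-q_3w)$ and then observe that the cubic alternating combination of (T1) collapses to $(x_1^ax_2^b-x_2^ax_1^b)P(x_1,x_2)P(x_2,x_1)$, which is antisymmetrized away by the second half of the relation.

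The genuine gap is the injectivity step, which is the entire content of the proposition, and which you explicitly leave unfinished ("the hard step is the character estimate for $S^m$"). Neither of the two routes you sketch is viable as stated. The PBW route invokes Theorem~\ref{flatness} and Theorem~\ref{limit_1}, but those are statements about formal deformations over $\CC[[h_2,h_3]]$ and $\CC[[h]]$; Proposition~\ref{Hall_vs_shuffle} lives at generic numerical $q_1,q_2,q_3\in\CC^*$. A flat formal deformation of $U(\bar{\dd}^0_{h_2})$ does not automatically transport a PBW-type dimension count to a generic complex specialization without a separate semicontinuity argument, and you do not supply one. The faithful-module route is only named, not carried out: you would need to exhibit a $\wt{\E}^+$-module on which the action factors through the shuffle map and is faithful on every bigraded piece, and nothing in Section~4 is wired up to do this directly. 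Either way, the actual computation of $\dim(S^m_n)_d$ against $\dim(\wt{\E}^+_n)_d$ — which is where the wheel condition truly enters — is missing, and that computation (or an equivalent faithfulness argument) is precisely what~\cite{SV} supplies. As written, your argument establishes a well-defined surjection but not the claimed injection.
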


 Combining this result with Theorems~\ref{generation_m} and~\ref{Hall}, we get:

\begin{thm}
 The algebras $\wt{\E}^+, \ddot{U}^+, S^m$ are isomorphic.
\end{thm}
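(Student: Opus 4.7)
The plan is to deduce the theorem by chaining together the two results stated immediately above it (Proposition~\ref{Hall_vs_shuffle} and Theorem~\ref{generation_m}) with Theorem~\ref{Hall}, so only elementary verifications remain.

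\textbf{Step 1: Isomorphism $\wt{\E}^+ \iso S^m$.} By Proposition~\ref{Hall_vs_shuffle}, there is an injective homomorphism $\phi:\wt{\E}^+\hookrightarrow S^m$ with $\phi(u_{1,i})=x_1^i$ for every $i\in\ZZ$. I would observe that $S^m_1=\CC[x_1^{\pm 1}]$ (the denominator $\Delta(x_1)^2=1$ in degree one, and there is no wheel condition to impose), so $S^m_1=\spa_\CC\{x_1^i\mid i\in\ZZ\}$ lies entirely in the image of $\phi$. By Theorem~\ref{generation_m}, $S^m$ is generated as an algebra by $S^m_1$ for generic $q_1,q_2,q_3$; consequently the image of $\phi$ coincides with $S^m$, i.e.\ $\phi$ is surjective. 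Combined with injectivity this yields $\wt{\E}^+\iso S^m$.

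\textbf{Step 2: Isomorphism $\wt{\E}^+\iso \ddot{U}^+_{q_1,q_2,q_3}(\gl_1)$.} Here I would invoke Theorem~\ref{Hall}: the isomorphism $\Xi:\wt{\E}/(\kappa_{0,1}-1)\iso \ddot{U}'_{q_1,q_2,q_3}(\gl_1)$ sends $u_{1,i}\mapsto e_i$. Since $\wt{\E}^+$ is generated by $\{u_{i,j}\}_{i>0}$ and does not meet the central ideal $(\kappa_{0,1}-1)$, the restriction $\Xi|_{\wt{\E}^+}$ is a well-defined injective homomorphism into $\ddot{U}'_{q_1,q_2,q_3}(\gl_1)$. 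Its image is the subalgebra generated by $\{e_i\}_{i\in\ZZ}$, which by the triangular decomposition recalled at the end of Section~1.2 is exactly $\ddot{U}^+_{q_1,q_2,q_3}(\gl_1)$. One further observation I would spell out: by relation (E2), every generator $u_{i,j}$ with $i>0$ can be written as an iterated commutator of the $u_{1,k}$'s, so $\wt{\E}^+$ is in fact generated by $\{u_{1,i}\}_{i\in\ZZ}$, which matches the generating set of $\ddot{U}^+_{q_1,q_2,q_3}(\gl_1)$ under $\Xi$.

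\textbf{Step 3: Conclusion.} Combining Steps 1 and 2, I obtain a chain of isomorphisms
\[
S^m \;\overset{\phi^{-1}}{\iso}\; \wt{\E}^+ \;\overset{\Xi|_{\wt{\E}^+}}{\iso}\; \ddot{U}^+_{q_1,q_2,q_3}(\gl_1),
\]
which is the desired result.

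The only genuine obstacle is Step~1's appeal to Theorem~\ref{generation_m}, whose statement (generation of $S^m$ in degree one, for generic parameters) does the real work; the surjectivity of the shuffle map and the identification with $\ddot{U}^+$ are then formal consequences. Everything else is either already packaged in a cited result or amounts to unwinding the triangular decomposition.
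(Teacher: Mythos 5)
Your proof is correct and takes essentially the same route as the paper, which itself simply cites the combination of Proposition~\ref{Hall_vs_shuffle}, Theorem~\ref{generation_m}, and Theorem~\ref{Hall}. The only place you paper over a small detail is the injectivity of $\Xi|_{\wt{\E}^+}$: the assertion that $\wt{\E}^+$ meets the two-sided ideal $(\kappa_{0,1}-1)$ trivially is correct but deserves a reference to the triangular decomposition of $\wt{\E}$ (as in~\cite{BS}), or alternatively can be deduced post hoc from Step~1 by inserting the shuffle realization $\ddot{U}^+\to S^m$, $e_i\mapsto x_1^i$, so that the injective composite $\wt{\E}^+\to\ddot{U}^+\to S^m$ forces the first arrow to be injective.
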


\subsection{Commutative subalgebra $\A^m\subset S^m$}
$\ $

 Following~\cite{FHHSY}, we introduce an important $\ZZ_+$-graded subspace
  $\A^m=\bigoplus_{n\geq 0} \A^m_n$ of $S^m$.  Its degree $n$ component is defined by
  $\A^m_n=\{F\in S^m_n| \partial^{(0,k)}F=\partial^{(\infty,k)}F\ \ \forall\ 1\leq k\leq n\}$, where
  $$\partial^{(0,k)}F:=\underset{\xi\to 0}\lim F(x_1,\ldots,\xi\cdot x_{n-k+1},\ldots,\xi\cdot x_n),\
    \partial^{(\infty,k)}F:=\underset{\xi\to \infty}\lim F(x_1,\ldots,\xi\cdot x_{n-k+1},\ldots,\xi\cdot x_n).$$

 This subspace satisfies the following properties:

\begin{thm}\cite[Section 2]{FHHSY}\label{comm_subalg_m}
 We have:

\noindent
 (a) Suppose $F\in S^m_n$ and $\partial^{(\infty,k)}F$ exist for all $1\leq k\leq n$, then $F\in \A^m_n$.

\noindent
 (b) The subspace $\A^m\subset S^m$ is $\overset{m}\star$-commutative.

\noindent
 (c) $\A^m$ is $\overset{m}\star$-closed and it is a
 polynomial algebra in $\{K^m_n\}_{n\in \NN}$ with $K^m_n\in S^m_n$ defined by
  $$K^m_1(x_1)=x_1^0,
    K^m_2(x_1,x_2)=\frac{(x_1-q_1x_2)(x_2-q_1x_1)}{(x_1-x_2)^2},
    K^m_n(x_1,\ldots,x_n)=\prod_{1\leq i<j\leq n} K^m_2(x_i,x_j).$$
\end{thm}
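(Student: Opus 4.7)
For part (a), I would write $F=f/\Delta^2$ with $f\in\CC[x_1^{\pm 1},\ldots,x_n^{\pm 1}]^{\mathfrak{S}_n}$ and analyze the Laurent expansion in $\xi$ of $F(x_1,\ldots,x_{n-k},\xi x_{n-k+1},\ldots,\xi x_n)$. Substitution into $\Delta^2$ pulls out a factor $\xi^{k(k-1)}\prod_{a\leq n-k<b}(x_a-\xi x_b)^2$ whose total $\xi$-span is $k(2n-k-1)$; the hypothesis that $\partial^{(\infty,k)}F$ exists pins the top $\xi$-degree of $f$ after the substitution. The full $\mathfrak{S}_n$-symmetry of $f$ together with the wheel condition on $F$ eliminates the intermediate Laurent coefficients in $\xi$, so $\partial^{(0,k)}F$ automatically exists and coincides with $\partial^{(\infty,k)}F$. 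This characterization of $\A^m$ by matching specializations will drive both (b) and (c).

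For part (b), I would write $F\overset{m}\star G - G\overset{m}\star F$ as a single $\mathfrak{S}_{n+m}$-symmetrization of a rational function whose only singularities lie on the diagonals $x_i=x_j$, coming from the $(x-y)^3$ denominators in $\omega^m$. A residue analysis at these diagonals, combined with the limits as each $x_i$ tends to $0$ or $\infty$, rewrites the symmetrized expression as a sum of terms of the form $\partial^{(0,k)}F\cdot\partial^{(\infty,k')}G - \partial^{(\infty,k)}F\cdot\partial^{(0,k')}G$. Since $F,G\in\A^m$, each bracketed difference vanishes, so the star-commutator is zero.

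For part (c), closedness of $\A^m$ under $\overset{m}\star$ follows from (a) via a Leibniz-type identity: $\partial^{(\infty,k)}(F\overset{m}\star G)$ expands as a weighted shuffle sum of $\partial^{(\infty,i)}F$ and $\partial^{(\infty,k-i)}G$, and the same expansion with $0$ in place of $\infty$ produces the same value by the hypothesis on $F,G$. The element $K^m_2(x_1,x_2)=(x_1-q_1x_2)(x_2-q_1x_1)/(x_1-x_2)^2$ is symmetric, homogeneous of degree $0$, and its numerator vanishes on every three-term wheel (using $q_1q_2q_3=1$), so $K^m_2\in\A^m_2$; the product $K^m_n=\prod_{i<j}K^m_2(x_i,x_j)$ inherits these properties and thus lies in $\A^m_n$. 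Algebraic independence and generation then follow by embedding $\A^m$ into a ring of symmetric functions via the specialization $F\mapsto(\partial^{(\infty,k)}F)_k$, observing that the images of $K^m_1,K^m_2,\ldots$ play the role of power-sum-type free generators, and matching graded dimensions against those of $\CC[K^m_1,K^m_2,\ldots]$.

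The principal obstacle is the commutativity in (b): the residue analysis is conceptually clean but requires delicate bookkeeping of the diagonal pole contributions and of their precise matching with the $\partial^{(0,k)}$ and $\partial^{(\infty,k)}$ specializations controlled by (a). Parts (a) and (c) are more routine once the Laurent-expansion framework of (a) is in place.
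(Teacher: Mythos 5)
Note that the paper does not prove this result: Theorem~\ref{comm_subalg_m} is quoted from \cite[Section 2]{FHHSY}, so there is no internal argument here for your sketch to be compared against.

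On its own merits, the real gap in your proposal is in (a). Your degree count for $\Delta^2$ under the rescaling $x_{n-k+1},\ldots,x_n\mapsto \xi x_{n-k+1},\ldots,\xi x_n$ is fine, and existence of $\partial^{(\infty,k)}F$ does bound the top $\xi$-degree of the numerator after substitution. But the step ``$\mathfrak{S}_n$-symmetry together with the wheel condition eliminates the intermediate Laurent coefficients'' is an assertion, not an argument, and it cannot be the right mechanism: the wheel condition is vacuous for $n\le 2$, yet (a) is supposed to hold there. What you actually need is a \emph{lower} bound on the bottom $\xi$-degree, and symmetry converts your upper bound into a lower one only in the presence of a homogeneity constraint (indeed every element of $\A^m$ that enters the paper --- $K^m_j$ and $L^m_j$ --- is homogeneous of degree $0$). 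Without such a constraint, (a) is not even true as literally stated: $F=\bigl(1+\frac{1}{x_1x_2}\bigr)K^m_2\in S^m_2$ has $\partial^{(\infty,1)}F=-q_1$ and $\partial^{(\infty,2)}F=K^m_2$ both finite, yet $\partial^{(0,1)}F$ diverges. Your sketch never introduces the homogeneity hypothesis that the cited source must carry, so the argument does not close. Parts (b) and (c) are the right style --- a single $\mathfrak{S}_{n+m}$-symmetrization plus residue bookkeeping for commutativity, wheel/limit checks and a Leibniz-type expansion for $\overset{m}\star$-closedness --- but remain at the level of a plan: the diagonal-pole analysis in (b) (which you flag yourself) is where the real work lies, and the algebraic-independence argument in (c) is circular as phrased, since ``matching graded dimensions'' presupposes the Hilbert series of $\A^m$.
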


\begin{rem}
 These elements $\{K^m_n\}_{n\in \NN}$ played a crucial role in~\cite{FT}, where they were
 used to construct an action of the Heisenberg algebra on the vector space $M$ from Section 2.2.
\end{rem}

 Our next result provides an alternative choice of generators for the algebra $\A^m$
 explicitly expressed via $S^m_1$. We use the following notation:
 $[P,Q]_m=P\overset{m}\star Q-Q\overset{m}\star P$ for $P,Q\in S^m$.

\begin{prop}\label{alt_gener_m}
 The algebra $\A^m$ is a polynomial algebra in $\{L^m_n\}_{n\in \NN}$ defined by
  $$L^m_1(x_1)=x_1^0\ \ \mathrm{and}\ \
    L^m_n=\underbrace{[x^1,[x^0,[x^0,\ldots,[x^0,x^{-1}]_m\ldots]_m]_m]_m}_{\mbox{$n$ factors}}\in S^m_n
    \ \ \mathrm{for}\ n\geq 2.$$
\end{prop}

 We refer the reader to Appendix D for the proof of this result.

\subsection{Small additive shuffle algebra $S^a$}
$\ $

 Consider a $\ZZ_+$-graded $\CC$-vector space $\sS^a=\bigoplus_{n\geq 0}\sS^a_n$,
 where $\sS^a_n$ consists of rational functions $\frac{f(x_1,\ldots,x_n)}{\Delta(x_1,\ldots,x_n)}$
 with $f\in \CC[x_1,\ldots,x_n]^{\mathfrak{S}_n}$.
 Define the star product $\overset{a}\star:\sS^a_k\times \sS^a_l\to \sS^a_{k+l}$ by
   $$(F\overset{a}\star G)(x_1,\ldots,x_{k+l}):=
     \mathrm{Sym}_{\mathfrak{S}_{k+l}}\left(F(x_1,\ldots,x_k)G(x_{k+1},\ldots,x_{k+l})\prod_{i\leq k}^{j>k}\omega^a(x_j,x_i)\right)$$
 with
   $$\omega^a(x,y):=\frac{(x-y-h_1)(x-y-h_2)(x-y-h_3)}{(x-y)^3}.$$
 This endows $\sS^a$ with a structure of an associative unital $\CC$-algebra with the unit ${\bf{1}}\in \sS^a_0$.

 We say that an element $\frac{f(x_1,\ldots,x_n)}{\Delta(x_1,\ldots,x_n)}\in \sS^a_n$
 satisfies the \emph{wheel conditions} if $f(x_1,\ldots,x_n)=0$ once
   $x_{i_1}-x_{i_2}=h_1\ \mathrm{and}\ x_{i_2}-x_{i_3}=h_2\ \mathrm{for\ some}\ 1\leq i_1,i_2,i_3\leq n$.
 Let $S^a\subset \sS^a$ be a $\ZZ_+$-graded subspace consisting of all such elements.
 The subspace $S^a$ is closed with respect to $\overset{a}\star$.

\begin{defn}
 The algebra $(S^a,\overset{a}\star)$ is called the \textit{small additive shuffle algebra}.
\end{defn}

 The following result is proved analogously to Theorem~\ref{generation_m}:

\begin{thm}\label{generation_a}
 For generic $h_1,h_2,h_3$ (that is $ah_1+bh_2+ch_3=0\ \mathrm{for}\ a,b,c\in \ZZ\Longleftrightarrow a=b=c$),
 the map $e_j\mapsto x_1^j$ extends to an isomorphism $\ddot{Y}^+\iso S^a$.
 In particular, $S^a$ is generated by $S^a_1$.
\end{thm}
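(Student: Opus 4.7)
The plan is to mimic Negut's proof of the multiplicative analogue (Theorem~\ref{generation_m}), adapting it via the dictionary $q_i^a \rightsquigarrow ah_i$ that turns multiplicative into additive wheel conditions. There are three ingredients: well-definedness, surjectivity, and injectivity of the map $\Psi: \ddot{Y}^+_{h_1,h_2,h_3}(\gl_1) \to S^a$ defined by $e_j \mapsto x_1^j$.

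For well-definedness, the elements $x_1^j$ trivially lie in $S^a_1$. The quadratic relation (Y1) becomes a direct identity using
\begin{equation*}
\omega^a(x_2,x_1) - \omega^a(x_1,x_2) = \frac{2\sigma_3}{(x_1-x_2)^3}, \qquad \omega^a(x_2,x_1) + \omega^a(x_1,x_2) = 2 + \frac{2\sigma_2}{(x_1-x_2)^2}
\end{equation*}
(valid because $\sigma_1 = 0$), combined with the binomial identity $x_1^{i+3} - 3x_1^{i+2}x_2 + 3x_1^{i+1}x_2^2 - x_1^i x_2^3 = x_1^i(x_1-x_2)^3$. A short calculation then shows that both sides of (Y1) map to $2\sigma_3(x_1^i x_2^j + x_1^j x_2^i) + 2\sigma_2\sigma_3(x_1^i x_2^j + x_1^j x_2^i)/(x_1-x_2)^2$. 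The Serre relation (Y6) reduces to the vanishing of a symmetric rational function in three variables whose numerator is forced to vanish on the wheel locus $\{x_1-x_2 = h_a,\ x_2 - x_3 = h_b\,|\,a\ne b\}$; this follows by a direct computation precisely parallel to the multiplicative case treated in~\cite{FHHSY}.

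For surjectivity (equivalently, generation of $S^a$ by $S^a_1$), one adapts Negut's specialization technique. For each partition $\lambda \vdash n$, define a map $\varphi_\lambda: S^a_n \to \CC(y_1,\ldots,y_{\ell(\lambda)})$ by substituting $x_k \mapsto y_a + (b-1)h_1$ according to a fixed column-reading of the boxes $(a,b)$ of $\lambda$. The wheel condition forces $\varphi_\lambda(F)$ to be divisible by a prescribed product of linear factors, yielding an injection into an explicit polynomial space; iterated star products of $\{x_1^i\}$ are then shown, by induction on $n$, to realize every admissible specialization pattern. For injectivity, the relations (Y1) and (Y6), together with the shift $\sigma^+$ from Proposition~\ref{yangian_generating}, give $\ddot{Y}^+_{h_1,h_2,h_3}(\gl_1)$ a PBW-type basis indexed by partitions whose Hilbert series matches the generating function of plane partitions; combined with surjectivity, the equality of Hilbert series promotes injectivity to bijectivity.

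The main obstacle is the surjectivity step: finding sufficiently sensitive specializations and showing that star products of degree-$1$ generators fill out the target space. This is the technical heart of Negut's argument and transfers to the additive setting with purely combinatorial adjustments, the key point being that the additive wheel locus has the same combinatorial structure as its multiplicative counterpart, so the inductive count of generators and the Hilbert-series bookkeeping are identical on both sides.
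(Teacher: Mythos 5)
Your approach is essentially the paper's: the text devotes a single sentence to this result, asserting it is ``proved analogously to Theorem~\ref{generation_m},'' which in turn just cites \cite[Proposition~3.5]{N}. Your explicit verification of (Y1) via $\omega^a(x_2,x_1)\mp\omega^a(x_1,x_2)$ is correct, and your outline of the specialization argument for generation is a reasonable transcription of Negut's method. One point worth noting: in the multiplicative setting the \emph{isomorphism} $\ddot{U}^+_{q_1,q_2,q_3}(\gl_1)\iso S^m$ does not follow from generation alone but is deduced in the paper from the elliptic Hall algebra (Theorems~\ref{Hall} and~\ref{Hall2}), which has no additive counterpart; so for Theorem~\ref{generation_a} the injectivity genuinely requires a different argument, and your Hilbert-series comparison is a sensible replacement. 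Two small imprecisions there: the relations give a priori only a \emph{spanning} set indexed by partitions (a dimension upper bound), not a basis, and it is surjectivity that upgrades this to equality of Hilbert series and hence to injectivity; and in your sketch of (Y6) the image of the Serre element must be shown to be identically zero in $S^a_3$, not merely to have a numerator vanishing on the wheel locus (the latter only places it in $S^a_3$, which is automatic since $S^a$ is $\overset{a}\star$-closed).
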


\subsection{Commutative subalgebra $\A^a\subset S^a$}
$\ $

 Let us introduce an \emph{additive version} of $\A^m$:
 a $\ZZ_+$-graded subspace $\A^a=\bigoplus_{n\geq 0} \A^a_n$ of  $S^a$.
 Its degree $n$ component $\A^a_n$ consists of those $F\in S^a_n$ such that the limits
   $$\partial^{(\infty,k)}F:=\underset{\xi\to \infty}\lim F(x_1,\ldots,x_{n-k},x_{n-k+1}+\xi,\ldots,x_n+\xi)$$
 exist for all $1\leq k\leq n$.
 The following is an \emph{additive} counterpart of Theorem~\ref{comm_subalg_m}:

\begin{thm}\label{comm_subalg_a}
 We have:

\noindent
 (a) The subspace $\A^a\subset S^a$ is $\overset{a}\star$-commutative.

\noindent
 (b) $\A^a$ is $\overset{a}\star$-closed and it is a
 polynomial algebra in $\{K^a_n\}_{n\in \NN}$ with $K^a_n\in S^a_n$ defined by
 $$K^a_1(x_1)=x^0_1,
   K^a_2(x_1,x_2)=\frac{(x_1-x_2-h_1)(x_2-x_1-h_1)}{(x_1-x_2)^2},
   K^a_n(x_1,\ldots,x_n)=\prod_{1\leq i<j\leq n}K^a_2(x_i,x_j).$$
\end{thm}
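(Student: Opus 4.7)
The plan is to adapt the proof of Theorem~\ref{comm_subalg_m} (due to~\cite{FHHSY}) to the additive setting, the principal translations being: factors $(1-q_iy/x)$ become differences $(x-y-h_i)$; multiplicative scalings $x\mapsto tx$ become shifts $x\mapsto x+t$; and the one-sided limit condition in the definition of $\A^a$ plays the role of the matching of $0$- and $\infty$-expansions used in the definition of $\A^m$.

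First I would verify directly that each $K^a_n\in\A^a_n$. Symmetry is clear from $K^a_n=\prod_{i<j}K^a_2(x_i,x_j)$. Rewriting its numerator as $\prod_{i<j}(h_1-(x_i-x_j))(h_1+(x_i-x_j))$, the wheel condition reduces to the observation that on any wheel locus $x_a-x_b=h_i,\ x_b-x_c=h_j$ with $i\ne j$, one of the three pairwise differences among $\{x_a,x_b,x_c\}$ must equal $\pm h_1$: this is automatic if $1\in\{i,j\}$, and otherwise $x_a-x_c=h_i+h_j=-h_1$ thanks to $h_1+h_2+h_3=0$; the corresponding factor then vanishes. For the asymptotic limit, a direct computation yields $K^a_2(x_i,x_j+\xi)\to -1$ as $\xi\to\infty$, hence $\partial^{(\infty,k)}K^a_n$ exists and equals $(-1)^{k(n-k)}K^a_{n-k}(x_1,\ldots,x_{n-k})\cdot K^a_k(x_{n-k+1},\ldots,x_n)$.

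Commutativity in part (a) is the technical heart. For $F\in\A^a_i$ and $G\in\A^a_j$, the difference $F\overset{a}\star G-G\overset{a}\star F$ lies in $\sS^a_{i+j}$, and its only possible poles sit on the diagonal hyperplanes $x_a=x_b$ (arising from the triple pole of $\omega^a$). Following the strategy of~\cite{FHHSY}, one expands this difference as a sum of residues corresponding to collisions of variables drawn from the $F$- and $G$-factors; each such residue involves an evaluation on a configuration with consecutive differences in $\{h_1,h_2,h_3\}$, at which the additive wheel condition on $F$ (or $G$) forces vanishing. Combined with the degree-at-infinity bound furnished by the existence of all $\partial^{(\infty,k)}F$ and $\partial^{(\infty,k)}G$, the residual rational function has degree too low to be nonzero.

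For (b), closedness of $\A^a$ under $\overset{a}\star$ follows by analyzing $\partial^{(\infty,k)}(F\overset{a}\star G)$ via a splitting of each term of the symmetrization according to the partition of $\{x_{n-k+1},\ldots,x_{i+j}\}$ between $F$-slots and $G$-slots; in each term the ``mixed'' factors $\omega^a(x_l+\xi,x_k)\to 1$ as $\xi\to\infty$, and the existence of $\partial^{(\infty,\bullet)}F,\ \partial^{(\infty,\bullet)}G$ produces the desired limit. Finally, to prove that $\A^a$ is a polynomial algebra in $\{K^a_j\}_{j\geq 1}$, I would verify algebraic independence by iteratively applying $\partial^{(\infty,1)}$ to separate shapes indexed by partitions, and match this with the Hilbert-series upper bound $\dim\A^a_n\le p(n)$ proved via the filtration argument of~\cite[Sec.~2]{FHHSY}. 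The main obstacle is the residue analysis inside the commutativity step: the triple pole of $\omega^a$ on $x_a=x_b$ requires a careful case-by-case treatment of three-variable collisions, which is precisely the point at which the additive wheel condition is invoked.
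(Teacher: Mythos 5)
The paper gives no proof of this theorem, presenting it only as the ``additive counterpart'' of Theorem~\ref{comm_subalg_m} from~\cite{FHHSY}; your proposal carries out precisely that intended adaptation, and the explicit checks are correct: the wheel condition for $K^a_n$ reduces to $h_2+h_3=-h_1$, the limit $K^a_2(x_i,x_j+\xi)\to -1$ gives $\partial^{(\infty,k)}K^a_n=(-1)^{k(n-k)}K^a_{n-k}(x_1,\dots,x_{n-k})\,K^a_k(x_{n-k+1},\dots,x_n)$, and the commutativity and polynomial-algebra steps are the residue and filtration arguments of~\cite{FHHSY} transported from $q$-scalings to shifts. You have simply supplied the details the paper leaves to the reader.
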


 Analogously to Proposition~\ref{alt_gener_m}, the commutative subalgebra $\A^a$ admits
 an alternative set of generators explicitly expressed via $S^a_1$.
 Define $[P,Q]_a:=P\overset{a}\star Q-Q\overset{a}\star P$ for $P,Q\in S^a$.

\begin{prop}\label{alt_gener_a}
 The algebra $\A^a$ is a polynomial algebra in $\{L^a_n\}_{n\in \NN}$ defined by
  $$L^a_1(x_1)=x_1^0\ \ \mathrm{and}\ \
    L^a_n=\underbrace{[x^0,[x^0,\ldots,[x^0,x^{n-1}]_a\ldots]_a]_a}_{\mbox{$n$ factors}}\in S^a_n\ \ \mathrm{for}\ n\geq 2.$$
\end{prop}

\begin{rem}
 The commutativity of $\{L^m_n\}$ and $\{L^a_n\}$ was shown in~\cite{SV},~\cite[Appendix E]{SV3}.
\end{rem}

%%%%%%%%%%%%%%%%%%%%%%%%%%%%%%%%%%%%%%%%%%%%%%%%%%%%%%%%%%%%%%%%%%%%%%%%%%%%%%%%%%%%%%%%%%%%%%%%%%%%%%%%
%%%%%%%%%%%%%%%%%%%%%%%%%%%%%%%%%%%%%%%%%% SECTION 8 %%%%%%%%%%%%%%%%%%%%%%%%%%%%%%%%%%%%%%%%%%%%%%%%%%%
%%%%%%%%%%%%%%%%%%%%%%%%%%%%%%%%%%%%%%%%%%%%%%%%%%%%%%%%%%%%%%%%%%%%%%%%%%%%%%%%%%%%%%%%%%%%%%%%%%%%%%%%

\section{Horizontal realization and Whittaker vectors}

%  The goal of this section is to introduce the \emph{horizontal realization} of the algebra $\ddot{U}_{q_1,q_2,q_3}(\gl_1)$.
%  This allows to define the tensor product structure on the whole $\ddot{U}_{q_1,q_2,q_3}(\gl_1)$-category $\Oo$.
%   It also provides a natural framework for the generalization of~\cite[Section 7]{FT} to K-theory/cohomology of $M(r,n)$:
% we prove that the natural vectors $v^K_r, v^H_r$ in the
% completions of $M^r, V^r$ are eigenvectors with respect to particular families of operators.

\subsection{Horizontal realization of $\ddot{U}_{q_1,q_2,q_3}(\gl_1)$}
$\ $

 Recall the distinguished collection of elements
 $\{u_\xx, \kappa_\yy\}\subset \ddot{U}_{q_1,q_2,q_3}(\gl_1)$ from Theorem~\ref{Hall}.
 Note that there is a natural $\mathrm{SL}_2(\ZZ)$-action on
  $\ddot{U}_{q_1,q_2,q_3}(\gl_1)/(\psi_0^{\pm 1}-1)\simeq \widetilde{\E}/(\kappa_\yy-1)_{\yy\in \ZZ^2}$.
 In particular, we have a natural automorphism of
 $\widetilde{\E}/(\kappa_\yy-1)_{\yy\in \ZZ^2}$ induced by $u_{k,l}\mapsto u_{-l,k}$.
 Although there is no such automorphism for $\widetilde{\E}/(\kappa_{0,1}-1)$,
 but we still have a nice presentation of this algebra in terms of
 the generators $\{u_{i,\pm 1},u_{j,0}, \kappa_{\pm 1,0}\}_{i\in \ZZ}^{j\in \ZZ^*}$ rather than
 $\{u_{\pm 1,i},u_{0,j}, \kappa_{\pm 1,0}\}_{i\in \ZZ}^{j\in \ZZ^*}$.

 To formulate the main result, we need to introduce a modification
 of the algebra $\ddot{U}_{q_1,q_2,q_3}(\gl_1)$.
 The algebra $\ddot{\bU}_{q_1,q_2,q_3}$ is the unital associative $\CC$-algebra
 generated by $\{\wt{e}_i,\wt{f}_i,\wt{\psi}_j, \gamma^{\pm 1/2}\}_{i\in \ZZ}^{j\in \ZZ^*}$
 with the following defining relations:
\begin{equation}\tag{TT0} \label{TT0}
  \wt{\psi}^\pm(z)\wt{\psi}^\pm(w)=\wt{\psi}^\pm(w)\wt{\psi}^\pm(z),\
  g(\gamma^{-1}z/w)\wt{\psi}^+(z)\wt{\psi}^-(w)=g(\gamma z/w)\wt{\psi}^-(w)\wt{\psi}^+(z),
\end{equation}
\begin{equation}\tag{TT1} \label{TT1}
  \wt{e}(z)\wt{e}(w)=g(z/w)\wt{e}(w)\wt{e}(z),
\end{equation}
\begin{equation}\tag{TT2}\label{TT2}
  \wt{f}(z)\wt{f}(w)=g(w/z)\wt{f}(w)\wt{f}(z),
\end{equation}
\begin{equation}\tag{TT3}\label{TT3}
  \beta_1\cdot [\wt{e}(z),\wt{f}(w)]=
  \delta(\gamma w/z)\wt{\psi}^{+}(\gamma^{1/2}w)-\delta(\gamma z/w)\wt{\psi}^{-}(\gamma^{1/2}z),
\end{equation}
\begin{equation}\tag{TT4}\label{TT4}
  \wt{\psi}^\pm(z)\wt{e}(w)=g(\gamma^{\pm 1/2}z/w)\wt{e}(w)\wt{\psi}^{\pm}(z),
\end{equation}
\begin{equation}\tag{TT5}\label{TT5}
  \wt{\psi}^{\pm}(z)\wt{f}(w)=g(\gamma^{\pm 1/2}w/z)\wt{f}(w)\wt{\psi}^{\pm}(z),
\end{equation}
\begin{equation}\tag{TT6}\label{TT6}
  \mathrm{Sym}_{\mathfrak{S}_3} [\wt{e}_{i_1},[\wt{e}_{i_2+1},\wt{e}_{i_3-1}]]=0,\ \
  \mathrm{Sym}_{\mathfrak{S}_3}  [\wt{f}_{i_1},[\wt{f}_{i_2+1},\wt{f}_{i_3-1}]]=0,
\end{equation}
 where these generating series are defined as follows:
   $$\wt{e}(z):=\sum_{i=-\infty}^{\infty}{\wt{e}_iz^{-i}},\
     \wt{f}(z):=\sum_{i=-\infty}^{\infty}{\wt{f}_iz^{-i}},\
     \wt{\psi}^{\pm}(z):=1+\sum_{j>0}{\wt{\psi}_{\pm j}z^{\mp j}},$$
 while $g(y):=\frac{(1-q_1y)(1-q_2y)(1-q_3y)}{(1-q_1^{-1}y)(1-q_2^{-1}y)(1-q_3^{-1}y)}$.
 Note that $g(y)=g(y^{-1})^{-1}$.

\medskip

  The following result is analogous to Theorem~\ref{Hall}:

\begin{thm}\label{Hall2}
 The assignment
   $$\kappa_{1,0}^{\pm 1/2}\mapsto \gamma^{\mp 1/2},\ \theta_{j,0}\mapsto \wt{\psi}_{-j},\
     u_{-i,1}\mapsto \gamma^{-|i|/2}\wt{e}_i,\ u_{-i,-1}\mapsto \gamma^{|i|/2}\wt{f}_i\ \
     \mathrm{for}\ i\in \ZZ, j\in \ZZ^*$$
 extends to an isomorphism of algebras
   $\Xi_{h}: \widetilde{\E}[\kappa_{1,0}^{\pm 1/2}]/(\kappa_{0,1}-1)\iso \ddot{\bU}_{q_1,q_2,q_3}$.
\end{thm}

 Following~\cite{DI}, we equip the algebra $\ddot{\bU}_{q_1,q_2,q_3}$ with a formal coproduct $\Delta_{h}$ determined by
  $$\Delta_h(\gamma^{\pm 1/2})=\gamma^{\pm 1/2}\otimes \gamma^{\pm 1/2},\
    \Delta_h(\wt{\psi}^\pm (z))=\wt{\psi}^\pm(\gamma^{\pm 1/2}_{(2)}z)\otimes \wt{\psi}^\pm(\gamma^{\mp 1/2}_{(1)}z),$$
  $$\Delta_h(\wt{e}(z))=\wt{e}(z)\otimes 1+\wt{\psi}^-(\gamma^{1/2}_{(1)}z)\otimes  \wt{e}(\gamma_{(1)}z),\
    \Delta_h(\wt{f}(z))=1\otimes \wt{f}(z)+\wt{f}(\gamma_{(2)}z)\otimes \wt{\psi}^{+}(\gamma^{1/2}_{(2)}z),$$
 where $\gamma^{\pm 1/2}_{(1)}:=\gamma^{\pm 1/2}\otimes 1$
 and $\gamma^{\pm 1/2}_{(2)}:=1\otimes \gamma^{\pm 1/2}$.

\begin{rem}
 According to Theorems~\ref{Hall} and~\ref{Hall2}, the algebras
 $\ddot{U}_{q_1,q_2,q_3}(\gl_1)[\psi_0^{\pm 1/2}]$ and $\ddot{\bU}_{q_1,q_2,q_3}$ are isomorphic.
 This allows to view $\Delta_h$ as a \emph{horizontal coproduct} on
 $\ddot{U}_{q_1,q_2,q_3}(\gl_1)[\psi_0^{\pm 1/2}]$, providing a monoidal structure on
 the category $\Oo$ from Section 4.6 (the category $\Oo$ for the bigger algebra with
 the central elements $\psi_0^{\pm 1/2}$ added is defined as before).
 For two $\ddot{U}_{q_1,q_2,q_3}(\gl_1)[\psi_0^{\pm 1/2}]$-modules $L_1,L_2$,
 we denote the corresponding tensor product by $L_1\underset{h}\otimes L_2$.
\end{rem}

\subsection{Horizontal realization of $F(u), V(u)$}$\ $
$\ $

 Identifying $\ddot{\bU}_{q_1,q_2,q_3}$ with $\ddot{U}_{q_1,q_2,q_3}(\gl_1)[\psi_0^{\pm 1/2}]$,
 let us describe how $\wt{e}(z), \wt{f}(z), \wt{\psi}^\pm(z)$ act on the Fock module $F(u)$.
 Consider the Heisenberg algebra $\mathfrak{h}$ generated by $\{a_n\}_{n\in \ZZ^*}$
 with the defining relation
   $$[a_m,a_n]=m(1-q_1^{|m|})/(1-q_2^{-|m|})\delta_{m,-n}.$$
 Let $\mathfrak{h}^+$ be the subalgebra generated by $\{a_n\}_{n\in \NN}$ and
   $\F:=\mathrm{Ind}_{\mathfrak{h}^+}^{\mathfrak{h}} \CC{\bf{1}}$
 be the Fock $\mathfrak{h}$-representation.
 On the other hand, note that $\{u_{j,0}\}_{j\in \ZZ^*}\subset \wt{\E}$
 also form a Heisenberg algebra and the highest weight vector $|\emptyset\rangle\in F(u)$
 is annihilated by $\{u_{j,0}\}_{j<0}$. The following is straightforward:

\begin{prop}\label{Fock_horizontal}
 There exists a unique isomorphism $F(u)\iso \F$
 such that $|\emptyset\rangle\mapsto {\bf{1}}$ and action of
  $\wt{e}(z), \wt{f}(z), \wt{\psi}^\pm(z), \gamma^{\pm 1/2}$ gets intertwined with
  $\rho_c(\wt{e}(z)), \rho_c(\wt{f}(z)),  \rho_c(\wt{\psi}^\pm(z)), \rho_c(\gamma^{\pm 1/2})$
 given by
   $$\rho_c(\gamma^{\pm 1/2})=q_3^{\pm 1/4},\
     \rho_c(\wt{\psi}^{\pm}(z))=\exp\left(\mp \sum_{n>0}\frac{1-q_2^{\mp n}}{n}(1-q_3^n)q_3^{-n/4}a_{\pm n}z^{\mp n}\right),$$
   $$\rho_c(\wt{e}(z))=
     c\exp\left(\sum_{n>0}\frac{1-q_2^n}{n}a_{-n}z^n\right)\exp\left(-\sum_{n>0} \frac{1-q_2^{-n}}{n}a_nz^{-n}\right),$$
   $$\rho_c(\wt{f}(z))=
     \frac{-q_1q_2c^{-1}}{(1-q_1)^2(1-q_2)^2}\exp\left(-\sum_{n>0}\frac{1-q_2^n}{n}q_3^{n/2}a_{-n}z^n\right)
     \exp\left(\sum_{n>0} \frac{1-q_2^{-n}}{n}q_3^{n/2}a_nz^{-n}\right),$$
 where $c=-u/(1-q_1)(1-q_2)$.
\end{prop}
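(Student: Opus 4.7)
The argument proceeds in three stages that exploit the reduction already outlined: identify $F(u)$ with $\F$ as a Heisenberg module, use the commutation relations (TT4)--(TT5) to force $\wt{e}(z), \wt{f}(z)$ into vertex-operator form, and pin down the scalar by a low-order match.

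For the first stage, I would define generators $a_{\pm n}$ ($n>0$) through the prescribed exponential expansion of $\wt{\psi}^{\pm}(z)$. Taking the logarithm of (TT0) and substituting the Taylor expansion of $\log g(y)$ into both sides translates the functional equation into the stated Heisenberg bracket $[a_m,a_n]=m(1-q_1^{|m|})/(1-q_2^{-|m|})\delta_{m,-n}a_0$; the remaining assertion $\gamma^{\pm 1/2}=q_3^{\pm 1/4}$ is the zero-mode content of the same computation, read off from the action of $\wt{\psi}^{\pm}_0$ on $|\emptyset\rangle$.

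For the second stage, observe that (TT4) asserts that $\wt{e}(w)$ intertwines two Heisenberg twists, forcing it into the normal-ordered shape $c\,\Gamma^+_e(w)\Gamma^-_e(w)$ for some scalar $c$ and Heisenberg exponentials involving only $a_{-n}$ and $a_n$ respectively. Matching the OPE $\rho_c(\wt{\psi}^{\pm}(z))\rho_c(\wt{e}(w))=g(\gamma^{\pm 1/2}z/w)\rho_c(\wt{e}(w))\rho_c(\wt{\psi}^{\pm}(z))$ term-by-term pins down the Heisenberg exponents uniquely to $(1-q_2^n)/n$ and $(1-q_2^{-n})/n$; the analogous treatment of (TT5), carrying $\gamma^{-1/2}$ in place of $\gamma^{1/2}$, accounts for the extra $q_3^{n/2}$ factors in the formula for $\wt{f}(w)$. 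The scalar $c$ is then fixed by inserting the ansatz into (TT3) and comparing the residue of $[\wt{e}(z),\wt{f}(w)]$ at $z=\gamma w$ acting on the vacuum with the eigenvalue of $\wt{\psi}^+(\gamma^{1/2}w)$ on $|\emptyset\rangle$; this forces $c^2=(1-q_3)^2u^2$, and the remaining sign is resolved by matching $\wt{e}_0|\emptyset\rangle$ against the classical Fock formula $e(z)|\emptyset\rangle=(1-q_1)^{-1}\delta(u/z)|1\rangle$ transported through the Hall-algebra identification $\Xi_h$. Relations (TT1), (TT2), (TT6) then follow from standard vertex-operator OPE computations using the identity $e^Ae^B=e^Be^Ae^{[A,B]}$ for central $[A,B]$.

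The main obstacle is the careful bookkeeping with half-integer powers of $\gamma=q_3^{1/2}$ together with the alignment of the logarithmic expansion of $g(y)$ against the Heisenberg commutator, so that the OPE reproduces exactly $g(\gamma^{\pm 1/2}z/w)$ rather than a conjugate or shifted variant; everything is forced by the constraints, but getting the signs and $q_3^{n/4}$ shifts to match on the first try is the delicate part.
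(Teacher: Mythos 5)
Your approach follows the paper's (quite terse) sketch closely: identify $F(u)$ with the Heisenberg Fock module $\F$ via the $\{\theta_{j,0}\}$ (the $\wt\psi$-modes), then use (TT4)--(TT5) to force the vertex-operator form of $\wt{e}(z)$ and $\wt{f}(z)$. Your Stage 1 (taking logs of (TT0) to read off the Heisenberg bracket, identifying $\gamma$ from the zero-mode content) and Stage 2 (the OPE argument via (TT4)--(TT5)) are exactly the two ingredients the paper cites, fleshed out correctly.

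There is, however, a genuine slip in Stage 3. The proposition's parametrization puts $c$ on $\wt{e}(z)$ and $c^{-1}$ on $\wt{f}(z)$, so the product $\rho_c(\wt{e}(z))\rho_c(\wt{f}(w))$ — and hence $[\rho_c(\wt{e}(z)),\rho_c(\wt{f}(w))]$ — is manifestly $c$-independent. Relation (TT3) therefore \emph{cannot} ``force $c^2=(1-q_3)^2u^2$''; it can only verify that the exponential prefactors $\Gamma_e$, $\Gamma_f$ you found in Stage 2 are mutually consistent and reproduce the $\wt{\psi}^{\pm}$-eigenvalues. The scalar $c$ is determined entirely by the module identification, i.e., the comparison you relegate to ``resolving the sign'': compute $\wt{e}_0\,|\emptyset\rangle$ on the vertex-operator side (giving $c\,|\emptyset\rangle$), transport $\wt{e}_0$ through $\Xi_h^{-1}\circ\Xi$ (it lands in a multiple of $\psi^+_1/\psi^+_0$), and evaluate on the Fock vacuum. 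That single comparison pins down $c$ in full, not merely its sign. With (TT3) demoted to a consistency check and the module comparison promoted to the determination of $c$, the argument is sound and matches the paper's intended route.
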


 The aforementioned $\ddot{\bU}_{q_1,q_2,q_3}$-representations $\{\rho_c\}$ (acting on $\F$)
 were constructed in~\cite[Proposition A.6]{FHHSY}. Likewise, computing the action of
 $\wt{e}(z), \wt{f}(z), \wt{\psi}^\pm(z)$ on the vector representation $V(u)$,
 one recovers the formulas for the $\ddot{\bU}_{q_1,q_2,q_3}$-representations $\pi_c$
 (acting on $\CC[x,x^{-1}]$) with $c=-u/(1-q_1)(1-q_2)$ considered in~\cite[Proposition A.5]{FHHSY}.

\subsection{Correlation functions}
$\ $

 For a $\ddot{\bU}_{q_1,q_2,q_3}$-module $L$ and a pair $v\in L, w\in L^*$,
 we define the correlation function
   $$m_{w,v}(z_1,\ldots,z_n):=\langle w|\wt{e}(z_1)\cdots\wt{e}(z_n)|v \rangle\cdot \prod_{i<j}\omega^m(z_i,z_j).$$
 The relation (TT1) implies that $m_{w,v}(z_1,\ldots,z_n)$ is $\mathfrak{S}_n$--symmetric.

\begin{prop}\label{matrix_realization}
 For $L=\rho_c$ and $v={\bf{1}}\in L, w={\bf{1}}^*\in L^*$--the dual of ${\bf{1}}$, we have
  $$m_{\bf{1}^*,\bf{1}}(z_1,\ldots,z_n)=(-q_3)^{-n(n-1)/2}c^n\prod_{i<j}\frac{(z_i-q_3z_j)(z_j-q_3z_i)}{(z_i-z_j)^2}.$$
\end{prop}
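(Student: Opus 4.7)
The plan is to compute the matrix coefficient directly from the bosonic vertex-operator formula for $\rho_c(\wt e(z))$ given in Proposition~\ref{Fock_horizontal} and then cancel the resulting rational prefactor against $\prod_{i<j}\omega^m(z_i,z_j)$.

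First I would write $\rho_c(\wt e(z))=c\cdot E^+(z)E^-(z)$, where
$$E^+(z):=\exp\!\Big(\sum_{m>0}\tfrac{1-q_2^m}{m}a_{-m}z^{m}\Big),\qquad E^-(z):=\exp\!\Big(-\sum_{m>0}\tfrac{1-q_2^{-m}}{m}a_{m}z^{-m}\Big).$$
Since $\langle {\bf 1}|a_{-m}=0$ and $a_m|{\bf 1}\rangle=0$ for $m>0$, the vacuum expectation value $\langle{\bf 1}|\wt e(z_1)\cdots\wt e(z_n)|{\bf 1}\rangle$ reduces, after normal-ordering, to $c^n$ times a product of scalar contractions, one for each pair $i<j$, obtained by commuting $E^-(z_i)$ past $E^+(z_j)$.

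Next I would evaluate each contraction via the Baker--Campbell--Hausdorff identity $e^Ae^B=e^{[A,B]}e^Be^A$ applicable when $[A,B]$ is central. Using the Heisenberg relation $[a_m,a_{-m}]=m(1-q_1^m)/(1-q_2^{-m})$ (acting on the vacuum as a scalar), a direct computation gives
$$E^-(z_i)E^+(z_j)=\exp\!\Big(-\sum_{m>0}\tfrac{(1-q_1^m)(1-q_2^m)}{m}(z_j/z_i)^m\Big)\,E^+(z_j)E^-(z_i).$$
Expanding the exponent by $-\log(1-ax)=\sum_{m\ge 1}(ax)^m/m$ and invoking $q_1q_2q_3=1$ to rewrite $1-q_1^mq_2^m=1-q_3^{-m}$, the contraction telescopes to
$$\frac{(1-z_j/z_i)(1-q_3^{-1}z_j/z_i)}{(1-q_1z_j/z_i)(1-q_2z_j/z_i)}.$$

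Finally I would multiply the product of contractions by $\prod_{i<j}\omega^m(z_i,z_j)=\prod_{i<j}\frac{(1-q_1z_j/z_i)(1-q_2z_j/z_i)(1-q_3z_j/z_i)}{(1-z_j/z_i)^3}$. The $q_1$ and $q_2$ factors cancel, leaving $\prod_{i<j}\frac{(z_i-q_3^{-1}z_j)(z_i-q_3z_j)}{(z_i-z_j)^2}$. Using the identity $z_i-q_3^{-1}z_j=-q_3^{-1}(z_j-q_3z_i)$ to convert each factor produces the overall sign/normalization $(-q_3)^{-\binom{n}{2}}$, yielding exactly the claimed formula.

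The only real obstacle is bookkeeping: tracking the signs and the powers of $q_3$ that arise from converting $(z_i-q_3^{-1}z_j)$ into $(z_j-q_3 z_i)$ for each of the $\binom{n}{2}$ pairs, and ensuring the $q_1,q_2$ cancellation between the contractions and $\omega^m$ is correctly carried out as a rational-function identity rather than only a formal series identity.
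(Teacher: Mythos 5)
Your proof is correct and follows the same route as the paper: normal-order the product $\rho_c(\wt e(z_1))\cdots\rho_c(\wt e(z_n))$ using the Heisenberg commutation relation, sum the resulting logarithmic series to get the rational contraction $\frac{(1-z_j/z_i)(1-q_3^{-1}z_j/z_i)}{(1-q_1z_j/z_i)(1-q_2z_j/z_i)}$ for each pair $i<j$, and then cancel against $\omega^m(z_i,z_j)$ after extracting the sign $(-q_3)^{-1}$ from $z_i-q_3^{-1}z_j=(-q_3)^{-1}(z_j-q_3z_i)$. The bookkeeping you were worried about checks out, so nothing more is needed.
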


\begin{proof} $\ $

 For $n>0$, we have
   $\exp(u\cdot a_n)\exp(v\cdot a_{-n})=\exp(v\cdot a_{-n})\exp(u\cdot a_n)\exp(uv\cdot  n(1-q_1^n)/(1-q_2^{-n}))$.
 Therefore
   $$\rho_c(\wt{e}(z_i))\rho_c(\wt{e}(z_j))=\
     :\rho_c(\wt{e}(z_i))\rho_c(\wt{e}(z_j)):\cdot \prod_{n>0}\exp\left(-\frac{(1-q_1^n)(1-q_2^n)}{n}(z_j/z_i)^n\right).$$
 It remains to use the equality
   $\prod_{n>0}\exp\left(-\frac{(1-q_1^n)(1-q_2^n)}{n}(z_j/z_i)^n\right)=
    \frac{(z_i-z_j)(z_i-q_1q_2z_j)}{(z_i-q_1z_j)(z_i-q_2z_j)}$.
\end{proof}

 In the more general case of $\rho_{c_1}\underset{h}\otimes\cdots\underset{h}\otimes\rho_{c_k}$, we have the following result:

\begin{prop}
 Consider
   ${\bar{\bf{1}}:=\bf{1}\otimes \cdots \otimes \bf{1}}\in \rho_{c_1}\underset{h}\otimes\cdots\underset{h}\otimes\rho_{c_k}$.
 Then
   $m_{\bar{\bf{1}}^*,\bar{\bf{1}}}(z_1,\ldots,z_n)\in \A^m.$
\end{prop}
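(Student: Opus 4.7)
The plan is to proceed by induction on $m$, using the criterion of Theorem~\ref{comm_subalg_m}(a): it suffices to verify that the limits $\partial^{(\infty,k)}m_{\bar{\bf{1}},\bar{\bf{1}}}$ exist for $1\leq k\leq n$. Membership in $S^m$ itself (the wheel condition and the denominator shape) is automatic from the relations (TT1) and (TT6) satisfied by the currents $\wt e(z)$. The base case $m=1$ is immediate from the preceding proposition: in the explicit formula $(-q_3)^{-n(n-1)/2}c^n\prod_{i<j}\frac{(z_i-q_3z_j)(z_j-q_3z_i)}{(z_i-z_j)^2}$, under the rescaling $z_j\mapsto \xi z_j$ for $j>n-k$, every ``mixed'' pair $(i,j)$ with $i\leq n-k<j$ contributes a factor tending to $-q_3$ as $\xi\to\infty$, while every ``pure'' pair is invariant, so all the limits $\partial^{(\infty,k)}$ exist.

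For the inductive step, regroup $\rho_{c_1}\underset{h}{\otimes}\cdots\underset{h}{\otimes}\rho_{c_m}\simeq \rho_{c_1}\underset{h}{\otimes} L$ with $L:=\rho_{c_2}\underset{h}{\otimes}\cdots\underset{h}{\otimes}\rho_{c_m}$. The coproduct formula $\Delta_h(\wt e(z))=\wt e(z)\otimes 1+\wt\psi^-(q_3^{1/4}z)\otimes \wt e(q_3^{1/2}z)$ (using that $\gamma$ acts as $q_3^{1/2}$ on $\rho_{c_1}$) expands $\prod_{i=1}^n\Delta_h(\wt e(z_i))$ as a sum over subsets $I\subseteq\{1,\ldots,n\}$; each summand is a mixed string of $\wt e(z_i)$'s ($i\notin I$) and $\wt\psi^-(q_3^{1/4}z_i)$'s ($i\in I$) on $\rho_{c_1}$, tensored with $\prod_{i\in I}\wt e(q_3^{1/2}z_i)$ on $L$ (both in the induced order). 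Pairing with ${\bf{1}}\otimes \bar{\bf{1}}_L$ and using the homogeneity identity $\omega^m(q_3^{1/2}z,q_3^{1/2}w)=\omega^m(z,w)$, the $L$-contribution times $\prod_{i<j,\,i,j\in I}\omega^m(z_i,z_j)$ equals $m^L_{\bar{\bf{1}}_L,\bar{\bf{1}}_L}$ evaluated at $(q_3^{1/2}z_i)_{i\in I}$, which by induction lies in $\A^m_{|I|}$; the $\rho_{c_1}$-contribution is an explicit Wick contraction of exponentials of Heisenberg modes from Proposition~\ref{Fock_horizontal}.

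To finish I would check that $\partial^{(\infty,k)}m_{\bar{\bf{1}},\bar{\bf{1}}}$ exists term-by-term in $I$: the $L$-factor has the required limit by induction (since rescaling $z_i\mapsto \xi z_i$ coincides with rescaling $q_3^{1/2}z_i\mapsto \xi q_3^{1/2}z_i$), and the $\rho_{c_1}$-factor is controlled exactly as in the base case. The main obstacle is the Wick bookkeeping on $\rho_{c_1}$: there are three types of two-point contractions ($\wt e\leftrightarrow\wt e$, $\wt e\leftrightarrow\wt\psi^-$, $\wt\psi^-\leftrightarrow\wt\psi^-$), each given by an infinite series in the Heisenberg modes $a_{\pm n}$, which must be resummed to rational functions $\prod(z_i-\alpha z_j)^{\pm 1}$ with poles only at the simple loci $z_i=\alpha z_j$, and then combined with the remaining $\omega^m$-factors (those with one index in $I$ and one in $I^c$) to produce a function with the correct asymptotic behavior under each rescaling $z_j\mapsto \xi z_j$. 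Once this bookkeeping is carried out, no further ideas are required.
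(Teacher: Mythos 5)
Your plan is sound and in substance identical to the paper's argument: expand $\prod_i\Delta_h(\wt e(z_i))$, Wick-contract the Heisenberg exponentials from Proposition~\ref{Fock_horizontal}, and verify the criterion of Theorem~\ref{comm_subalg_m}(a) on the resulting rational function. The only real difference is organizational. The paper does the expansion for all $m$ tensor factors at once and lands on the closed formula
$$m_{\bar{\bf{1}},\bar{\bf{1}}}(z_1,\ldots,z_n)=\sum_{f:\{1,\ldots,n\}\to\{1,\ldots,m\}}c_{f(1)}\cdots c_{f(n)}\prod_{i<j}\omega^m(z_i,z_j)\,W_f(z_i,z_j),$$
where $W_f(z_i,z_j)$ depends only on the sign of $f(i)-f(j)$: it is $1$, $g(z_i/z_j)$, or $\frac{(z_i-z_j)(z_i-q_1q_2 z_j)}{(z_i-q_1z_j)(z_i-q_2z_j)}$. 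This makes the limit check trivial \emph{per pair}: each combined factor $\omega^m(z_i,z_j)W_f(z_i,z_j)$ equals either $\omega^m(z_i,z_j)$, $\omega^m(z_j,z_i)$, or $\frac{(z_i-q_3z_j)(z_i-q_3^{-1}z_j)}{(z_i-z_j)^2}$, and all three manifestly have finite limits under rescaling. Your induction on $m$ reaches the same conclusion but hides these per-pair factors inside the recursively defined $L$-factor, so the bookkeeping you flag as the ``main obstacle'' is exactly what the paper compresses into one line. One small correction: $\rho_c(\wt\psi^-(z))$ is a pure creation exponential (only $a_{-n}$, $n>0$, appear), so there are no $\wt\psi^-\leftrightarrow\wt\psi^-$ contractions at all --- the contraction types reduce to $\wt e\leftrightarrow\wt e$ and $\wt e\leftrightarrow\wt\psi^-$, which makes the resummation you describe slightly simpler than you anticipate and does not affect the conclusion.
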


\begin{proof}
$\ $

 Combining the formulas from Proposition~\ref{Fock_horizontal} with the formulas for $\Delta_h$, we find
   $$m_{\bar{\bf{1}}^*,\bar{\bf{1}}}(z_1,\ldots,z_n)=
     \sum_{f}c_{f(1)}\cdots c_{f(n)}\prod_{i<j}W_f(z_i,z_j)\prod_{i<j}\omega^m(z_i,z_j),$$
 where the sum is over all maps $f:\{1,\ldots,n\}\to \{1,\ldots,k\}$ and
 $W_f(z_i,z_j)$ is $1$ (if $f(i)>f(j)$), $\frac{(z_i-z_j)(z_i-q_1q_2z_j)}{(z_i-q_1z_j)(z_i-q_2z_j)}$
 (if $f(i)=f(j)$), and $g(z_i/z_j)$ (if $f(i)<f(j)$). The claim follows.
\end{proof}

\begin{rem}
 (a) This approach provides a \emph{Bethe algebra realization} of $\A^m$ (see~\cite{FT2}).

\noindent
 (b) According to~\cite[Proposition A.10]{FHHSY}, the correlation functions of
 $\pi_{c_1}\underset{h}\otimes \cdots \underset{h}\otimes \pi_{c_k}$
 are identified with the classical Macdonald difference operators.
\end{rem}

\subsection{Whittaker vector in K-theory}
$\ $

 Consider the \emph{Whittaker vector}
   $v^K_r:=\sum_{n\geq 0}{\left[\mathcal{O}_{M(r,n)}\right]}\in \widehat{M}^r,$
 where $\widehat{M}^r:=\prod_{n=1}^\infty M^r_n$.
 To state our main result, we introduce a family of the elements
 $\{K_n^{(m;j)}\}^{j\in \ZZ}_{n\in \NN}$ of $S^m$ defined by
   $$K^{(m;j)}_n(x_1,\ldots,x_n):=K^m_n(x_1,\ldots,x_n)x_1^j\cdots x_n^j=
     \prod_{1\leq a<b\leq n}\frac{(x_a-q_1x_b)(x_b-q_1x_a)}{(x_a-x_b)^2}\prod_{1\leq s\leq n} x_s^j.$$
 Let $\{K^{(m;j)}_{-n}\}^{j\in \ZZ}_{n\in \NN}$ be analogous elements
 in the opposite algebra $(S^m)^{\mathrm{opp}}$ (note $(S^m)^{\mathrm{opp}}\simeq\ddot{U}^-$).
 The name \emph{Whittaker} is motivated by the following result (see Appendix E for a proof):

\begin{theorem}\label{Whittaker_m}
 The vector $v^K_r$ is an eigenvector with respect to $\{K^{(m;j)}_{-n}|0\leq j\leq r, n>0\}$.
 More precisely: $K^{(m;j)}_{-n}(v^K_r)=C_{j,-n}\cdot v^K_r$, where
  $$C_{0,-n}=\frac{(-1)^nt^{(r-2)n}(-t_1)^{n(n-1)/2}}{(1-t_1)^n(1-t_2)(1-t_2^2)\cdots(1-t_2^n)},\
    C_{1,-n}=\ldots=C_{r-1,-n}=0,$$
  $$C_{r,-n}=\frac{(-t)^{(r-2)n}(-t_1t_2)^{n(n-1)/2}}{(1-t_1)^n(1-t_2)(1-t_2^2)\cdots(1-t_2^n)\cdot (\chi_1\cdots\chi_r)^n}.$$
\end{theorem}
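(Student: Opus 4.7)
The plan is to compute $K^{(m;j)}_{-n}\cdot v^K_r$ by combining K-theoretic localization for the structure sheaves $[\mathcal{O}_{M(r,N)}]$ with the shuffle realization of the negative half of $\ddot{U}_{q_1,q_2,q_3}(\gl_1)$ (via the opposite of the isomorphism $S^m\iso \ddot{U}^+_{q_1,q_2,q_3}(\gl_1)$ from Proposition~\ref{Hall_vs_shuffle}), and then reading the result as a residue calculation in $n$ auxiliary variables. Concretely, localization gives
$$[\mathcal{O}_{M(r,N)}]=\sum_{\bar{\lambda}\vdash N}\frac{[\bar{\lambda}]}{\Lambda^\bullet_{-1}(T^*_{\xi_{\bar{\lambda}}}M(r,N))},$$
with the $\TT_r$-weights of the denominator coming from the tangent character stated in Section 3.1. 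Under the shuffle realization, an element $F(x_1,\ldots,x_n)$ of the opposite algebra $(S^m)^{\mathrm{opp}}$ acts on $M^r$ by a formula that symmetrizes $F$ over the $n!$ orderings of chains $\bar{\lambda}=\bar{\lambda}^{(0)}\supset\cdots\supset\bar{\lambda}^{(n)}$ of removable boxes, weighted by products of the single-box $f$-matrix elements of Lemma~\ref{matrix_coeff_Gis_m}(a); the particular factor $\prod_{a<b}(x_a-q_1 x_b)(x_b-q_1 x_a)/(x_a-x_b)^2$ in $K^{(m;j)}_{-n}$ clears the $(x_a-x_b)^{-1}$-poles produced by this symmetrization.

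With these substitutions in place, I would rewrite the coefficient of $[\bar{\mu}]$ in $K^{(m;j)}_{-n}\cdot[\mathcal{O}_{M(r,N)}]$, summed over chains $\bar{\lambda}\supset\bar{\mu}$ with $|\bar{\lambda}/\bar{\mu}|=n$, as an $n$-fold contour integral of the schematic form
$$\frac{1}{(2\pi i)^n}\oint\!\cdots\!\oint\prod_{a<b}\frac{(z_a-q_1 z_b)(z_b-q_1 z_a)}{(z_a-z_b)^2}\prod_{i=1}^n z_i^{\,j}\,G_{r,\bar{\mu}}(z_i)\,\frac{dz_i}{z_i},$$
where the contours initially enclose the weights $\chi(\square)$ of the boxes being removed and the kernel $G_{r,\bar{\mu}}$ collects the framing and tautological contributions at $\bar{\mu}$. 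The decisive step is then to sum over $N$: the generating series $\sum_{N\geq n}[\mathcal{O}_{M(r,N)}]$ should enact a telescoping that allows each contour to be deformed past all of the internal weight-poles so that only the residues at $z_i=0$ and $z_i=\infty$ remain. After this deformation the integrand behaves like $z_i^{\,j-r}$ at both endpoints, so for $1\leq j\leq r-1$ it is regular there and $C_{j,-n}=0$; for $j=0$ only $z_i=\infty$ contributes and produces $C_{0,-n}$, while for $j=r$ only $z_i=0$ contributes and produces $C_{r,-n}$. Because the integral factorizes into $n$ commuting one-variable integrals, the final constants evaluate as products over $k=1,\ldots,n$, reproducing the denominator $(1-t_1)^n(1-t_2)(1-t_2^2)\cdots(1-t_2^n)$.

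The main obstacle is the telescoping step, namely verifying that after summing over $N$ the contour can indeed be deformed to pick up only the residues at $0$ and $\infty$ and that the residual factor $G_{r,\bar{\mu}}$ becomes $\bar{\mu}$-independent. This $\bar{\mu}$-independence is precisely what promotes $v^K_r$ from a generalized eigenvector to a genuine eigenvector, and it is the point at which the specific definition of $v^K_r$ as the sum of \emph{all} structure sheaves (rather than any proper subsum) is used in an essential way. The bookkeeping is delicate because each step of a box-removal chain contributes asymmetric factors from both the $f$-matrix elements of Lemma~\ref{matrix_coeff_Gis_m}(a) and the Hecke normal-bundle characters, and one must verify that, summed over $N$, these factors reorganize into the clean factorized product above. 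Once this identity is established, the vanishing for $1\leq j\leq r-1$ and the explicit residue evaluation for $j=0,r$ are straightforward.
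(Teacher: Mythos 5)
Your overall framework---localization, the shuffle realization of $(S^m)^{\mathrm{opp}}$, and a contour/residue picture for the action of $K^{(m;j)}_{-n}$---is in the right family, and your contour-deformation intuition is the continuum shadow of what the paper actually does. But there are two genuine gaps, one of which you flag yourself.

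First, the ``telescoping'' step is not an auxiliary bookkeeping issue; it \emph{is} the theorem, and you do not prove it. The paper's mechanism is more elementary and does not deform any contour or sum over $N$ at all: it fixes $\bar\lambda$, writes the coefficient of $[\bar\lambda]$ in $K^{(m;j)}_{-n}v^K_r$ as a finite sum $\sum_{\bj}C_{\bj}$ over box-adding chains, and shows directly that this sum is a rational function with no poles in the variables $\chi^{(a)}_k$ and of nonpositive degree for $0\le j\le r$, hence independent of $\bar\lambda$. The crucial algebraic input making this work is the Bott--Lefschetz identity
\[
\frac{a_{\bar\lambda^{[q]}}}{a_{\bar\lambda^{[q-1]}}}\,f_{0\mid[\bar\lambda^{[q]},\bar\lambda^{[q-1]}]}
= e_{-r\mid[\bar\lambda^{[q-1]},\bar\lambda^{[q]}]},
\]
which converts the normalization ratios coming from $\mathcal O_{M(r,N)}$ and the $f$-matrix elements into a single $e_{-r}$-matrix element; without this identity the degree bound (the degree drop by $r$ that makes $0\le j\le r$ the right range) does not appear. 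Your proposal has nothing playing this role.

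Second, the claim that ``the integral factorizes into $n$ commuting one-variable integrals'' is not correct. The kernel $\prod_{a<b}\frac{(z_a-q_1z_b)(z_b-q_1z_a)}{(z_a-z_b)^2}$ couples the variables, and the eigenvalue contains the $q$-factorial $(1-t_2)(1-t_2^2)\cdots(1-t_2^n)$ which arises precisely from this coupling (boxes stacking in a single column) and cannot be produced by a product of $n$ identical single-variable residues---a literal factorization would give $(1-t_2)^{-n}$. In the paper this factor emerges by evaluating the $\bar\lambda$-independent sum at $\bar\lambda=\bar\emptyset$ (where the chain is forced into the first columns) and then sending $\chi_1\to\infty,\chi_2\to\infty,\ldots$ successively, which isolates the contributions of the compositions $(n,0,\ldots,0)$ (for $j\le r-1$) and $(0,\ldots,0,l_a,0,\ldots)$ recursively (for $j=r$). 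This is a nested, not factorized, evaluation, and your write-up would need to be restructured to reflect it.
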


\begin{rem}
 The subalgebra of $(S^m)^{\mathrm{opp}}$ generated by $\{K^{(m;j)}_{-n}\}^{0\leq j\leq r}_{n>0}$
 corresponds to the subalgebra of $\ddot{U}^-$ generated by
  $\{f_j, [f_{j+1},f_{j-1}],[f_{j+1},[f_j,f_{j-1}]],\cdots\}_{j=0}^r$,
 due to Proposition~\ref{alt_gener_m}.
\end{rem}

\subsection{Whittaker vector in cohomology}
$\ $

 Consider the \emph{Whittaker vector} $v^H_r:=\sum_{n\geq 0}[M(r,n)]\in \widehat{V}^r,$
 where $\widehat{V}^r:=\prod_{n=1}^\infty V^r_n$.
 To state our main result, we introduce a family of the elements
 $\{K_n^{(a;j)}\}_{n\in \NN}^{j\in \ZZ_+}$ of $S^a$ defined by
   $$K^{(a;j)}_n(x_1,\ldots,x_n):=K^a_n(x_1,\ldots,x_n)x_1^j\cdots x_n^j=
     \prod_{1\leq a<b\leq n}\frac{(x_a-x_b-h_1)(x_b-x_a-h_1)}{(x_a-x_b)^2}\prod_{1\leq s\leq n} x_s^j.$$
 Let $\{K^{(a;j)}_{-n}\}^{j\in \ZZ_+}_{n\in \NN}$ be analogous elements in the
 opposite algebra $(S^a)^{\mathrm{opp}}$. The following result has been already
 proved in~\cite{SV3} (we refer the reader to Appendix E for an alternative  proof).

\begin{theorem}\label{Whittaker_a}
 The vector $v^H_r$ is an eigenvector with respect to $\{K^{(a;j)}_{-n}|0\leq j\leq r, n>0\}$.
 More precisely: $K^{(a;j)}_{-n}(v^H_r)=D_{j,-n}\cdot v^H_r$,
 where $D_{r,-n}$ is a degree $n$ polynomial in $x_a$ and
  $$D_{0,-n}=\ldots=D_{r-2,-n}=0,\
    D_{r-1,-n}=\frac{(-1)^{n(n-1)/2}}{n!s_1^ns_2^n},\
    D_{r,-1}=\frac{-1}{s_1s_2}\sum_{a=1}^r x_a.$$
\end{theorem}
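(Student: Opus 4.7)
The plan is to mirror the strategy for the $K$-theoretic analogue (Theorem~\ref{Whittaker_m}) in the cohomological setting. Via equivariant localization, the fundamental class decomposes in the fixed-point basis as $[M(r,n)] = \sum_{\bar{\lambda}\vdash n}[\bar{\lambda}]/e(T_{\bar{\lambda}})$, where $e(T_{\bar{\lambda}})$ is the $\TT_r$-equivariant Euler class of the tangent space, given in terms of arm and leg lengths by Proposition~3.1(b). Consequently $v^H_r = \sum_{\bar{\lambda}}[\bar{\lambda}]/e(T_{\bar{\lambda}})$, and the eigenvector identity $K^{(a;j)}_{-n}(v^H_r) = D_{j,-n}\cdot v^H_r$ reduces to the family of scalar identities
\begin{equation*}
 \sum_{\bar{\mu}=\bar{\lambda}+n\,\mathrm{boxes}}\frac{\langle [\bar{\lambda}]|K^{(a;j)}_{-n}|[\bar{\mu}]\rangle}{e(T_{\bar{\mu}})} = \frac{D_{j,-n}}{e(T_{\bar{\lambda}})},\qquad\forall\ \bar{\lambda}.
\end{equation*}

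The next step is to compute the matrix coefficients via the shuffle realization. Using Theorem~\ref{generation_a} applied to the opposite algebra, $K^{(a;j)}_{-n}$ corresponds to the symmetric rational function $\prod_{p<q}K^a_2(x_p,x_q)\cdot\prod_s x_s^j$, so the matrix coefficient is an $\mathfrak{S}_n$-symmetrization over ordered removal sequences $(\square_1,\ldots,\square_n)$ of boxes from $\bar{\mu}$ to $\bar{\lambda}$, with each single-box step contributing an $f_0$-matrix element given by Lemma~\ref{matrix_coeff_Gis_a}(a). The shuffle factors $\prod_{p<q}K^a_2(\chi(\square_p),\chi(\square_q))$ telescope against the infinite products appearing in those $f_0$-matrix elements and against the ratio $e(T_{\bar{\mu}})/e(T_{\bar{\lambda}})$ extracted from Proposition~3.1(b), leaving a manifestly symmetric rational expression in the box coordinates $\chi(\square_i)$, with all dependence on $\bar{\lambda}$ packaged into the variables $x^{(a)}_k$.

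The remaining identity is then a polynomiality/residue computation analogous to Lemma~\ref{psi_0,1,2}. A careful analysis shows that all apparent poles in the variables $x^{(a)}_k$ cancel in the symmetrized sum, forcing the result to be a polynomial in $x_1,\ldots,x_r$. A degree count against $\prod_s\chi(\square_s)^j$ then gives the following trichotomy: for $0\le j\le r-2$ the polynomial has strictly negative total degree in the $\chi(\square_i)$ and must vanish, yielding $D_{j,-n}=0$; for $j=r-1$ it is a constant, whose value is read off at $\bar{\lambda}=\bar{\emptyset}$ where only a single family of removals (a single column of $\bar{\mu}$) contributes, giving $D_{r-1,-n}=(-1)^{n(n+1)/2+nr-n}/(n!\,s_1^n s_2^n)$; for $j=r$ it is a degree $n$ polynomial in $x_1,\ldots,x_r$, and the case $n=1$ reduces to the direct calculation $\sum_l f_{r,[\square^l_1,\bar{\emptyset}]}/e(T_{\square^l_1})$ using Lemma~\ref{matrix_coeff_Gis_a}(a), producing $D_{r,-1}=(-1)^{r+1}(x_1+\cdots+x_r)/(s_1 s_2)$. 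I expect the main obstacle to be the residue-cancellation bookkeeping that establishes polynomiality and the exact vanishing for $j\le r-2$---the cohomological counterpart of the analogous computation in Appendix~E for Theorem~\ref{Whittaker_m}, where the multiplicative factors $1-t_1t_2\chi^{(l)}_j/\chi^{(a)}_k$ are replaced by their additive avatars $s_1+s_2+x^{(l)}_j-x^{(a)}_k$.
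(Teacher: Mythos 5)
Your proposal is correct and follows essentially the same path as the paper's own sketch (Appendix E.2): equivariant localization expresses $v^H_r$ in the fixed-point basis, the shuffle/matrix-coefficient product formula with $f_0$-steps telescopes against the Euler-class ratios, a residue-cancellation argument gives polynomiality, and the degree count against $\prod_s\chi(\square_s)^j$ combined with evaluation at $\bar{\lambda}=\bar{\emptyset}$ yields the trichotomy and the explicit eigenvalues. One minor imprecision worth noting: at $\bar{\lambda}=\bar{\emptyset}$ the sum is over all partitions $(l_1,\ldots,l_r)$ of $n$ (with each component's boxes in a single column), and for $j=r-1$ picking out the single term $(n,0,\ldots,0)$ requires the additional limit $x_1\to\infty$ after constancy in $x_a$ has been established, as in the paper.
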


\begin{rem}
 According to Proposition~\ref{alt_gener_a}, the subalgebra of $(S^a)^{\mathrm{opp}}$ generated by
 $\{K^{(a;j)}_{-n}\}^{0\leq j\leq r}_{n>0}$ corresponds to the subalgebra of $\ddot{Y}^-$ generated by
 $\{f_j, [f_j,f_{j+1}],[f_j,[f_j,f_{j+2}]],\cdots\}_{j=0}^r$.
\end{rem}

%%%%%%%%%%%%%%%%%%%%%%%%%%%%%%%%%%%%%%%%%%%%%%%%%%%%%%%%%%%%%%%%%%%%%%%%%%%%%%%%%%%%%%%%%%%%%%%%%%%%
%%%%%%%%%%%%%%%%%%%%%%%%%%%%%%%%%%%%%%%% APPENDIX %%%%%%%%%%%%%%%%%%%%%%%%%%%%%%%%%%%%%%%%%%%%%%%%%%
%%%%%%%%%%%%%%%%%%%%%%%%%%%%%%%%%%%%%%%%%%%%%%%%%%%%%%%%%%%%%%%%%%%%%%%%%%%%%%%%%%%%%%%%%%%%%%%%%%%%

\appendix

%%%%%%%%%%%%%%%%%%%%%%%%%%%%%%%%%%%%%%%%%%%%%%%%%%%%%%%%%%%%%%%%%%%%%%%%%%%%%%%%%%%%%%%%%%%%%%%%%%%%%%
%%%%%%%%%%%%%%%%%%%%%%%%%%%%%%%%%%%%%%%% APPENDIX A %%%%%%%%%%%%%%%%%%%%%%%%%%%%%%%%%%%%%%%%%%%%%%%%%%
%%%%%%%%%%%%%%%%%%%%%%%%%%%%%%%%%%%%%%%%%%%%%%%%%%%%%%%%%%%%%%%%%%%%%%%%%%%%%%%%%%%%%%%%%%%%%%%%%%%%%%

\section{Proof of Proposition~\ref{triangular_m}}

  We follow a standard argument.
 Consider a unital associative $\CC$-algebra $\ddot{V}_{q_1,q_2,q_3}(\gl_1)$
 generated by $\{e_i, f_i, \psi_i, \psi_0^{-1}|i\in \ZZ\}$ subject to the relations (T0,T3,T4,T5).
 Let $\ddot{V}^-, \ddot{V}^0, \ddot{V}^+$ be the subalgebras of $\ddot{V}_{q_1,q_2,q_3}(\gl_1)$
 generated by $\{f_i\},\ \{\psi_i,\psi_0^{-1}\},\ \mathrm{and}\ \{e_i\}$, respectively.
  Let $I^+$ and $I^-$ be the two-sided ideals of $\ddot{V}_{q_1,q_2,q_3}(\gl_1)$ generated by the
 quadratic and cubic relations in $e_i$ and $f_i$, respectively, arising from the relations (T1,T2,T6).
 Explicitly, $I^+$ is generated by
  $$A_{i,j}=e_{i+3}e_j-\varsigma_1e_{i+2}e_{j+1}+\varsigma_2e_{i+1}e_{j+2}-e_ie_{j+3}-
            e_je_{i+3}+\varsigma_2e_{j+1}e_{i+2}-\varsigma_1e_{j+2}e_{i+1}+e_{j+3}e_i,$$
  $$B_{i_1,i_2,i_3}=\Sym_{\mathfrak{S}_3}[e_{i_1}, [e_{i_2+1},e_{i_3-1}]],$$
 where $\varsigma_1:=q_1+q_2+q_3,\ \varsigma_2:=q_1^{-1}+q_2^{-1}+q_3^{-1}$ and
 $i,j,i_1,i_2,i_3\in \ZZ$.
  Let $J^\pm$ stay for the corresponding two-sided ideals of $\ddot{V}^\pm$.
 Our next result implies Proposition~\ref{triangular_m}.

\begin{lem}\label{triangular_m'}
  (a) Multiplication induces an isomorphism of vector spaces
    $$m:\ddot{V}^-\otimes \ddot{V}^0\otimes \ddot{V}^+\iso \ddot{V}_{q_1,q_2,q_3}(\gl_1).$$

\noindent
 (b) $\ddot{V}^-$ and $\ddot{V}^+$ are free associative algebras in $\{f_i\}$ and $\{e_i\}$, respectively,
  while $\ddot{V}^0$ is the algebra generated by $\{\psi_i, \psi_0^{-1}\}$ with the defining relations (T0).

\noindent
 (c) We have $I^+=m(\ddot{V}^-\otimes \ddot{V}^0\otimes J^+)$ and $I^-=m(J^-\otimes \ddot{V}^0\otimes \ddot{V}^+)$.
\end{lem}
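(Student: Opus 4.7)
The three parts of the lemma will be proved by a standard PBW-type strategy: parts (a) and (b) together via normal-form rewriting combined with a Verma-type faithful representation, and part (c) by verifying that $\ddot{V}^-\ddot{V}^0J^+$ is already a two-sided ideal of $\ddot{V}$.

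\emph{Parts (a) and (b).} For surjectivity of $m$, observe that all defining relations of $\ddot{V}$ are of commutation type: (T3) rewrites $e_if_j=f_je_i+(\text{element of }\ddot{V}^0)$, while (T4) and (T5) are rational-function identities whose coefficientwise expansion rewrites each $\psi^\pm_je_k$ (resp.\ $\psi^\pm_jf_k$) as a finite $\CC$-linear combination of products $e_l\psi^\pm_m$ (resp.\ $f_l\psi^\pm_m$). Iterating puts any monomial in the normal form ``$f$'s, then $\psi$'s, then $e$'s''. For injectivity together with (b), I would construct a Verma-type faithful representation: let $F^+, F^-$ be the free associative $\CC$-algebras on $\{e_i\},\{f_i\}$ and $F^0$ the commutative algebra on $\{\psi^\pm_j,(\psi_0^\pm)^{-1}\}$ modulo only (T0). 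Define an action of $\ddot{V}$ on $W:=F^-\otimes F^0\otimes F^+$ in which $f_i$ acts by left multiplication on $F^-$; $\psi^\pm_j$ acts by first commuting through the $F^-$-factor using (T5), then multiplying into $F^0$; and $e_i$ acts by commuting through both $F^-$ and $F^0$ using (T3) and (T4), then right-multiplying into $F^+$. A direct (though tedious) verification that the defining relations are preserved shows this is well-defined; then the map $\ddot{V}\to W$, $x\mapsto x(1\otimes 1\otimes 1)$, inverts the composition $F^-\otimes F^0\otimes F^+\to \ddot{V}\to W$, giving both injectivity of $m$ and the identifications $\ddot{V}^\pm\simeq F^\pm$, $\ddot{V}^0\simeq F^0$.

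\emph{Part (c).} The inclusion $m(\ddot{V}^-\otimes \ddot{V}^0\otimes J^+)\subseteq I^+$ is immediate from $J^+\subseteq I^+$. For the reverse, using (a) it suffices to show $U:=\ddot{V}^-\ddot{V}^0J^+$ is a two-sided ideal of $\ddot{V}$. Closure under left multiplication by $\ddot{V}$ is automatic; closure under right multiplication by $\ddot{V}^+$ follows from $J^+\cdot\ddot{V}^+\subseteq J^+$; closure under right multiplication by $\ddot{V}^0$ reduces to $J^+\cdot\ddot{V}^0\subseteq\ddot{V}^0\cdot J^+$, which follows from (T4) because the relations (T1) and (T6) that generate $J^+$ are shift-invariant in the $e$-indices. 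The main obstacle is closure under right multiplication by $\ddot{V}^-$, which reduces to the key identity $[J^+,f_k]\subseteq\ddot{V}^0\cdot J^+$. The generating case $[A_{i,j},f_k]$ I would verify directly in series form: writing (T1) as $E(z,w):=e(z)e(w)P(z,w)+e(w)e(z)P(w,z)$ with $P(z,w):=(z-q_1w)(z-q_2w)(z-q_3w)$, the Leibniz rule together with (T3) reduces $[E(z,w),f(u)]$ to four $\delta$-function terms involving $\Psi(u):=\psi^+(u)-\psi^-(u)$; two applications of (T4) in the form $\Psi(u)e(z)P(u,z)=-e(z)\Psi(u)P(z,u)$ make these cancel in pairs, yielding $[A_{i,j},f_k]=0$ in $\ddot{V}$ for all $i,j,k$. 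A parallel but slightly more elaborate series computation, with three delta-supported variables and repeated use of (T4), handles $[B_{i_1,i_2,i_3},f_k]$ and shows it lies in $\ddot{V}^0\cdot J^+$ (expressible as a $\ddot{V}^0$-combination of the $A$'s).
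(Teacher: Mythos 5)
Your proposal follows the same structure as the paper: parts (a) and (b) by the standard PBW/Verma-module argument (the paper simply labels these "standard"/"immediate"), and part (c) by showing $\ddot{V}^-\ddot{V}^0J^+$ is a two-sided ideal, reducing to the commutators of the generators of $J^+$ with $f_r$ and with $\ddot{V}^0$, and in particular proving $[A_{i,j},f_r]=0$ via the generating-series computation using (T3) and two applications of (T4), with $[B_{i_1,i_2,i_3},f_r]\in\ddot{V}^0J^+$ by the analogous but longer computation the paper carries out in full.
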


\begin{proof}[Proof of Lemma~\ref{triangular_m'}]$\ $

  Parts (a) and (b) are standard.

  The first equality in part (c) is equivalent to $\ddot{V}^-\ddot{V}^0J^+$
 being a two-sided ideal of $\ddot{V}_{q_1,q_2,q_3}(\gl_1)$.
 To prove this, it suffices to show
  $[A_{i,j}, t_r], [B_{i_1,i_2,i_3}, t_r], [A_{i,j}, f_r], [B_{i_1,i_2,i_3}, f_r]\in  \ddot{V}^0J^+.$
 The first two commutators are just the linear combinations of
 $A_{i',j'}$ and $B_{i'_1, i'_2, i'_3}$, respectively, due to (T4t).
 Also $[A_{i,j}, f_r]=0$, due to the relations (T3) and (T4).

  To prove $[B_{i_1,i_2,i_3},f_r]\in \ddot{V}^0J^+$, we work with the generating series.
 The relation (T3) implies
  $$\beta_1\cdot [e(z_1)e(z_2)e(z_3),f(w)]=$$
  $$\delta\left(\frac{z_1}{w}\right)\psi(z_1)e(z_2)e(z_3)+
    \delta\left(\frac{z_2}{w}\right)\psi(z_2)e(z_1)e(z_3)g\left(\frac{z_1}{z_2}\right)+
    \delta\left(\frac{z_3}{w}\right)\psi(z_3)e(z_1)e(z_2)g\left(\frac{z_1}{z_3}\right)g\left(\frac{z_2}{z_3}\right),$$
 where $\psi(z)=\psi^+(z)-\psi^-(z)$.
 Hence, we have
  $$\left[\Sym_{\mathfrak{S}_3}
    \left\{\left(\frac{z_2}{z_1}+\frac{z_2}{z_3}-\frac{z_1}{z_2}-\frac{z_3}{z_2}\right)e(z_1)e(z_2)e(z_3)\right\},f(w)\right]=$$
  $$\beta_1^{-1}\left(\delta(z_1/w)\psi(z_1)C_1(z_2,z_3)+\delta(z_2/w)\psi(z_2)C_2(z_3,z_1)+\delta(z_3/w)\psi(z_3)C_3(z_1,z_2)\right),$$
 where $C_1(z_2,z_3)=e(z_2)e(z_3)C_{123}+e(z_3)e(z_2)C_{132}$ and
  $$C_{123}=
    \left(\frac{z_2}{z_1}+\frac{z_2}{z_3}-\frac{z_1}{z_2}-\frac{z_3}{z_2}\right)+
    g(z_2/z_1)\left(\frac{z_1}{z_2}+\frac{z_1}{z_3}-\frac{z_2}{z_1}-\frac{z_3}{z_1}\right)+$$
  $$g(z_2/z_1)g(z_3/z_1)\left(\frac{z_3}{z_1}+\frac{z_3}{z_2}-\frac{z_1}{z_3}-\frac{z_2}{z_3}\right),$$
  $$C_{132}=
   \left(\frac{z_3}{z_1}+\frac{z_3}{z_2}-\frac{z_1}{z_3}-\frac{z_2}{z_3}\right)+
    g(z_3/z_1)\left(\frac{z_1}{z_2}+\frac{z_1}{z_3}-\frac{z_2}{z_1}-\frac{z_3}{z_1}\right)+$$
  $$g(z_2/z_1)g(z_3/z_1)\left(\frac{z_2}{z_1}+\frac{z_2}{z_3}-\frac{z_1}{z_2}-\frac{z_3}{z_2}\right).$$

  The equality $C_{132}=-g(z_2/z_3)C_{123}$ implies that
 $C_1(z_2,z_3)$ is proportional to the generating function of $A_{i,j}$.
 The same holds for $C_2(z_3,z_1), C_3(z_1,z_2)$.
 This yields $[B_{i_1,i_2,i_3},f_r]\in \ddot{V}^0J^+$ for any $i_1,i_2,i_3,r\in \ZZ$.
 The second equality in part (c) is proved analogously.
\end{proof}

%%%%%%%%%%%%%%%%%%%%%%%%%%%%%%%%%%%%%%%%%%%%%%%%%%%%%%%%%%%%%%%%%%%%%%%%%%%%%%%%%%%%%%%%%%%%%%%%%%%%%%
%%%%%%%%%%%%%%%%%%%%%%%%%%%%%%%%%%%%%%%% APPENDIX B %%%%%%%%%%%%%%%%%%%%%%%%%%%%%%%%%%%%%%%%%%%%%%%%%%
%%%%%%%%%%%%%%%%%%%%%%%%%%%%%%%%%%%%%%%%%%%%%%%%%%%%%%%%%%%%%%%%%%%%%%%%%%%%%%%%%%%%%%%%%%%%%%%%%%%%%%

\section{Proofs of Theorems~\ref{Gieseker_m},~\ref{Gieseker_a}}

\subsection{Sketch of the proof of Theorem~\ref{Gieseker_m}}
$\ $

  We generalize the key technical result of~\cite{FT},
 required to prove Theorem~\ref{Gieseker_m} (all other arguments stay the same).
 Verification of the relations (T0,T1,T2,T6t) is straightforward
 and is based on Lemma~\ref{matrix_coeff_Gis_m}(a).
 Likewise, it is easy to check that the operators $[e_i,f_j]$ are diagonal
 in the fixed point basis and depend on $i+j$ only:
  $[e_i,f_j]([\bar{\lambda}])=\gamma_{i+j\mid_{\bar{\lambda}}}\cdot [\bar{\lambda}]$.

\begin{lem}\label{gamma_m}
  We have
   $$\gamma_{0\mid_{\bar{\lambda}}}=\frac{t^{-r}-t^r}{(1-t_1)(1-t_2)(1-t_3)},$$
   $$\gamma_{1\mid_{\bar{\lambda}}}=
     t^{-r}\left(\frac{1}{(1-t_1)(1-t_2)}\sum_{a=1}^r\chi_a^{-1}-\sum_{a=1}^r\sum_{\square\in\lambda^a}\chi(\square)\right).$$
\end{lem}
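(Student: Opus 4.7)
The plan is to carry out, in the rank-$r$ K-theoretic setting, an argument parallel to the proof of Lemma~\ref{psi_0,1,2}(c). The operator $[e_i,f_j]$ is diagonal in the fixed-point basis and its eigenvalue on $[\bar{\lambda}]$ depends only on $i+j$; this is a direct consequence of the explicit matrix coefficient formulas of Lemma~\ref{matrix_coeff_Gis_m}(a), since the ``off-diagonal'' contributions to $[e_m,f_0]$ (add a box $\square$ and then remove a different box $\square'$) cancel against each other in each pair.

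First, using Lemma~\ref{matrix_coeff_Gis_m}(a), I would write
\[
\gamma_m|_{\bar{\lambda}} = \sum_{\square \text{ rem.}} f_{0[\bar{\lambda}, \bar{\lambda}-\square]}\, e_{m[\bar{\lambda}-\square, \bar{\lambda}]} \ -\ \sum_{\square \text{ add.}} e_{m[\bar{\lambda}, \bar{\lambda}+\square]}\, f_{0[\bar{\lambda}+\square, \bar{\lambda}]},
\]
and plug in the explicit formulas. Each summand factors as $(\chi(\square))^m \cdot R_{\bar{\lambda}}(\chi(\square))$ times a common prefactor, where $R_{\bar{\lambda}}(z)$ is a single rational function of an auxiliary variable $z$. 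The infinite products $\prod_{a,k}$ telescope to finite products (since $\chi^{(a)}_k$ stabilize geometrically as $k\to\infty$), so $R_{\bar{\lambda}}(z)$ is genuinely rational, with poles precisely at the addable and removable box characters of $\bar{\lambda}$ and at $z=0,\infty$.

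The key identification is that the full sum above equals the addable/removable-box residues of the meromorphic differential $\omega_m(z) := z^{m-1}R_{\bar{\lambda}}(z)\,dz$ on $\mathbb{P}^1$. By the residue theorem, this equals $-(\mathrm{Res}_{z=0}+\mathrm{Res}_{z=\infty})\omega_m$, which can be read off from the leading behavior of $R_{\bar{\lambda}}(z)$ at $0$ and $\infty$. For $m=0$, the $\bar{\lambda}$-dependent factors at $z=0$ and $z=\infty$ cancel (via the telescoping), so the answer is independent of $\bar{\lambda}$; evaluating at $\bar{\lambda}=\bar{\emptyset}$ (where only the $r$ corner boxes with characters $\chi_a^{-1}$ are addable and none is removable) reduces the computation to a short product involving $\chi_1,\ldots,\chi_r, t_1, t_2$ and recovers the stated constant. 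For $m=1$, the behavior at $z=\infty$ produces an additional term that is linear in the $\chi^{(a)}_k$; this term telescopes along each column of each component $\lambda^a$ into the combinatorial sum $\sum_{a,\square\in\lambda^a}\chi(\square)$, yielding the claimed formula after the $\bar{\lambda}=\bar{\emptyset}$ specialization pins the constant term.

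The main technical obstacle is the bookkeeping: writing $R_{\bar{\lambda}}(z)$ with the correct signs and prefactors, and verifying that its apparent non-box poles (arising from the interplay of numerators and denominators in the products of Lemma~\ref{matrix_coeff_Gis_m}(a)) cancel along the boundary of each Young diagram. Once $R_{\bar{\lambda}}(z)$ is correctly identified, the residue computations at $z=0,\infty$ and the cross-check at $\bar{\lambda}=\bar{\emptyset}$ are routine.
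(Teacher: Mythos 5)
Your proposal is sound in its broad structure and takes a genuinely different organizing route from the paper's. The paper works directly with the closed expression (\ref{LONG_m}) for $\gamma_s|_{\bar\lambda}$ as a finite sum over truncated rows $(l,j)$, observes that the right-hand side is a rational function of the box variables $\chi^{(a)}_k$ of total degree $\leq s$ with no poles (hence a polynomial of degree $\leq s$ over $\CC(t_1,t_2,\chi_1,\ldots,\chi_r)$), and then pins it down via the leading coefficient (for $s=1$) plus evaluation at $\bar{\emptyset}$. You instead propose to realize the same alternating box-sum as the sum of residues of a single rational $1$-form $\omega_m(z)=z^{m-1}R_{\bar\lambda}(z)\,dz$ on $\bP^1$ in an auxiliary variable $z$ and invoke the residue theorem, reducing everything to the two residues at $z=0$ and $z=\infty$. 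This is an equally standard organizing principle for such computations, and your structural claims (that $[e_i,f_j]$ is diagonal with eigenvalue depending only on $i+j$, that the matrix coefficients factor through the character of the added/removed box, and that the infinite products telescope to a genuine rational function) are all correct. The trade-off is that both routes ultimately rest on the same bookkeeping: in the paper's version it is the no-pole check in the many variables $\chi^{(a)}_k$; in yours it is the verification that, after telescoping, the only poles of $R_{\bar\lambda}(z)$ away from $0,\infty$ are simple and located exactly at addable/removable box characters (with no spurious poles along the diagram boundary), together with the correct identification of the normalizing prefactor. Your further claim that the $z=0$ and $z=\infty$ contributions are manifestly $\bar\lambda$-independent for $m=0$ and linear-telescoping for $m=1$ is true, but it is not free — it is precisely the degree/no-pole statement of (\ref{LONG_m}) repackaged — so the residue formulation buys elegance in the statement rather than a genuine shortcut in the verification. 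In short: your plan works and is a legitimate alternative; the main content you are deferring ("the bookkeeping") is exactly what the paper's explicit formula (\ref{LONG_m}) amounts to, and one still needs that level of detail to close the argument.
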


\begin{proof}$\ $

  Fix positive integers $L_a>(\lambda^{a*})_1$ for $1\leq a\leq r$.
  Applying Lemma~\ref{matrix_coeff_Gis_m}(a), we find
\begin{multline}\tag{$\heartsuit$}\label{LONG_m}
 \gamma_{s\mid_{\bar{\lambda}}}=\\
 \sum_{l=1}^r\sum_{j=1}^{L_l}\frac{T}{(1-t_1)^2}(\chi_j^{(l)})^{s-r}\cdot
 \frac{\chi^{(l)}_j(1-\chi^{(l)}_jt_1t_2^{1-L_l}\chi_l)}{\chi^{(l)}_j-t_2^{L_l}\chi_l^{-1}}\cdot
 \prod_{k\ne j}^{k\leq L_l}
 \frac{(\chi^{(l)}_j-t_1t_2\chi^{(l)}_k)(\chi^{(l)}_k-t_2\chi^{(l)}_j)}{(\chi^{(l)}_j-t_1\chi^{(l)}_k)(\chi^{(l)}_k-\chi^{(l)}_j)}\times \\
 \prod_{a\ne l}\left(\frac{\chi^{(l)}_j(1-\chi^{(l)}_jt_1t_2^{1-L_a}\chi_a)}{\chi^{(l)}_j-t_2^{L_a}\chi_a^{-1}} \cdot
 \prod_{k=1}^{L_a}
 \frac{(\chi^{(l)}_j-t_1t_2\chi^{(a)}_k)(\chi^{(a)}_k-t_2\chi^{(l)}_j)}{(\chi^{(l)}_j-t_1\chi^{(a)}_k)(\chi^{(a)}_k-\chi^{(l)}_j)}\right)-\\
 \sum_{l=1}^r\sum_{j=1}^{L_l}\frac{T}{(1-t_1)^2}(t_1\chi_j^{(l)})^{s-r}\cdot
 \frac{\chi^{(l)}_j(1-\chi^{(l)}_jt_1^2t_2^{1-L_l}\chi_l)}{\chi^{(l)}_j-t_1^{-1}t_2^{L_l}\chi_l^{-1}}\cdot
 \prod_{k\ne j}^{k\leq L_l}
 \frac{(\chi^{(l)}_k-t_1t_2\chi^{(l)}_j)(\chi^{(l)}_j-t_2\chi^{(l)}_k)}{(\chi^{(l)}_k-t_1\chi^{(l)}_j)(\chi^{(l)}_j-\chi^{(l)}_k)}\times \\
 \prod_{a\ne l}\left(\frac{\chi^{(l)}_j(1-\chi^{(l)}_jt_1^2t_2^{1-L_a}\chi_a)}{\chi^{(l)}_j-t_1^{-1}t_2^{L_a}\chi_a^{-1}} \cdot
 \prod_{k=1}^{L_a}
 \frac{(\chi^{(a)}_k-t_1t_2\chi^{(l)}_j)(\chi^{(l)}_j-t_2\chi^{(a)}_k)}{(\chi^{(a)}_k-t_1\chi^{(l)}_j)(\chi^{(l)}_j-\chi^{(a)}_k)}\right),
\end{multline}
 where $T=(-t)^{r-2}\chi_1^{-1}\cdots \chi_r^{-1}$ and
 $\chi^{(a)}_k=t_1^{\lambda^a_k-1}t_2^{k-1}\chi_a^{-1}$ as before.

 (i) For $s=0$, the right-hand side of~(\ref{LONG_m}) is a degree $0$ rational function
 in the variables $\chi^{(a)}_k$. It is easy to check that it has no poles, in fact.
 Therefore, it is an element of $\mathbb{F}_r$ independent of $\bar{\lambda}$.
 It suffices to compute its value at $\bar{\emptyset}$, the $r$-tuple of empty diagrams.
 For $\bar{\lambda}=\bar{\emptyset}$, we can choose $L_1=\ldots=L_r=1$,
 while $\chi^{(a)}_k=t_1^{-1}t_2^{k-1}\chi_a^{-1}$. Applying~(\ref{LONG_m}), we get
  $$\gamma_{0\mid_{\bar{\lambda}}}=\gamma_{0\mid_{\bar{\emptyset}}}=
    -\frac{T}{(1-t_1)^2}\sum_{l=1}^rt_1^{-r}\frac{1-t_1}{t_1^{-1}\chi_l^{-1}-t_2t_1^{-1}\chi_l^{-1}}
    \prod_{a\ne l}\frac{(t_1^{-1}\chi_a^{-1}-t_2\chi_l^{-1})(1-t_1\chi_a\chi_l^{-1})}{(t_1^{-1}\chi_l^{-1}-t_1^{-1}\chi_a^{-1})(t_1^{-1}\chi_a^{-1}-\chi_l^{-1})}=$$
  $$\frac{t^{r-2}}{(1-t_1)(1-t_2)}\sum_{l=1}^r\prod_{a\ne l}\frac{\chi_l-t_1t_2\chi_a}{\chi_l-\chi_a}=
    \frac{t^{r-2}}{(1-t_1)(1-t_2)}\cdot \frac{1-t_1^rt_2^r}{1-t_1t_2}=\frac{t^{-r}-t^r}{(1-t_1)(1-t_2)(1-t_3)},$$
 where we used the identity
  $\sum_{l=1}^r\prod_{a\ne l}\frac{\chi_l-u\chi_a}{\chi_l-\chi_a}=\frac{1-u^r}{1-u}$.
 The first equality follows.

 (ii) For $s=1$, the right-hand side of~(\ref{LONG_m}) is a degree $1$ rational function
 in the variables $\chi^{(a)}_k$. It is easy to check that it has no poles.
 Therefore, it is a linear function with the leading linear part
  $T\cdot (-1)^r\chi_1\ldots\chi_r\frac{t_1^{r}t_2^{r-1}}{1-t_1}\cdot \sum_{l=1}^r\sum_{j=1}^{L_l}\chi^{(l)}_j$.
 Hence, we have
  $$\gamma_{1\mid_{\bar{\lambda}}}=\frac{t^{-r}t_1}{1-t_1}\cdot\sum_{l=1}^r\sum_{j=1}^\infty\wt{\chi}^{(l)}_j+C$$
 for a constant $C\in \mathbb{F}_r$ independent of $\bar{\lambda}$, where
  $\wt{\chi}^{(l)}_j:=\chi^{(l)}_j-{t_1^{-1}t_2^{j-1}\chi_l^{-1}}$.
 Note that
   $$\sum_{a=1}^r\sum_{\square\in \lambda^a}\chi(\square)=
    \sum_{a=1}^r\sum_{j=1}^{(\lambda^{a*})_1}t_2^{j-1}(1+t_1+\ldots+t_1^{\lambda^a_j-1})\chi_a^{-1}=
    \sum_{l=1}^r\sum_j\frac{-t_1}{1-t_1}\wt{\chi}^{(l)}_j.$$
 On the other hand, we have $C=\gamma_{1\mid_{\bar{\emptyset}}}$.
 Applying~(\ref{LONG_m}), we find
  $$C=\gamma_{1\mid_{\bar{\emptyset}}}=-\frac{T}{(1-t_1)^2}\sum_{l=1}^rt_1^{-r}\frac{\chi_l^{-1}(1-t_1)}{t_1^{-1}\chi_l^{-1}-t_2t_1^{-1}\chi_l^{-1}}
    \prod_{a\ne l}\frac{(t_1^{-1}\chi_a^{-1}-t_2\chi_l^{-1})(1-t_1\chi_a\chi_l^{-1})}{(t_1^{-1}\chi_l^{-1}-t_1^{-1}\chi_a^{-1})(t_1^{-1}\chi_a^{-1}-\chi_l^{-1})}=$$
  $$\frac{t^{r-2}}{(1-t_1)(1-t_2)}\cdot \sum_{l=1}^r\frac{1}{\chi_l}\prod_{a\ne l}\frac{\chi_l-t_1t_2\chi_a}{\chi_l-\chi_a}=
    \frac{t^{r-2}t_1^{r-1}t_2^{r-1}}{(1-t_1)(1-t_2)}\sum_{l=1}^r\chi_l^{-1}=\frac{t^{-r}}{(1-t_1)(1-t_2)}\sum_{l=1}^r\chi_l^{-1},$$
 where we used
   $\sum_{l=1}^r\frac{1}{\chi_l}\prod_{a\ne l}\frac{\chi_l-u\chi_a}{\chi_l-\chi_a}=u^{r-1}\sum_{l=1}^r\chi_l^{-1}$.
 The second equality follows.
\end{proof}

 Next, we introduce the operator series
  $\phi^{\pm}(z)=\sum_{j=0}^{\infty}{\phi^{\pm}_jz^{\mp j}}\in \End(M^r)[[z^{\mp 1}]]$,
 diagonal in the fixed point basis with the eigenvalues given by
  $$\phi^+_{0\mid_{\bar{\lambda}}}=t^{-r},\ \phi^-_{0\mid_{\bar{\lambda}}}=t^r,\
    \phi^{\pm}_{j\mid_{\bar{\lambda}}}=\pm (1-t_1)(1-t_2)(1-t_3)\gamma_{\pm j\mid_{\bar{\lambda}}}\ \mathrm{for}\ j\in \NN.$$
 The following is a consequence of Lemma~\ref{gamma_m}:

\begin{cor}\label{propery of phi}
 We have
  $$[e(z),f(w)]=\frac{\delta(z/w)}{(1-t_1)(1-t_2)(1-t_3)}(\phi^{+}(w)-\phi^{-}(z)).$$
\end{cor}

 Our next result follows from the explicit computations in the fixed point basis.

\begin{lem}
 The series $\phi^\pm(z)$ satisfy the following relations:
\begin{equation}\label{4d}
 \phi^{\pm}(z)e(w)(z-t_1w)(z-t_2w)(z-t_3w)=-e(w)\phi^{\pm}(z)(w-t_1z)(w-t_2z)(w-t_3z),
\end{equation}
\begin{equation}\label{5d}
 \phi^{\pm}(z)f(w)(w-t_1z)(w-t_2z)(w-t_3z)=-f(w)\phi^{\pm}(z)(z-t_1w)(z-t_2w)(z-t_3w).
\end{equation}
\end{lem}

 Relation~(\ref{4d}) implies the following identity:
  $$\phi^+(z)_{\mid_{\bar{\lambda}+\square^l_j}}=\phi^+(z)_{\mid_{\bar{\lambda}}}\cdot
    \frac{(1-t_1^{-1}\chi(\square^l_j)/z)(1-t_2^{-1}\chi(\square^l_j)/z)(1-t_3^{-1}\chi(\square^l_j)/z)}
         {(1-t_1\chi(\square^l_j)/z)(1-t_2\chi(\square^l_j)/z)(1-t_3\chi(\square^l_j)/z)}.$$
 Therefore, we get
  $$\phi^+(z)_{\mid_{\bar{\lambda}}}=\phi^+(z)_{\mid_{\bar{\emptyset}}}\cdot \bc_r(z)^+_{\mid_{\bar{\lambda}}}.$$
 Applying~(\ref{LONG_m}) once again, we find
  $$\phi^+(z)_{\mid_{\bar{\emptyset}}}=(\phi^-_0+\sum_{i\geq 0}(1-t_1)(1-t_2)(1-t_3)\gamma_iz^{-i})_{\mid_{\bar{\emptyset}}}=$$
  $$t^r-t^r(1-t_1t_2)\sum_{l=1}^r\frac{1}{1-\chi_l^{-1}z^{-1}}\prod_{a\ne l}\frac{\chi_l-t_1t_2\chi_a}{\chi_l-\chi_a}=
    t^r\prod_{l=1}^r\left(\frac{1-t_1t_2\chi_lz}{1-\chi_lz}\right)^+,$$
 where we used the identity
   $$1-(1-u)\sum_{l=1}^r\frac{1}{1-1/(\chi_lz)}\prod_{a\ne l}\frac{\chi_l-u\chi_a}{\chi_l-\chi_a}=
     \prod_{l=1}^r\frac{u-\frac{1}{\chi_lz}}{1-\frac{1}{\chi_lz}}.$$
 This proves $\phi^+(z)=\psi^+(z)$. Analogous arguments also imply $\phi^-(z)=\psi^-(z)$.

 The relation (T3) follows from Corollary~\ref{propery of phi},
 while the relations (T4,T5) follow from~(\ref{4d},\ref{5d}).
 This completes our proof of Theorem~\ref{Gieseker_m}.

\subsection{Sketch of the proof of Theorem~\ref{Gieseker_a}}
$\ $

  The proof of Theorem~\ref{Gieseker_a} is completely analogous to its K-theoretical counterpart.
 Verification of the relations (Y0,Y1,Y2,Y6) is straightforward.
 Likewise, it is easy to check that $[e_i,f_j]$ are diagonal in the fixed point basis and depend on $i+j$ only:
  $[e_i,f_j]([\bar{\lambda}])=\gamma_{i+j\mid_{\bar{\lambda}}}\cdot [\bar{\lambda}]$.
 To verify the remaining relations, we will need the following generalization of Lemma~\ref{psi_0,1,2}:

\begin{lem}
  We have
   $\gamma_{0\mid_{\bar{\lambda}}}=\frac{-r}{s_1s_2},\
     \gamma_{1\mid_{\bar{\lambda}}}=\frac{1}{s_1s_2}\left(\sum_{a=1}^r x_a-\binom{r}{2}(s_1+s_2)\right),$
   $$\gamma_{2\mid_{\bar{\lambda}}}=2|\bar{\lambda}|-\frac{1}{s_1s_2}\left(\sum_{a=1}^r x_a^2-(r-1)(s_1+s_2)\sum_{a=1}^r x_a+\binom{r}{3}(s_1+s_2)^2\right).$$
\end{lem}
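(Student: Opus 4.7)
The plan is to proceed in direct parallel with the proof of Lemma~\ref{psi_0,1,2}, combined with the bookkeeping used in the preceding subsection (Lemma~\ref{gamma_m}) to pass from rank $1$ to rank $r$. First, using Lemma~\ref{matrix_coeff_Gis_a}(a), I will write
\[
 \gamma_{m|\bar{\lambda}}=\sum_{l=1}^r \sum_{j=1}^{L_l} e_{m[\bar{\lambda}-\square^l_j,\bar{\lambda}]}\, f_{0[\bar{\lambda},\bar{\lambda}-\square^l_j]} - \sum_{l=1}^r \sum_{j=1}^{L_l} f_{m[\bar{\lambda},\bar{\lambda}+\square^l_j]}\, e_{0[\bar{\lambda}+\square^l_j,\bar{\lambda}]},
\]
for any integers $L_l>\lambda^{l*}_1$, which is the additive counterpart of formula $(\heartsuit)$ from the proof of Lemma~\ref{gamma_m}. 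This is a rational function in the variables $x^{(a)}_k$ whose value does not depend on the auxiliary cut-offs $L_l$.

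The apparent poles of this expression lie along the hyperplanes $x^{(a)}_k=x^{(b)}_{k'}$ and $x^{(a)}_k=x^{(b)}_{k'}+s_1$, and a routine residue computation (exactly analogous to the one carried out in Lemma~\ref{psi_0,1,2}(c)) shows that all residues cancel. Hence $\gamma_{m|\bar{\lambda}}$ is an honest polynomial in the $x^{(a)}_k$ over $\CC(s_1,s_2,x_1,\ldots,x_r)$, of total degree $\leq m$. Examining the limits $x^{(l)}_j\to\infty$ with the remaining variables fixed refines these bounds just as in Cases 2 and 3 of Lemma~\ref{psi_0,1,2}: for $m=1$ the effective degree drops to $0$, and for $m=2$ it drops to $1$. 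Thus $\gamma_{0|\bar{\lambda}}$ and $\gamma_{1|\bar{\lambda}}$ are scalars independent of $\bar{\lambda}$, and $\gamma_{2|\bar{\lambda}}$ is linear in the $x^{(l)}_j$.

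For $m=0$ and $m=1$ the stated formulas follow by evaluating at $\bar{\lambda}=\bar{\emptyset}$ (with $L_a=1$ and $x^{(a)}_1=-s_1-x_a$); the single sum over $l$ collapses via the Lagrange-type identity
\[
 \sum_{l=1}^r \prod_{a\ne l} \frac{x_l-x_a-u}{x_l-x_a}=r \qquad \forall\ u,
\]
together with its first $u$-derivative, which produces the required $\binom{r}{2}(s_1+s_2)$ terms. For $m=2$ the coefficient of $x^{(l)}_j$ in $\gamma_{2|\bar{\lambda}}$ is computed as $\lim_{\xi\to\infty}\xi^{-1}\gamma_{2|\bar{\lambda}}|_{x^{(l)}_j=\xi}$; a short calculation yields $2/s_1$, independent of $(l,j)$. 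Since the shifted variables $\widetilde{x}^{(l)}_j:=x^{(l)}_j-((j-1)s_2-s_1-x_l)=\lambda^l_j\, s_1$ vanish for $j$ large enough, one gets $\frac{2}{s_1}\sum_{l,j}\widetilde{x}^{(l)}_j=2|\bar{\lambda}|$, and the remaining $\bar{\lambda}$-independent constant is determined by evaluation at $\bar{\emptyset}$.

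The main technical obstacle is this final $\bar{\emptyset}$-evaluation for arbitrary $r$: the identities used in Lemma~\ref{gamma_m} are multiplicative, and their additive analogues must be rederived. I expect the second and third $u$-derivatives of the Lagrange identity above to handle the $\sum_a x_a^2$ and $\binom{r}{3}(s_1+s_2)^2$ contributions to $\gamma_{2|\bar{\emptyset}}$. Apart from this bookkeeping, the argument is entirely parallel to the $r=1$ case treated in Lemma~\ref{psi_0,1,2}.
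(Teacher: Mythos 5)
Your proposal follows the paper's proof step for step: the formula for $\gamma_{m|\bar\lambda}$ as a double sum over boxes is the paper's $(\spadesuit)$, the pole-cancellation argument, the degree drops (to $0$ for $m=1$, to $1$ for $m=2$), the identification $\widetilde{x}^{(l)}_j=\lambda^l_j s_1$ giving the $2|\bar\lambda|$ contribution, and the final evaluation at $\bar\emptyset$ all match. So the approach is essentially the same.

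Two small corrections. First, the second double sum in your opening formula mixes up the operators; since $\gamma_m=[e_m,f_0]|_{\bar\lambda}$, the subtracted term should be $\sum_{l,j} e_{m[\bar\lambda,\bar\lambda+\square^l_j]}\,f_{0[\bar\lambda+\square^l_j,\bar\lambda]}$, and the shift $(x^{(l)}_j)^m\rightsquigarrow(x^{(l)}_j+s_1)^m$ in the paper's $(\spadesuit)$ comes from the fact that the row variable is measured with respect to $\bar\lambda+\square^l_j$ rather than from a $p=m$ factor on $f$. Second, the identity $\sum_{l}\prod_{a\ne l}\frac{x_l-x_a-u}{x_l-x_a}=r$ is correct, but its first (or higher) $u$-derivatives are identically zero and do not produce the $\sum_a x_a$, $\binom{r}{2}u$, or $\binom{r}{3}u^2$ terms. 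What one actually needs is the family $\sum_l x_l^s\prod_{a\ne l}\frac{x_l-x_a-u}{x_l-x_a}$ for $s=0,1,2$; these are precisely the coefficients of $z^{-1},z^{-2},z^{-3}$ in the expansion of the generating-function identity $1+u\sum_l\frac{1}{z+x_l}\prod_{a\ne l}\frac{x_l-x_a-u}{x_l-x_a}=\prod_j\frac{z+x_j+u}{z+x_j}$ that the paper records at the end of Appendix B.2 (equivalently, they follow from comparing coefficients in $P(z+u)/P(z)$ with $P(z)=\prod_j(z+x_j)$). With that substitution the $\bar\emptyset$-evaluations for $m=0,1,2$ come out exactly as you predict.
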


\begin{proof}$\ $

 Fix positive integers $L_a>(\lambda^{a*})_1$ for $1\leq a\leq r$.
 Applying Lemma~\ref{matrix_coeff_Gis_a}(a), we find
\begin{multline}\tag{$\spadesuit$}\label{LONG_a}
 \gamma_{s\mid_{\bar{\lambda}}}=\\
 \sum_{l=1}^r\sum_{j=1}^{L_l}\frac{1}{s_1^2}(x_j^{(l)})^s\cdot
 \frac{x^{(l)}_j+s_1+(1-L_l)s_2+x_l}{-x^{(l)}_j+L_ls_2-x_l}\cdot
 \prod_{k\ne j}^{k\leq L_l}
 \frac{(x^{(l)}_j-x^{(l)}_k-s_1-s_2)(x^{(l)}_k-x^{(l)}_j-s_2)}{(x^{(l)}_j-x^{(l)}_k-s_1)(x^{(l)}_k-x^{(l)}_j)}\times \\
 \prod_{a\ne l}\left(\frac{x^{(l)}_j+s_1+(1-L_a)s_2+x_a}{x^{(l)}_j-L_as_2+x_a} \cdot
 \prod_{k=1}^{L_a}
 \frac{(x^{(l)}_j-x^{(a)}_k-s_1-s_2)(x^{(a)}_k-x^{(l)}_j-s_2)}{(x^{(l)}_j-x^{(a)}_k-s_1)(x^{(a)}_k-x^{(l)}_j)}\right)-\\
 \sum_{l=1}^r\sum_{j=1}^{L_l}\frac{1}{s_1^2}(x_j^{(l)}+s_1)^s\cdot
 \frac{x^{(l)}_j+2s_1+(1-L_l)s_2+x_l}{-x^{(l)}_j+L_ls_2-s_1-x_l}\cdot
 \prod_{k\ne j}^{k\leq L_l}
 \frac{(x^{(l)}_k-x^{(l)}_j-s_1-s_2)(x^{(l)}_j-x^{(l)}_k-s_2)}{(x^{(l)}_k-x^{(l)}_j-s_1)(x^{(l)}_j-x^{(l)}_k)}\times \\
 \prod_{a\ne l}\left(\frac{x^{(l)}_j+2s_1+(1-L_a)s_2+x_a}{x^{(l)}_j-L_as_2+s_1+x_a} \cdot
 \prod_{k=1}^{L_a}
 \frac{(x^{(a)}_k-x^{(l)}_j-s_1-s_2)(x^{(l)}_j-x^{(a)}_k-s_2)}{(x^{(a)}_k-x^{(l)}_j-s_1)(x^{(l)}_j-x^{(a)}_k)}\right),
\end{multline}
 where $x^{(a)}_k=(\lambda^a_k-1)s_1+(k-1)s_2-x_a$ as before.
 The right-hand side of~(\ref{LONG_a}) is a degree $s$ rational function in the variables $x^{(a)}_k$.
 It is easy to see that it has no poles for $s\geq 0$.

 (i) For $s=0$, we therefore see that $\gamma_{0\mid_{\bar{\lambda}}}$ must be
 an element of $\CC(s_1,s_2,x_1,\ldots,x_l)$ independent of $\bar{\lambda}$.
 Evaluating~(\ref{LONG_a}) at $\bar{\emptyset}$, we find
  $\gamma_{0\mid_{\bar{\lambda}}}=\gamma_{0\mid_{\bar{\emptyset}}}=-r/s_1s_2$.

 (ii) For $s=1$, we therefore see that $\gamma_{1\mid_{\bar{\lambda}}}$ is a linear function.
 But its leading linear part is zero.
 So $\gamma_{1\mid_{\bar{\lambda}}}=\gamma_{1\mid_{\bar{\emptyset}}}$.
 Evaluating~(\ref{LONG_a}) at $\bar{\emptyset}$, we find
   $\gamma_{1\mid_{\bar{\lambda}}}=\gamma_{1\mid_{\bar{\emptyset}}}=\frac{1}{s_1s_2}\left(\sum_{a=1}^r x_a-\binom{r}{2}(s_1+s_2)\right)$.

 (iii) For $s=2$, we therefore see that $\gamma_{2\mid_{\bar{\lambda}}}$ is a quadratic function.
 But its leading quadratic part is zero.
 So $\gamma_{2\mid_{\bar{\lambda}}}$ is a linear function.
 Analogously to Lemma~\ref{psi_0,1,2},
 we find that the leading linear part is actually
  $\frac{2}{s_1}\sum_{a=1}^r\sum_{k=1}^\infty \wt{x}^{(a)}_k=2|\bar{\lambda}|$,
 where $\wt{x}^{(a)}_k:=x^{(a)}_k-(-s_1+(k-1)s_2-x_a)$. Hence,
  $\gamma_{2\mid_{\bar{\lambda}}}=2|\bar{\lambda}|+\gamma_{2\mid_{\bar{\emptyset}}}$.
 Applying~(\ref{LONG_a}) once again, we recover the last formula.
\end{proof}

  Define $\phi_j:=[e_j,f_0]\in \End(V^r)$.
 Explicit computations in the fixed point basis show that $\{\phi_j,e_j,f_j\}_{j\in \ZZ_+}$
 satisfy the relations (Y3,Y4,Y4$'$,Y5,Y5$'$) with $\psi_j$ replaced by $\phi_j$.
 This implies
  $$\phi(z)_{\mid_{\bar{\lambda}+\square^l_j}}=\phi(z)_{\mid_{\bar{\lambda}}}\cdot
    \frac{(z-\chi(\square^l_j)+s_1)(z-\chi(\square^l_j)+s_2)(z-\chi(\square^l_j)+s_3)}
         {(z-\chi(\square^l_j)-s_1)(z-\chi(\square^l_j)-s_2)(z-\chi(\square^l_j)-s_3)},$$
 where $\phi(z):=1+\sigma_3\sum_{j\geq 0} \phi_jz^{-j-1}$.
 Therefore,
  $\phi(z)_{\mid_{\bar{\lambda}}}=\phi(z)_{\mid_{\bar{\emptyset}}}\cdot \bC_r(z)_{\mid_{\bar{\lambda}}}.$
 Applying~(\ref{LONG_a}), we find
  $$\phi(z)_{\mid_{\bar{\emptyset}}}=
    1-\frac{\sigma_3}{s_1s_2}\sum_{i\geq 0}\sum_{l=1}^r(-x_l)^iz^{-i-1}\prod_{a\ne l}\frac{x_l-x_a-s_1-s_2}{x_l-x_a}=
    1-s_3\sum_{l=1}^r\frac{1}{z+x_l}\prod_{a\ne l}\frac{x_l-x_a+s_3}{x_l-x_a}.$$
 Combining this with
   $1+u\sum_{l=1}^r\frac{1}{z+x_l}\prod_{a\ne l}\frac{x_l-x_a-u}{x_l-x_a}=\prod_{l=1}^r\frac{z+x_l+u}{z+x_l}$,
 we finally get $\phi(z)=\psi(z)$.

%%%%%%%%%%%%%%%%%%%%%%%%%%%%%%%%%%%%%%%%%%%%%%%%%%%%%%%%%%%%%%%%%%%%%%%%%%%%%%%%%%%%%%%%%%%%%%%%%%%%%%
%%%%%%%%%%%%%%%%%%%%%%%%%%%%%%%%%%%%%%%% APPENDIX C %%%%%%%%%%%%%%%%%%%%%%%%%%%%%%%%%%%%%%%%%%%%%%%%%%
%%%%%%%%%%%%%%%%%%%%%%%%%%%%%%%%%%%%%%%%%%%%%%%%%%%%%%%%%%%%%%%%%%%%%%%%%%%%%%%%%%%%%%%%%%%%%%%%%%%%%%

\section{Proofs of Theorems~\ref{limit_1.1},~\ref{limit_2.1}}

\subsection{Proof of Theorem~\ref{limit_1.1}}
$\ $

  As pointed out in Section 5.4, all the defining relations of the algebra $\ddot{U}_{h_0}(\gl_1)$
 are of Lie-type. Therefore, it is a universal enveloping algebra of the Lie algebra $\ddot{u}_{h_0}$
 generated by $\{e_i, f_i, H_i\}_{i\in \ZZ}$ with the same defining relations.
 Moreover, $\ddot{u}_{h_0}$ is a $\CC\cdot H_0$--central extension of the Lie-algebra $\ul{\ddot{u}}_{h_0}$
 generated by $\{e_i,f_i,H_m\}_{i\in \ZZ}^{m\in \ZZ^*}$ with the following defining relations:
\begin{equation}\tag{u0}\label{u0}
 [H_k, H_m]=0,
\end{equation}
\begin{equation}\tag{u1}\label{u1}
 [e_{i+3},e_j]-(1+q+q^{-1})[e_{i+2},e_{j+1}]+(1+q+q^{-1})[e_{i+1}, e_{j+2}]-[e_i, e_{j+3}]=0,
\end{equation}
\begin{equation}\tag{u2}\label{u2}
 [f_{i+3},f_j]-(1+q+q^{-1})[f_{i+2},f_{j+1}]+(1+q+q^{-1})[f_{i+1}, f_{j+2}]-[f_i, f_{j+3}]=0,
\end{equation}
\begin{equation}\tag{u3}\label{u3}
 [e_i,f_j]=H_{i+j} \ \mathrm{for}\ {j\ne -i},\ \ [e_i,f_{-i}]=0,
\end{equation}
\begin{equation}\tag{u4}\label{u4}
 [H_m,e_i]=-(1-q^m)(1-q^{-m})e_{i+m},
\end{equation}
\begin{equation}\tag{u5}\label{u5}
 [H_m, f_i]=(1-q^m)(1-q^{-m})f_{i+m},
\end{equation}
\begin{equation}\tag{u6}\label{u6}
 [e_0,[e_1,e_{-1}]]=0,\ \ [f_0,[f_1,f_{-1}]]=0,
\end{equation}
 where $q=e^{h_0}\in \CC^*$.
 Note that $h_0\notin \mathbb{Q}\cdot \pi \sqrt{-1}\Longleftrightarrow q\ne \sqrt{1}$ ($q$ is not a root of $1$).

 Hence, it suffices to check that the corresponding homomorphism
  $\theta:\ul{\ddot{u}}_{h_0}\to \dd^0_{q}$
 defined by
  $$\theta: e_i\mapsto Z^iD,\ f_i\mapsto -D^{-1}Z^i,\ H_m\mapsto (q^{-m}-1)Z^m\ \
    \mathrm{for}\ i\in \ZZ, m\in \ZZ^*$$
 is an isomorphism of the $\CC$-Lie algebras for $q\ne \sqrt{1}$.

 The Lie algebra $\ul{\ddot{u}}_{h_0}$ is $\ZZ^2$-graded via
  $\deg(e_i):=(i,1), \deg(f_i):=(i,-1), \deg(H_m):=(m,0)$.
 The Lie algebra $\dd^0_{q}$ is also $\ZZ^2$-graded via $\deg(Z^iD^j)=(i,j)$.
 Note that $\theta$ is $\ZZ^2$-graded and surjective.
 Since $\dim (\dd^0_q)_{i,j}=1$ for $(i,j)\ne (0,0)$,
 it suffices to show that $\dim(\ul{\ddot{u}}_{h_0})_{i,j}\leq 1$.
 This is clear for $j=0$, while the proof for $j>0$ will occupy the rest of this section.

 Let $\ul{\ddot{u}}_{h_0}^{\geq}$ be the Lie algebra generated by
 $\{e_i,H_m\}_{i\in \ZZ}^{m\in \ZZ^*}$ with the defining relations (u0,u1,u4,u6).
 It suffices to show that
  $\dim(\ul{\ddot{u}}_{h_0}^{\geq})_{i,j}\leq 1\ \ \mathrm{for}\ i\in \ZZ, j\in \NN$.
 We prove this by an induction on $j$.

\noindent
 $\bullet$ \emph{Case $j=1$}.

 It is clear that $(\ul{\ddot{u}}_{h_0}^{\geq})_{N,1}$ is spanned by $e_N$.

\noindent
 $\bullet$ \emph{Case $j=2$}.

 It is clear that $(\ul{\ddot{u}}_{h_0}^{\geq})_{N,2}$ is spanned by $\{[e_{i_1},e_{i_2}]|i_1+i_2=N\}$.
 However,~(\ref{u1}) implies
   $$[e_{i+2+k}, e_{i+1-k}]=\frac{q^{k+1}-q^{-k}}{q-1}[e_{i+2},e_{i+1}],\
     [e_{i+2+k}, e_{i-k}]=\frac{q^{k+2}-q^{-k}}{q^2-1}[e_{i+2},e_i].$$
 These formulas can be unified in the following way:
\begin{equation}\label{j=2}
   [e_{i_1},e_{i_2}]=\frac{q^{i_2}-q^{i_1}}{q^{i_1+i_2}-1}[e_0,e_{i_1+i_2}]\ \mathrm{if}\ i_1+i_2\ne 0,\
   [e_i,e_{-i}]=\frac{q^{i+1}-q^{1-i}}{q^2-1}[e_1,e_{-1}].
\end{equation}
 Therefore, $(\ul{\ddot{u}}_{h_0}^{\geq})_{N,2}$ is either spanned by
 $[e_0,e_N]$ (if $N\ne 0$) or $[e_1,e_{-1}]$ (if $N=0$).

\noindent
 $\bullet$ \emph{Case $j=3$}.

 Let us introduce the \emph{length $n$ commutator}:
  $[a_1; a_2; \ldots;a_n]_n:=[a_1,[a_2,[\ldots[a_{n-1},a_n]\ldots]]]$
 (we will omit the subscript $n$ when the length is clear).
 The space $(\ul{\ddot{u}}_{h_0}^{\geq})_{N,3}$ is spanned by $\{[e_{i_1};e_{i_2};e_{i_3}]|i_1+i_2+i_3=N\}$.
 Using the automorphism $\pi$ of the Lie algebra $\ul{\ddot{u}}_{h_0}^{\geq}$ determined by
 $e_i\mapsto e_{i+1}, H_m\mapsto H_m$, we can assume $i_1,i_2,i_3\in \NN$.
 Due to the above $j=2$ case, it suffices to show that $[e_k;e_0;e_l]$
 is a multiple of $[e_0;e_0;e_{k+l}]$ for any $k,l\in \NN$.

 For $m\in \ZZ^*$, define $\hh_m:=-\frac{H_m}{(1-q^m)(1-q^{-m})}$, so that $\ad(\hh_m)(e_i)=e_{i+m}$.
 Set $A_1:=\ad(\hh_1)$. Then
  $$A_1([e_k;e_0;e_l])=[e_{k+1};e_0;e_l]+[e_k;e_1;e_l]+[e_k;e_0;e_{l+1}].$$
 Assuming $[e_k;e_0;e_l]$ is a multiple of $[e_0;e_0;e_{k+l}]$, we get
 $[e_{k+1};e_0;e_l]$ is a linear combination of $[e_0;e_0;e_{k+l+1}]$
 and $[e_1;e_0;e_{k+l}]$, due to~(\ref{j=2}).
 It remains to consider the $k=1$ case.

 We prove by an induction on $N>1$ that
  $[e_1;e_0;e_{N-1}]=\frac{(q^{N-1}-q^2)(q^{N-1}-1)}{(q^N-1)^2}[e_0;e_0;e_N]$.
 This is equivalent to $[e_1;e_0;e_{N-1}]$ being a multiple of $[e_0;e_0;e_N]$,
 since we can recover the constant $\lambda_{N,3}=\frac{(q^{N-1}-q^2)(q^{N-1}-1)}{(q^N-1)^2}$
 by comparing the images $\theta([e_1;e_0;e_{N-1}])$ and $\theta([e_0;e_0;e_N])$.

\noindent
 $\circ$ \emph{Case $N=2$}.

  Analogously to Proposition~\ref{Serre_0}, the relation~(\ref{u6}) combined with~(\ref{u4}) imply
\begin{equation}\tag{u6$'$}\label{u6'}
 \Sym_{\mathfrak{S}_3}[e_{i_1};e_{i_2+1};e_{i_3-1}]=0\ \ \mathrm{for\ any}\ i_1,i_2,i_3\in \ZZ.
\end{equation}
 Plugging in $i_1=1, i_2=1, i_3=0$, we get
  $[e_1;e_2;e_{-1}]+[e_1;e_1;e_0]+[e_0;e_2;e_0]=0.$
 Combining this equality with~(\ref{j=2}), we find
  $[e_0;e_0;e_2]=-\frac{(q+1)^2}{q}[e_1;e_0;e_1]\Longrightarrow [e_1;e_0;e_1]=\lambda_{2,3}[e_0;e_0;e_2]$.

\noindent
 $\circ$ \emph{Case $N=3$}.

 Plugging in $i_1=1, i_2=2, i_3=0$ into~(\ref{u6'}), we get
  $$[e_1;e_3;e_{-1}]+[e_2;e_2;e_{-1}]+[e_2;e_1;e_0]+[e_0;e_3;e_0]+[e_0;e_2;e_1]=0.$$
 Applying~(\ref{j=2}), we get
  $-(q+2+q^{-1})[e_2;e_0;e_1]-(q+q^{-1})[e_1;e_0;e_2]-(1+\frac{q^2-q}{q^3-1})[e_0;e_0;e_3]=0.$
 Meanwhile, applying $A_1$ to the above equality $(q+1)^2[e_1;e_0;e_1]+q[e_0;e_0;e_2]=0$, we find
  $(q+1)^2[e_2;e_0;e_1]+(q^2+3q+1)[e_1;e_0;e_2]+(q-\frac{q^2-q^3}{q^3-1})[e_0;e_0;e_3]=0.$
 Combining these two linear combinations of $[e_2;e_0;e_1], [e_1;e_0;e_2], [e_0;e_0;e_3]$, we get
  $[e_1;e_0;e_2]=0=\lambda_{3,3}[e_0;e_0;e_3]$.

\noindent
 $\circ$ \emph{Case $N=k+2, k>1$}.

  By the induction assumption $[e_1;e_0;e_k]-\lambda_{k+1,3}[e_0;e_0;e_{k+1}]=0$.
  Applying $A_1$, we get
   $$([e_2;e_0;e_k]+[e_1;e_1;e_k]+[e_1;e_0;e_{k+1}])-\lambda_{k+1,3}([e_1;e_0;e_{k+1}]+[e_0;e_1;e_{k+1}]+[e_0;e_0;e_{k+2}])=0.$$
  Consider the linear operator $A_2:=\frac{1}{2}(\ad(\hh_1)^2-\ad(\hh_2))$
  acting on $\ul{\ddot{u}}_{h_0}^{\geq}$. Then
   $$A_2([e_{i_1};e_{i_2};e_{i_3}])=[e_{i_1+1};e_{i_2+1};e_{i_3}]+[e_{i_1+1};e_{i_2};e_{i_3+1}]+[e_{i_1};e_{i_2+1};e_{i_3+1}].$$
  By the induction assumption $[e_1;e_0;e_{k-1}]-\lambda_{k,3}[e_0;e_0;e_{k}]=0$.
  Applying $A_2$, we get
   $$([e_2;e_1;e_{k-1}]+[e_2;e_0;e_k]+[e_1;e_1;e_k])-\lambda_{k,3}([e_1;e_1;e_k]+[e_1;e_0;e_{k+1}]+[e_0;e_1;e_{k+1}])=0.$$
  Applying~(\ref{j=2}), we find two linear combinations of $[e_2;e_0;e_k], [e_1;e_0;e_{k+1}], [e_0;e_0;e_{k+2}]$
  which are zero. It is a routine verification to check that they are
  not proportional for $q\ne \sqrt{1}$.  Therefore, we can eliminate $[e_2;e_0;e_k]$,
  which proves that $[e_1;e_0;e_{k+1}]$ is a multiple of $[e_0;e_0;e_{k+2}]$.

\noindent
 $\bullet$ \emph{Case $j=n>3$}.

 Analogously to the previous case, it suffices to show that
 $[e_1;e_0;\ldots;e_0;e_{N-1}]_n$ is a multiple of $[e_0;\ldots;e_0;e_N]_n$.
 This is equivalent to
  $$[e_1;\ldots;e_0;e_{N-1}]_n=\lambda_{N,n}\cdot [e_0;\ldots;e_0;e_N]_n\ \
    \mathrm{with}\ \lambda_{N,n}=\frac{(q^{N-1}-1)^{n-2}(q^{N-1}-q^{n-1})}{(q^N-1)^{n-1}},$$
 where $\lambda_{N,n}$ is computed by comparing the images of these length $n$ commutators under $\theta$.

  We will need the following generalization of~(\ref{u6}), which follows from Proposition~\ref{alt_gener_m}:
\begin{equation}\tag{u7n}\label{u7n}
 [e_0;e_1;e_0;\ldots;e_0;e_{-1}]_n=0.
\end{equation}
 Analogously to Proposition~\ref{Serre_0}, one can see that~(\ref{u7n}) combined with~(\ref{u4}) imply
\begin{equation}\tag{u7$'$n}\label{u7'n}
 \Sym_{\mathfrak{S}_n} [e_{i_1};e_{i_2+1};e_{i_3};\ldots;e_{i_{n-1}};e_{i_n-1}]_n=0
 \ \ \mathrm{for\ any}\ i_1,\ldots,i_n\in \ZZ.
\end{equation}

 Now we proceed to the proof of the aforementioned result by an induction on $N>1$.

\noindent
 $\circ$ \emph{Case $N=2$}.

 Applying $A_2$ to the equality~(\ref{u7n}), we get
  $$[e_1,A_1([e_1;e_0;\ldots;e_0;e_{-1}]_{n-1})]+[e_0;A_2([e_1;e_0;\ldots;e_0;e_{-1}]_{n-1})]=0.$$
 By the induction assumption on length $n-1$ commutators,
 this has a form $a_n\cdot[e_1;\ldots;e_0;e_1]_n+b_n\cdot[e_0;\ldots;e_0;e_2]_n=0$.
 Computing the images under $\theta$, we find $a_n=\frac{(-1)^{n}(1-q^{n-1})^2}{q^{n-2}(1-q)^2}\ne 0$.

\noindent
 $\circ$ \emph{Case $N=3$}.

 Applying $A_1$ to the equality $[e_1;\ldots;e_0;e_1]_n=\lambda_{2,n}[e_0;\ldots;e_0;e_2]_n$, we get
  $$[e_2;e_0;\ldots;e_1]_n+[e_1,A_1([e_0;\ldots;e_1]_{n-1})]=\lambda_{2,n}([e_1;\ldots;e_0;e_2]_n+[e_0,A_1([e_0;\ldots;e_2]_{n-1})]).$$
 By the induction assumption on length $n-1$ commutators,
 this equation can be simplified to
  $$[e_2;e_0;\ldots;e_0;e_1]_n+c_n\cdot [e_1;e_0;\ldots;e_0;e_2]_n+d_n\cdot [e_0;e_0;\ldots;e_0;e_3]_n=0.$$
 Computing the images under $\theta$, we find $c_n=\frac{(1-q)^{n-2}(1+2q-2q^{n-1}-q^n)}{(1-q^2)^{n-1}}$.

 Define the linear operator $A_3:=\ad(\hh_1)\ad(\hh_2)-\ad(\hh_3)$.
 Applying $A_3$ to~(\ref{u7n}), we get
  $$[e_2, \ad(\hh_1)[e_1;e_0;\ldots;e_0;e_{-1}]_{n-1}]+[e_1,\ad(\hh_2)[e_1;e_0;\ldots;e_0;e_{-1}]_{n-1}]+$$
  $$[e_0, A_3([e_1;e_0;\ldots;e_0;e_{-1}]_{n-1})]=0.$$
 By the induction assumption on length $n-1$ commutators, this equation can be simplified to
  $$a'_n\cdot [e_2;e_0;\ldots;e_0;e_1]_n+c'_n\cdot [e_1;e_0;\ldots;e_0;e_2]_n+d'_n\cdot [e_0;e_0;\ldots;e_0;e_3]_n=0.$$
 Computing the images under $\theta$, we find the following formulas
  $$a'_n=\frac{(-1)^n(1-q^{n-1})^2}{q^{n-2}(1-q)^2},\
    c'_n=\frac{(-1)^n(1-q)^{n-3}(1-q^{n-1})(1-q^{2n-2})}{q^{n-2}(1-q^2)^{n-1}}.$$
 It remains to notice that $c'_n\ne a'_nc_n$ for $q\ne \sqrt{1}$.
 Therefore, eliminating $[e_2;e_0;\ldots;e_0;e_1]_n$, we see that
 $[e_1;e_0;\ldots;e_0;e_2]_n$ is a multiple of $[e_0;e_0;\ldots;e_0;e_3]_n$.

\noindent
 $\circ$ \emph{Case $N=k+2, k>1$}.

 By the induction: $[e_1;e_0;\ldots;e_0;e_k]_n=\lambda_{k+1,n}[e_0;\ldots;e_0;e_{k+1}]_n$.
 Applying $A_1$, we get
  $$[e_2;e_0;\ldots;e_0;e_k]_n+[e_1,A_1([e_0;\ldots;e_0;e_k]_{n-1})]=$$
  $$\lambda_{k+1,n}([e_1;e_0;\ldots;e_0;e_{k+1}]_n+[e_0,A_1([e_0;\ldots;e_0;e_{k+1}]_{n-1})]).$$
 By the induction assumption on length $n-1$ commutators, this equality can be simplified to
  $$[e_2;e_0;\ldots;e_0;e_k]_n+ v_n\cdot [e_1;e_0;\ldots;e_0;e_{k+1}]_n+ w_n\cdot [e_0;\ldots;e_0;e_{k+2}]_n=0.$$
 Computing the images under $\theta$, we find
  $v_n=\frac{(1-q^k)^{n-2}(q^{n+k}-2q^{k+1}-2q^{n-1}+q^n+q^k+1)}{(1-q^{k+1})^{n-1}(1-q)}$.

 On the other hand, by the induction assumption
  $[e_1;\ldots;e_0;e_{k-1}]_n=\lambda_{k,n}[e_0;\ldots;e_0;e_k]_n$.
 Applying $A_2$ to this equality, we find
  $$[e_2, A_1([e_0;\ldots;e_0;e_{k-1}]_{n-1})]+[e_1,A_2([e_0;\ldots;e_0;e_{k-1}]_{n-1})]=$$
  $$\lambda_{k,n}([e_1,A_1([e_0;\ldots;e_0;e_k]_{n-1})]+[e_0,A_2([e_0;\ldots;e_0;e_k]_{n-1})]).$$
 By the induction assumption on length $n-1$ commutators, this equality can be simplified to
   $$u'_n\cdot [e_2;e_0;\ldots;e_0;e_k]_n+ v'_n\cdot [e_1;e_0;\ldots;e_0;e_{k+1}]_n+ w'_n\cdot [e_0;\ldots;e_0;e_{k+2}]_n=0.$$
 Computing the images under $\theta$, we find
  $$u'_n=\frac{(1-q^{n-1})(1-q^{k-1})^{n-2}}{(1-q)(1-q^k)^{n-2}},\
    v'_n=\frac{(1-q^{k-1})^{n-2}(1-q^{n-1})}{(1-q^{k+1})^{n-2}(1-q)}\left(\frac{q-q^{n-1}}{1-q^2}+\frac{q^{k-1}-q^{n-1}}{1-q^k}\right).$$
 Since $v'_n\ne u'_nv_n$ for $q\ne \sqrt{1}$, we can eliminate $[e_2;e_0;\ldots;e_0;e_k]_n$,
 so that $[e_1;e_0;\ldots;e_0;e_{k+1}]_n$ is a multiple of $[e_0;\ldots;e_0;e_{k+2}]_n$.
 This completes our proof of $\dim(\ul{\ddot{u}}_{h_0}^{\geq})_{i,j}\leq 1$ for $j>0$.

\subsection{Proof of Theorem~\ref{limit_2.1}}$\ $
$\ $

 As pointed out in Section 5.5, all the defining relations of the algebra
 $\ddot{Y}_{h_0}(\gl_1)$ are of Lie-type. Therefore, it is a universal
 enveloping algebra of the Lie algebra $\ddot{y}_{h_0}$ generated by
 $\{e_j, f_j, \psi_j\}_{j\in \ZZ_+}$ with the same defining relations.
 Moreover, $\ddot{y}_{h_0}$ is a $\CC\cdot \psi_0$-central extension of the Lie-algebra $\ul{\ddot{y}}_{h_0}$
 generated by $\{e_j,f_j,\psi_{j+1}\}_{j\in \ZZ_+}$ with the following defining relations:
\begin{equation}\tag{y0}\label{y0}
 [\psi_k, \psi_l]=0,
\end{equation}
\begin{equation}\tag{y1}\label{y1}
 [e_{i+3},e_j]-3[e_{i+2},e_{j+1}]+3[e_{i+1}, e_{j+2}]-[e_i, e_{j+3}]-h_0^2([e_{i+1},e_j]-[e_i,e_{j+1}])=0,
\end{equation}
\begin{equation}\tag{y2}\label{y2}
 [f_{i+3},f_j]-3[f_{i+2},f_{j+1}]+3[f_{i+1}, f_{j+2}]-[f_i, f_{j+3}]-h_0^2([f_{i+1},f_j]-[f_i,f_{j+1}])=0,
\end{equation}
\begin{equation}\tag{y3}\label{y3}
 [e_0,f_0]=0,\ [e_i,f_j]=\psi_{i+j}\ \mathrm{for}\ i+j>0,
\end{equation}
\begin{equation}\tag{y4}\label{y4}
 [\psi_{i+3},e_j]-3[\psi_{i+2},e_{j+1}]+3[\psi_{i+1}, e_{j+2}]-[\psi_i, e_{j+3}]-h_0^2([\psi_{i+1},e_j]-[\psi_i,e_{j+1}])=0,
\end{equation}
\begin{equation}\tag{y4$'$}\label{y4'}
 [\psi_1,e_j]=0,\ [\psi_2,e_j]=2h_0^2e_j,
\end{equation}
\begin{equation}\tag{y5}\label{y5}
 [\psi_{i+3},f_j]-3[\psi_{i+2},f_{j+1}]+3[\psi_{i+1},f_{j+2}]-[\psi_i,f_{j+3}]-h_0^2([\psi_{i+1},f_j]-[\psi_i,f_{j+1}])=0,
\end{equation}
\begin{equation}\tag{y5$'$}\label{y5'}
 [\psi_1,f_j]=0,\ [\psi_2,f_j]=-2h_0^2f_j,
\end{equation}
\begin{equation}\tag{y6}\label{y6}
 \Sym_{\mathfrak{S}_3}[e_{i_1},[e_{i_2},e_{i_3+1}]]=0,\ \ \Sym_{\mathfrak{S}_3}[f_{i_1},[f_{i_2},f_{i_3+1}]]=0.
\end{equation}

 Hence, it suffices to check that the corresponding homomorphism
  $\vartheta:\ul{\ddot{y}}_{h_0}\to \D_{h_0}$
 defined by
  $$\vartheta: e_j\mapsto x^j\partial,\ f_j\mapsto -\partial^{-1}x^j,\
    \psi_{j+1}\mapsto ((x-h_0)^{j+1}-x^{j+1})\partial^0\ \ \mathrm{for}\ j\in \ZZ_+$$
 is an isomorphism of the $\CC$-Lie algebras for $h_0\ne 0$.
 The surjectivity of $\vartheta$ is clear.

 The Lie algebra $\ul{\ddot{y}}_{h_0}$ is $\ZZ$-graded via
  $\deg_2(e_j):=1, \deg_2(f_j):=-1, \deg_2(\psi_{j+1}):=0$
 and $\ZZ_+$-filtered as a quotient of the free algebra
 $\CC\langle e_j,f_j,\psi_{j+1} \rangle$ graded via
  $\deg_1(e_j):=j, \deg_1(f_j):=j, \deg_1(\psi_{j+1}):=j$.
 The Lie algebra $\D_{h_0}$ is also $\ZZ$-graded via $\deg_2(x^i\partial^j)=j$
 and $\ZZ_+$-filtered as a quotient of $\CC\langle x,\partial^{\pm 1}\rangle$ with
  $\deg_1(x)=1, \deg_1(\partial^{\pm 1})=0$.
 Note that $\vartheta$ is $\ZZ$-graded and preserves the $\ZZ_+$-filtration, while
  $\dim (\D_{h_0})_{\leq i,j}=\dim (\D_{h_0})_{\leq i-1,j}+1$.
 Hence, it suffices to prove
  $\dim(\ul{\ddot{y}}_{h_0})_{\leq i,j}-\dim(\ul{\ddot{y}}_{h_0})_{\leq i-1,j}\leq 1$.
 This is clear for $j=0$, while we consider $j>0$ below.

 Let $\ul{\ddot{y}}_{h_0}^{\geq}$ be the Lie algebra generated by $\{e_j,\psi_{j+1}\}_{j\geq 0}$
 subject to the relations (y0,y1,y4,y4$'$,y6). It suffices to prove
 $\dim(\ul{\ddot{y}}_{h_0}^{\geq})_{\leq i,j}-\dim(\ul{\ddot{y}}_{h_0}^{\geq})_{\leq i-1,j}\leq 1.$
 Let $W(n;N)$ be the subspace of $\ul{\ddot{y}}_{h_0}^{\geq}$ spanned by
  $\{[e_0;\ldots;e_0;e_M]_n| 0\leq M\leq N\}$.
 Let $V(n;N)$ be the subspace of $\ul{\ddot{y}}_{h_0}^{\geq}$ spanned by
  $\{[e_{i_1};\ldots;e_{i_n}]_n|i_1+\ldots+i_n\leq N\}$.
 Given $x,y\in V(n;N)$, we write $x\sim y$ if $x-y\in V(n;N-1)$.
 Given $x\in W(n;N)$, we write $x\equiv \nu\cdot [e_0;\ldots;e_N]_n$ if
 $x-\nu\cdot [e_0;\ldots;e_N]_n\in W(n;N-1)$.

 The required estimate on dimensions of $(\ul{\ddot{y}}_{h_0}^\geq)_{\leq i,j}$ follows from the following result:

\begin{prop}
 We have $V(n;N)=W(n;N)$ for any $n,N\in \NN$.
\end{prop}

\begin{proof}
$\ $

 It is clear that $W(n;N)\subset V(n;N)$. We prove $V(n;N)\subset W(n;N)$ by an induction on $n$.

\noindent
 $\bullet$ \emph{Case $n=1,2$}.

 The case $n=1$ is obvious. Let us now consider the case $n=2$.
 The relation~(\ref{y1}) implies
  $$[e_{i+2+k},e_{i+1-k}]\sim (2k+1)[e_{i+2},e_{i+1}]\ \mathrm{and}\
    [e_{i+2+k},e_{i-k}]\sim (k+1)[e_{i+2},e_i].$$
 These formulas can be unified in the following way:
\begin{equation}\label{j=2'}
  [e_i,e_j]\sim \frac{j-i}{i+j}[e_0,e_{i+j}]\ \ \mathrm{for}\ i,j\in \NN.
\end{equation}
 Assuming by induction $V(2;i+j-1)\subset W(2;i+j-1)$, we find $V(2;i+j)\subset W(2;i+j)$.

\noindent
 $\bullet$ \emph{Case $n=3$}.

 Define $\hh_1:=\frac{\psi_3}{6h_0^2}\ \mathrm{and}\ \hh_2:=\frac{\psi_4-h_0^2\psi_2}{12h_0^2}$,
 so that $[\hh_1,e_j]=e_{j+1}\ \mathrm{and}\ [\hh_2,e_j]=e_{j+2}$, due to~(\ref{y4},\ref{y4'}).
 Consider the linear operators
  $A_1:=\ad(\hh_1), A_2:=\frac{1}{2}(\ad(\hh_1)^2-\ad(\hh_2)) \in \End(\ul{\ddot{y}}^{\geq})$.

 By the induction assumption for $n=2$, it suffices to prove $[e_k;e_0;e_l]\in W(3;k+l)$.
 Assuming by induction on $k$ that $[e_k;e_0;e_l]=\sum_{M=0}^{k+l} \nu_M[e_0;e_0;e_M]$
 and applying $A_1$ to this equality, we find (as in Appendix C.1) that
 $[e_{k+1};e_0;e_l]$ can be expressed as a linear combination of
 $\{[e_0;e_0;e_M]\}_{M\leq k+l+1}\cup \{[e_1;e_0;e_M]\}_{M\leq k+l}$.
 Therefore, it remains to prove $[e_1;e_0;e_{N-1}]\in W(3;N)$.
 This is equivalent to $[e_1;e_0;e_{N-1}]\equiv \frac{N-4}{N}[e_0;e_0;e_N]$, where
 the constant $\beta_{N,3}=\frac{N-4}{N}$ can be recovered by comparing
 $\vartheta([e_1;e_0;e_{N-1}])$ and $\vartheta([e_0;e_0;e_N])$.

\noindent
 $\circ$ \emph{Case $N=1,2$}.

 We have $[e_1;e_0;e_0]=0=[e_0;e_0;e_1]$.
 Applying $A_1$, we also find $[e_1;e_0;e_1]=-[e_0;e_0;e_2]$.

\noindent
 $\circ$ \emph{Case $N=k+1, k>1$}.

 By the induction assumption: $[e_1;e_0;e_{k-1}]\equiv \beta_{k,3}[e_0;e_0;e_k]$.
 Applying $A_1$, we find
  $$[e_2;e_0;e_{k-1}]+[e_1;e_1;e_{k-1}]+[e_1;e_0;e_k]\equiv \beta_{k,3}([e_1;e_0;e_k]+[e_0;e_1;e_k]+[e_0;e_0;e_{k+1}]).$$
 Applying~(\ref{j=2'}), we get $[e_2;e_0;e_{k-1}]+\frac{k+2}{k}[e_1;e_0;e_k]\in W(3;k+1)$.
 On the other hand, applying $A_2$ to $[e_1;e_0;e_{k-2}]\equiv \beta_{k-1,3}[e_0;e_0;e_{k-1}]$, we find
  $$[e_2;e_1;e_{k-2}]+[e_2;e_0;e_{k-1}]+[e_1;e_1;e_{k-1}]\equiv \beta_{k-1,3}([e_1;e_1;e_{k-1}]+[e_1;e_0;e_k]+[e_0;e_1;e_k]).$$
 Applying~(\ref{j=2'}), we get $\frac{2(k-2)}{k-1}[e_2;e_0;e_{k-1}]+\frac{8-k}{k}[e_1;e_0;e_k]\in W(3;k+1)$.
 Comparing those two linear combinations of $[e_2;e_0;e_{k-1}], [e_1;e_0;e_k]$,
 we find $[e_1;e_0;e_k]\in W(3;k+1)$ unless $k=3$.
 The latter case will be considered in the greater generality below.

\noindent
 $\bullet$ \emph{Case $n>3$}.

 Analogously to the previous case, it suffices to show that $[e_1;e_0;\ldots;e_0;e_{N-1}]_n\in W(n;N)$,
 which is equivalent to  $[e_1;\ldots;e_0;e_{N-1}]_n\equiv \beta_{N,n}\cdot [e_0;\ldots;e_0;e_N]_n$
 with $\beta_{N,n}=\frac{N-2n+2}{N}$.

 We will need the following generalization of~(\ref{y6}),
 which follows from Proposition~\ref{alt_gener_a}:
\begin{equation}\tag{y7n}\label{y7n}
 [e_0;\ldots;e_0;e_{n-2}]_n=0.
\end{equation}

 Now we proceed to the proof of the aforementioned result by an induction on $N$.

\noindent
 $\circ$ \emph{Case $N\leq n-1$}.

 If $N<n-1$, then $[e_0;\ldots;e_0;e_{N-1}]_{n-1}=0=[e_0;\ldots;e_0;e_N]_n$.

 Applying $A_1$ to $[e_0;\ldots;e_0;e_{n-2}]_n=0$, we find $[e_1;\ldots;e_0;e_{n-2}]_n\in W(n;n-1)$.

\noindent
 $\circ$ \emph{Case $N=k+1, k>n-2$}.

 Applying $A_1$ to $[e_1;\ldots;e_0;e_{k-1}]_n\equiv \beta_{k,n}[e_0;\ldots;e_0;e_k]_n$, we find
   $$[e_2;\ldots;e_0;e_{k-1}]_n+[e_1,A_1([e_0;\ldots;e_{k-1}]_{n-1})]\equiv
     \beta_{k,n}([e_1;\ldots;e_0;e_k]_n+[e_0,A_1([e_0;\ldots;e_k]_{n-1})]).$$
 Combining this with the induction assumption for length $n-1$ commutators, we get
\begin{equation}\label{auxilary}
  k[e_2;e_0;\ldots;e_0;e_{k-1}]_n+((n-2)k-(n-1)(n-4))[e_1;e_0;\ldots;e_0;e_k]_n\in W(n;k+1).
\end{equation}
 Applying $A_2$ to $[e_1;\ldots;e_0;e_{k-2}]_n\equiv \beta_{k-1,n}[e_0;\ldots;e_0;e_{k-1}]_n$ and
 using the induction assumption, we analogously find
  $P[e_2;e_0;\ldots;e_0;e_{k-1}]_n+Q[e_1;e_0;\ldots;e_0;e_k]_n\in W(n;k+1),$
 where $P=\frac{(n-1)(k-n+1)}{k-1},Q=\frac{n-1}{2k(k-1)}(k^2(n-4)-k(2n^2-13n+12)+(n^3-9n^2+18n-8))$.
 Comparing those two linear combinations, we get
  $[e_1;e_0;\ldots;e_0;e_k]_n\in W(n;k+1)$ for $k\ne n$.

 It remains to consider the case $k=n$.
 Define $\hh_3:=\frac{\psi_5}{20h_0^2}-\frac{\psi_3}{12}$, so that $[\hh_3,e_j]=e_{j+3}$.
 Applying $\ad(\hh_1)\ad(\hh_2)-\ad(\hh_3)$ to $[e_0;\ldots;e_0;e_{n-2}]_n=0$, we find
  $$n[e_2;\ldots;e_0;e_{n-1}]_n+2[e_1;\ldots;e_0;e_n]_n\in W(n;n+1).$$
 Combining this with~(\ref{auxilary}), we get $[e_1;\ldots;e_0;e_n]_n\in W(n;n+1)$.
 This completes our proof.
\end{proof}

%%%%%%%%%%%%%%%%%%%%%%%%%%%%%%%%%%%%%%%%%%%%%%%%%%%%%%%%%%%%%%%%%%%%%%%%%%%%%%%%%%%%%%%%%%%%%%%%%%%%%%
%%%%%%%%%%%%%%%%%%%%%%%%%%%%%%%%%%%%%%%% APPENDIX D %%%%%%%%%%%%%%%%%%%%%%%%%%%%%%%%%%%%%%%%%%%%%%%%%%
%%%%%%%%%%%%%%%%%%%%%%%%%%%%%%%%%%%%%%%%%%%%%%%%%%%%%%%%%%%%%%%%%%%%%%%%%%%%%%%%%%%%%%%%%%%%%%%%%%%%%%

\section{Proof of Proposition~\ref{alt_gener_m}}

  Proposition~\ref{alt_gener_m} follows from Theorem~\ref{comm_subalg_m}(c) and the following two lemmas:

\begin{lem}\label{step 1.1}
  The elements $\{L^m_n\}_{n\in \NN}$ belong to $\A^m$.
\end{lem}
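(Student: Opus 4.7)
By Theorem~\ref{comm_subalg_m}(a), to show $L^m_j \in \A^m$ it suffices to verify that the limits $\partial^{(\infty,k)} L^m_j$ exist for all $1\leq k\leq j$. The strategy is to expand the iterated commutator into shuffle sums and exploit the asymptotic properties of $\omega^m$ under rescaling.

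The main tool is the shuffle expansion
$$(F \overset{m}\star G)(x_1,\ldots,x_{n})=\sum_{|I|=|F|}F(x_I)G(x_{I^c})\prod_{a\in I,\, b\in I^c}\omega^m(x_b,x_a),$$
together with the following key asymptotic facts about $\omega^m(x,y)$, all consequences of $q_1q_2q_3=1$: the scale-invariance $\omega^m(\xi x,\xi y)=\omega^m(x,y)$, and the limits $\omega^m(\xi x, y) \to 1$ and $\omega^m(x,\xi y) \to 1$ as $\xi\to\infty$ (each with an $O(1/\xi)$ correction). Consequently the $\omega^m$-factors contribute at most constants (plus decaying corrections) under rescaling, and the only source of potential divergence in $\partial^{(\infty,k)}L^m_j$ is the monomial factors $x_a^{\pm 1}$ coming from the generators $x^1$ and $x^{-1}$.

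I would proceed by induction on $j$. The cases $j=1,2$ are direct: $L^m_1=x_1^0$ is obviously in $\A^m$, and $L^m_2=[x^1,x^{-1}]_m$ can be computed in closed form and checked by hand. For $j\geq 3$, write $L^m_j=[x^1,\tilde M_{j-2}]_m$, where $\tilde M_k:=[x^0,[x^0,\ldots,[x^0,x^{-1}]_m\cdots]_m]_m$ with $k$ copies of $x^0$. I would first establish by a secondary induction on $k$ a precise asymptotic description of $\partial^{(\infty,r)}\tilde M_k$, tracking which shuffle-terms contribute at which order in the rescaling parameter $\xi$. The crucial cancellation mechanism is the commutator structure $[P,Q]_m=P\overset{m}\star Q-Q\overset{m}\star P$: the monomial-driven divergences of the two halves match up to sign, and cancel. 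The outer commutator with $x^1$ is then handled similarly, the divergent $\xi$-factor introduced by $x^1$ being compensated by the controlled asymptotics of $\tilde M_{j-2}$.

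The main obstacle will be the combinatorial bookkeeping. Each iterated commutator expands, after symmetrization, into a large number of shuffle terms, and one must organize them by recording how many of the rescaled variables appear among the ``left'' versus the ``right'' factors of each shuffle. The cancellations are clean only when the terms are paired up consistently across all levels of the nesting; performing this pairing carefully is the bulk of the technical work in Appendix D.
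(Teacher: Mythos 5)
Your setup is right: you correctly reduce to checking that $\partial^{(\infty,k)}L^m_j$ exists (via Theorem~\ref{comm_subalg_m}(a)), and you correctly identify that the $\omega^m$-factors are tame under rescaling ($\omega^m(\xi x,y)$, $\omega^m(y,\xi x)\to 1$) so that the only danger comes from the monomials $x_a/x_b$. However, your proposed cancellation mechanism is not the one that actually works, and this is a real gap rather than a bookkeeping issue.

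You say the divergent parts of the two halves of the commutator ``match up to sign, and cancel.'' This is essentially a bijection between divergent terms coming from $P\overset{m}\star Q$ and those from $Q\overset{m}\star P$. That mechanism does apply when $k=1$ (one rescaled variable), but it fails for $k>1$: once more than one variable is sent to infinity, the divergent terms no longer pair up across the two halves. The paper's proof addresses this by first collapsing the nested commutator into the single closed-form symmetrization
\[
  L^m_n=\mathrm{Sym}_{\mathfrak{S}_n}
  \left\{\Bigl(\sum_{l=0}^{n-2}(-1)^l\tbinom{n-2}{l}\tfrac{x_1}{x_{n-l}}-\sum_{l=0}^{n-2}(-1)^l\tbinom{n-2}{l}\tfrac{x_n}{x_{n-1-l}}\Bigr)
        \prod_{i<j}\omega^m(x_i,x_j)\right\},
\]
and then showing that, for $k>1$, each of the two potentially divergent pieces $A_1$, $A_2$ vanishes \emph{separately}, not by antisymmetry but by an internal binomial identity: grouping the contributions by the position $t$ of the denominator variable and by the relative orderings $\sigma_1,\sigma_2$, the coefficient of each group is a signed sum of products of binomial coefficients that collapses to a factor $(1-1)^{k-1}=0$. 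Your plan, as stated, has no substitute for either of these two ingredients (the explicit formula, and the binomial vanishing), and an induction that relies only on pairing commutator halves would not close the $k>1$ case. You would need to first prove a closed-form expansion like the one above (which is itself a nontrivial computation on the alternating monomial prefactors produced by $[x^1,\cdot]_m$ and repeated $[x^0,\cdot]_m$), and then locate the binomial-type cancellation within each symmetrized block.
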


\begin{lem}\label{step 2.1}
 The elements $\{L^m_n\}_{n\in \NN}$ are algebraically independent.
\end{lem}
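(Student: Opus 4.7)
By Lemma~\ref{step 1.1} we have $L^m_j\in \A^m_j$ for every $j\geq 1$, while Theorem~\ref{comm_subalg_m}(c) identifies $\A^m$ with the polynomial algebra $\CC[K^m_1,K^m_2,\ldots]$ (with $\deg K^m_i=i$). Accordingly $\A^m_j$ admits the partition basis $\{K^m_\lambda:=K^m_{\lambda_1}\overset{m}\star\cdots\overset{m}\star K^m_{\lambda_k}\}_{\lambda\vdash j}$, and we may uniquely write
$$L^m_j\ =\ c_j\,K^m_j\ +\ \sum_{\lambda\vdash j,\ \ell(\lambda)\geq 2} c_{j,\lambda}\,K^m_\lambda.$$
The plan is to prove $c_j\neq 0$ for every $j\geq 1$. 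Granting this, induction on $j$ together with a standard triangular change-of-generators argument yields $\CC[L^m_1,\ldots,L^m_n]=\CC[K^m_1,\ldots,K^m_n]$ for every $n$, so the $L^m_j$ form an alternative set of free polynomial generators of $\A^m$ and are in particular algebraically independent.

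To extract the coefficient $c_j$ I would construct a linear functional $\varphi_j\colon \A^m_j\to \CC$ that is nonzero on $K^m_j$ but vanishes on every product $K^m_\lambda$ with $\ell(\lambda)\geq 2$, so that $c_j = \varphi_j(L^m_j)/\varphi_j(K^m_j)$. A natural source is the matrix-coefficient realization of $\A^m$ from Section~8.3 combined with the vertex-operator formulas of Proposition~\ref{Fock_horizontal}: in a single Fock module $\rho_c$ the matrix coefficient $m_{\mathbf{1},\mathbf{1}}$ is (up to a nonzero constant) the ``connected'' generator corresponding to $K^m_j$, whereas in any tensor product $\rho_{c_1}\underset{h}\otimes\cdots\underset{h}\otimes\rho_{c_m}$ with $m\geq 2$ the analogous computation splits off decomposable contributions; subtracting these recovers $\varphi_j$. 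Evaluating $\varphi_j(L^m_j)$ then reduces to tracing the nested commutator $L^m_j=[\wt{e}_1,[\wt{e}_0,\ldots,[\wt{e}_0,\wt{e}_{-1}]\cdots]]$ through the normal-ordering of $j$ boson exponentials in $\rho_c$.

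The principal obstacle is the bookkeeping in this vertex-operator computation and the verification that the resulting ``$q$-integer'' product is nonvanishing for generic $q_1,q_2,q_3$. As a sanity check, a direct expansion in the case $j=2$ yields
$$L^m_2\ =\ \frac{(a-b)(x_1+x_2)^2}{(x_1-x_2)^2},\qquad a=q_1+q_2+q_3,\ b=q_1q_2+q_1q_3+q_2q_3,$$
and solving the $2\times 2$ linear system that expresses $L^m_2$ in the basis $\{K^m_2,\,K^m_1\overset{m}\star K^m_1\}$ of $\A^m_2$ gives $c_2=-q_1^{-1}(1-q_1)(1+q_2)(1+q_3)\neq 0$ (using $q_1q_2q_3=1$ to simplify). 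The analogous inductive computation of $c_j$ for $j\geq 3$ closes the argument.
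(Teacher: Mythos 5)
Your strategy---decompose $L^m_j$ in the partition basis $\{K^m_\lambda\}_{\lambda\vdash j}$ of $\A^m_j$ afforded by Theorem~\ref{comm_subalg_m}(c), prove that the ``top'' coefficient $c_j$ of $K^m_j$ never vanishes, and then invoke a triangular change-of-generators---is a legitimate route, and if it were completed it would in fact prove the stronger statement of Proposition~\ref{alt_gener_m} directly. The $j=2$ sanity check is consistent. But as written the argument has a genuine gap: the key claim $c_j\neq 0$ \emph{for every $j$} is never established. You describe only a plan (construct a linear functional $\varphi_j$ annihilating the decomposable $K^m_\lambda$ and trace the nested commutator through the normal-ordering of boson exponentials), and you yourself flag ``the bookkeeping in this vertex-operator computation and the verification that the resulting $q$-integer product is nonvanishing'' as the principal obstacle. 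Those two items \emph{are} the content of the lemma, and neither is addressed. Designing $\varphi_j$ from the matrix-coefficient picture of Section 8.3 is also not routine: the single-Fock coefficient $m_{\mathbf 1,\mathbf 1}$ does produce a multiple of $K^m_j$, but a functional that vanishes on all $K^m_\lambda$ with $\ell(\lambda)\geq 2$ requires disentangling the decomposable contributions in the $m$-fold tensor product, and the normal-ordered form of the nested commutator $[\wt e_1,[\wt e_0,\ldots,[\wt e_0,\wt e_{-1}]\cdots]]$ is not evidently tractable for general $j$.

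The paper's proof avoids this computation entirely by changing models. Under the embedding $\wt{\E}^+\hookrightarrow S^m$ of Proposition~\ref{Hall_vs_shuffle}, $L^m_j$ corresponds to a nested commutator of $u_{1,i}$'s of total bidegree $(j,0)$, and repeated use of the elliptic Hall algebra relation~(\ref{E2}) shows this equals a nonzero multiple of $\theta_{j,0}$. Under the horizontal isomorphism $\Xi_h$ of Theorem~\ref{Hall2} the elements $\theta_{j,0}$ become Cartan generators $\wt\psi^-_j$ of $\ddot{\bU}_{q_1,q_2,q_3}$, whose algebraic independence is immediate from the $\ddot{\bU}$-analogue of the triangular decomposition, Proposition~\ref{triangular_m}(b). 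What your approach would buy, were it completed, is an explicit transition matrix from $\{K^m_j\}$ to $\{L^m_j\}$; what the paper's approach buys is never having to compute it.
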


\begin{proof}[Proof of Lemma~\ref{step 1.1}]$\ $

 According to Theorem~\ref{comm_subalg_m}(a), it suffices to prove that
 $\partial^{(\infty,k)}L^m_n$ exist for all $k$. We have
\begin{equation}\label{Lmn}
  L^m_n=\mathrm{Sym}_{\mathfrak{S}_n}
  \left\{\left(\sum_{l=0}^{n-2}(-1)^l\binom{n-2}{l}\frac{x_1}{x_{n-l}}-\sum_{l=0}^{n-2}(-1)^l\binom{n-2}{l}\frac{x_n}{x_{n-l-1}}\right)
         \prod_{i<j}\omega^m(x_j,x_i)\right\}.
\end{equation}
 Our goal is to show that the RHS of~(\ref{Lmn}) has a finite limit
 as $x_{n-k+1}\mapsto \xi\cdot x_{n-k+1},\ldots,x_n\mapsto \xi\cdot x_n$ with $\xi\to \infty$.
 Note that $\frac{x_{\sigma(i)}}{x_{\sigma(j)}}$ has a finite limit as $\xi \to \infty$
 unless $\sigma(j)\leq n-k<\sigma(i)$, while it has a linear growth in the latter case.
 On the other hand, $\omega^m(x_j,x_i)$ has a finite limit as $\xi\to \infty$ for any $i,j$.
 Moreover:
  $\omega^m(\xi\cdot x,y)=1+O(\xi^{-1}),\  \omega^m(y,\xi\cdot x)=1+O(\xi^{-1})\ \mathrm{as}\ \xi\to \infty.$
 Therefore, it remains to prove the equality $A_1=A_2$, where $A_1, A_2$ are given by
  $$A_1:=\sum_{s=n-k+1}^n \sum_{\sigma\in \mathfrak{S}_n}^{\sigma(1)=s}\sum_l^{\sigma(n-l)\leq n-k}(-1)^l\binom{n-2}{l}
         \frac{x_s}{x_{\sigma(n-l)}} \prod_{i<j}^{j\leq n-k}\omega_\sigma^m(x_j,x_i)\prod_{i<j}^{n-k<i}\omega_\sigma^m(x_j,x_i),$$
  $$A_2:=\sum_{s=n-k+1}^n \sum_{\sigma\in \mathfrak{S}_n}^{\sigma(n)=s}\sum_l^{\sigma(n-l-1)\leq n-k}(-1)^l\binom{n-2}{l}
         \frac{x_s}{x_{\sigma(n-l-1)}} \prod_{i<j}^{j\leq n-k}\omega_\sigma^m(x_j,x_i)\prod_{i<j}^{n-k<i}\omega_\sigma^m(x_j,x_i).$$
 Here we set
  $\omega_\sigma^m(x_j,x_i)=\omega^m(x_j,x_i)$ if $\sigma^{-1}(i)<\sigma^{-1}(j)$ and
  $\omega_\sigma^m(x_j,x_i)=\omega^m(x_i,x_j)$ otherwise.

 If $k=1$, then $s=n$ in both sums and the map $(\sigma,l)\mapsto (\sigma',l)$ with
 $\sigma'(i):=\sigma(i+1)$ (for $1\leq i\leq n-1$) establishes
 a bijection between equal summands in $A_1$ and $A_2$, so that $A_1=A_2$.

 For $k>1$, there is no such bijection. Instead, we prove $A_1=0$ (the proof of $A_2=0$ is analogous).
 Let us group the summands in $A_1$ according to $s$, $\sigma(n-l)$ and also the ordering of
 $\{\sigma^{-1}(1),\ldots,\sigma^{-1}(n-k)\}$ and $\{\sigma^{-1}(n-k+1),\ldots,\sigma^{-1}(n)\}$,
 which are given by elements $\sigma_1^{-1}\in \mathfrak{S}_{n-k}$ and $\sigma_2^{-1}\in \mathfrak{S}_k$.
 Define
   $$\omega^m_{\sigma_1,\sigma_2}(x_1;\ldots;x_n):=
     \prod_{i<j}^{j\leq n-k}\omega_{\sigma_1}^m(x_j,x_i)\cdot \prod_{i<j}^{n-k<i}\omega_{\sigma_2}^m(x_j,x_i).$$
 Then $A_1$ can be written in the form
   $$A_1=\sum_{t\leq n-k}\sum_{\sigma_1\in \mathfrak{S}_{n-k}}\sum_{\sigma_2\in \mathfrak{S}_k}
         A_{t,\sigma_1,\sigma_2}\frac{x_{\sigma_2(1)}}{x_t}\omega^m_{\sigma_1,\sigma_2}(x_1;\ldots;x_n)\
         \mathrm{with}\ A_{t,\sigma_1,\sigma_2}\in \ZZ.$$
 We claim that all these constants $A_{t,\sigma_1,\sigma_2}$ are zero.
 As an example, we compute $A_{t, 1_{n-k}, 1_k}$:
   $$A_{t, 1_{n-k},1_k}=\sum_{l=n-k-t}^{n-t-1}(-1)^l\binom{n-2}{l}\binom{l}{n-k-t}\binom{n-l-2}{t-1}=
     \frac{(-1)^{n-k-t}(n-2)!(1-1)^{k-1}}{(t-1)!(k-1)!(n-k-t)!}.$$
 Thus $A_{t, 1_{n-k},1_k}=0$ since $k>1$.
 Analogously $A_{t,\sigma_1,\sigma_2}=0$ for any $t,\sigma_1,\sigma_2$.
 Hence, $A_1=0$.
\end{proof}

\begin{proof}[Proof of Lemma~\ref{step 2.1}]$\ $

 The elements $L^m_n$ correspond to nonzero multiples of $\theta_{n,0}$
 via  $S^m\simeq \wt{\E}^+$. An algebraic independence of $\{\theta_{n,0}\}_{n\in \NN}$ follows
 from an analogue of Proposition~\ref{triangular_m}(b) for $\ddot{\bU}_{q_1,q_2,q_3}$.

 Alternatively, note that $\{e_0\}\cup\{[e_1;e_0;\ldots;e_0;e_{-1}]_n\}_{n=2}^\infty$ correspond
 to nonzero multiples of $\{D^n\}_{n=1}^\infty$ via $\ddot{U}_h(\gl_1)\simeq U(\bar{\dd}_q^0)$.
 The result follows from the PBW theorem applied to $U(\bar{\dd}_q^0)$.
\end{proof}

%%%%%%%%%%%%%%%%%%%%%%%%%%%%%%%%%%%%%%%%%%%%%%%%%%%%%%%%%%%%%%%%%%%%%%%%%%%%%%%%%%%%%%%%%%%%%%%%%%%%%%
%%%%%%%%%%%%%%%%%%%%%%%%%%%%%%%%%%%%%%%% APPENDIX E %%%%%%%%%%%%%%%%%%%%%%%%%%%%%%%%%%%%%%%%%%%%%%%%%%
%%%%%%%%%%%%%%%%%%%%%%%%%%%%%%%%%%%%%%%%%%%%%%%%%%%%%%%%%%%%%%%%%%%%%%%%%%%%%%%%%%%%%%%%%%%%%%%%%%%%%%

\section{Proofs of Theorems~\ref{Whittaker_m},~\ref{Whittaker_a}}

\subsection{Proof of Theorem~\ref{Whittaker_m}}$\ $

 In the fixed point basis, we have
   $v^K_r=\sum_{\bar{\lambda}} a_{\bar{\lambda}}\cdot [\bar{\lambda}]\
    \mathrm{with}\ a_{\bar{\lambda}}=\prod_{w\in T_{\bar{\lambda}}M(r,|\bar{\lambda}|)}(1-w)^{-1}$.
  Hence, it suffices to prove the following equality for any $r$-tuple of diagrams $\bar{\lambda}$:
\begin{equation}\label{C_in}
   C_{j,-n}=\sum_{\bar{\lambda}'}\frac{a_{\bar{\lambda}'}}{a_{\bar{\lambda}}}\cdot {K^{(m;j)}_{-n}}_{\mid{[\bar{\lambda}',\bar{\lambda}]}},
\end{equation}
 where the sum is over all $r$-tuples of diagrams $\bar{\lambda}'$ such that
 $\bar{\lambda}\subset \bar{\lambda}'$ and  $|\bar{\lambda}'|=|\bar{\lambda}|+n$.

 For such a pair $(\bar{\lambda}, \bar{\lambda}')$, define a collection of positive integers
\begin{equation}\label{indices}
  j_{1,1}\leq j_{1,2}\leq \cdots\leq j_{1,l_1},\
  j_{2,1}\leq j_{2,2}\leq \cdots\leq j_{2,l_2},\ \ldots,\
  j_{r,1}\leq j_{r,2}\leq \cdots\leq j_{r,l_r}
\end{equation}
 (with $\sum_{a=1}^r l_a=n$) via the following equality:
  $$\bar{\lambda}'=\bar{\lambda}+\square^1_{j_{1,1}}+\cdots+\square^1_{j_{1,l_1}}+\square^2_{j_{2,1}}+\cdots+
    \square^2_{j_{2,l_2}}+\cdots+\square^r_{j_{r,1}}+\cdots+\square^r_{j_{r,l_r}}.$$
 We also consider the sequence of $r$-tuples of diagrams
   $\bar{\lambda}=\bar{\lambda}^{[0]}\subset \bar{\lambda}^{[1]}\subset \cdots\subset \bar{\lambda}^{[n]}=\bar{\lambda}',$
 where $\bar{\lambda}^{[\Qq]}$ is obtained from $\bar{\lambda}$
 by adding the first $\Qq$ boxes from above.
 For $1\leq \Qq\leq n$, the $\Qq$th box from above has a form
 $\square^{s_{\Qq}}_{j_{s_{\Qq},i_{\Qq}}}$ and we denote its character by $\chi(\Qq)$.

 For any $F\in (S^m_n)^{\mathrm{opp}}$, we have the following formula
 for the matrix coefficient $F_{\mid{[\bar{\lambda}',\bar{\lambda}]}}$:
  $$F_{\mid{[\bar{\lambda}',\bar{\lambda}]}}=\frac{F(\chi(1),\ldots,\chi(n))}{\prod_{a<b}\omega^m(\chi(a),\chi(b))}
    \cdot \prod_{\Qq=1}^n f_{0\mid[\bar{\lambda}^{[\Qq]},\bar{\lambda}^{[\Qq-1]}]}.$$
 In particular, we get
  $${K^{(m;j)}_{-n}}_{\mid{[\bar{\lambda}',\bar{\lambda}]}}=
    \prod_{1\leq a<b\leq n}\frac{(\chi(a)-\chi(b))(\chi(b)-t_1\chi(a))}{(\chi(a)-t_2\chi(b))(\chi(a)-t_3\chi(b))}\cdot
    \prod_{\Qq=1}^n\chi(\Qq)^j\cdot \prod_{\Qq=1}^n f_{0\mid[\bar{\lambda}^{[\Qq]},\bar{\lambda}^{[\Qq-1]}]}.$$
 As an immediate consequence, we find
  ${K^{(m;j)}_{-n}}_{\mid{[\bar{\lambda}',\bar{\lambda}]}}=0$
 if $\bar{\lambda}'\backslash\bar{\lambda}$ contains two boxes
 in the same row of its $i$th component, $1\leq i \leq r$.
 Therefore, the sum in~(\ref{C_in}) should be taken only over those $\bar{\lambda}'$
 which correspond to collections $\{j_{1,1},\ldots,j_{r,l_r}\}$ from~(\ref{indices}) with strict inequalities.

 We also split $\frac{a_{\bar{\lambda}'}}{a_{\bar{\lambda}}}$ into the product over consequent pairs:
   $\frac{a_{\bar{\lambda}'}}{a_{\bar{\lambda}}}=\prod_{\Qq=1}^n \frac{a_{\bar{\lambda}^{[\Qq]}}}{a_{\bar{\lambda}^{[\Qq-1]}}}.$
 According to the Bott-Lefschetz fixed point formula, we have
  $$\frac{a_{\bar{\lambda}^{[\Qq]}}}{a_{\bar{\lambda}^{[\Qq-1]}}}\cdot f_{0\mid[\bar{\lambda}^{[\Qq]},\bar{\lambda}^{[\Qq-1]}]}=
    T\cdot e_{-r\mid[\bar{\lambda}^{[\Qq-1]},\bar{\lambda}^{[\Qq]}]}, \mathrm{where}\ T=(-t)^{r-2}\chi_1^{-1}\cdots \chi_r^{-1}.$$
 For two $r$-tuples of diagrams $(\bar{\mu},\bar{\mu}')$ such that
 $\bar{\mu}'=\bar{\mu}+\square^l_j$, the matrix coefficient $e_{-r\mid[\bar{\mu},\bar{\mu}']}$
 is computed by Lemma~\ref{matrix_coeff_Gis_m}(a):
  $$e_{-r\mid[\bar{\mu},\bar{\mu}']}=\frac{1}{1-t_1}\prod_{a=1}^r \frac{1}{t_1\chi^{(l)}_j-t_2^{L_a}\chi_a^{-1}}
    \prod_{(a,k)\ne (l,j)}^{k\leq L_a}\frac{\chi^{(l)}_j-t_2\chi^{(a)}_k}{\chi^{(l)}_j-\chi^{(a)}_k},$$
 where $\{L_a\}_{a=1}^r$ are chosen to satisfy $L_a\geq (\mu^{a*})_1+1$.

 Combining these formulas together, we finally get
  $$\frac{a_{\bar{\lambda}'}}{a_{\bar{\lambda}}}\cdot {K^{(m;j)}_{-n}}_{\mid{[\bar{\lambda}',\bar{\lambda}]}}=
    T^n \cdot \prod_{1\leq \Qq\leq n}\left\{\frac{(-t_1t_2)^{\Qq-1}}{1-t_1}\cdot
    \prod_{a=1}^r\frac{1}{\chi(\Qq)-t_2^{L_a}\chi_a^{-1}}\cdot
    \prod \frac{\chi(\Qq)-t_1t_2\chi^{(a)}_k}{\chi(\Qq)-t_1\chi^{(a)}_k}\cdot \chi(\Qq)^j  \right\},$$
 with the last product taken over pairs
 $(a,k)\notin \{(s_\Qq,j_{s_\Qq,i_\Qq})\}_{\Qq=1}^n,\ k\leq L_a$ with $L_a\geq (\lambda^{a*})_1+n$.

 Let us denote the RHS of this equality by $C_\bj$, where
 $\bj=\{j_{1,1},\ldots,j_{r,l_r}\}$ is defined in~(\ref{indices}).
 Note that $C_\bj=0$ if the corresponding $\bar{\lambda}'$ fails to be a
 collection of $r$ Young diagrams. Hence,~(\ref{C_in}) is reduced to $C_{j,-n}=\sum C_\bj$,
 where the sum is over all $\bj$ from ~(\ref{indices}) with strict inequalities.

 It is easy to check that $\sum C_\bj$ is a rational function in $\chi^{(a)}_k$ with no poles for $0\leq j\leq r$.
 The degree estimate implies that $\sum C_\bj$ is an element of $\mathbb{F}_r$ independent of $\bar{\lambda}$.
 Thus $v^K_r$ is indeed an eigenvector with respect to ${K^{(m;j)}_{-n}}$.
 To compute its eigenvalue, we evaluate $\sum C_\bj$ at $\bar{\lambda}=\bar{\emptyset}$.
 This sum is actually taken over all partitions $(l_1,\ldots,l_r)$ of $n$ with $j_{a,b}=b$, and it equals
 $$\frac{T^n(-t_1t_2)^{n(n-1)/2}}{(1-t_1)^n}
   \sum_{l_1+\ldots+l_r=n}\prod_{a,b=1}^r \frac{1}{(\chi_b^{-1}-t_2^{l_a}\chi_a^{-1})\cdots(t_2^{l_b-1}\chi_b^{-1}-t_2^{l_a}\chi_a^{-1})}
   \prod_{b=1}^r(t_2^{\frac{l_b(l_b-1)}{2}}\chi_b^{-l_b})^j=$$
 $$\frac{(-t)^{(r-2)n}(-t_1t_2)^{\frac{n(n-1)}{2}}}{(1-t_1)^n}
   \sum_{l_1+\ldots+l_r=n}\prod_{a,b=1}^r \frac{1}{(\chi_a-t_2^{l_a}\chi_b)\cdots(\chi_a-t_2^{l_a-l_b+1}\chi_b)}
   \prod_{b=1}^r(t_2^{-\frac{l_b(l_b-1)}{2}}\chi_b^{l_b})^{r-j}.$$

 It is straightforward to check that latter expression
 is a rational function in $\chi_a$ with no poles for $0\leq j\leq r-1$.
 Together with the degree estimate, we see that it is independent of $\chi_a$.
 To compute this constant, we let $\chi_1\to \infty$.
 Then the only nonzero contribution comes from the collection
 $(l_1,l_2,\ldots,l_r)=(n,0,\ldots,0)$ and the result equals $C_{j,-n}$.

 For $j=r$, the product of the above expression and $(\chi_1\cdots \chi_r)^n$ is a rational
 function in $\chi_a$ with no poles and of total degree $0$, hence, it is independent of $\chi_a$.
 To compute this constant, we let $\chi_1\to \infty$.
 The only nonzero contributions come from $(l_1,\ldots,l_r)$ with $l_1=0$.
 For these terms, we let $\chi_2\to \infty$, etc.
 The result follows from straightforward computations.

\subsection{Sketch of the proof of Theorem~\ref{Whittaker_a}}$\ $

 In the fixed point basis, we have
   $v^H_r=\sum_{\bar{\lambda}} b_{\bar{\lambda}}\cdot [\bar{\lambda}]\
    \mathrm{with}\ b_{\bar{\lambda}}=\prod_{w\in T_{\bar{\lambda}}M(r,|\bar{\lambda}|)}w^{-1}$.
 Hence, it suffices to prove the following equality for any $r$-tuple of diagrams $\bar{\lambda}$:
\begin{equation}\label{D_in}
   D_{j,-n}=\sum_{\bar{\lambda}'}\frac{b_{\bar{\lambda}'}}{b_{\bar{\lambda}}}\cdot {K^{(a;j)}_{-n}}_{\mid{[\bar{\lambda}',\bar{\lambda}]}},
\end{equation}
 where the sum is over all $r$-tuples of diagrams $\bar{\lambda}'$ such that
  $\bar{\lambda}\subset \bar{\lambda}'$ and  $|\bar{\lambda}'|=|\bar{\lambda}|+n$.

 Analogously to the K-theoretical case, we have
 $$\frac{b_{\bar{\lambda}'}}{b_{\bar{\lambda}}}\cdot {K^{(a;j)}_{-n}}_{\mid{[\bar{\lambda}',\bar{\lambda}]}}=
    \prod_{1\leq \Qq\leq n}\left\{\frac{(-1)^{\Qq}}{s_1}\cdot \prod_{a=1}^r\frac{1}{\chi(\Qq)-L_as_2+x_a}\cdot
    \prod \frac{\chi(\Qq)-x^{(a)}_k-s_1-s_2}{\chi(\Qq)-x^{(a)}_k-s_1}\cdot \chi(\Qq)^j  \right\},$$
 with the last product taken over pairs $(a,k)\notin \{(s_\Qq,j_{s_\Qq,i_\Qq})\}_{\Qq=1}^n,\ k\leq L_a$
 with $L_a\geq (\lambda^{a*})_1+n$.

 Let us denote the RHS of this equality by $D_\bj$.
 Then $\sum_\bj D_\bj$ is a rational function in $x^{(a)}_k$ and it has no poles for $j\geq 0$.
 The degree estimate implies that it is independent of $x^{(a)}_k$ for $0\leq j\leq r$.
 Thus $v^H_r$ is indeed an eigenvector with respect to $\{K^{(a;j)}_{-n}\}_{0\leq j\leq r}^{n>0}$.
 To compute the corresponding eigenvalues, we evaluate $\sum_\bj D_\bj$ at $\bar{\lambda}=\bar{\emptyset}$.
 This sum equals
   $$\frac{(-1)^{n(n+1)/2}}{s_1^n}\sum_{l_1+\ldots+l_r=n}
     \left\{\prod_{a,b=1}^r \prod_{k=1}^{l_b}(x_a-x_b-(l_a-k+1)s_2)^{-1}\prod_{b=1}^r\prod_{k=1}^{l_b}((k-1)s_2-x_b)^j\right\}.$$

 It is straightforward to check that this sum is a rational function in $x_a$ with no poles.
 Together with the degree estimate for $j\leq r-1$, we see that it is independent of $x_a$.
 To compute this constant, we let $x_1\to \infty$. For $j<r-1$, all the summands tend to $0$.
 For $j=r-1$, the only nonzero contribution comes from
 $(l_1,l_2,\ldots,l_r)=(n,0,\ldots,0)$ and equals $D_{r-1,-n}$.

\begin{rem}
 An explicit formula for the eigenvalues $D_{r,-n}\ (n>1)$ was first obtained in~\cite{SV3}.
\end{rem}

%%%%%%%%%%%%%%%%%%%%%%%%%%%%%%%%%%%%%%%%%%%%%%%%%%%%%%%%%%%%%%%%%%%%%%%%%%%%%%%%%%%%%%%%%%%%%%%%%%%%%%%%%%%%%%%%%%
%%%%%%%%%%%%%%%%%%%%%%%%%%%%%%%%%%%%%%%%%%%%% BIBLIOGRAPHY %%%%%%%%%%%%%%%%%%%%%%%%%%%%%%%%%%%%%%%%%%%%%%%%%%%%%%%
%%%%%%%%%%%%%%%%%%%%%%%%%%%%%%%%%%%%%%%%%%%%%%%%%%%%%%%%%%%%%%%%%%%%%%%%%%%%%%%%%%%%%%%%%%%%%%%%%%%%%%%%%%%%%%%%%%


\begin{thebibliography} {XXX}

\bibitem[AS]{AS}
 N.~Arbesfeld and O.~Schiffmann,
   {\em A presentation of the deformed $W_{1+\infty}$ algebra},
 Symmetries, integrable systems and representations, 1--13,
 Springer Proc. Math. Stat. {\bf 40} (2013); arXiv:1209.0429.

\bibitem[BS]{BS}
 I.~Burban and O.~Schiffmann,
   {\em On the Hall algebra of an elliptic curve, I},
 Duke Math. J. {\bf 161}  (2012), no.~7, 1171--1231; arXiv:math/0505148.

\bibitem[CP]{CP}
 V.~Chari and A.~Pressley,
   {\em Quantum affine algebras},
 Comm. Math. Phys. {\bf 142} (1991), no. 2, 261--283.

\bibitem[DI]{DI}
 J.~Ding and K.~Iohara,
   {\em Generalization of Drinfeld quantum affine algebras},
 Lett. Math. Phys. {\bf 41} (1997), no. 2, 181--193; arXiv:q-alg/9608002.

\bibitem[FFJMM1]{FFJMM}
 B.~Feigin, E.~Feigin, M.~Jimbo, T.~Miwa, and E.~Mukhin,
   {\em Quantum continuous $\gl_\infty$: semiinfinite construction of representations},
 Kyoto J. Math. {\bf 51} (2011), no. 2, 337--364; arXiv:1002.3100.

\bibitem[FFJMM2]{FFJMM2}
 B.~Feigin, E.~Feigin, M.~Jimbo, T.~Miwa, and E.~Mukhin,
   {\em Quantum continuous $\gl_\infty$: tensor products of Fock modules and $\mathcal{W}_n$-characters},
 Kyoto J. Math. {\bf 51}  (2011),  no. 2, 365--392; arXiv:1002.3113.

\bibitem[FHHSY]{FHHSY}
 B.~Feigin, K.~Hashizume, A.~Hoshino, J.~Shiraishi, and S.~Yanagida,
   {\em A commutative algebra on degenerate $\CP^1$ and Macdonald polynomials},
 J. Math. Phys. {\bf 50} (2009), no. 9; arXiv:0904.2291.

\bibitem[FT1]{FT}
 B.~Feigin and A.~Tsymbaliuk,
   {\em Equivariant $K$-theory of Hilbert schemes via shuffle algebra},
 Kyoto J. Math. {\bf 51} (2011), no. 4, 831--854; arXiv:0904.1679.

\bibitem[FT2]{FT2}
 B.~Feigin and A.~Tsymbaliuk,
   {\em Bethe subalgebras of $U_q(\widehat{\gl}_n)$ via shuffle algebras},
 Selecta Math. (N.S.) {\bf 22} (2016), no. 2, 979--1011; arXiv:1504.01696.

\bibitem[GTL]{GTL}
 S.~Gautam and V.~Toledano Laredo,
   {\em Yangians and quantum loop algebras},
 Selecta Math. (N.S.) {\bf 19} (2013), no.~2, 271--336; arXiv:1012.3687.

\bibitem[M]{M}
 K.~Miki,
   {\em A $(q,\gamma)$ analog of the $W_{1+\infty}$ algebra},
 J. Math. Phys. {\bf 48} (2007), no. 12, 123520, 35 pp.

\bibitem[MO]{MO}
 D.~Maulik and A.~Okounkov,
   {\em Quantum groups and quantum cohomology},
 preprint, arXiv:1211.1287.

\bibitem[N]{N}
 A.~Negut,
   {\em The shuffle algebra revisited},
 Int. Math. Res. Not. (2014), no. 22, 6242--6275; arXiv:1209.3349.

\bibitem[Na1]{Na3}
 H.~Nakajima,
   {\em Quiver varieties and Kac-Moody algebras},
 Duke Math. J. {\bf 91} (1998), no. 3, 515--560.

\bibitem[Na2]{Na1}
 H.~Nakajima,
   {\em Lectures on Hilbert schemes of points on surfaces},
 Univ. Lecture Ser. {\bf 18}, Amer. Math. Soc., Providence, 1999.

\bibitem[Na3]{Na2}
 H.~Nakajima,
   {\em More lectures on Hilbert schemes of points on surfaces},
 preprint, arXiv:1401.6782.

\bibitem[NY]{NY}
 H.~Nakajima and K.~Yoshioka,
   {\em Instanton counting on blowup. I. $4$-dimensional pure gauge theory},
 Invent. Math. {\bf 162} (2005), no. 2, 313--355; arXiv:math/0306198.

\bibitem[S]{S}
 O.~Schiffmann,
   {\em Drinfeld realization of the elliptic Hall algebra},
 J. Algebraic Combin. {\bf 35} (2012), no. 2, 237--262; arXiv:1004.2575.

\bibitem[SV1]{SV}
 O.~Schiffmann and E.~Vasserot,
   {\em The elliptic Hall algebra and the $K$-theory of the Hilbert scheme of $\mathbb{A}^2$},
 Duke Math. J. {\bf 162} (2013), no. 2, 279--366; arXiv:0905.2555.

\bibitem[SV2]{SV3}
 O.~Schiffmann and E.~Vasserot,
   {\em Cherednik algebras, W-algebras and the equivariant cohomology of the moduli space of instantons on $\mathbb{A}^2$},
 Publ. Math. Inst. Hautes \'{E}tudes Sci. {\bf 118} (2013), 213--342; arXiv:1202.2756.

\bibitem[T1]{T1}
 A.~Tsymbaliuk,
   {\em Several realizations of Fock modules for toroidal $\ddot{U}_{q,d}(\ssl_n)$},
 preprint, arXiv:1603.08915.

\bibitem[T2]{T2}
 A.~Tsymbaliuk,
   {\em Limits of quantum toroidal and affine Yangian},
 preprint, arXiv:1605.01314.

\bibitem[TB]{TB}
 A.~Tsymbaliuk and M.~Bershtein,
   {\em Homomorphisms between different quantum toroidal and affine Yangian algebras},
 preprint, arXiv:1512.09109.

\end{thebibliography}
\end{document}